\title{Implicit regularization and momentum algorithms in nonlinearly parameterized adaptive control and prediction}
\author[1]{Nicholas M.\ Boffi}
\author[2]{Jean-Jacques E.\ Slotine}
\affil[1]{John A.\ Paulson School of Engineering and Applied Sciences, Harvard University}
\affil[2]{Nonlinear Systems Laboratory, Massachusetts Institute of Technology}
\newcommand{\T}{\mathsf{T}}
\newcommand{\HH}{\mathcal{H}}
\newcommand{\hp}{\hat{\mathbf{p}}}
\newcommand{\hq}{\hat{\mathbf{q}}}
\newcommand{\be}{\mathbf{e}}
\newcommand{\sN}{\mathcal{N}}
\newcommand{\bs}{\mathbf{s}}
\newcommand{\bc}{\mathbf{c}}
\newcommand{\p}{\partial}
\newcommand{\bw}{\mathbf{w}}
\newcommand{\bphi}{\boldsymbol\phi}
\newcommand{\bGam}{\boldsymbol\Gamma}
\newcommand{\LL}{\mathcal{L}}
\newcommand{\brho}{\boldsymbol\rho}
\newcommand{\bg}{\mathbf{g}}
\newcommand{\bP}{\mathbf{P}}
\newcommand{\bp}{\mathbf{p}}
\newcommand{\bJ}{\mathbf{J}}
\newcommand{\bx}{\mathbf{x}}
\newcommand{\bsig}{\boldsymbol{\sigma}}
\newcommand{\bz}{\mathbf{z}}
\newcommand{\ba}{\mathbf{a}}
\newcommand{\bA}{\mathbf{A}}
\newcommand{\bu}{\mathbf{u}}
\newcommand{\bq}{\mathbf{q}}
\newcommand{\bM}{\mathbf{M}}
\newcommand{\bb}{\mathbf{b}}
\newcommand{\bv}{\mathbf{v}}
\newcommand{\bThet}{\boldsymbol\Theta}
\newcommand{\bthet}{\boldsymbol\theta}
\newcommand{\bH}{\mathbf{H}}
\newcommand{\bC}{\mathbf{C}}
\newcommand{\bV}{\mathbf{V}}
\newcommand{\bY}{\mathbf{Y}}
\newcommand{\bI}{\mathbf{I}}
\newcommand{\by}{\mathbf{y}}
\newcommand{\bQ}{\mathbf{Q}}
\newcommand{\bff}{\mathbf{f}}
\newcommand{\balf}{\boldsymbol{\alpha}}
\newcommand{\bxi}{\boldsymbol{\xi}}
\newcommand{\ha}{\hat{\ba}}
\newcommand{\hv}{\hat{\bv}}
\newcommand{\sign}{\text{sign}}
\newcommand{\qeddef}{\ \ \diamond}
\newcommand{\bK}{\mathbf{K}}
\newcommand{\norm}[1]{\Vert#1\Vert}
\newcommand{\bregd}[2]{d_\psi\left(#1\ \middle\|\ #2\right)}
\newcommand{\bregdg}[3]{d_#1\left(#2\ \middle\|\ #3\right)}
\newcommand{\bregdphi}[2]{d_\phi\left(#1\ \middle\|\ #2\right)}
\newtheorem{prop}{Proposition}[section]
\newtheorem{lem}{Lemma}[section]
\theoremstyle{definition}
\newtheorem{rmk}{Remark}[section]
\theoremstyle{definition}
\theoremstyle{definition}
\theoremstyle{definition}
\newtheorem{assumption}{Assumption}[section]
\begin{document}
\maketitle

\begin{abstract}
Stable concurrent learning and control of dynamical systems is the subject of adaptive control. Despite being an established field with many practical applications and a rich theory, much of the development in adaptive control for nonlinear systems revolves around a few key algorithms. By exploiting strong connections between classical adaptive nonlinear control techniques and recent progress in optimization and machine learning, we show that there exists considerable untapped potential in algorithm development for both adaptive nonlinear control and adaptive dynamics prediction. We begin by introducing first-order adaptation laws inspired by natural gradient descent and mirror descent. We prove that when there are multiple dynamics consistent with the data, these non-Euclidean adaptation laws implicitly regularize the learned model. Local geometry imposed during learning thus may be used to select parameter vectors -- out of the many that will achieve perfect tracking or prediction -- for desired properties such as sparsity. We apply this result to regularized dynamics predictor and observer design, and as concrete examples, we consider Hamiltonian systems, Lagrangian systems, and recurrent neural networks. We subsequently develop a variational formalism based on the Bregman Lagrangian. We show that its Euler Lagrange equations lead to natural gradient and mirror descent-like adaptation laws with momentum, and we recover their first-order analogues in the infinite friction limit. We illustrate our analyses with simulations demonstrating our theoretical results.
\end{abstract}

\section{Introduction}

Adaptation is an online learning problem concerned with control or prediction of the
dynamics of an unknown nonlinear system. This task is accomplished by constructing an approximation to the true dynamics through the online adjustment of a vector of parameter estimates under the assumption that there exists a fixed vector of parameters that globally fits the dynamics. The overarching goal is provably safe, stable, and concurrent learning and control of nonlinear dynamical systems.

Adaptive control theory is a mature field, and many results exist tailored to specific system structures~\citep{ioannou, annaswammy_book,slot_li_book}. An adaptive control algorithm typically consists of a parameter estimator coupled in feedback to the controlled system, and the parameter estimator is often strongly inspired by gradient-based optimization algorithms. A significant difference between standard optimization algorithms and adaptive control algorithms is that the parameter estimator must not only converge to a set of parameters that leads to perfect tracking of the desired trajectory, but the system must remain stable throughout adaptation. The additional requirement of stability prevents the immediate application of optimization algorithms as adaptive control algorithms, and stability must be proved by jointly analyzing the closed-loop system and estimator.

Significant progress has been made in adaptive control even for nonlinear systems in the \textit{linearly parameterized} setting, where the dynamics approximation is of the form $\bY(\bx, t)\hat{\ba}$ for some known regressor matrix $\bY(\bx, t)$ and vector of parameter estimates $\ha(t)$. Well-known examples include the adaptive robot trajectory controller of~\citet{slot_li_robot} and the neural network-based controller of~\citet{sanner}, which employs a mathematical expansion in physical nonlinear basis functions to uniformly approximate the unknown dynamics. 

Unlike its linear counterpart, solutions to the adaptive control problem in the general nonlinearly parameterized setting have remained elusive. Intuitively, this is unsurprising: guarantees for gradient-based optimization algorithms typically rely on convexity, with a few notable exceptions such as the Polyak-Lojasiewicz condition~\citep{pl_polyak}. In the linearly parameterized setting, the underlying optimization problem will be convex. When the parameters appear nonlinearly, the problem is in general nonconvex and thus difficult to provide guarantees for.

In this work, we provide new provably globally convergent algorithms for both the linearly and nonlinearly parameterized adaptive control problems, along with new insight into existing adaptive control algorithms for the linearly parameterized setting. Our results for nonlinearly parameterized systems are valid under the monotonicity assumptions of~\citet{tyukin_paper} or the convexity assumptions of~\citet{fradkov-1979}. The monotonicity assumptions are equivalent to those commonly satisfied by generalized linear models in statistics~\citep{glmtron}.

\subsection{Description of primary contributions}
\label{sec:contr}
Our contributions can be categorized into two main advances. 
\begin{enumerate}
    \item We further develop a class of natural gradient and mirror descent-like algorithms that have recently appeared in the literature in the context of physically consistent inertial parameter learning in robotics~\citep{natural_robot} and geodesically convex optimization~\citep{geodesic}. We prove that these algorithms implicitly regularize the learned system model in both the linearly parameterized and nonlinearly parameterized settings.
    \item We construct a general class of higher-order in-time adaptive control algorithms that incorporate momentum into existing adaptation laws. We prove that our new momentum algorithms are stable and globally convergent for both linearly parameterized and nonlinearly parameterized systems. We connect these higher-order methods with the first advance by designing a number of natural gradient and mirror descent-like adaptive control algorithms with momentum, which also implicitly regularize the learned model.
\end{enumerate}
Unlike standard problems in optimization and machine learning, explicit regularization terms cannot be naively added to adaptive control algorithms without impacting stability and performance. Our approach enables a provably stable and globally convergent implementation of regularization in adaptive control. We demonstrate the utility of these results through examples in the context of dynamics prediction, such as sparse estimation of a physical system's Hamiltonian or Lagrangian function, and estimating the weights of a continuous-time recurrent neural network model.
    
It is well-known in adaptive control that the true parameters are only recovered when the desired trajectory satisfies a strong condition known as \textit{persistent excitation}~\citep{annaswammy_book,slot_li_book}. In general, an adaptation law need only find parameters that enable perfect tracking, and very little is known about what parameters are found when the estimator converges without persistent excitation. Our categorization of implicit regularization provides an answer and shows that standard Euclidean adaptation laws lead to parameters of minimum $\ell_2$ norm.

For the second contribution, we utilize the Bregman Lagrangian~\citep{ashia_1, ashia_2, ashia_3} in tandem with the velocity gradient methodology~\citep{fradkov-1979, fradkov-book, fradkov_paper2, fradkov_paper1} to define a general formalism that generates higher-order in-time~\citep{morse} velocity gradient algorithms. Our key insight is that the velocity gradient formalism provides an optimization-like framework that encompasses many well-known adaptive control algorithms, and that the velocity gradient ``loss function'' can be placed directly in the Bregman Lagrangian.

\subsection{Summary of related work}
Our work continues in a long-standing tradition that utilizes a continuous-time view to analyze optimization algorithms, and here we consider a non-exhaustive list. \citet{gen_hamil} develop momentum algorithms from the perspective of Hamiltonian dynamics, while~\citet{maddison} use Hamiltonian dynamics to prove linear convergence of new optimization algorithms without strong convexity.~\citet{muehlenbach, muehlebach_2} study momentum algorithms from the viewpoint of dynamical systems and control.~\citet{boffi-slot} analyze distributed stochastic gradient descent algorithms via dynamical systems and nonlinear contraction theory.~\citet{su_boyd} provide an intuitive justification for Nesterov's accelerated gradient method~\citep{nest-mom}. Continuous-time differential equations were used as early as 1964 by Polyak to derive the classical momentum or ``heavy ball'' optimization method~\citep{polyak1964}. In all cases, continuous-time often affords simpler proofs, and it enables the application of physical intuition when reasoning about optimization algorithms. Given the gradient-based nature of many adaptive control algorithms, the continuous-time view of optimization provides a natural bridge from modern optimization to modern adaptive control.

Despite the simplicity of proofs in continuous-time, finding a discretization that provably retains the convergence rates of a given differential equation is challenging. In a significant advance, \citet{ashia_1} showed that many accelerated methods in optimization can be derived via a variational point of view from a single mathematical object known as the Bregman Lagrangian. The Bregman Lagrangian leads to second-order mass-spring-damper-like dynamics, and careful discretization provides discrete-time algorithms such as Nesterov's celebrated accelerated gradient method~\citep{nest-mom}. We similarly use the Bregman Lagrangian to generate our new adaptive control algorithms, which generalize and extend a recently developed algorithm due to~\citet{gaudio_1}.

Progress has been made in nonlinearly parameterized adaptive control in a number specific cases.~\citet{anna_adapt_1},~\citet{anna_adapt_2}, and~\citet{anna_adapt_3} develop stable adaptive control laws for convex and concave parameterizations, though they may be overly conservative and require solving optimization problems at each timestep.~\citet{II1} and~\citet{II2} develop the Immersion and Invariance (I\&I) approach, and prove global convergence if a certain monotone function can be constructed.~\citet{ortega_sysid} use a similar approach for system identification.~\citet{tyukin_paper} consider dynamical systems satisfying a monotonicity assumption that is essentially identical to conditions required for learning generalized linear models in machine learning and statistics~\citep{glmtron, alphatron, convotron}, and develop provably stable adaptive control algorithms for nonlinearly parameterized systems in this setting.~\citet{fradkov-1979},~\citet{fradkov_paper1},~\citet{fradkov_paper2}, and~\citet{fradkov-book} develop the velocity gradient methodology, an optimization-like framework for adaptive control that allows for provably global convergence under a convexity assumption. As mentioned in Section~\ref{sec:contr}, this framework, in tandem with the Bregman Lagrangian, is central to our development of momentum algorithms.

Our work is strongly related to and inspired by a line of recent work that analyzes the implicit bias of optimization algorithms in machine learning.~\citet{gunasekar_1},~\citet{gunasekar_2}, and~\citet{gunasekar_3} characterize implicit regularization of common gradient-based optimization algorithms such as gradient descent with and without momentum, as well as natural and mirror descent in the settings of regression and classification.~\citet{azizan_1}, and~\citet{azizan_2} arrive at similar results via a different derivation based on results from $\HH_{\infty}$ control. Similarly,~\citet{belkin} consider the importance of implicit regularization in the context of the successes of deep learning. Our results are the adaptive control analogues of those presented in these papers.

\subsection{Paper outline}
The paper is organized as follows. In Section~\ref{sec:adaptive}, we present some required mathematical background on direct adaptive control in the linearly and nonlinearly parameterized settings, along with a review of velocity gradient methods. 
In Section~\ref{sec:nat} we analyze the implicit bias of adaptive control algorithms.
In Section~\ref{sec:obs_predict} we consider observer and dynamics predictor design, Hamiltonian dynamics prediction, control of Lagrangian systems, and estimation of recurrent neural networks. 
In Section~\ref{ssec:breg}, we provide a review of the Bregman Lagrangian, and use it in Section~\ref{sec:main} to devise new higher-order adaptation algorithms with momentum.
We extend these to the non-Euclidean section in Section~\ref{sec:ho_nat}.
In Section~\ref{ssec:elastic}, we discuss extensions of our adaptation laws to distributed settings.
We illustrate our results via simulation in Section~\ref{sec:sim}, and we conclude with some closing remarks and future directions in Section~\ref{sec:conc}.

\section{Direct adaptive control}
In this section, we provide an introduction to direct adaptive control for both linearly parameterized and nonlinearly parameterized systems, along with a description of some natural gradient-like adaptive laws that have appeared in the recent literature.
\label{sec:adaptive}
\subsection{Linearly parameterized dynamics}
\label{ssec:adaptive_basic_lin}
For simplicity, we restrict ourselves to the class of $\text{n}^{\text{th}}$-order nonlinear systems
\begin{equation}
    x^{(n)} + f(\bx, \ba, t) = u
    \label{eqn:gen_sys}
\end{equation}
where $x^{(i)} \in \mathbb{R}$ denotes the $i^{\text{th}}$ derivative of $x$, $\bx = \left(x, x^{(1)}, \hdots, x^{(n-1)}\right)^\T \in \mathbb{R}^n$ is the system state, $\ba \in \mathbb{R}^p$ is a vector of unknown parameters, $f: \mathbb{R}^n \times \mathbb{R}^p \times \mathbb{R}_+ \rightarrow \mathbb{R}$ is of known functional form but is unknown due to its dependence on $\ba$, and $u \in \mathbb{R}$ is the control input. We seek to design a feedback control law $u = u(\bx, \hat{\ba})$ that depends on a set of adjustable parameters $\hat{\ba} \in \mathbb{R}^p$ and ensures that $\bx(t) \rightarrow \bx_d(t)$ where $\bx_d(t) \in \mathbb{R}^n$ is a known desired trajectory. Along the way, we require that all system signals remain bounded. The estimated parameters $\hat{\ba}$ are updated according to a learning rule or adaptation law
\begin{equation}
    \dot{\hat{\ba}} = \mathbf{g}(\ba, \hat{\ba}, \bx)
    \label{eqn:gen_adapt}
\end{equation}
where $\bg : \mathbb{R}^p\times\mathbb{R}^p\times\mathbb{R}^n\rightarrow\mathbb{R}^p$ must be implementable solely in terms of known system signals despite its potential dependence on $\ba$. For $\text{n}^{\text{th}}$ order systems as considered in (\ref{eqn:gen_sys}), a common approach is to define the \textit{sliding variable}~\citep{slot_li_book}
\begin{equation}
    s(\bx, \bx_d(t)) = \left(\frac{d}{dt} + \lambda\right)^{n-1} \tilde{x} = \tilde{x}^{(n-1)} - \tilde{x}^{(n-1)}_r
    \label{eqn:s_def}
\end{equation}
where $\lambda > 0$ is a constant, and $\tilde{x}(t) = x(t) - x_d(t)$. We have defined $\tilde{x}^{(i)}(t) = x^{(i)}(t) - x^{(i)}_d(t)$ and $\tilde{x}^{(n-1)}_r$ as the remainder based on the definition of $s(\bx, \bx_d(t))$. According to the definition (\ref{eqn:s_def}), $s(\bx, \bx_d(t))$ obeys the differential equation
\begin{equation}
    \dot{s} = u - f(\bx, \ba, t) - \tilde{x}^{(n)}_r.
    \label{eqn:s_dyn_def}
\end{equation}
Hence, from (\ref{eqn:s_dyn_def}), we may choose
\begin{equation}
    \label{eqn:control}
    u = f(\bx, \hat{\ba}, t) + \tilde{x}^{(n)}_r - \eta s
\end{equation} 
to obtain the stable first-order linear filter
\begin{equation}
    \dot{s} = -\eta s + f(\bx, \hat{\ba},t) - f(\bx, \ba,t).
    \label{eqn:s_dyn}
\end{equation}
For future convenience, we define $\tilde{f}(\bx, \hat{\ba}, \ba, t) = f(\bx, \hat{\ba}, t) - f(\bx, \ba, t)$. From the definition of $s(\bx, \bx_d(t))$ in (\ref{eqn:s_def}), $s(\bx, \bx_d(t)) = 0$ defines the \textit{dynamics} 
\begin{equation}
    \left(\frac{d}{dt} + \lambda\right)^{n-1}\tilde{x} = 0.
    \label{eqn:s=0}
\end{equation}
Equation (\ref{eqn:s=0}) is a stable $(n-1)^{\text{th}}$-order filter which ensures that $\tilde{x}(t) \rightarrow 0$ exponentially. For systems of the form (\ref{eqn:gen_sys}), it is thus sufficient to consider the two first-order dynamics (\ref{eqn:gen_adapt}) and (\ref{eqn:s_dyn}). The adaptive control problem has thus been reduced to finding a learning algorithm that ensures $s(\bx(t), \bx_d(t)) \rightarrow 0$.
\begin{rmk}
Systems in the matched uncertainty form 
\begin{equation*}
\dot{\bx} = \bA\bx + \bb\left(u - f(\bx, \ba, t)\right),
\end{equation*}
where the constant pair $(\bA,\bb)$ is controllable and the constant parameter vector $\ba$ in the nonlinear function $ f(\bx, \ba, t)$ is unknown, can always be put in the form \eqref{eqn:gen_sys} by using a state transformation to the second controllability canonical form $-$ see~\citet{luenberger}, Chapter 8.8. After such a transformation, the new state variables $\bz$ satisfy $\dot{z}_i = z_{i+1}$ for $i < n$ and $\dot{z}_n = -\sum_{i=1}^{n-1}a_i z_i + u - f(\bx, \ba, t)$. Defining $s$ as in (\ref{eqn:s_def}) and choosing $u$ accordingly leads to (\ref{eqn:s_dyn}). Hence, all results in this paper extend immediately to such systems.$\qeddef$
\end{rmk}
\begin{rmk}
\label{rmk:error}
Our results may be simply extended to other error models~\citep{anna_adapt_2} of the form (\ref{eqn:s_dyn}), or error models with similar input-output guarantees, as summarized by Lemma \ref{lem:conv}.$\qeddef$
\end{rmk}
\begin{rmk}
We will use $\bff$ to denote the equivalent first-order system to (\ref{eqn:gen_sys}), $\dot{\bx} = \bff(\bx, \ba,t) + \bu$, where $\bff = \left(x_2, x_3, \hdots, f(\bx, \ba,t)\right)$ and $\bu = \left(0, 0, \hdots, u\right)$.$\qeddef$
\end{rmk}
The classic setting for adaptive control assumes that the unknown nonlinear dynamics depends linearly on the set of unknown parameters, that is
\begin{equation*}
    f(\bx, \ba, t) = \bY(\bx, t)\ba,
\end{equation*}
with $\bY:\mathbb{R}^n \times \mathbb{R}_+ \rightarrow \mathbb{R}^{1\times p}$ a known function. In this setting, a well-known algorithm is the adaptive controller of~\citet{slot_slide_adaptive}, given by
\begin{equation}
    \dot{\hat{\ba}} = -\bP\bY(\bx, t)^\T s(\bx, \bx_d(t)),
    \label{eqn:slotine_li}
\end{equation}
and its extension to multi-input adaptive robot control~\citep{slot_li_robot}, where $\bP = \bP^\T > 0 \in \mathbb{R}^{p\times p}$ is a constant positive definite matrix of learning rates. Consideration of the Lyapunov-like function $V(\bx, \ha, t) = \frac{1}{2}s(\bx, \bx_d(t))^2 + \frac{1}{2}\tilde{\ba}^\T\bP^{-1}\tilde{\ba}$ shows stability of the feedback interconnection of (\ref{eqn:s_dyn}) and (\ref{eqn:slotine_li}) and convergence to the desired trajectory via an application of Lemma~\ref{lem:barbalat}. We will refer to (\ref{eqn:slotine_li}) as the Slotine and Li controller.

In this work, we make a mild local boundedness assumption that simplifies some of the proofs.
\begin{assumption}
\label{assmp:bound}
If $\Vert\bx\Vert \leq \infty$ and $\Vert\ha\Vert < \infty$, then $\forall t \ge 0,\ |\hat{f}(\bx, \ha, t)| < \infty .\qeddef$
\end{assumption}
\subsection{Nonlinearly parameterized dynamics}
\label{ssec:adaptive_basic_nlin}
While a difficult problem in general, significant progress has been made for the nonlinearly parameterized adaptive control problem under the assumption of \textit{monotonicity}, and several notions of monotonicity have appeared in the literature~\citep{tyukin_paper, tyukin_book, II1, II2, ortega_sysid}. We consider one such notion as presented by~\citet{tyukin_paper}, which is captured in the following assumption.

\begin{assumption}
\label{assmp:tyukin}
There exists a known time- and state-dependent function $\balf : \mathbb{R}^n \times \mathbb{R}_{\geq 0} \rightarrow \mathbb{R}^p$ such that
\begin{align}
    \tilde{\ba}^\T\balf(\bx, t)\left(f\left(\bx, \hat{\ba}, t\right) - f\left(\bx, \ba, t\right)\right) &\geq 0,
    \label{assmp:tyukin_1}\\
     |\balf(\bx, t)^\T\tilde{\ba}| &\geq \frac{1}{D_1}|f\left(\bx, \hat{\ba}, t\right) - f\left(\bx, \ba, t\right)|.
    \label{assmp:tyukin_2}
\end{align}
where $D_1 > 0$ is a positive scalar.$\qeddef$
\end{assumption}
This assumption is satisfied, for example, by all functions of the form 
\begin{equation}
    f(\bx, \ba, t) = \lambda(\bx, t)f_m(\bx, \bphi(\bx, t)^\T\ba, t),
    \label{eqn:single_layer}
\end{equation} 
where $\lambda : \mathbb{R}^n \times \mathbb{R}_{\geq 0} \rightarrow \mathbb{R}$, $\bphi : \mathbb{R}^n\times\mathbb{R}_{\geq 0} \rightarrow \mathbb{R}^p$, $f_m : \mathbb{R}^n \times \mathbb{R} \times \mathbb{R}_{\geq 0} \rightarrow \mathbb{R}$, and where $f_m$ is monotonic and Lipschitz in $\bphi(\bx, t)^\T\ba$. In this setting, $\balf(\bx, t)$ may be taken as $\balf(\bx, t) = (-1)^pD_1\lambda(\bx, t)\bphi(\bx, t)$ where $p=0$ if $f_m$ is non-decreasing in $\bphi^\T\ba$ and $p=1$ if $f_m$ is non-increasing in $\bphi^\T\ba$~\citep{tyukin_paper, tyukin_book}.

Under Assumption~\ref{assmp:tyukin},~\citet{tyukin_paper} showed that the adaptation law
\begin{equation}
    \dot{\hat{\ba}} = - \tilde{f}(\bx, \hat{\ba}, \ba, t)\bP\balf(\bx, t)
    \label{eqn:tyukin_alg}
\end{equation}
with $\bP = \bP^\T > 0$ a positive definite matrix of learning rates of appropriate dimensions ensures that $\tilde{f}(\bx(\cdot), \ha(\cdot), \ba, \cdot) \in \LL_2$ over the maximal interval of existence of $\bx(t)$. Under suitable conditions on the error model, this then ensures that $\tilde{f}(\bx(\cdot), \ha(\cdot), \ba, \cdot) \in \LL_2\cap\LL_\infty$, $\bx(t)$ and $\ha(t)$ both remain bounded for all $t$, and that $\bx(t)\rightarrow\bx_d(t)$. The proof follows by consideration of the Lyapunov-like function $V(\ha) = \frac{1}{2}\tilde{\ba}^\T\bP^{-1}\tilde{\ba}$. 

While $\tilde{f}(\bx, \ha, \ba, t)$ itself is unknown, and hence (\ref{eqn:tyukin_alg}) is not directly implementable, it is contained in $\dot{s}(\bx, \bx_d(t), t)$. Intuitively, unknown quantities contained in $\dot{s}(\bx, \bx_d(t), t)$ can be obtained in the adaptation dynamics through a proportional term in $\ha$ that contains $s(\bx, \bx_d(t))$. This idea of gaining a ``free'' derivative is the basis of the reduced-order Luenberger observer for linear systems~\citep{luenberger}. Proportional-integral adaptive laws of this type have been known as algorithms in finite form~\citep{fradkov-book, tyukin_finite} and appear in the well-known I\&I framework~\citep{II1, II2}. Following this prescription, (\ref{eqn:tyukin_alg}) may be implemented in a proportional-integral form,
\begin{align}
    \label{eqn:tyukin_pi_1}
    \bxi(\bx, t) &= -\bP s(\bx, \bx_d(t))\balf(\bx, t),\\
    \label{eqn:tyukin_pi_2}
    \brho(\bx, t) &= \bP\int_{x_n(t_0)}^{x_n(t)}s(\bx, \bx_d(t))\frac{\p \balf(\bx, t)}{\p x_n}dx_n,\\
    \label{eqn:tyukin_pi_3}
    \hat{\ba} &= \overline{\ba} + \bxi(\bx, t) + \brho(\bx, t),\\
    \dot{\overline{\ba}} &= -\eta s(\bx, \bx_d(t))\bP\balf(\bx, t) + \bP s(\bx, \bx_d(t)) \sum_{i=1}^{n-1}\frac{\p \balf(\bx, t)}{\p x_i}x_{i+1} - \sum_{i=1}^{n-1}\frac{\p \brho(\bx, t)}{\p x_i}x_{i+1} \nonumber\\
    &\phantom{=} - \left(\frac{\p \brho(\bx, t)}{\p \bx_d}\right)^\T\dot{\bx}_d - \frac{\p \bxi}{\p t}(\bx, t) - \frac{\p \brho}{\p t}(\bx, t).
    \label{eqn:tyukin_pi_4}
\end{align}
Algorithm (\ref{eqn:tyukin_alg}) is similar to a gradient flow algorithm. If $f(\bx, \ba, t)$ has the form (\ref{eqn:single_layer}) and is non-decreasing, gradient flow on the loss function $L(\bx, \hat{\ba}, \ba, t) = \frac{1}{2}\tilde{f}^2(\bx, \hat{\ba}, \ba, t)$ with a gain matrix $D_1\bP$ leads to
\begin{equation*}
    \dot{\hat{\ba}} = -\tilde{f}(\bx, \hat{\ba}, \ba, t)f_m'(\bx, \bphi^\T\hat{\ba}, t)\bP\balf(\bx, t)
\end{equation*}
where $'$ denotes differentiation with respect to the second argument. $f_m'(\bx, \bphi^\T\hat{\ba}, t)$ is of known sign due to the monotonicity assumption, but of unknown magnitude. It is sufficient to remove this quantity from the adaptation law and instead to follow the \textit{pseudogradient} $\tilde{f}(\bx, \ha, \ba, t)\balf(\bx, t)$ despite non-convexity of the square loss in this setting. Similarly, if $f$ is non-increasing, we find 
\begin{equation*}
    \dot{\hat{\ba}} = \tilde{f}(\bx, \hat{\ba}, \ba, t)f_m'(\bx, \bphi^\T\hat{\ba}, t)\bP\balf(\bx, t)
\end{equation*}
and it is sufficient to set $f_m'(\bx, \bphi^\T\ba, t)$ to negative one.

\subsection{Velocity gradient algorithms}
\label{ssec:sg}
We now provide a brief introduction to a class of adaptive control methods known as velocity gradient algorithms~\citep{fradkov-book, fradkov-1979, fradkov_paper1, fradkov_paper2}. In their most basic form, they are specified by a ``local'' goal functional $Q(\bx, t):\mathbb{R}^n\times\mathbb{R}_+\rightarrow\mathbb{R}$; driving $Q(\bx, t)$ to zero then ensures that $\bx(t)\rightarrow\bx_d(t)$. The adaptation law is defined as
\begin{equation}
    \dot{\hat{\ba}} = - \bP\nabla_{\hat{\ba}}\dot{Q}(\bx, \hat{\ba}, t),
    \label{eqn:sg_alg}
\end{equation}
where $\bP = \bP^\T > 0$ is a positive definite matrix of learning rates of appropriate dimension, and $\dot{Q}(\bx, \ha, t) = \left(\nabla_{\bx}Q(\bx, t)\right)^\T\dot{\bx} + \frac{\p Q(\bx, t)}{\p t}$. Intuitively, while the goal functional $Q(\bx, t)$ may only depend on the control parameters $\hat{\ba}$ indirectly through $\bx$, its time derivative will depend explicitly on $\hat{\ba}$ through $\dot{\bx}$\footnote{It will also depend on $\ba$, but we suppress this dependence for notational simplicity.}. The adaptation law (\ref{eqn:sg_alg}) ensures that $\hat{\ba}$ moves in a direction that instantaneously decreases $\dot{Q}(\bx, \ha, t)$. Under the conditions specified by Assumptions \ref{assmp:sg1}-\ref{assmp:sg3}, this causes $\dot{Q}(\bx, \ha, t)$ to be negative for long enough to drive $Q(\bx, t)$ to zero~\citep{fradkov-book}. $Q(\bx, t)$ is required to satisfy three main assumptions to ensure that this is the case.

\begin{assumption}
$Q(\bx, t)$ is non-negative and radially unbounded, so that $Q(\bx, t) \geq 0$ for all $\bx$, $t$ and $Q(\bx, t) \rightarrow \infty$ when $\Vert\bx\Vert \rightarrow \infty$. $Q(\bx, t)$ is uniformly continuous in $t$ whenever $\bx$ is bounded. $\qeddef$
\label{assmp:sg1}
\end{assumption}

\begin{assumption}
There exists an ideal set of control parameters $\ba$ such that the origin of the system (\ref{eqn:gen_sys}) is globally asymptotically stable when the control is evaluated at $\ba$. Furthermore, $Q(\bx, t)$ is a Lyapunov function for the system when the control is evaluated at $\ba$. That is, there exists a strictly increasing function $\rho:\mathbb{R}\rightarrow\mathbb{R}_+$ such that $\rho(0) = 0$ with $\dot{Q}(\bx, \ba, t) \leq -\rho(Q(\bx, \ba, t))$.$\qeddef$
\label{assmp:sg2}
\end{assumption}

\begin{assumption}
The time derivative of $Q$ is convex in the control parameters $\hat{\ba}$, i.e.,
\begin{equation}
    \dot{Q}(\bx, \ba_1, t) \geq \dot{Q}(\bx, \ba_2, t) + \left(\ba_1 - \ba_2\right)^\T\nabla_{\ba_2}\dot{Q}(\bx, \ba_2, t),
    \label{eqn:sg_conv}
\end{equation}
is satisfied for all $\ba_1$ and $\ba_2$.$\qeddef$
\label{assmp:sg3}
\end{assumption}
The properties of \eqref{eqn:sg_alg} are summarized in the following proposition~\citep{fradkov-book}. 

\begin{prop}[cf.~\citet{fradkov-1979}]
\label{prop:sg_local}
Consider the local velocity gradient algorithm (\ref{eqn:sg_alg}) under Assumptions \ref{assmp:sg1}-\ref{assmp:sg3}. Then all solutions $(\bx(t), \hat{\ba}(t))$ of (\ref{eqn:gen_sys}) and (\ref{eqn:sg_alg}) remain bounded, and for all $\bx(0) \in \mathbb{R}^n$
\begin{equation*}
    \lim_{t\rightarrow\infty} Q(\bx(t), t) = 0.
\end{equation*}
\end{prop}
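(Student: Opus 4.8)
The plan is to use $Q(\bx(t),t)$ itself as a Lyapunov-like function and to exploit the three assumptions to force $\dot Q$ negative along trajectories. First I would write out $\dot Q$ along the closed loop, i.e. $\frac{d}{dt}Q(\bx(t),t) = \dot Q(\bx,\ha,t)$ where $\dot Q(\bx,\ha,t) = (\nabla_\bx Q)^\T\dot\bx + \p_t Q$, which depends explicitly on $\ha$ through $\dot\bx$. The key algebraic step is to compare $\dot Q$ evaluated at the current estimate $\ha$ with $\dot Q$ evaluated at the ideal parameters $\ba$: by the convexity Assumption~\ref{assmp:sg3}, $\dot Q(\bx,\ba,t) \ge \dot Q(\bx,\ha,t) + (\ba - \ha)^\T\nabla_{\ha}\dot Q(\bx,\ha,t)$, so $\dot Q(\bx,\ha,t) \le \dot Q(\bx,\ba,t) - (\ba - \ha)^\T\nabla_{\ha}\dot Q(\bx,\ha,t)$. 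Using Assumption~\ref{assmp:sg2}, the first term is bounded by $-\rho(Q)\le 0$, so $\dot Q(\bx,\ha,t) \le -\rho(Q) + (\ha - \ba)^\T\nabla_{\ha}\dot Q(\bx,\ha,t)$.

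The cross term $(\ha - \ba)^\T\nabla_{\ha}\dot Q$ is exactly what the adaptation law (\ref{eqn:sg_alg}) is designed to cancel in an augmented Lyapunov function. So the second step is to introduce $V = Q(\bx,t) + \frac12(\ha - \ba)^\T\bP^{-1}(\ha - \ba)$. Differentiating, $\dot V = \dot Q(\bx,\ha,t) + (\ha-\ba)^\T\bP^{-1}\dot{\ha} = \dot Q(\bx,\ha,t) - (\ha-\ba)^\T\nabla_{\ha}\dot Q(\bx,\ha,t)$, and combining with the convexity inequality above gives $\dot V \le \dot Q(\bx,\ba,t) \le -\rho(Q) \le 0$. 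This shows $V$ is nonincreasing, hence bounded; radial unboundedness of $Q$ (Assumption~\ref{assmp:sg1}) and positive definiteness of $\bP^{-1}$ then give boundedness of $\bx(t)$ and $\ha(t)$ on the maximal interval of existence, which by standard arguments is all of $\mathbb{R}_+$.

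The third step is the convergence claim $Q(\bx(t),t)\to 0$. Integrating $\dot V \le -\rho(Q)$ shows $\int_0^\infty \rho(Q(\bx(\tau),\tau))\,d\tau \le V(0) < \infty$, so $\rho(Q)\in\LL_1$. To upgrade this to $\rho(Q(t))\to 0$ I would invoke Barbalat's Lemma (Lemma~\ref{lem:barbalat}): I need $\frac{d}{dt}\rho(Q(\bx(t),t))$ bounded, equivalently uniform continuity of $\rho(Q)$ in $t$. This follows from boundedness of $\bx$, $\ha$ (already established), the uniform continuity of $Q$ in $t$ whenever $\bx$ is bounded (Assumption~\ref{assmp:sg1}), continuity of $\rho$, and boundedness of $\dot\bx$ along the trajectory (using Assumption~\ref{assmp:bound} or the structural form of the dynamics). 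Then Barbalat gives $\rho(Q(\bx(t),t))\to 0$, and since $\rho$ is strictly increasing with $\rho(0)=0$, this forces $Q(\bx(t),t)\to 0$.

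The main obstacle I anticipate is the regularity bookkeeping needed for the Barbalat step: one must be careful to argue that all the signals entering $\dot Q$ — in particular $\dot\bx$, hence $u(\bx,\ha)$ and $f(\bx,\ba,t)$ — stay bounded given only boundedness of $\bx$ and $\ha$, which is where Assumption~\ref{assmp:bound} and the precise smoothness hypotheses on $Q$ and $f$ do real work. The Lyapunov construction itself is essentially forced once one notices that the adaptation law is the velocity-gradient descent direction; the convexity assumption is precisely the hypothesis that makes the pseudo-gradient $\nabla_{\ha}\dot Q$ point "toward" $\ba$ in the sense needed to kill the cross term. I would also double-check that no persistent-excitation-type condition sneaks in — it does not, since we only claim $Q\to 0$, not $\ha\to\ba$.
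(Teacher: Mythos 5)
Your proposal is correct and follows essentially the same route as the paper: the Lyapunov-like function $V = Q + \frac{1}{2}\tilde{\ba}^\T\bP^{-1}\tilde{\ba}$ is exactly the one the paper cites, and the sequence convexity (Assumption~\ref{assmp:sg3}) $\Rightarrow$ $\dot V \le \dot Q(\bx,\ba,t) \le -\rho(Q)$ (Assumption~\ref{assmp:sg2}) $\Rightarrow$ boundedness via radial unboundedness $\Rightarrow$ $\rho(Q)\in\LL_1$ $\Rightarrow$ Barbalat $\Rightarrow$ $Q\to 0$ is the standard velocity-gradient argument the paper refers to. The regularity bookkeeping you flag for the Barbalat step is indeed where Assumptions~\ref{assmp:sg1} and~\ref{assmp:bound} are consumed, and your handling of it is sound.
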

The proof follows by consideration of the Lyapunov-like function $V(\bx, t) = Q(\bx, t) + \frac{1}{2}\tilde{\ba}^\T\bP^{-1}\tilde{\ba}$.

\begin{rmk}
\label{rmk:slotine_li}
If $Q(\bx, t)$ is chosen so that $\dot{Q}(\bx, \ha, t)$ depends on $\hat{\ba}$ only through $\hat{f}(\bx, \ha, t)$ and $f(\bx, \ba, t)$ is linearly parameterized, then Assumption~\ref{assmp:sg3} will immediately be satisfied by convexity of affine functions. Indeed, consider defining the goal functional $Q(\bx, t) = \frac{1}{2}s(\bx, \bx_d(t))^2$ for system (\ref{eqn:gen_sys}). It is clear that this proposed goal functional satisfies Assumptions~\ref{assmp:sg1} and \ref{assmp:sg2} for bounded $x_d(t)$. Then $\dot{Q}(\bx, \ha, t) = -\eta s(\bx, \bx_d(t))^2 + s(\bx, \bx_d(t))\tilde{f}(\bx, \ha, \ba, t)$, and (\ref{eqn:sg_alg}) exactly recovers the Slotine and Li controller (\ref{eqn:slotine_li}).$\qeddef$
\end{rmk}

\begin{rmk}
\label{rmk:dvdx}
An alternative perspective on velocity gradient algorithms can be found by using the expression $\dot{Q}(\bx, \ha, t) = \left(\nabla_{\bx}Q(\bx, t)\right)^\T\dot{\bx} + \frac{\p Q(\bx, t)}{\p t}$. Assume that $\dot{\bx} = \bu(\bx, \ha, t) - \bY(\bx, t)\ba$, and set $\bu(\bx, \ha, t) = \bY(\bx, t)\ha + \bu_d(\bx, \bx_d(t))$ where $\bu_d(\bx, \bx_d(t))$ ensures that $\bx(t) \rightarrow \bx_d(t)$ for $\ha = \ba$. Then $\nabla_{\ha}\dot{Q}(\bx, \ha, t) = \bY(\bx, t)^\T \nabla_{\bx}Q(\bx, t)$. This shows that the adaptation law $\dot{\ha} = -\bP\nabla_{\ha}\dot{Q}(\bx, \ha, t) = -\bP\bY(\bx, t)^\T\nabla_{\bx}Q(\bx, t)$ transforms the gradient of $Q(\bx, t)$ with respect to $\bx$ by pre-multiplication by the regressor $\bY(\bx, t)^\T$. This perspective also applies to the adaptation law for contracting systems developed in~\citep{brett}. Conversely, this perspective shows that if a Lyapunov function $V(\bx, t)$ is known for a nominal system $\dot{\bx} = \bff(\bx, t)$, then the adaptive control input $\bu(\bx, \ha, t) = \bY(\bx, t)\ha$ with adaptation law $\dot{\ha} = -\bP\bY(\bx, t)^\T\nabla_{\bx}V(\bx, t)$ will return the perturbed system $\dot{\bx} = \bff(\bx, t) + \bu(\bx, \ha, t) - \bY(\bx, t)\ba$ back to its nominal behavior. This was recently exploited to learn adaptive control laws directly from data in~\citet{boffi2020learning}.
\end{rmk}

Rather than a local functional, one may instead specify an integral goal functional of the form $Q(\bx, \hat{\ba}, t) = \int_0^t R(\bx(t'), \hat{\ba}(t'), t')dt'$. In this case, (\ref{eqn:sg_alg}) takes the form 
\begin{equation}
    \dot{\hat{\ba}} = -\bP\nabla_{\hat{\ba}}R(\bx, \hat{\ba}, t).
    \label{eqn:sg_int}
\end{equation} 
Equation (\ref{eqn:sg_int}) is a gradient flow algorithm on the loss function $R(\bx, \hat{\ba}, t)$. We now replace Assumptions \ref{assmp:sg1} and \ref{assmp:sg2} by a slightly modified setting.

\begin{assumption}
$R$ is a non-negative function and $R(\bx(t), \ha(t), t)$ is uniformly continuous in $t$ for bounded $\bx$ and $\ha$. Furthermore, $\nabla_{\ha}R(\bx, \ha, t)$ is locally bounded in $\bx$ and $\ha$ uniformly in $t$. $\qeddef$
\label{assmp:sg4}
\end{assumption}

\begin{assumption}
There exists an ideal set of controller parameters $\ba$ and a scalar function $\mu$ such that $\int_0^\infty \mu(t')dt' < \infty$, $\lim_{t\rightarrow \infty} \mu(t) = 0$, and $R(\bx(t), \ba, t) \leq \mu(t)$ for all $t$.$\qeddef$
\label{assmp:sg5}
\end{assumption}

The properties of algorithm (\ref{eqn:sg_int}) are summarized in the following proposition~\citep{fradkov-book}.

\begin{prop}
\label{prop:sg_nonloc}
Consider the integral velocity gradient algorithm (\ref{eqn:sg_int}) where the goal functional $Q(\bx, t)$ satisfies Assumptions \ref{assmp:sg3}-\ref{assmp:sg5}. Then $Q(\bx(t), t) \leq \alpha$ where
\begin{equation*}
    \alpha = \frac{1}{2}\tilde{\ba}(0)^\T\bP^{-1}\tilde{\ba}(0) + \int_0^\infty\mu(t')dt',
\end{equation*}
and $\int_0^{T_x} R(\bx(t'), \ha(t'), t')dt' < \infty$ where $T_x$ denotes the maximal interval of existence of $\bx(t)$. Furthermore, $R(\bx(t), \ha(t), t) \rightarrow 0$ for any bounded solution $\bx(t)$.
\end{prop}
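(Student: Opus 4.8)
The plan is to mirror the Lyapunov argument behind Proposition~\ref{prop:sg_local}, but dropping the tracking-error component of the Lyapunov-like function. The observation that makes everything work is that for an integral goal functional $Q(\bx, \ha, t) = \int_0^t R(\bx(t'), \ha(t'), t')\,dt'$ one has $\dot Q(\bx, \ha, t) = R(\bx, \ha, t)$, so Assumption~\ref{assmp:sg3} is precisely convexity of the instantaneous loss $R$ in its second argument, and the velocity gradient law \eqref{eqn:sg_int} is plain gradient flow on $R$.

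First I would take $V = \tfrac12 \tilde{\ba}^\T \bP^{-1}\tilde{\ba}$ and, using $\dot{\tilde{\ba}} = \dot{\ha} = -\bP\nabla_{\ha}R(\bx, \ha, t)$ from \eqref{eqn:sg_int}, compute $\dot V = -\tilde{\ba}^\T\nabla_{\ha}R(\bx, \ha, t)$. Applying Assumption~\ref{assmp:sg3} with $\ba_1 = \ba$, $\ba_2 = \ha$ (and $\dot Q = R$), namely $R(\bx, \ba, t) \ge R(\bx, \ha, t) + (\ba - \ha)^\T\nabla_{\ha}R(\bx, \ha, t)$, rearranges to $-\tilde{\ba}^\T\nabla_{\ha}R(\bx, \ha, t) \le R(\bx, \ba, t) - R(\bx, \ha, t)$, so with $R(\bx(t), \ba, t) \le \mu(t)$ from Assumption~\ref{assmp:sg5} we obtain $\dot V \le \mu(t) - R(\bx(t), \ha(t), t)$.

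Next I would integrate this inequality from $0$ to $t$ over the maximal interval of existence of $\bx$. Since $V(t) \ge 0$ and $\int_0^t\mu(t')\,dt' \le \int_0^\infty\mu(t')\,dt'$, this gives $Q(t) = \int_0^t R(\bx(t'), \ha(t'), t')\,dt' \le V(0) + \int_0^\infty\mu(t')\,dt' = \alpha$, which is the first claim, and simultaneously finiteness of $\int R$ over the maximal interval. The same estimate bounds $V(t)$, hence $\tilde{\ba}$ and therefore $\ha$, on that interval. For the last claim, I would invoke Barbalat's Lemma (Lemma~\ref{lem:barbalat}): if $\bx(t)$ is bounded, then together with the boundedness of $\ha(t)$ just established, Assumption~\ref{assmp:sg4} makes $t \mapsto R(\bx(t), \ha(t), t)$ uniformly continuous; being non-negative with a convergent integral, it must tend to $0$.

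The computation itself is routine; the point needing care is that, unlike the local case of Proposition~\ref{prop:sg_local}, the Lyapunov-like function here has no term that is radially unbounded in $\bx$, so boundedness of $\bx$ cannot be concluded — this is exactly why the conclusions are phrased over the maximal interval of existence and the final convergence is made conditional on $\bx$ being bounded. A secondary subtlety is that Assumption~\ref{assmp:sg3} must be read with $\dot Q$ specialized to $R$, and the local boundedness of $\nabla_{\ha}R$ in Assumption~\ref{assmp:sg4} is what keeps the adaptation dynamics well-posed while $\ha$ remains in the bounded region supplied by the $V$ estimate.
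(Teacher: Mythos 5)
Your proof is correct and is essentially the same argument the paper has in mind, just with slightly different bookkeeping: the paper folds $\int_0^t R$ and $\int_t^\infty \mu$ into the Lyapunov-like function so that $\dot V \le 0$ outright, whereas you keep only the parameter-estimation term and integrate the resulting differential inequality $\dot V \le \mu - R$; these are trivially equivalent. Your remarks on why boundedness of $\bx$ cannot be inferred (no radially unbounded term in $\bx$) and on reading Assumption~\ref{assmp:sg3} with $\dot Q = R$ are both accurate and capture exactly why the conclusions are stated over the maximal interval of existence and made conditional on boundedness.
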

The proof follows by consideration of the Lyapunov-like function $V = \int_0^tR(\bx(t'), \ha(t'), t')dt' + \frac{1}{2}\tilde{\ba}^\T\bP^{-1}\tilde{\ba} + \int_{t}^{\infty}\mu(t')dt'$.

Integral functionals allow the specification of a control goal that depends on all past data. $R(\bx, \ha, t)$ is chosen so that it does not necessarily depend on the structure of the dynamics, but depends explicitly on $\ha$. Local functionals, on the other hand, result in adaptation laws that \textit{do} have an explicit dependence on the dynamics through the appearance of the term $\frac{\p Q}{\p \bx}(\bx, t)^\T\dot{\bx}$ in $\dot{Q}(\bx, \ha, t)$. 

Integral functionals can be particularly useful if $R(\bx(t), \ha(t), t) \rightarrow 0$ implies the desired control goal. In this work, we will focus on the choice $R(\bx, \ha, t) = \frac{1}{2}\tilde{f}(\bx, \ha, \ba, t)^2$, which will require a PI form as described in Section~\ref{sec:adaptive} in the context of Tyukin's algorithm. For this choice of $R(\bx, \ha, t)$ the result of Proposition~\ref{prop:sg_nonloc} implies that $\tilde{f}(\bx(\cdot), \ha(\cdot), \ba, \cdot)\in\LL_2$ over the maximal interval of existence of $\bx(t)$. For some error models, this is enough to ensure that $\bx(\cdot)\in\LL_\infty$, and hence that $\tilde{f}(\bx(t), \ha(t), \ba, t)\rightarrow 0$ and $\bx(t)\rightarrow\bx_d(t)$. This is verified for \eqref{eqn:s_dyn_def} in Lemma~\ref{lem:conv}.

Goal functionals can also be written as a sum of local and integral functionals with similar guarantees, and these approaches will lead to composite algorithms in the subsequent sections. The interested reader is referred to~\citet{fradkov-book}, Chapter 3 for more details.

\subsection{The Bregman divergence and natural adaptation laws}
\label{ssec:fo_rie_adapt}
\citet{natural_robot} introduced an elegant modification of the Slotine and Li adaptive robot controller, later generalized by~\citet{geodesic}. It consists of replacing the usual parameter estimation error term $\frac{1}{2}\tilde{\ba}^\T\bP^{-1}\tilde{\ba}$ in the Lyapunov-like function $V(\bx, \ha, t) = \frac{1}{2}s(\bx, \bx_d(t))^2 + \frac{1}{2}\tilde{\ba}^\T\bP^{-1}\tilde{\ba}$ with the Bregman divergence~\citep{bregman},
\begin{equation*}
    \bregd{\by}{\bx} = \psi(\by) - \psi(\bx) - \left(\by - \bx\right)^\T\nabla \psi(\bx)
    \label{eqn:breg_div}
\end{equation*}
to obtain the new ``non-Euclidean'' Lyapunov-like function
\begin{equation}
V(\bx, \ha, t) = \frac{1}{2}s(\bx, \bx_d(t))^2 + \bregd{\ba}{\ha}
\end{equation}
for an arbitrary strongly convex function $\psi:\mathbb{R}^p \rightarrow \mathbb{R}$. 

The Bregman divergence may be understand as the error made when approximating $\psi(\mathbf{y})$ by a first-order Taylor expansion around $\bx$. It is guaranteed to be non-negative for strongly convex functions by the first-order characterization of convexity. While it is not a norm in general, it defines a distance-like function for $\psi$ strongly convex related to the \textit{Hessian metric} $\frac{1}{2}\Vert\bx\Vert_{\nabla^2\psi}^2 = \frac{1}{2}\bx^\T\nabla^2\psi(\bx)\bx$. As two simple examples, for $\psi(\bx) = \frac{1}{2}\Vert\bx\Vert^2$, $\bregd{\bx}{\by} = \frac{1}{2}\Vert\bx - \by\Vert^2$. For $\psi(\bx) = \frac{1}{2}\bx^\T\mathbf{Q}\bx$ with $\bQ > 0$ a positive definite matrix, $\bregd{\bx}{\by} = \frac{1}{2}\left(\bx-\by\right)^\T\mathbf{Q}\left(\bx-\by\right)$. For general convex functions, $\bregd{\cdot}{\cdot}$ can always be written via Hadamard's Lemma as
\begin{equation*}
    \bregd{\by}{\bx} = \left(\by-\bx\right)^\T\left(\int_0^1\nabla^2\psi\left(\bx + s\left(\by-\bx\right)\right)(1-s)ds\right)\left(\by-\bx\right).
\end{equation*}

The derivative of the Bregman divergence is simply
\begin{equation}
    \label{eqn:breg_deriv}
    \frac{d}{dt}\bregd{\ba}{\ha} = \tilde{\ba}^\T\nabla^2\psi(\hat{\ba})\dot{\hat{\ba}},
\end{equation}
which can be directly used to show stability of the adaptation law
\begin{equation*}
    \dot{\ha} = -\left[\nabla^2\psi(\ha)\right]^{-1}\bY(\bx, t)^\T s(\bx, \bx_d(t)).
\end{equation*}
This procedure replaces the gain matrix $\bP$ in the adaptation law by the $\ha$-dependent \emph{inverse Hessian} $\left[\nabla^2\psi(\ha)\right]^{-1}$ of the strongly convex function $\psi$. In essence, this amounts to the adaptive control equivalent of the \emph{natural gradient} algorithm in the sense of~\citet{amari}, so that the resulting adaptation law respects the underlying Riemannian geometry captured by the Hessian metric $\nabla^2\psi(\hat{\ba})$. The standard adaptation law $\dot{\ha} = -\bP\bY(\bx, t)^\T s(\bx, \bx_d(t))$ uses the constant metric $\bP^{-1}$, which in turn explains the appearance of $\bP$ in the natural gradient-like system. 

The choice of $\psi$ enables the design of adaptation algorithms that respect physical Riemannian constraints~\citep{wensing_ty} obeyed by the true parameters, as in the estimation of mass properties in robotics~\citep{wensing_lmi}. Similarly, it allows one to introduce a priori bounds on parameter estimates without resorting to parameter projection techniques by choosing $\psi$ to be a $\log$-barrier function~\citep{geodesic}. In Section~\ref{ssec:imp_reg}, we further prove that the choice of $\psi$ implicitly regularizes the learned system model.

\begin{rmk}
It is straightforward to generalize velocity gradient algorithms to the non-Euclidean setting. The adaptation law
\begin{equation*}
    \dot{\ha} = -\gamma\left(\nabla^2\psi(\ha)\right)^{-1}\nabla_{\ha}\dot{Q}(\bx, \ha, t)
\end{equation*}
with $\gamma > 0$ a positive learning rate and $\psi:\mathbb{R}^p\rightarrow\mathbb{R}$ a strongly convex function obtains matching guarantees as \eqref{eqn:sg_alg}. The proof follows by consideration of the Lyapunov-like function $V(\bx, \ha, t) = Q(\bx, t) + \bregd{\ba}{\ha}$.
Similarly, the Lyapunov-like function $V(\bx, \ha, t) = \int_0^tR(\bx(t'), \ha(t'), t')dt' + \bregd{\ba}{\ha} + \int_t^{\infty}\mu(t')dt'$ provides a guarantee for the integral velocity gradient algorithm
\begin{equation*}
    \dot{\ha} = -\gamma\left(\nabla^2\psi(\ha)\right)^{-1}\nabla_{\ha}R(\bx, \ha, t).
\end{equation*}
In the same manner, a non-Euclidean metric can be incorporated into Tyukin's algorithm \eqref{eqn:tyukin_alg}.
\end{rmk}

\section{Natural gradient adaptation and implicit regularization}
\label{sec:nat}
In this section, we show that the ``natural'' adaptation algorithms of the previous section implicitly regularize the learned system model.

\subsection{Implicit regularization and adaptive control}
\label{ssec:imp_reg}
As exemplified by deep networks, modern machine learning utilizes highly over-parameterized models capable of interpolating the training data while still generalizing well to unseen examples. The classical principles of statistical learning theory emphasize a trade-off between generalization performance and model capacity, and predict that in the highly over-parameterized regime, generalization performance should be poor due to a tendency of the model to fit noise in the training data. Nevertheless, empirical evidence indicates that deep networks and other modern machine learning models do not obey classical statistical learning wisdom~\citep{belkin}, and can even generalize with significant label noise~\citep{und_dl}. 

More surprisingly, the ability to simultaneously fit label noise in the training data yet generalize to new examples has been observed in over-parameterized linear models~\citep{barlett, sahai}. A possible explanation for the ability of highly over-parameterized models to generalize when optimized using simple first-order algorithms is their \textit{implicit bias} -- that is, the tendency of an algorithm to converge to a particular (e.g. minimum norm) solution when there are many that interpolate the training data~\citep{gunasekar_1, gunasekar_2, gunasekar_3, azizan_1, azizan_2}.

In adaptive control, the existence of many possible parameter vectors $\ha$ that lead to zero tracking error is not unique to the over-parameterized case. Unless the trajectory is \textit{persistently exciting}\footnote{A typical characterization of persistent excitation in the linearly parameterized setting is that there exists some $\delta > 0$ and some $T > 0$ such that for all $t, \int_t^{t+T} \bY(\bx(\tau), \tau)^\T\bY(\bx(\tau), \tau)d\tau \geq \delta \bI$.}~\citep{annaswammy_book, slot_li_book}, it is well-known that $\ha$ will not converge to the true parameters $\ba$ in general. Depending on the complexity of the trajectory, there may even be many solutions in the \textit{under-parameterized} case where $\dim(\ha) < \dim(\ba)$. To achieve perfect tracking, the adaptation algorithm need only fit the unknown dynamics $f(\bx(t), \ba, t)$ along the trajectory rather than the whole state space, so that the \textit{effective} number of parameters may be less than $\dim(\ba)$. 

The space of possible solutions in the linearly parameterized case is described by the time-dependent null space of $\bY(\bx(t), t)$: when $\bx(t)\rightarrow\bx_d(t)$, we can conclude that $\bY(\bx_d(t), t)\tilde{\ba}(t) = 0$, and hence that $\ha(t) = \ba + \hat{\mathbf{n}}(t)$ where $\bY(\bx_d(t), t)\hat{\mathbf{n}}(t) = 0$ for all $t$. In the over-parameterized case when $\dim(\ha) > \dim(\ba)$, the set of parameters that achieve zero tracking error is not unique regardless of the complexity of the desired trajectory. 

We now categorize the implicit bias of the natural adaptive laws of the previous section. The proof technique exploits a continuous-time generalization of a recent technique due to~\citet{azizan_1} and~\citet{azizan_2}. This proof of implicit regularization exactly categorizes the parameter vectors found by adaptive control algorithms. By taking $\psi(\cdot) = \frac{1}{2}\norm{\cdot}_2^2$, it recovers a result for the standard Euclidean setting. Towards this end, define the interpolating set
\begin{equation}
    \label{set:A}
    \mathcal{A} = \{\bthet | \bff(\bx(t), \bthet, t) = \bff(\bx(t), \ba, t) \ \ \forall t\}.
\end{equation}
(\ref{set:A}) contains only parameters that interpolate the unknown dynamics $\bff(\bx(t), \ba, t)$ along the entire trajectory. We are now in a position to state the following proposition.
\begin{prop}
\label{prop:implicit_reg}
Consider the natural velocity gradient algorithm
\begin{equation}
    \label{eqn:natural_sl}
    \dot{\ha} = -\left[\nabla^2\psi(\ha)\right]^{-1}\nabla_{\ha}\dot{Q}(\bx, \ha, t),
\end{equation}
where $\psi:\mathbb{R}^p\rightarrow\mathbb{R}$ is a strongly convex function. Suppose that the unknown dynamics $f(\bx, \ba, t) = \bY(\bx, t)\ba$ is linearly parameterized. Assume that $\ha(t) \rightarrow \ha_\infty \in \mathcal{A}$. Then $\ha_\infty = \arg\min_{\bthet \in \mathcal{A}}\bregd{\bthet}{\ha(0)}$.
In particular, if $\ha(0) = \arg\min_{\bthet\in\mathbb{R}^p}\psi(\bthet)$, then 
\begin{equation}
    \label{eqn:imp_reg}
    \ha_\infty = \arg\min_{\bthet \in \mathcal{A}} \psi(\bthet).
\end{equation}
\end{prop}
The proof is given in Appendix~\ref{app2:prop:implicit_reg}. (\ref{eqn:imp_reg}) captures the implicit regularization imposed by the adaptation algorithm (\ref{eqn:natural_sl}): out of all possible interpolating parameters, it chooses the $\ha$ that achieves the minimum value of $\psi$.

\begin{rmk}
The assumptions of Proposition~\ref{prop:implicit_reg} provide a setting where theoretical insight may be gained into the implicit regularization of adaptive control algorithms, but they are stronger than needed. In general, the parameters $\ha(t)$ found by an adaptive controller need not converge to a constant despite the fact that $\dot{\ha} \rightarrow 0$\footnote{Lyapunov function arguments based on a parameter estimation error term generally lead to the conclusion that the parameters remain bounded. It is also often the case that $\dot{\ha} \rightarrow 0$ as it is driven by an error term. Nevertheless, $\ha$ may stay time-varying for all $t$. For instance, the function $f(t) = \sin(\sqrt{t})$ remains bounded and time-varying for all $t$, but has $f'(t) = \frac{1}{2\sqrt{t}}\cos(\sqrt{t})\rightarrow 0$. A sufficient condition (by Barbalat's Lemma [Lemma~\ref{lem:barbalat}]) for convergence to a constant $\ha_\infty$ is that $\left(\ha-\ha_\infty\right)\in\LL_p$ for some $p$.}. Similarly, even in the case that the parameters converge, it is not strictly required that $\bY(\bx(t), t)\ha_\infty = \bff(\bx(t), \ba, t)$ along the entire trajectory, as this condition is satisfied asymptotically. Numerical simulations in Section~\ref{sec:sim} will demonstrate the implicit regularization of parameters $\ha(t)$ found by adaptive control along the entire trajectory.$\qeddef$
\end{rmk}

We may make a similar claim in the nonlinearly parameterized setting captured by Assumption~\ref{assmp:tyukin}. To do so, we require an additional assumption.
\begin{assumption}
\label{assmp:invert}
For any vector of parameters $\bthet$ and the true parameters $\ba$, $f(\bx(t), \bthet, t) = f(\bx(t), \ba, t)$ implies that $\balf(\bx(t), t)^\T\bthet = \balf(\bx(t), t)^\T\ba$.$\qeddef$
\end{assumption}
For the class of systems (\ref{eqn:single_layer}), a sufficient condition for Assumption~\ref{assmp:invert} is that $\lambda(\bx(t), t) \neq 0$ and that the map $\bphi(\bx, t)^\T\ba \rightarrow f_m(\bx(t), \bphi(\bx, t)^\T\ba)$ is invertible at every $t$. We may now state our implicit regularization result for nonlinearly parameterized systems.
\begin{prop}
\label{prop:tyukin_reg}
Consider the adaptation algorithm
\begin{equation}
    \label{eqn:tyukin_natural}
    \dot{\ha} = -\left[\nabla^2\psi(\ha)\right]^{-1}\tilde{f}(\bx, \ha, \ba, t)\balf(\bx, t)
\end{equation}
under Assumptions~\ref{assmp:tyukin} \& \ref{assmp:invert}. Assume $\ha(t)\rightarrow\ha_\infty\in\mathcal{A}$. Then $\ha_\infty = \arg\min_{\bthet \in \mathcal{A}}\bregd{\bthet}{\ha(0)}$.
\end{prop}
The proof is similar to the linearly parameterized case and is given in Appendix~\ref{app2:tyukin_reg}.
\begin{rmk}
\label{rmk:tyukin_imp}
Algorithm (\ref{eqn:tyukin_natural}) must be implemented in PI form due to the appearance of $\tilde{f}(\bx, \ha, \ba, t)$, but the use of the PI form~(\ref{eqn:tyukin_pi_1})-(\ref{eqn:tyukin_pi_4}) in $\ha$ is complicated by the presence of the inverse Hessian of $\psi$. To implement (\ref{eqn:tyukin_alg}), the Euclidean variant may be implemented through the usual PI form for an auxiliary variable $\dot{\hv} = -\tilde{f}(\bx, \ha, \ba, t)\balf(\bx, t)$, and then the controller parameters may be computed by inverting the gradient of $\psi$, $\ha(t) = \left(\nabla\psi^{-1}\right)(\hv(t))$. Concretely, the identity $\frac{d}{dt}\nabla\psi(\ha) = \nabla^2\psi(\ha)\dot{\ha}$ shows that $\dot{\ha} = -\left(\nabla^2\psi(\ha)\right)^{-1}\tilde{f}(\bx, \ha, \ba, t)\balf(\bx, t)$ is equivalent to $\frac{d}{dt}\nabla\psi(\ha) = -\tilde{f}(\bx, \ha, \ba, t)\balf(\bx, t)$. The auxiliary variable $\hv$ can then be identified with $\nabla\psi(\ha)$.$\qeddef$
\end{rmk}
Propositions~\ref{prop:implicit_reg} \& \ref{prop:tyukin_reg} demonstrate the implicit bias of adaptive control algorithms. In doing so, they identify an additional design choice that may be exploited for the application of interest. Proposition~\ref{prop:implicit_reg} implies that the Slotine and Li controller, when initialized with the parameters at $\ha(0) = \mathbf{0}$, finds the interpolating parameter vector of minimum $l_2$ norm. Other norms, such as the $l_p$ norm with $p > 1$ will find alternative parameter vectors that may have desirable properties such as sparsity\footnote{Because the $l_1$ norm is not strongly convex, it may be replaced with a suitable approximation such as the $l_{1+\epsilon}$ norm for $\epsilon > 0$ and small~\citep{azizan_1, azizan_2}.}. The usual Euclidean geometry-based adaptive laws can be seen as a form of ridge regression, while imposing $l_1$, $l_2$ and $l_1$ simultaneously, or $l_p$ regularization through the choice of $\psi$ can be seen as the adaptive control equivalents of LASSO~\citep{LASSO} or compressed sensing, elastic net, and bridge regression respectively. In the context of adaptive control, this notion of implicit regularization is particularly interesting, as typical regularization terms such as $l_1$ and $l_2$ penalties cannot in general be added to the adaptation law directly without affecting stability and performance of the algorithm.

\subsection{Non-Euclidean measure of the tracking error}
The usual Lyapunov function incorporates a Euclidean tracking error term given by $\frac{1}{2}s(\bx, \bx_d(t))^2$. In a similar vein to the derivation of the ``natural'' adaptive laws, for any strongly convex function $\phi:\mathbb{R}\rightarrow\mathbb{R}$, we may instead replace this tracking error term by the Bregman divergence $\bregdphi{0}{s(\bx, \bx_d(t))}$. This quantity has time derivative
\begin{equation*}
    \frac{d}{dt}\bregdphi{0}{s(\bx, \bx_d(t))} = -\eta s(\bx, \bx_d(t))^2 \phi''(s(\bx, \bx_d(t))) + \phi''(s(\bx, \bx_d(t))) \bY(\bx, t)\tilde{\ba}
\end{equation*} 
in the linearly parameterized case. Because $\phi''(s) \geq 0$ for all $s$ for $\phi$ strongly convex, it is simple to see that this modification to the usual Lyapunov function in combination with a non-Euclidean measure of the parameter estimation error leads to a family of stable adaptation laws parameterized by $\phi$ and $\psi$ of the form $\dot{\ha} = -\left[\nabla^2\psi(\ha)\right]^{-1}\bY(\bx, t)^\T\phi''(s(\bx, \bx_d(t)))s(\bx, \bx_d(t))$. This shows, for example, that any odd power of $s$ may be stably employed in the adaptation law by taking $\phi(s) = s^p$ for some even power $p$. Surprisingly, more exotic adaptation laws such as $\dot{\ha} = -\left[\nabla^2\psi(\ha)\right]^{-1}\bY(\bx, t)^\T e^{\lambda |s(\bx, \bx_d(t))|}s(\bx, \bx_d(t))$ for $\lambda > 0$ may also be used.

In the single-input case, these laws could be more simply obtained by replacing the $\frac{1}{2}s(\bx, \bx_d(t))^2$ term in the Lyapunov-like function with a term of the form $g(s(\bx, \bx_d(t))$ where $g'(s)s \geq 0$ and $g'(s)$ is known. In the multi-input case, these two approaches differ. Taking $g$ to be a strongly convex function with minimum attained at $s=0$ and a known gradient, the Lyapunov-like function 
\begin{equation*}
    V(\bx, \hat{\ba}, t) = g\left(\bs\left(\bx, \bx_d\left(t\right)\right)\right) - \inf_\bs g\left(\bs\left(\bx, \bx_d\left(t\right)\right)\right) + \bregd{\ba}{\ha}
\end{equation*} 
shows that the adaptation law
\begin{equation*}
    \dot{\ha} = -\left[\nabla^2\psi(\ha)\right]^{-1}\bY(\bx, t)^\T\nabla g(\bs(\bx, \bx_d(t)))
\end{equation*}
is globally convergent. On the other hand, the Lyapunov-like function 
\begin{equation*}
    V(\bx, \ha, t) = \bregdphi{0}{\bs(\bx, \bx_d(t))} + \bregd{\ba}{\ha}
\end{equation*}
shows that the distinct adaptation law
\begin{equation*}
    \dot{\ha} = -\left[\nabla^2\psi(\ha)\right]^{-1}\bY(\bx, t)^\T\left[\nabla^2\phi(\mathbf{s}(\bx, \bx_d(t)))\right]\mathbf{s}(\bx, \bx_d(t))
\end{equation*} 
is also globally convergent.

\section{Adaptive dynamics prediction, control, and observer design}
\label{sec:obs_predict}
In this section, we demonstrate how the new non-Euclidean adaptation laws of Section~\ref{ssec:imp_reg} may be used for regularized dynamics prediction, regularized adaptive control, and regularized observer design.
\subsection{Regularized adaptive dynamics prediction}
\label{ssec:dyn_predict}
Similar to direct adaptive control, online parameter estimation may also be used within an observer-like framework for dynamics prediction. This enables, for instance, the design of provably stable online learning rules for the weights of a recurrent neural network in the dynamics approximation context~\citep{force, deneve, gerst}. Consider a nonlinear dynamical system
\begin{equation}
    \label{eqn:dyn_predict_gen}
    \dot{\bx} = \bff(\bx) + \bc(t),
\end{equation}
where $\bx\in\mathbb{R}^n$ is the system state, $\bff : \mathbb{R}^n\rightarrow\mathbb{R}^n$ is the system dynamics and $\bc : \mathbb{R}_+ \rightarrow \mathbb{R}^n$ is a system input. Define the observer-like system
\begin{equation*}
    \dot{\hat{\bx}} =  \bY(\hat{\bx})\hat{\ba} + \bg\left(\hat{\bx}, \bx(t)\right) + \bc(t),
\end{equation*}
where $\bY:\mathbb{R}^n \rightarrow \mathbb{R}^{n\times p}$, $\hat{\ba} \in \mathbb{R}^p$, and $\bg\left(\hat{\bx}, \bx(t)\right)$ is a feedback term incorporating the measurement $\bx(t)$ which satisfies $\bg(\bx, \bx) = 0$ for all $\bx\in\mathbb{R}^n$. Assume that there exists a fixed parameter vector $\ba$ such that for all $\bx \in \mathbb{R}^n$, $\bY(\bx)\ba = \bff(\bx)$. Then we may state the following proposition.
\begin{prop}
\label{prop:dyn_predict_gen}
Consider the observer-like system \eqref{eqn:dyn_predict_gen} along with adaptation law
\begin{equation}
    \label{eqn:adapt_dyn_predict}
    \dot{\ha} = -\gamma\left[\nabla^2\psi(\ha)\right]^{-1}\bY(\hat{\bx})^\T\bGam\left(\hat{\bx} - \bx(t)\right)
\end{equation}
for $\gamma > 0$ a fixed learning rate, $\Gamma \in \mathbb{R}^{n\times n}$ a symmetric positive definite matrix, and $\psi$ strongly convex. Let $\frac{\p \bg}{\p \bx}(\bx_1, \bx_2)$ denote the derivative of $\bg$ with respect to its first argument evaluated at $(\bx_1, \bx_2)$. Then $\hat{\bx}(t)\rightarrow\bx(t)$ if $\bff(\bx) + \bg\left(\bx, \bu(t)\right)$ is contracting in the metric $\bGam$ for any external input $\bu(t)$~\citep{contraction, modular}, i.e., if
\begin{equation*}
    \left(\frac{\p \bff}{\p \bx}(\bx) + \frac{\p \bg}{\p \bx}\left(\bx, \bu(t)\right)\right)^\T\bGam + \bGam\left(\frac{\p \bff}{\p \bx}(\bx) + \frac{\p \bg}{\p \bx}\left(\bx, \bu(t)\right)\right) \leq -2\lambda\bGam
\end{equation*}
uniformly in $\bx$ for some contraction rate $\lambda > 0$.
\end{prop}
The proof is given in Appendix~\ref{app2:dyn_predict_gen}. The implicit regularization results of Section~\ref{ssec:imp_reg} show that this framework provides a technique for provably regularizing learned predictive dynamics models without negatively impacting stability or convergence of the combined error and parameter estimation systems.

Proposition~\ref{prop:dyn_predict_gen} represents a separation theorem for adaptive dynamics prediction. If a dynamics predictor can be designed under the assumption that the true system dynamics is known -- e.g., if bounds on $\frac{\p \bff(\bx)}{\p \bx}$ are available -- then the same dynamics predictor can be made adaptive by incorporating the skew-symmetric law (\ref{eqn:adapt_dyn_predict}). Convergence properties then only depend on the nominal system with control feedback and are independent of the parameter estimator, as shown by the conditions for contraction. 

\begin{rmk}
In principle, these simple results could be made more general using the techniques developed in~\citet{brett}, or could be performed in a latent space computed via a nonlinear dimensionality reduction technique such as an autoencoder~\citep{brunton}. This could also extend to adaptive control, for example in robot control applications where an adaptive controller could be designed in a latent space computed from raw pixels via a neural network.
\end{rmk}

\subsection{Regularized dynamics prediction for Hamiltonian systems}
\label{ssec:ham}
If the underlying system is known to have a specific structure, this structure may be leveraged in a principled way to adaptively compute models for dynamics prediction~\citep{sanner_neco}. For example, large classes of physical systems can be well-described via Hamiltonian dynamics,
\begin{align*}
    \dot{\bp} &= -\nabla_{\bq}\HH(\bp, \bq),\\
    \dot{\bq} &= \nabla_{\bp}\HH(\bp, \bq),
\end{align*}
where $\HH(\bp, \bq)$ is the system Hamiltonian, $\bp$ is the generalized momentum, and $\bq$ is the generalized coordinate conjugate to $\bp$. This structure was exploited in recent work
by~\citet{bottou} via direct estimation of the system Hamiltonian with a deep feedforward network in combination with symplectic integration of the resulting dynamics. In a similar spirit, rather than parameterizing the system dynamics as in Section~\ref{ssec:dyn_predict}, we now estimate the scalar Hamiltonian itself as a linear expansion in a set of known nonlinear basis functions $\{Y_k\}$.
\begin{prop}
\label{prop:ham_estimate}
Consider direct estimation of a system's scalar Hamiltonian
\begin{equation*}
    \widehat{\HH}(\hat{\ba}, \bp, \bq) = \sum_k \hat{a}_k Y_k(\bp, \bq) = \bY(\bp, \bq)\ha,
\end{equation*}
where $\bY(\bp, \bq) \in \mathbb{R}^{1\times p}$. Assume that there exists some true parameter vector $\ba \in\mathbb{R}^p$ such that $\HH = \bY(\bp, \bq)\ba$. Consider the induced dynamics prediction model 
\begin{align}
    \label{eqn:ham_pred_p}
    \dot{\hp} &= -\left(\nabla_{\hq}\bY(\hp, \hq)\right)\ha + k_p\left(\bp(t) - \hp\right),\\
    \label{eqn:ham_pred_q}
    \dot{\hq} &= \left(\nabla_{\hp}\bY(\hp, \hq)\right)\ha + k_q\left(\bq(t) - \hq\right).
\end{align}
for $k_p > 0$, $k_q > 0$ feedback gains, along with the adaptation law
\begin{equation*}
    \dot{\ha} = \gamma\left[\nabla^2\psi(\ha)\right]^{-1}\left(\left(\nabla_{\hq}\bY(\hp, \hq)\right)^\T\tilde{\bp}(t) - \left(\nabla_{\hp}\bY(\hp, \hq)\right)^\T\tilde{\bq}(t)\right),
\end{equation*}
with $\gamma > 0$ a positive learning rate and $\psi$ strongly convex. Then $\hat{\bp}(t)\rightarrow \bp(t)$ and $\hat{\bq}(t) \rightarrow \bq(t)$ if 
\begin{align*}
    k_p &> -\frac{1}{2}\lambda_{\min}\left(\nabla_{\bp}\nabla_{\bq}\HH(\bp, \bq) + \nabla_{\bq}\nabla_{\bp}\HH(\bp, \bq)\right),\\
    k_q &> \frac{1}{2}\lambda_{\max}\left(\nabla_{\bp}\nabla_{\bq}\HH(\bp, \bq) + \nabla_{\bq}\nabla_{\bp}\HH(\bp, \bq)\right),\\
    \lambda_p\lambda_q &> \frac{1}{4}\lambda_{\max}^2\left[\nabla^2_{\bp}\HH(\bp, \bq) - \nabla^2_{\bq}\HH(\bp, \bq)\right],
\end{align*}
where $\lambda_p > 0$ and $\lambda_q > 0$ are given by
\begin{align*}
    \lambda_p &= k_p + \frac{1}{2}\lambda_{\min}\left(\nabla_{\bp}\nabla_{\bq}\HH(\bp, \bq) + \nabla_{\bq}\nabla_{\bp}\HH(\bp, \bq)\right)\\
    \lambda_q &= k_q - \frac{1}{2}\lambda_{\max}\left(\nabla_{\bp}\nabla_{\bq}\HH(\bp, \bq) + \nabla_{\bq}\nabla_{\bp}\HH(\bp, \bq)\right).
\end{align*}
\end{prop}
The proof is given in Appendix~\ref{app2:ham_estimate}. The above predictor employs parameter sharing between both dynamics due to the direct estimation of the system Hamiltonian. The basis functions for the individual dynamics reflect the symplectic structure of the true dynamics, as they are given by partial derivatives of the basis functions for the Hamiltonian.

Rather than a general Hamiltonian $\HH = \HH(\bp, \bq)$, it is common to have a separable Hamiltonian structure with $T(\bp)$ denoting the kinetic energy and $U(\bq)$ the potential energy,
\begin{equation*}
    \HH(\bp, \bq) = T(\bp) + U(\bq).
\end{equation*}
The conditions for successful estimation then simplify to
\begin{equation}
    \label{eqn:contr_sep}
    k_qk_p > \frac{1}{4}\lambda_{\max}^2\left(\nabla_{\hp}^2T(\hp) - \nabla_{\hq}^2 U(\hq)\right).
\end{equation}
The results of Section~\ref{ssec:imp_reg} show that the choice of $\psi$ may be used to regularize the estimate of the Hamiltonian, and in turn, the dynamics. This may be used, for instance, for parsimonious Hamiltonian estimation through the combination of a rich set of physically motivated scalar basis functions and a sparse representation obtained via $l_1$ regularization, similar to~\citet{brunton}. Further results that exploit the structure of separable Hamiltonians through independent estimation of the kinetic and potential energies are presented in Appendix~\ref{app2:ham}.

\subsection{Regularized adaptive control for Lagrangian systems}
\label{ssec:lag}
A similar methodology can be applied to parameterize a scalar Lagrangian rather than Hamiltonian, leading to a second order differential equation with inertia matrix, centripetal and Coriolis forces, and potential energy parameterized by a shared set of weights. As we now show, generalizing the derivation of the Slotine and Li robot controller~\citep{slot_li_robot} to this setting allows for stable adaptive control of Lagrangian systems by direct estimation of the Lagrangian itself. Consider the Lagrangian
\begin{equation*}
    \LL = \frac{1}{2}\dot{\bq}^\T\bH(\bq)\dot{\bq} - U(\bq),
\end{equation*}
with $\bH(\bq)$ an unknown inertia matrix and $U(\bq)$ an unknown potential. Assume that the inertia matrix and scalar potential are given exactly by an expansion in physically- or mathematically-motivated basis functions. That is, for a set of positive definite matrices $\bM^l > 0$ and scalar functions $\phi^l$,
\begin{align*}
    \bH(\bq) &= \sum_{l=1}^{p^{(K)}} a_l^{(K)}\bM^{l}(\bq),\\
    U(\bq) &= \sum_{l=1}^{p^{(P)}} a_l^{(P)}\phi^l(\bq),
\end{align*}
where superscript $(K)$ and $(P)$ denote kinetic and potential, respectively, and the vectors $\ba^{(K)}$ and $\ba^{(P)}$ are unknown. We may then state the following proposition.
\begin{prop}
\label{prop:lagr_est}
Consider direct estimation of a system Lagrangian through expansion of the inertia matrix and potential energy in known basis functions
\begin{align*}
    \widehat{\bH}(\bq) &= \sum_l^{p^{(K)}} \hat{a}_l^{(K)}\bM^{l}(\bq),\\
    \widehat{U}(\bq) &= \sum_l^{p^{(P)}} \hat{a}_l^{(P)}\phi^l(\bq),\\
    \widehat{\LL}(\bq, \dot{\bq}) &= \frac{1}{2}\dot{\bq}^\T \widehat{\bH}(\bq)\dot{\bq} - \widehat{U}(\bq).
\end{align*}
Consider the induced dynamics predictor formed by the Euler-Lagrange equations 
\begin{equation*}
    \frac{d}{dt}\frac{\partial \widehat{\LL}(\bq, \dot{\bq})}{\partial \dot{\bq}} - \frac{\partial \widehat{\LL}(\bq, \dot{\bq})}{\partial \bq} = \bu(\bq, \dot{\bq}, \bq_d(t), \dot{\bq}_d(t))
\end{equation*}
with $\bu(\bq, \dot{\bq}, \bq_d(t), \dot{\bq}_d(t)) = -\bK\bs(\bq, \dot{\bq}, \bq_d(t), \dot{\bq}_d(t)) + \bY^{(p)}(\bq)\ha^{(P)} + \bY^{(K)}(\bq, \dot{\bq}, \bq_d(t), \dot{\bq}_d(t))\ha^{(K)}$ for $\bq_d(t)$ a desired trajectory, $\bK > 0$ a positive definite feedback gain, $\bs(\bq, \dot{\bq}, \bq_d(t), \dot{\bq}_d(t)) = \left(\frac{d}{dt} + \lambda\right)\left(\bq - \bq_d(t)\right)$, and $\bY^{(P)}(\bq)$ and $\bY^{(K)}(\bq, \dot{\bq}, \bq_d(t), \dot{\bq}_d(t))$ given by
\begin{align*}
    Y^{(P)}_{il}(\bq) &= \frac{\p\phi^l(\bq)}{\p q_i},\\
    Y^{(K)}_{il}(\bq, \dot{\bq}, \bq_d(t), \dot{\bq}_d(t)) &= \sum_{kj} \frac{1}{2}\left[\frac{\p M^l_{ij}(\bq)}{\p q_k} - \left(\frac{\p M^l_{kj}(\bq)}{\p q_i} - \frac{\p M^l_{ki}(\bq)}{\p q_j}\right)\right]\dot{q}_k\dot{q}_{r, j} + \sum_{j}M_{ij}^l(\bq)\ddot{q}_{r, j},
\end{align*}
with $\ddot{\bq}_r = \bq_d - \lambda \left(\bq - \bq_d(t)\right)$. Then, for $\gamma_K > 0, \gamma_P > 0$ positive learning rates and $\psi^{(K)}, \psi^{(P)}$ strongly convex functions, the adaptation laws
\begin{align*}
    \dot{\ha}^{(K)} &= -\gamma_K\nabla^2\psi^{(K)}\left(\ha^{(K)}\right)^{-1}\bY^{(K)}(\bq, \dot{\bq}, \bq_d(t), \dot{\bq}_d(t))^\T\bs,\\
    \dot{\ha}^{(P)} &= -\gamma_P\nabla^2\psi^{(P)}\left(\ha^{(P)}\right)^{-1}\bY^{(P)}(\bq)^\T\bs,
\end{align*}
ensure that $\bq(t) \rightarrow \bq_d(t)$. Above, $\bs = \bs(\bq, \dot{\bq}, \bq_d(t), \dot{\bq}_d(t))$.
\end{prop}
The proof is given in Appendix~\ref{app2:lagr_est}. As in Section~\ref{ssec:ham}, by using an $l_1$ approximation for $\psi$, this approach may find sparse, interpretable models of the kinetic and potential energies. Estimating the potential energy directly may in some cases lead to simpler parameterizations than estimating the resulting forces.

If more structure in the inertia matrix is known, for example that it depends only on a few unknown parameters, it may still be approximated using the usual Slotine and Li controller. The external forces can then be estimated by directly estimating the corresponding potential that generates them.

\subsection{Regularized adaptive observer design}
In many physical and engineering systems, only a low-dimensional output of the system $\by(\bx) \in \mathbb{R}^m$ is available for measurement. Assuming that $\by(\bx) = \bC\bx$ is a rank-$m$ linear readout for some known matrix $\bC \in \mathbb{R}^{m\times n}$ with $m < n$, we now show that the tools of the previous two subsections can be used to design regularized adaptive observers for the full system state. Assume that the true system dynamics satisfies
\begin{equation*}
    \dot{\bx} = \bff(\bx) + \bc(t) = \bY(\by(\bx))\ba + \bc(t),
\end{equation*}
with $\ba\in\mathbb{R}^p$ a vector of unknown parameters, and where the known regressor matrix $\bY\in\mathbb{R}^{n\times p}$ only depends on the system output $\by(\bx)$. We may now state the following proposition.
\begin{prop}
\label{prop:adaptive_observer}
Consider the adaptive observer
\begin{align*}
    \dot{\hat{\bx}} &= \bY(\hat{\by})\ha + \bc(t) + \bg(\hat{\by}, \by(t)),\\
    \dot{\ha} &= -\gamma\nabla^2\psi(\ha)^{-1}\bY(\hat{\by})^\T\bC^\T\bGam\tilde{\by}(t),
\end{align*}
with $\gamma > 0$ a positive learning rate, $\hat{\by} = \by(\hat{\bx})$, $\psi:\mathbb{R}^p\rightarrow\mathbb{R}$ a strongly convex potential function, $\bGam$ a positive definite matrix, and $\bg:\mathbb{R}^m\times\mathbb{R}^m\rightarrow\mathbb{R}^n$ an arbitrary function satisfying $\bg(\by, \by) = \mathbf{0}$ for all $\by\in\mathbb{R}^m$. Let $\frac{\p \bY\ba}{\p \by}(\by)$ denote the derivative of $\bY(\by)\ba$ with respect to its argument evaluated at $\by$ and let $\frac{\partial \bg}{\p \by}(\by_1, \by_2)$ denote the derivative of $\bg$ with respect to its first argument evaluated at $(\by_1, \by_2)$. Then $\hat{\bx}(t)\rightarrow \bx(t)$ if
\begin{equation*}
    \bGam\bC\left(\frac{\p \bY\ba}{\p \by}(\by) + \frac{\p \bg}{\p \by}\left(\by, \bu(t)\right)\right) + \left(\frac{\p \bY\ba}{\p \by}(\by) + \frac{\p \bg}{\p \by}\left(\by, \bu(t)\right)\right)^\T\bC^\T\bGam \leq -\lambda\bGam
\end{equation*}
for some $\lambda > 0$ and any external input $\bu(t)$.
\end{prop}
The proof is given in Appendix~\ref{app2:adaptive_observer}. The requirement of the theorem is equivalent to contraction of the unknown output dynamics
\begin{equation*}
    \dot{\by} = \bC\bY(\by)\ba + \bC\bg(\by, \bu(t)) + \bC\bc(t)
\end{equation*}
in the metric $\bGam$ for any external input $\bu(t)$. A natural choice of $\bg(\hat{\by}, \by(t))$ to satisfy this condition with $\bGam = \bI$ is $\bg(\hat{\by}, \by(t)) = -k\bC^\T\left(\hat{\by} - \by(t)\right)$ for some $k > 0$.

As in Section~\ref{ssec:dyn_predict}, Proposition~\ref{prop:adaptive_observer} demonstrates a separation theorem for adaptive observer design. If an observer can be designed for the true system with unknown parameters, then the same observer can be made adaptive by incorporating the adaptation law presented in this section. Convergence properties then depend only on the true system with feedback, and are independent of the parameter estimator. The results of Section~\ref{ssec:imp_reg} show that the choice of $\psi$ can be used to regularize the observer model while maintaining provable reconstruction of the full system state.

\subsection{Regularized dynamics prediction for recurrent neural networks}
\label{ssec:rec_net}
Consider the continuous-time recurrent neural network model
\begin{equation}
    \label{eqn:rec_net}
    \tau\dot{\bx} = -\bx + \bsig\left(\bThet\bx\right) 
\end{equation}
with $\bx\in\mathbb{R}^n$ a vector of neuron firing rates, $\bThet\in\mathbb{R}^{n\times n}$ the synaptic weights, $\bsig\left(\bThet\bx\right)$ the post-synaptic potentials, and $\tau > 0$ a relaxation timescale. Let $\bsig(\cdot)$ be an elementwise Lipschitz and monotonic activation function, i.e., 
\begin{align*}
    \bsig(\bx)_i &= \sigma_i(x_i),\\
    |\sigma_i(x) - \sigma_i(y)| &\leq L_i|x - y|,\\ 
    (x-y)\left(\sigma_i(x) - \sigma_i(y)\right) &\geq 0.
\end{align*}
These requirements are satisfied by common activation functions such as the ReLU, softplus, $\tanh$, and sigmoid. We may now state the following proposition.
\begin{prop}
\label{prop:rec_net}
Consider the regularized adaptive dynamics predictor for (\ref{eqn:rec_net}),
\begin{align}
    \label{eqn:rec_net_predict}
    \tau\dot{\hat{\bx}} &= -\hat{\bx} + \bsig\left(\widehat{\bThet}\bx(t)\right) + k\left(\bx(t) - \hat{\bx}\right),\\
    \dot{\widehat{\bThet}} &= -\gamma\left(\nabla^2\psi\left(\widehat{\bThet}\right)\right)^{-1}\left(\bsig\left(\widehat{\bThet}\bx(t)\right) - \bsig\left(\bThet\bx(t)\right)\right)\bx(t)^\T,
    \label{eqn:rec_net_adaptive}
\end{align}
with $\psi$ a strongly convex function on $n\times n$ matrices or vectors in $\mathbb{R}^{n^2}$ and $\gamma > 0$ a positive gain.
\end{prop}
The proof is given in Appendix~\ref{app2:rec_net}. In (\ref{eqn:rec_net_predict}), the true vector of firing rates $\bx$ is used underneath the application of $\bsig(\cdot)$ in the $\dot{\hat{\bx}}$ dynamics. This approach could be used, for example, for identifying regularized low-dimensional models in computational neuroscience. Our results are similar to those of~\citet{foster2020learning}, but handle a mirror descent or natural gradient extension valid in the continuous-time deterministic setting.

The adaptation law (\ref{eqn:rec_net_adaptive}) cannot be implemented directly through a PI form. However, it can be well-approximated, for example by the PI construction
\begin{align*}
    \dot{\bar{\bx}} &= \lambda \left(\bx(t) - \bar{\bx}\right),\\
    \nabla\psi\left(\widehat{\bThet}\right) &= \gamma\left(\overline{\bThet} - \tilde{\bx}(t)\bar{\bx}^\T\right),\\
    \dot{\overline{\bThet}} &= -\left(k+1\right)\tilde{\bx}(t)\bar{\bx}^\T + \lambda\tilde{\bx}(t)\left(\bx(t) - \bar{\bx}\right)^\T,
\end{align*}
for $\lambda > 0$ a positive gain ensuring $\bar{\bx} \approx \bx$.

\section{The Bregman Lagrangian and accelerated optimization algorithms}
\label{ssec:breg}
In~\citet{ashia_1}, the \textit{Bregman Lagrangian} was shown to generate a suite of accelerated optimization algorithms in continuous-time by appealing to the Euler Lagrange equations through the principle of least action. In its original form, the Bregman Lagrangian is given by
\begin{equation}
    \mathcal{L}(\bx, \dot{\bx}, t) = e^{\overline{\alpha}(t) + \overline{\gamma}(t)}\left(\bregd{\bx + e^{-\overline{\alpha}(t)}\dot{\bx}}{\bx} - e^{\overline{\beta}(t)}f(\bx)\right).
    \label{eqn:breg_lag}
\end{equation}
In (\ref{eqn:breg_lag}), $f(\bx)$ is the loss function to be optimized and $\psi(\bx)$ is a strongly convex function. We will take $\psi(\cdot) = \frac{1}{2}\Vert\cdot\Vert_2^2$ in Section~\ref{sec:main}, and will consider extensions to arbitrary $\psi$ in Section~\ref{sec:ho_nat}.

The quantities $\overline{\alpha}:\mathbb{R}_+ \rightarrow \mathbb{R}, \overline{\beta}:\mathbb{R}_+ \rightarrow \mathbb{R}$, and $\overline{\gamma}:\mathbb{R}_+ \rightarrow \mathbb{R}$ in (\ref{eqn:breg_lag}) are arbitrary time-dependent functions that will ultimately set the damping and learning rates in the second-order Euler Lagrange dynamics. To generate accelerated optimization algorithms, Wibisono, Wilson, and Jordan required two \textit{ideal scaling conditions}: $\dot{\overline{\beta}}(t) \leq e^{\overline{\alpha}(t)}$ and  $\dot{\overline{\gamma}}(t) = e^{\overline{\alpha}(t)}$. These conditions originate from the Euler Lagrange equations, where the second is used to eliminate an unwanted term, and a Lyapunov argument, where the first is used to ensure decrease of a chosen Lyapunov function.

\citet{gaudio_1} recently utilized the Bregman Lagrangian to derive a momentum-like adaptive control algorithm. To do so, they defined $\overline{\alpha}(t) = \log(\beta \sN(t))$, $\overline{\beta}(t) = \log\left(\frac{\gamma}{\beta \sN(t)}\right)$, and $\overline{\gamma}(t) = \int_0^t e^{\overline{\alpha}(t')}dt'$\footnote{Note that these conditions validate the second ideal scaling condition but not the first. As mentioned above, the first ideal scaling condition is required only by the choice of Lyapunov function in the original work, which was used to derive convergence rates for optimization algorithms~\citep{ashia_1}. In this sense, it is not strictly required for adaptive control.}. Here, $\gamma\geq 0$ and $\beta\geq 0$ are non-negative scalar hyperparameters and $\sN = \sN(t)$ is a time-dependent signal chosen based on the system. With these definitions, choosing the Euclidean norm $\psi(\cdot) = \frac{1}{2}\Vert\cdot\Vert^2$, and modifying the Bregman Lagrangian presented in~\citet{gaudio_1} to the adaptive control framework defined in Section~\ref{sec:adaptive}, (\ref{eqn:breg_lag}) becomes
\begin{equation}
    \mathcal{L}\left(\hat{\ba}, \dot{\hat{\ba}}, t\right) = e^{\int_0^t \beta \sN(t') dt'}\frac{1}{\beta
    \sN(t)}\left(\frac{1}{2}\dot{\hat{\ba}}^\T\dot{\hat{\ba}} - \gamma \beta \sN(t)\frac{d}{dt}\left[\frac{1}{2}s(\bx(t), \bx_d(t))^2\right]\right).
    \label{eqn:breg_adaptive}
\end{equation}
Comparing (\ref{eqn:breg_lag}) and (\ref{eqn:breg_adaptive}), it is clear that the loss function $f(\bx)$ in (\ref{eqn:breg_lag}) has been replaced by $\frac{d}{dt}\frac{1}{2}s(\bx(t), \bx_d(t))^2$ in (\ref{eqn:breg_adaptive}). Following Remark~\ref{rmk:slotine_li}, this is precisely the $\dot{Q}(\bx, \hat{\ba}, t)$ velocity gradient functional that gives rise to the Slotine and Li controller. For (\ref{eqn:breg_adaptive}), the Euler-Lagrange equations lead to the adaptation law
\begin{equation}
    \ddot{\hat{\ba}} + \dot{\hat{\ba}}\left(\beta \sN(t) - \frac{\dot{\sN}(t)}{\sN(t)}\right) = -\gamma\beta \sN(t) \bY(\bx, t)^\T s(\bx, \bx_d(t)).
    \label{eqn:anna}
\end{equation}
(\ref{eqn:anna}) may be understood as a modification of the Slotine and Li adaptive controller to incorporate momentum and time-dependent damping. (\ref{eqn:anna}) may also be re-written as two first-order systems which are useful for proving stability.
\begin{align}
    \label{eqn:vdot_basic}
    \dot{\hat{\bv}} &= - \gamma \bY(\bx, t)^\T s(\bx, \bx_d(t))\\
    \label{eqn:adot_basic}
    \dot{\hat{\ba}} &= \beta \sN(t) \left(\hat{\bv} - \hat{\ba}\right)
\end{align}
The properties of (\ref{eqn:anna}) are summarized in the following proposition.
\begin{prop}
\label{prop:anna}
Consider the higher-order adaptation algorithm (\ref{eqn:anna}) with $\sN(t) = 1 + \mu\Vert\bY(\bx(t), t)\Vert^2$ and $\mu > \frac{\gamma}{\eta\beta}$. Then, all trajectories $(\bx(t), \hat{\bv}(t), \hat{\ba}(t))$ remain bounded, $s(\bx(\cdot), \bx_d(\cdot)) \in \LL_\infty \cap \LL_2$, $\left(\hat{\ba}(\cdot) - \hat{\bv}(\cdot)\right)\in\LL_2$, $s(\bx(t), \bx_d(t)) \rightarrow 0$ and $\bx(t) \rightarrow \bx_d(t)$.
\end{prop}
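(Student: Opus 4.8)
The plan is a Lyapunov argument on the feedback loop formed by the filter $\dot s = -\eta s + \tilde f$ with $\tilde f = \bY\tilde{\ba}$, together with the first-order form (\ref{eqn:vdot_basic})--(\ref{eqn:adot_basic}) of (\ref{eqn:anna}). Write $\tilde{\ba} = \ha - \ba$, $\tilde{\bv} = \hv - \ba$, and introduce the inter-estimator slack $\bw = \ha - \hv = \tilde{\ba} - \tilde{\bv}$, so that $\dot{\tilde{\bv}} = -\gamma\bY^\T s$, $\dot{\bw} = -\beta\sN\bw + \gamma\bY^\T s$, and $\dot s = -\eta s + \bY\tilde{\bv} + \bY\bw$. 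I would work with the Lyapunov-like function
\begin{equation*}
V = \tfrac12 s^2 + \tfrac1{2\gamma}\tilde{\bv}^\T\tilde{\bv} + \tfrac1{2\gamma}\bw^\T\bw ,
\end{equation*}
the essential design choice being that the slack $\bw$ enters with the same $1/\gamma$ weight as $\tilde{\bv}$. A direct computation then cancels the $\tilde{\bv}$-cross-terms and leaves
\begin{equation*}
\dot V = -\eta s^2 + 2\, s\bY\bw - \tfrac{\beta}{\gamma}\bigl(1 + \mu\Vert\bY\Vert^2\bigr)\Vert\bw\Vert^2 .
\end{equation*}

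The crux is to upgrade this from $\dot V \le 0$ to $\dot V \le -c\,(s^2 + \Vert\bw\Vert^2)$ for a fixed $c > 0$. Bounding $s\bY\bw \le |s|\,\Vert\bY\Vert\,\Vert\bw\Vert$, the negative of the right-hand side is the quadratic form in $(|s|,\Vert\bw\Vert)$ with matrix
\begin{equation*}
\bM = \begin{pmatrix} \eta & -\Vert\bY\Vert \\ -\Vert\bY\Vert & \frac{\beta}{\gamma}\bigl(1+\mu\Vert\bY\Vert^2\bigr)\end{pmatrix},
\end{equation*}
whose determinant equals $\frac{\eta\beta}{\gamma} + \bigl(\frac{\eta\beta\mu}{\gamma}-1\bigr)\Vert\bY\Vert^2$. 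The hypothesis $\mu > \frac{\gamma}{\eta\beta}$ makes the coefficient of $\Vert\bY\Vert^2$ strictly positive, so $\det\bM \ge \frac{\eta\beta}{\gamma} > 0$ uniformly in the state; combined with the uniformly positive trace, a one-line estimate ($\lambda_{\min}(\bM) \ge \det\bM/\operatorname{tr}\bM$, then examining the limits $\Vert\bY\Vert\to 0$ and $\Vert\bY\Vert\to\infty$ of this linear-fractional expression) shows $\lambda_{\min}(\bM)$ is bounded below by a positive constant independent of $\bx$ and $t$. This gives the desired $\dot V \le -c\,(s^2 + \Vert\bw\Vert^2)$. Without the lower bound on $\mu$ the form is only sign-indefinite, so this is precisely where the hypothesis is spent.

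The remaining conclusions are routine. Non-negativity and non-increase of $V$ bound $s$, $\tilde{\bv}$, $\bw$, hence $\hv$ and $\ha$; boundedness of $s$ together with the standing assumption that $\bx_d$ is bounded and the exponential stability of the filter (\ref{eqn:s=0}) bound $\bx$ (which also rules out finite escape, so all trajectories are global). Integrating $\dot V \le -c\,(s^2 + \Vert\bw\Vert^2)$ over $[0,\infty)$ gives $c\int_0^\infty(s^2 + \Vert\bw\Vert^2)\,dt \le V(0) < \infty$, so $s \in \LL_2$ and $\ha - \hv = \bw \in \LL_2$; with boundedness, $s \in \LL_\infty \cap \LL_2$. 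Since $\bx$ stays bounded, Assumption~\ref{assmp:bound} keeps $\tilde f$ bounded, so $\dot s = -\eta s + \tilde f$ is bounded and $s$ is uniformly continuous; Barbalat's Lemma (Lemma~\ref{lem:barbalat}) applied to $s \in \LL_2$ yields $s \to 0$. Feeding $s \to 0$ into the error model (\ref{eqn:s_dyn}) and invoking Lemma~\ref{lem:conv} gives $\tilde x \to 0$, i.e. $\bx \to \bx_d$.

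The only genuinely non-mechanical step is the construction of $V$ — specifically the insight that the auxiliary-variable slack $\ha - \hv$ must be penalized with the same weight $1/\gamma$ as the error of the ``fast'' estimate $\hv$, since this is what produces the clean $\dot V$ above — together with the recognition that $\mu > \gamma/(\eta\beta)$ is exactly the threshold rendering the $\Vert\bY\Vert$-dependent quadratic form uniformly negative definite. Everything downstream (the sliding filter, Barbalat, the error-model lemma) is standard.
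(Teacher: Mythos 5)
Your proof is correct and uses the same Lyapunov function $V = \tfrac12 s^2 + \tfrac1{2\gamma}\Vert\tilde{\bv}\Vert^2 + \tfrac1{2\gamma}\Vert\hat{\ba}-\hat{\bv}\Vert^2$ that the paper cites, and the same overall structure (cancellation of the $\tilde{\bv}$ cross-terms, uniform negative-definiteness of the remaining quadratic form in $(|s|,\Vert\ha-\hv\Vert)$ thanks to $\mu > \gamma/(\eta\beta)$, then integration, Barbalat, and the sliding filter). The only cosmetic difference is that you establish the uniform negative-definiteness via the $2\times 2$ matrix $\lambda_{\min}\ge\det/\operatorname{tr}$ estimate, whereas the paper's analogous propositions use an $\epsilon$-completion of the square; these are interchangeable, and your final invocation of Lemma~\ref{lem:conv} is redundant once $s\to 0$ is known, since the stable filter \eqref{eqn:s=0} then gives $\bx\to\bx_d$ directly.
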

The proof follows by consideration of the Lyapunov-like function 
\begin{equation*}
V(\bx, \hat{\ba}, \hat{\bv}, t) = \frac{1}{2}\left(s(\bx, \bx_d(t))^2 + \frac{1}{\gamma}\Vert\tilde{\bv}\Vert^2 + \frac{1}{\gamma}\Vert\hat{\bv} - \hat{\ba}\Vert^2\right).
\end{equation*}
\begin{rmk}
\label{rmk:ham}
The transformation to a system of two first-order equations may seem somewhat \textit{ad-hoc}, but it follows immediately by consideration of the non-Euclidean Bregman Lagrangian (\ref{eqn:breg_lag}). Indeed, it is easy to check that $\hv = \ha + \frac{\dot{\ha}}{\beta\sN(t)}$, which is precisely the adaptive control equivalent of $\bx + e^{-\overline{\alpha}(t)}\dot{\bx}$ in the first argument of $\bregd{\cdot}{\cdot}$ in (\ref{eqn:breg_lag}). The transformation is also readily apparent by use of the \textit{Bregman Hamiltonian}
\begin{equation}
    \mathcal{H}(\hat{\ba}, \bp) = \frac{1}{2}\beta \sN(t) e^{-\overline{\gamma}(t)}\Vert\bp\Vert^2 + \gamma e^{\overline{\gamma}(t)}\left[\frac{d}{dt}\frac{1}{2}s(\bx(t), \bx_d(t))^2\right],
    \label{eqn:breg_ham}
\end{equation}
which, via Hamilton's equations, leads to
\begin{align*}
    \dot{\bp} &= -\frac{\p \mathcal{H}}{\p \hat{\ba}} = - \gamma e^{\overline{\gamma}(t)}\bY(\bx, t)^\T s(\bx, \bx_d(t)),\\
    \dot{\hat{\ba}} &= \frac{\p \mathcal{H}}{\p \bp} = \beta \sN(t) e^{-\overline{\gamma}(t)}\bp.
\end{align*}
Defining $\hat{\bv} = e^{-\overline{\gamma}(t)}\bp + \hat{\ba}$ immediately leads to (\ref{eqn:vdot_basic}) \& (\ref{eqn:adot_basic}). This line of reasoning was recently investigated further by~\citet{gaudio_ham}. As is typical in classical mechanics, the Bregman Hamiltonian may be obtained from a Legendre transform of the Bregman Lagrangian. The Hamiltonian dynamics may be useful for discrete-time algorithm development through application of symplectic discretization techniques~\citep{ashia_2, relativistic, symp_disc}.$\qeddef$
\end{rmk}
\begin{rmk}
It is well known, for example from a passivity interpretation of the Lyapunov-like analysis~\citep{slot_li_book}, that the pure integrator in the standard Slotine and Li adaptation law (\ref{eqn:slotine_li}) can be replaced by any linear positive real transfer function containing a pure integrator. The higher-order algorithms presented in this work are distinct from this approach, as most clearly seen by the state-dependent damping term in (\ref{eqn:anna}).$\qeddef$
\end{rmk}
\begin{rmk}
In~\citet{ashia_1}, the suggested Lyapunov function in the Euclidean setting is $V(\bx, \dot{\bx}, t) = \Vert \bx + e^{-\overline{\alpha}(t)}\dot{\bx} - \bx_*\Vert^2 + e^{\overline{\beta}(t)}f(\bx)$ where $\bx_*$ is the global optimum and $f(\bx)$ is the loss function. Noting that $\hv$ is the equivalent of $\bx + e^{-\overline{\alpha}(t)}\dot{\bx}$ in the adaptive control context (see Remark \ref{rmk:ham}), we see that the Lyapunov-like function used to prove stability of the adaptive law (\ref{eqn:anna}) is similar to that used to prove convergence in the optimization context. The loss function term $f(\bx)$ is replaced by $\frac{1}{2}s(\bx, \bx_d(t))^2$, and it is necessary to add the additional term $\frac{1}{\gamma}\Vert\hv-\ha\Vert^2$.
\end{rmk}

\section{Adaptation laws with momentum}
\label{sec:main}
In this section, we develop several new adaptation laws for both linearly and nonlinearly parameterized systems. We begin by noting that the Bregman Lagrangian generates velocity gradient algorithms with momentum, which provides a path towards adaptive control algorithms similar to Nesterov's seminal method. We prove some general conditions under which these momentum algorithms will achieve stable convergence. By analogy with integral velocity gradient functionals, we then derive a proportional-integral scheme to implement a first-order composite adaptation law driven directly by the function approximation error. We subsequently fuse the generating functional for the composite law with the Bregman Lagrangian to construct a composite algorithm with momentum. We then derive momentum algorithms for nonlinearly parameterized systems satisfying the monotonicity requirements in Assumption~\ref{assmp:tyukin}.

\subsection{Velocity gradient algorithms with momentum}
\label{ssec:ho_sg}
As noted in Section~\ref{ssec:breg}, the Bregman Lagrangian (\ref{eqn:breg_adaptive}) that generates the higher-order algorithm (\ref{eqn:anna}) contains the local velocity gradient functional $Q(\bx, t) = \frac{1}{2}s(\bx, \bx_d(t))^2$ that gives rise to the Slotine and Li controller (\ref{eqn:slotine_li}). Based on this observation, we define local and integral higher-order velocity gradient algorithms via the Euclidean Bregman Lagrangian. We begin with the local functional
\begin{equation}
    \label{eqn:breg_lag_sg}
    \mathcal{L}\left(\hat{\ba}, \dot{\hat{\ba}}, t\right) = e^{\int_0^t \beta \sN(t') dt'}\frac{1}{\beta \sN(t)}\left(\frac{1}{2}\dot{\hat{\ba}}^\T\dot{\hat{\ba}} - \gamma \beta \sN(t)\frac{d}{dt}Q(\bx, t)\right),
\end{equation}
which generates the higher-order law
\begin{equation}
    \ddot{\hat{\ba}} + \dot{\hat{\ba}}\left(\beta \sN(t) - \frac{\dot{\sN}(t)}{\sN(t)}\right) = -\gamma\beta \sN(t) \nabla_{\ha}\dot{Q}(\bx, \hat{\ba}, t).
    \label{eqn:ho_local}
\end{equation}
Algorithm (\ref{eqn:ho_local}) can be re-written as two first-order systems
\begin{align}
    \label{eqn:vdot_local}
    \dot{\hat{\bv}} &= - \gamma \nabla_{\ha}\dot{Q}(\bx, \hat{\ba}, t),\\
    \label{eqn:adot_local}
    \dot{\hat{\ba}} &= \beta \sN(t) \left(\hat{\bv} - \hat{\ba}\right).
\end{align}
To drive $Q(\bx(t), t)$ to zero, we require the following technical assumption in addition to Assumptions \ref{assmp:sg1} \& \ref{assmp:sg3}. This assumption replaces Assumption \ref{assmp:sg2} for first-order velocity gradient algorithms.
\begin{assumption}
\label{assmp:local_ho_cond}
There exists a time-dependent signal $N(t)$ and non-negative scalar values $\beta \geq 0, \mu \geq 0$ such that the time-derivative of the goal functional evaluated at the true parameters, $\dot{Q}(\bx, \ba, t)$, satisfies the following inequality for all $\bx, \ba, \ha, \hv$, and $t$
\begin{equation}
    \dot{Q}(\bx, \ba, t) - \frac{\beta\mu}{\gamma}N(t)\Vert\hat{\ba} - \hat{\bv}\Vert^2 + 2\left(\hat{\ba} - \hat{\bv}\right)^\T\nabla_{\ha}\dot{Q}(\bx, \hat{\ba}, t) \leq -\rho(Q(\bx, t)).
    \label{eqn:local_ho_cond}
\end{equation}
In (\ref{eqn:local_ho_cond}), $\rho(\cdot)$ is positive definite, continuous in its argument, and satisfies $\rho(0) = 0$.$\qeddef$
\end{assumption}
Assumption~\ref{assmp:local_ho_cond} is a formal statement that we may ``complete the square'' on the left-hand side of \eqref{eqn:local_ho_cond}. For example, for $\nabla_{\ha}\dot{Q}(\bx, \ha, t) = \bY^\T(\bx, t) s(\bx, \bx_d(t))$ and for $\dot{Q}(\bx, \ba, t) = -\eta s(\bx, \bx_d(t))^2$, we may choose $N(t) = \Vert \bY(\bx(t), t)^\T\Vert^2$. With Assumption \ref{assmp:local_ho_cond} in hand, we can state the following proposition.
\begin{prop}
\label{prop:local_ho_sg}
Consider the algorithm (\ref{eqn:ho_local}) or its equivalent form (\ref{eqn:vdot_local}) \& (\ref{eqn:adot_local}), and assume $Q(\bx, t)$ satisfies Assumptions \ref{assmp:sg1}, \ref{assmp:sg3}, and \ref{assmp:local_ho_cond}. Then, all trajectories $(\bx(t), \hat{\bv}(t), \hat{\ba}(t))$ remain bounded, $\left(\hat{\ba}(\cdot) - \hat{\bv}(\cdot)\right) \in \LL_2$, and $\lim_{t\rightarrow \infty} Q(\bx(t), t) = 0$.
\end{prop}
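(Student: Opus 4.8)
The plan is to follow the Lyapunov argument behind Proposition~\ref{prop:anna}, now carrying a general goal functional $Q$ in place of $\frac12 s^2$ and a general velocity gradient $\nabla_{\ha}\dot{Q}(\bx,\ha,t)$ in place of $\bY^\T s$, and to work with the equivalent first-order form (\ref{eqn:vdot_local})--(\ref{eqn:adot_local}). Writing $\tilde{\bv} = \hv - \ba$ and taking the damping signal $\sN = 1 + \mu N(t)$ (as in Proposition~\ref{prop:anna}), with $N$, $\mu$, $\beta$ as in Assumption~\ref{assmp:local_ho_cond}, I would introduce the Lyapunov-like function
\begin{equation*}
  V = Q(\bx, t) + \frac{1}{2\gamma}\Vert\tilde{\bv}\Vert^2 + \frac{1}{2\gamma}\Vert\hv - \ha\Vert^2,
\end{equation*}
which is nonnegative by Assumption~\ref{assmp:sg1}. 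Differentiating along (\ref{eqn:vdot_local})--(\ref{eqn:adot_local}), using $\frac{d}{dt}Q(\bx(t),t) = \dot{Q}(\bx,\ha,t)$, $\dot{\tilde{\bv}} = -\gamma\nabla_{\ha}\dot{Q}(\bx,\ha,t)$, $\dot{\hv}-\dot{\ha} = -\gamma\nabla_{\ha}\dot{Q}(\bx,\ha,t)-\beta\sN(\hv-\ha)$, and the splitting $\tilde{\bv} = (\hv-\ha)+(\ha-\ba)$, a short computation gives
\begin{equation*}
  \dot{V} = \dot{Q}(\bx,\ha,t) - (\ha-\ba)^\T\nabla_{\ha}\dot{Q}(\bx,\ha,t) + 2(\ha-\hv)^\T\nabla_{\ha}\dot{Q}(\bx,\ha,t) - \frac{\beta\sN}{\gamma}\Vert\hv-\ha\Vert^2.
\end{equation*}
Convexity of $\dot{Q}$ in the control parameters (Assumption~\ref{assmp:sg3}) applied with $\ba_1=\ba$, $\ba_2=\ha$ gives $-(\ha-\ba)^\T\nabla_{\ha}\dot{Q}(\bx,\ha,t) \le \dot{Q}(\bx,\ba,t)-\dot{Q}(\bx,\ha,t)$; the two $\dot{Q}(\bx,\ha,t)$ terms cancel, and expanding $\sN = 1+\mu N(t)$ leaves
\begin{equation*}
  \dot{V} \le \left[\dot{Q}(\bx,\ba,t) - \frac{\beta\mu}{\gamma}N(t)\Vert\ha-\hv\Vert^2 + 2(\ha-\hv)^\T\nabla_{\ha}\dot{Q}(\bx,\ha,t)\right] - \frac{\beta}{\gamma}\Vert\hv-\ha\Vert^2.
\end{equation*}
The bracket is exactly the left-hand side of (\ref{eqn:local_ho_cond}), so Assumption~\ref{assmp:local_ho_cond} yields $\dot{V} \le -\rho(Q) - \frac{\beta}{\gamma}\Vert\hv-\ha\Vert^2 \le 0$.

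The rest is routine. Since $V$ is nonincreasing it is bounded on the maximal interval of existence, hence so are $Q(\bx,t)$, $\tilde{\bv}$, and $\hv-\ha$; radial unboundedness of $Q$ (Assumption~\ref{assmp:sg1}) then bounds $\bx$, whence $\hv$ and $\ha$ are bounded, and standard arguments (as in the proof of Proposition~\ref{prop:sg_local}) extend the solution to all $t\ge 0$ with all signals bounded. Integrating $\dot{V}\le -\frac{\beta}{\gamma}\Vert\hv-\ha\Vert^2$ gives $\int_0^\infty\Vert\hv-\ha\Vert^2\,dt \le \frac{\gamma}{\beta}V(0) < \infty$, i.e.\ $(\ha-\hv)\in\LL_2$, and integrating $\dot{V}\le -\rho(Q)$ gives $\int_0^\infty\rho(Q(\bx(t),t))\,dt < \infty$. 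Because $\bx$ is bounded, $Q(\bx(t),t)$ is uniformly continuous in $t$ by Assumption~\ref{assmp:sg1} and $\rho$ is continuous on the (bounded) range of $Q$, so $\rho(Q(\bx(t),t))$ is uniformly continuous; Barbalat's Lemma (Lemma~\ref{lem:barbalat}) then gives $\rho(Q)\to 0$, and since $\rho$ is continuous, positive definite with $\rho(0)=0$, and $Q$ is bounded, this forces $\lim_{t\to\infty}Q(\bx(t),t)=0$.

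I expect the only delicate step to be the bookkeeping in $\dot{V}$: one must expand $\tilde{\bv}$ so that precisely the combination in (\ref{eqn:local_ho_cond}) emerges — the term $\dot{Q}(\bx,\ba,t)$, the cross term $2(\ha-\hv)^\T\nabla_{\ha}\dot{Q}(\bx,\ha,t)$, and a $-\frac{\beta\mu}{\gamma}N(t)\Vert\ha-\hv\Vert^2$ damping contribution — with the convexity inequality of Assumption~\ref{assmp:sg3} used to discard the unknown-parameter term $(\ha-\ba)^\T\nabla_{\ha}\dot{Q}(\bx,\ha,t)$, and the extra ``$1$'' in $\sN = 1+\mu N(t)$ supplying the strict $\LL_2$ margin $-\frac{\beta}{\gamma}\Vert\hv-\ha\Vert^2$. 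The passage from $\rho(Q)\to 0$ to $Q\to 0$ also merits a line: on any compact subset of $(0,\infty)$ the positive continuous function $\rho$ is bounded below by a positive constant, which together with boundedness of $Q$ rules out $Q$ staying outside a neighbourhood of $0$.
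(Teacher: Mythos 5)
Your proof is correct and follows essentially the same route as the paper: the same Lyapunov-like function $V = Q + \frac{1}{2\gamma}\Vert\tilde{\bv}\Vert^2 + \frac{1}{2\gamma}\Vert\hv-\ha\Vert^2$, the same expansion of $\dot V$ via the split $\tilde{\bv}=(\hv-\ha)+\tilde{\ba}$, the same use of Assumption~\ref{assmp:sg3} to replace $\dot Q(\bx,\ha,t)-\tilde{\ba}^\T\nabla_{\ha}\dot Q$ by $\dot Q(\bx,\ba,t)$, Assumption~\ref{assmp:local_ho_cond} to reach $\dot V\le -\rho(Q)-\frac{\beta}{\gamma}\Vert\hv-\ha\Vert^2$, and Barbalat's Lemma for the limit. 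The only small addition is your explicit justification that $\rho(Q)\to 0$ forces $Q\to 0$, which the paper leaves implicit.
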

The proof is given in Appendix~\ref{app2:prop:local_ho_sg}. By taking $Q(\bx, t) = \frac{1}{2}s(\bx, \bx_d(t))^2$ in Proposition~\ref{prop:local_ho_sg}, we immediately recover Proposition~\ref{prop:anna}. 

We now consider the integral functional
\begin{equation*}
    \mathcal{L}\left(\hat{\ba}, \dot{\hat{\ba}}, t\right) = e^{\int_0^t \beta \sN(t') dt'}\frac{1}{\beta \sN(t)}\left(\frac{1}{2}\dot{\hat{\ba}}^\T\dot{\hat{\ba}} - \gamma \beta \sN(t)\frac{d}{dt}\int_0^tR(\bx(t'), \hat{\ba}(t'), t')dt'\right),
\end{equation*}
which generates the higher-order law
\begin{equation}
    \ddot{\hat{\ba}} + \dot{\hat{\ba}}\left(\beta \sN(t) - \frac{\dot{\sN}(t)}{\sN(t)}\right) = -\gamma\beta \sN(t) \nabla_{\ha}R(\bx, \hat{\ba}, t).
    \label{eqn:ho_int}
\end{equation}
We again re-write (\ref{eqn:ho_int}) as two first-order systems
\begin{align}
    \label{eqn:vdot_int}
    \dot{\hat{\bv}} &= - \gamma \nabla_{\ha}R(\bx, \hat{\ba}, t), \\
    \label{eqn:adot_int}
    \dot{\hat{\ba}} &= \beta \sN(t) \left(\hat{\bv} - \hat{\ba}\right),
\end{align}
and now require a modified version of Assumption~\ref{assmp:local_ho_cond}.
\begin{assumption}
\label{assmp:int_ho_cond}
$R(\bx, \hat{\ba}, t) \geq 0$ for all $\bx, \hat{\ba}$, and $t$, and is uniformly continuous in $t$ for bounded $\bx$ and $\ha$. $\nabla_{\ha}R(\bx, \ha, t)$ is locally bounded in $\bx$ and $\ha$ uniformly in $t$. Furthermore, there exists a time-dependent signal $N(t)$ and non-negative scalar values $\beta \geq 0, \mu \geq 0$ such that, for all $\bx, \ba, \ha, \hv$, and $t$,
\begin{equation*}
    R(\bx, \ba, t) - R(\bx, \hat{\ba}, t) - \frac{\beta\mu}{\gamma}N(t)\Vert\hat{\ba}-\hat{\bv}\Vert^2 + 2\left(\hat{\ba}-\hat{\bv}\right)^\T\nabla_{\hat{\ba}}R(\bx, \hat{\ba}, t) \leq -k R(\bx, \hat{\ba}, t)
\end{equation*}
for some constant $k > 0$.$\qeddef$
\end{assumption}

Similar to Assumption~\ref{assmp:local_ho_cond}, Assumption~\ref{assmp:int_ho_cond} is a formal requirement that we may ``complete the square''. With slight abuse of notation, consider the case when $R(\bx, \ha, t) = \frac{1}{2}\tilde{f}(\bx, \ha, \ba, t)^2$. Then $R(\bx, \ba, t) = 0$, $\nabla_{\ha}R(\bx, \ha, t) = \tilde{f}(\bx, \ha, \ba, t)\nabla_{\ha}\hat{f}(\bx, \ha, t)$, and we may choose $N(t) = \Vert \nabla_{\ha}\hat{f}(\bx(t), \ha(t), t)\Vert^2$. With Assumption~\ref{assmp:int_ho_cond}, we can state the following proposition. The proof is given in Appendix~\ref{app2:prop:int_ho}
\begin{prop}
\label{prop:int_ho}
Consider algorithm (\ref{eqn:ho_int}) along with Assumptions \ref{assmp:sg3} \& \ref{assmp:int_ho_cond}. Let $T_x$ denote the maximal interval of existence of $\bx(t)$. Then $\hat{\bv}(t)$ and $\hat{\ba}(t)$ remain bounded for $t\in[0, T_x]$, $\left(\hat{\ba} -\hat{\bv}\right) \in \LL_2$ over this interval, and $\int_0^{T_x} R(\bx(t'), \hat{\ba}(t'), t')dt' < \infty$. Furthermore, for any bounded solution $\bx(t)$, these conclusions hold for all $t$ and $R(\bx(t), \ha(t), t)\rightarrow 0$.
\end{prop}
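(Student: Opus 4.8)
The plan is to adapt the proof of Proposition~\ref{prop:local_ho_sg}, replacing the local goal functional $Q$ by the running cost $R$, but with one genuinely new ingredient: I will \emph{not} include an $\int_0^t R\,dt'$ term in the Lyapunov-like function. Instead take
\[
    V = \frac{1}{2\gamma}\tilde{\bv}^\T\tilde{\bv} + \frac{1}{2\gamma}\left(\ha - \hv\right)^\T\left(\ha - \hv\right), \qquad \tilde{\bv} := \hv - \ba,
\]
which is nonnegative and radially unbounded in $\tilde{\bv}$ and $\ha - \hv$. The point of omitting $\int_0^t R\,dt'$ is that the strict ``$-kR$'' margin built into Assumption~\ref{assmp:int_ho_cond} is exactly what supplies the $\LL_2$ conclusion on $R$; if one instead transcribes the Lyapunov function of Proposition~\ref{prop:sg_nonloc} verbatim, the extra $+R$ from differentiating $\int_0^t R\,dt'$ is only partially absorbed and leaves an uncancelled $(1-k)R$ term of indefinite sign (indeed, the canonical choice $R=\tfrac12\tilde f^2$ forces $k<1$).

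First I would differentiate $V$ along the first-order form (\ref{eqn:vdot_int})--(\ref{eqn:adot_int}). Using $\dot{\hv} = -\gamma\nabla_{\ha}R$, $\dot{\ha} = \beta\sN(\hv - \ha)$, $\sN = 1 + \mu N(t)$, and the splitting $\hv - \ba = (\hv - \ha) + (\ha - \ba)$ precisely as in Proposition~\ref{prop:local_ho_sg}, one obtains
\[
    \dot{V} = -\tilde{\ba}^\T\nabla_{\ha}R + 2\left(\ha - \hv\right)^\T\nabla_{\ha}R - \frac{\beta}{\gamma}\Vert\ha - \hv\Vert^2 - \frac{\beta\mu}{\gamma}N(t)\Vert\ha - \hv\Vert^2 .
\]
Next I would invoke the convexity hypothesis, Assumption~\ref{assmp:sg3}, which for an integral functional (whose total time derivative is $R$) asserts that $R$ is convex in $\ha$, in the form $-\tilde{\ba}^\T\nabla_{\ha}R \le R(\bx, \ba, t) - R(\bx, \ha, t)$; feeding this into $\dot V$ and then applying the ``complete-the-square'' inequality of Assumption~\ref{assmp:int_ho_cond} collapses the right-hand side to
\[
    \dot{V} \le -k\,R(\bx, \ha, t) - \frac{\beta}{\gamma}\Vert\ha - \hv\Vert^2 \le 0 .
\]

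The stated conclusions follow from this. Since $V \ge 0$ is nonincreasing it is bounded on $[0,T_x)$, so $\tilde{\bv}$ and $\ha - \hv$ — and hence $\hv$ and $\ha$ — are bounded there. Integrating the bound on $\dot V$ gives $k\int_0^{T_x} R\,dt' + \frac{\beta}{\gamma}\int_0^{T_x}\Vert\ha - \hv\Vert^2\,dt' \le V(0) < \infty$, whence $\int_0^{T_x}R\,dt' < \infty$ and $(\ha - \hv)\in\LL_2$ over $[0,T_x)$. If in addition $\bx$ is bounded then the full state $(\bx,\hv,\ha)$ is bounded on $[0,T_x)$; since the closed-loop vector field is locally bounded (Assumption~\ref{assmp:bound} together with the local boundedness of $\nabla_{\ha}R$ in Assumption~\ref{assmp:int_ho_cond}), the solution cannot escape in finite time, so $T_x = \infty$ and all of the above holds for all $t$. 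Finally $t\mapsto R(\bx(t),\ha(t),t)$ is nonnegative, integrable on $[0,\infty)$, and uniformly continuous (Assumption~\ref{assmp:int_ho_cond}, using boundedness of $\bx$ and $\ha$), so Barbalat's Lemma (Lemma~\ref{lem:barbalat}) gives $R(\bx(t),\ha(t),t)\to 0$.

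I expect the only real obstacle to be the first step: recognizing that Assumption~\ref{assmp:int_ho_cond} is tailored so that the momentum-integral proof closes with the purely parametric $V$, with the $-kR$ term simultaneously playing the role of the $\int_0^t R\,dt'$ term in Proposition~\ref{prop:sg_nonloc} and of the dissipation that makes $R\in\LL_2$. After that, verifying the signs in the $\dot V$ computation and the finite-escape-time argument for $T_x=\infty$ under bounded $\bx$ is routine.
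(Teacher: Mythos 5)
Your proof is correct and in fact coincides with the paper's own argument step for step: the paper also uses the purely parametric Lyapunov-like function $V = \frac{1}{2\gamma}\tilde{\bv}^\T\tilde{\bv} + \frac{1}{2\gamma}\Vert\ha - \hv\Vert^2$ (no $\int_0^t R\,dt'$ term), reaches the same $\dot{V}$ expression, invokes convexity of $R$ in $\ha$ and Assumption~\ref{assmp:int_ho_cond} to get $\dot{V} \le -kR - \frac{\beta}{\gamma}\Vert\ha - \hv\Vert^2$, integrates, and closes with uniform continuity plus Barbalat's Lemma. The one thing you got wrong is the meta-level framing: you present the omission of $\int_0^t R\,dt'$ as a departure from the paper, when it is exactly what the paper does; your explanation of \emph{why} including it would leave an uncancelled $(1-k)R$ of indefinite sign (since $k<1$ is forced for $R = \frac12\tilde f^2$) is nonetheless a correct and useful observation about why the naive transcription of Proposition~\ref{prop:sg_nonloc} would not close, and why Assumption~\ref{assmp:int_ho_cond} is phrased to make the parametric $V$ sufficient.
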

As mentioned in Section~\ref{ssec:sg}, we will be particularly interested in Proposition~\ref{prop:int_ho} when $R(\bx, \ha, t) = \frac{1}{2}\tilde{f}(\bx, \ha, \ba, t)^2$, which will generate composite adaptation algorithms and algorithms applicable to nonlinearly parameterized systems. Proposition~\ref{prop:int_ho} then shows that $\tilde{f}(\bx(\cdot), \ha(\cdot), \cdot) \in \LL_2$ over the interval of existence of $\bx(t)$. As shown by Lemma~\ref{lem:conv}, with our error model \eqref{eqn:s_dyn}, this is enough to show that $\bx(t)$ always remains bounded and hence $\tilde{f}(\bx(t), \ha(t), t) \rightarrow 0$. 

\begin{rmk}
Classically, Lyapunov-like functions used in adaptive control consist of a sum of tracking and parameter estimation error terms, with $\dot{\hat{\ba}}$ chosen to cancel a term of unknown sign. Several Lyapunov functions in this work consist only of parameter estimation error terms, such as the Lyapunov function used in the proof of Proposition~\ref{prop:int_ho}. From a mathematical point of view, we require that $\dot{V}$ is negative semi-definite and contains signals related to the tracking error. Integrating $\dot{V}$ allows the application of tools from functional analysis to ensure that the control goal is accomplished. The lack of tracking error term in $V$ is the origin of the additional complication that $\bx(t)$ must be shown to be bounded even after it is known that $\dot{V} \leq 0$.$\qeddef$
\end{rmk}

While derived via the Bregman Lagrangian, the adaptation laws presented in this section exhibit a key difference from Nesterov's method in optimization. The gradient updates in $\dot{\hv}$ are evaluated at $\ha$, while Nesterov's method evaluates the gradient at $\hv$. In adaptive control, this forward prediction does not appear to be possible: in the linearly parameterized case, for example, $\nabla_{\ha}\dot{Q}(\bx, \ha, t) = \bY(\bx, t)^\T s(\bx, \bx_d(t))$ is independent of $\ha$.

\subsection{First- and second-order composite adaptation laws}
\label{ssec:comp}
Here we consider the linearly parameterized setting $f(\bx, \ba, t) = \bY(\bx, t)\ba$, and derive new first- and second-order composite adaptation laws based on Propositions~\ref{prop:sg_nonloc}~\&~\ref{prop:int_ho}. Composite adaptation laws are driven by two sources of error: the tracking error itself, as summarized by $s(\bx, \bx_d(t))$ in the Slotine and Li controller, and a prediction error. The prediction error term is generally obtained from an algebraic relation constructed by filtering the dynamics~\citep{slot_li_book}. We now present a composite algorithm that does not require any explicit filtering of the dynamics, but is instead driven simultaneously by $s(\bx, \bx_d(t))$ and $\tilde{f}(\bx, \ha, \ba, t)$.

A starting point is to consider a hybrid local and integral velocity gradient functional
\begin{equation}
    Q(\bx, t) = \frac{\gamma}{2} s(\bx, \bx_d(t))^2 + \frac{\kappa}{2}\int_0^t\tilde{f}^2(\bx(t'), \hat{\ba}(t'), \ba, t')dt',
    \label{eqn:hybrid_sg}
\end{equation}
where $\kappa > 0$ and $\gamma > 0$ are positive learning rates weighting the contributions of each term. As discussed in Section~\ref{ssec:sg}, the first term leads to the Slotine and Li controller. The second can be clearly seen to satisfy Assumptions \ref{assmp:sg4} and \ref{assmp:sg5} with $\mu(t) = 0$. It also satisfies Assumption \ref{assmp:sg3}, as $\tilde{f}(\bx, \ha, \ba, t)^2$ is a quadratic function of $\tilde{\ba}$ for linear $\tilde{f}(\bx, \ha, \ba, t)$. Following the velocity gradient formalism, the resulting adaptation law is given by
\begin{equation}
    \dot{\hat{\ba}} = -\bP\bY(\bx, t)^\T\left(\gamma s(\bx, \bx_d(t)) + \kappa \bY(\bx, t)\tilde{\ba}\right).
    \label{eqn:comp_adapt}
\end{equation}
Equation (\ref{eqn:comp_adapt}) depends on the function approximation error $\tilde{f}(\bx, \ha, \ba, t) = Y(\bx, t)\tilde{\ba}$, which is not measured and hence cannot be used directly in an adaptation law. Nevertheless, it can be obtained through a proportional-integral form for $\ha$ in an identical manner to Section~\ref{ssec:adaptive_basic_nlin}. To do so, we define
\begin{align}
    \label{eqn:pi_comp_1}
    \bxi(\bx, t) &= -\kappa \bP s(\bx, \bx_d(t))\bY(\bx, t)^\T,\\
    \label{eqn:pi_comp_2}
    \brho(\bx, t) &= \kappa \bP \int_{x_n(t_0)}^{x_n(t)}s(\bx, \bx_d(t))\frac{\p \bY(\bx, t)^\T}{\p x_n}dx_n,\\
    \label{eqn:pi_comp_3}
    \hat{\ba} &= \overline{\ba} + \bxi(\bx, t) + \brho(\bx, t),\\
    \label{eqn:pi_comp_4}
    \dot{\overline{\ba}} &= -\left(\kappa\eta + \gamma\right) s(\bx, \bx_d(t)) \bP \bY(\bx, t)^\T + \kappa s(\bx, \bx_d(t)) \sum_{i=1}^{n-1}\bP\frac{\p \bY(\bx, t)}{\p x_i}\dot{x}_i\nonumber\\
    &\phantom{=} - \sum_{i=1}^{n-1}\frac{\p \brho(\bx, t)}{\p x_i}\dot{x}_i - \frac{\p \bxi(\bx, t)}{\p t} - \frac{\p \brho(\bx, t)}{\p t}.
\end{align}
Computing $\dot{\hat{\ba}}$ demonstrates that (\ref{eqn:comp_adapt}) is obtained through only the known signals contained in (\ref{eqn:pi_comp_1})-(\ref{eqn:pi_comp_4}) despite its dependence on $\bY(\bx, t)\tilde{\ba}$.

\begin{rmk}
The $\bY(\bx, t)\tilde{\ba}$ term may also be obtained by following the Immersion and Invariance formalism~\citep{II1, II2}. To our knowledge, this discussion is the first that demonstrates the possibility of using a PI law in combination with a standard Lyapunov-stability motivated adaptation law to obtain a composite law.$\qeddef$
\end{rmk}

\begin{rmk}
Rearranging (\ref{eqn:comp_adapt}) shows that $\dot{\tilde{\ba}} + \bP\bY(\bx, t)^\T\bY(\bx, t)\tilde{\ba} = -\bP \bY(\bx, t)^\T s(\bx, \bx_d(t))$, so that the additional term can be seen to add a damping term that smooths adaptation~\citep{slot_li_book}.$\qeddef$
\end{rmk}
\begin{rmk}
\label{rmk:pde}
As mentioned in Section~\ref{ssec:adaptive_basic_lin}, for clarity of presentation we have restricted our discussion to the $n^{\text{th}}$-order system (\ref{eqn:gen_sys}). The PI form (\ref{eqn:pi_comp_3}) leads to \textit{undesired} unknown terms in addition to the \textit{desired} unknown term. Here, the desired unknown term is $-\kappa\bP\bY(\bx, t)^\T\bY(\bx, t)\tilde{\ba}$ while the undesired unknown term is $-\kappa \bP\frac{\p \bY(\bx, t)}{\p x_n}\dot{x}_ns(\bx, \bx_d(t))$. The purpose of introducing the additional proportional term $\brho(\bx, t)$ in (\ref{eqn:pi_comp_1}) is to cancel this undesired unknown term. In general, cancellation of the undesired terms can be obtained by choosing $\brho$ to solve a PDE, and solutions to this PDE will only exist if the undesired term is the gradient of an auxiliary function. $\brho$ is then chosen to be exactly this auxiliary function. In some cases, the PDE can be avoided, such as through dynamic scaling techniques~\citep{dyn_scale} or the related embedding technique of~\citet{tyukin_book}.$\qeddef$
\end{rmk}

The properties of the adaptive law (\ref{eqn:comp_adapt}) may be summarized with the following proposition.

\begin{prop}
\label{prop:comp_adapt}
Consider the adaptation algorithm (\ref{eqn:comp_adapt}) with a linearly parameterized unknown, $f(\bx, \ba, t) = \bY(\bx, t)\ba$. Then all trajectories $(\bx(t), \hat{\ba}(t))$ remain bounded, $s(\bx(\cdot), \bx_d(\cdot)) \in \LL_2\cap \LL_{\infty}$, $\tilde{f}(\bx(\cdot), \ha(\cdot), \cdot) \in \LL_2$, $s(\bx(t), \bx_d(t)) \rightarrow 0$, and $\bx(t) \rightarrow \bx_d(t)$.
\end{prop}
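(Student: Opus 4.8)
The plan is to run the velocity-gradient Lyapunov argument with the hybrid functional $Q$ of (\ref{eqn:hybrid_sg}) in the role of the ``loss'' and a quadratic parameter-error term supplying the missing certificate. Concretely, I would take $V = \frac{\gamma}{2}s^2 + \frac{1}{2}\tilde{\ba}^\T\bP^{-1}\tilde{\ba}$, differentiate along (\ref{eqn:s_dyn}) and (\ref{eqn:comp_adapt}), and use that in the linearly parameterized case $\tilde{f} = \bY\tilde{\ba}$. Then $\gamma s\dot{s} = -\gamma\eta s^2 + \gamma s\,\bY\tilde{\ba}$, while $\tilde{\ba}^\T\bP^{-1}\dot{\tilde{\ba}} = -\tilde{\ba}^\T\bY^\T\left(\gamma s + \kappa\bY\tilde{\ba}\right) = -\gamma s\,\bY\tilde{\ba} - \kappa\left(\bY\tilde{\ba}\right)^2$, so the indefinite cross terms cancel and $\dot{V} = -\gamma\eta s^2 - \kappa\tilde{f}^2 \leq 0$. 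This cancellation is the crux: the extra composite term is ``free'' because the pseudo-gradient $\bY^\T\bY\tilde{\ba}$ that it adds to the adaptation law annihilates exactly the $\gamma s\tilde{f}$ contribution coming from $\dot{s}$. (This specializes the ``sum of local and integral functional'' remark at the end of Section~\ref{ssec:sg}: the local part $\frac{\gamma}{2}s^2$ supplies $-\gamma\eta s^2$ and the integral part $\frac{\kappa}{2}\int\tilde{f}^2$ supplies $-\kappa\tilde{f}^2$.)

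From $\dot{V}\leq 0$, $V$ is nonincreasing, hence bounded, which gives $s\in\LL_\infty$ and $\tilde{\ba}\in\LL_\infty$, so $\ha\in\LL_\infty$. Integrating the inequality, $\gamma\eta\int_0^\infty s^2\,dt + \kappa\int_0^\infty\tilde{f}^2\,dt \leq V(0) < \infty$, so $s\in\LL_2$ and $\tilde{f}\in\LL_2$. Boundedness of $\bx$ then follows from the structure of the error model: by (\ref{eqn:s=0}) the map $s\mapsto\tilde{x}$ is a stable $(n-1)^{\text{th}}$-order linear filter, so $s\in\LL_\infty$ forces $\tilde{x}$ and all $\tilde{x}^{(i)}$, hence $\bx$, to remain bounded --- this is precisely the hypothesis discharged by Lemma~\ref{lem:conv}.

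For asymptotic convergence, apply Barbalat's Lemma (Lemma~\ref{lem:barbalat}) to $s$. Since $\bx$ and $\ha$ are bounded, Assumption~\ref{assmp:bound} makes $\hat{f}(\bx,\ha,t)$ and $f(\bx,\ba,t)$ bounded, hence $\tilde{f}$ bounded, so $\dot{s} = -\eta s + \tilde{f}$ is bounded and $s$ is uniformly continuous; together with $s\in\LL_2$ this yields $s\to 0$. Passing $s\to 0$ through the stable filter (\ref{eqn:s=0}) gives $\tilde{x}\to 0$, i.e.\ $\bx\to\bx_d$; equivalently one invokes Lemma~\ref{lem:conv} directly with the $\LL_2$ bound on $\tilde{f}$ already in hand.

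I expect the only genuinely delicate point to be the passage from the Lyapunov certificate to boundedness and convergence of the \emph{full} state $\bx$: $V$ contains no tracking-error term beyond $s$ itself, so boundedness of $\bx$ and the limit $\bx\to\bx_d$ are not read off from $V$ but come from the stable-filter structure of (\ref{eqn:s=0}) via Lemma~\ref{lem:conv}. The other usual caveat --- that (\ref{eqn:comp_adapt}) is not directly implementable because it contains the unmeasured $\tilde{f}$ --- is orthogonal to this stability proof and is already resolved by the proportional-integral realization (\ref{eqn:pi_comp_1})--(\ref{eqn:pi_comp_4}).
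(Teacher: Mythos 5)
Your proof is correct and takes essentially the same approach as the paper's: the Lyapunov-like function you use, $V = \tfrac{\gamma}{2}s^2 + \tfrac{1}{2}\tilde{\ba}^\T\bP^{-1}\tilde{\ba}$, is just a constant rescaling (by $\gamma$) of the paper's $V = \tfrac{1}{2}s^2 + \tfrac{1}{2\gamma}\tilde{\ba}^\T\bP^{-1}\tilde{\ba}$, and the rest of the argument (integrating $\dot V$ to get $s\in\LL_2$ and $\tilde f\in\LL_2$, boundedness of $\bx$ from $s\in\LL_\infty$ via the stable filter, then Barbalat's Lemma) is the one the paper uses.
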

The proof is given in Appendix~\ref{app2:comp_adapt}.

Following the velocity gradient with momentum approach of Section~\ref{ssec:ho_sg}, we now obtain a higher-order composite algorithm, and give a PI implementation. We again consider a hybrid local and integral velocity gradient functional, so that (\ref{eqn:breg_lag}) takes the form
\begin{align}
    \mathcal{L}&\left(\hat{\ba}, \dot{\hat{\ba}}, t\right) =\nonumber\\
    &\phantom{=} \frac{e^{\int_0^t \beta \sN(t') dt'}}{\beta \sN(t)}\left(\frac{1}{2}\dot{\hat{\ba}}^\T\dot{\hat{\ba}} - \beta \sN(t)\frac{d}{dt}\left[\frac{\gamma}{2}s(\bx, \bx_d(t))^2 + \frac{\kappa}{2}\int_0^t\tilde{f}^2(\bx(t'), \hat{\ba}(t'), \ba, t')dt'\right]\right)
    \label{eqn:breg_adaptive_comp}
\end{align}

where $\gamma > 0$ and $\kappa > 0$ are positive constants weighting the two error terms. The Euler-Lagrange equations then lead to the higher-order composite system
\begin{equation}
    \ddot{\hat{\ba}} + \left(\beta \sN(t) - \frac{\dot{\sN}(t)}{\sN(t)}\right)\dot{\hat{\ba}} = -\beta \sN(t) \bY(\bx, t)^\T\left(\gamma s(\bx, \bx_d(t)) + \kappa \bY(\bx, t)\tilde{\ba}\right).
    \label{eqn:ho_comp}
\end{equation}
As in Section~\ref{ssec:breg}, (\ref{eqn:ho_comp}) may be implemented as two first-order systems
\begin{align}
    \label{eqn:ho_comp_v}
    \dot{\hat{\bv}} &= -\bY(\bx, t)^\T\left(\gamma s(\bx, \bx_d(t)) + \kappa \bY(\bx, t)\tilde{\ba} \right),\\
    \label{eqn:ho_comp_a}
    \dot{\hat{\ba}} &= \beta \sN(t) \left(\hat{\bv} - \hat{\ba}\right).
\end{align}
In an implementation, (\ref{eqn:ho_comp_v}) is obtained through the PI form $\hat{\bv} = \overline{\bv} + \bxi(\bx, t) + \brho(\bx, t)$ with $\bxi$, $\brho$, and $\dot{\overline{\bv}}$ given by (\ref{eqn:pi_comp_1}), (\ref{eqn:pi_comp_2}), and (\ref{eqn:pi_comp_4}) respectively with $\bP = \bI$. The properties of the higher-order composite adaptation law (\ref{eqn:ho_comp}) are stated in the following proposition.

\begin{prop}
\label{prop:ho_comp}
Consider the higher-order composite adaptation algorithm (\ref{eqn:ho_comp}) for a linearly parameterized unknown, $f(\bx, \ba, t) = \bY(\bx, t)\ba$. Set $\sN(t) = 1 + \mu \Vert\bY(\bx(t), t)\Vert^2$ and $\mu > \frac{\gamma}{\beta}\left(\frac{1}{\eta} + \frac{\kappa}{\gamma}\right)$. Then all trajectories $(\bx(t), \hat{\bv}(t), \hat{\ba}(t))$ remain bounded, $\hat{\bv}(\cdot) - \hat{\ba}(\cdot) \in \LL_2$, $s(\bx(\cdot), \bx_d(\cdot)) \in \LL_\infty \cap \LL_2$, $\tilde{f}(\bx(\cdot), \ha(\cdot), \cdot) \in \LL_\infty \cap \LL_2$, $s(\bx(t), \bx_d(t)) \rightarrow 0$, and $\bx(t) \rightarrow \bx_d(t)$.
\end{prop}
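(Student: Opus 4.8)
The plan is to mirror the Lyapunov argument behind Proposition~\ref{prop:anna}, carrying along the extra terms produced by the composite correction $\kappa\bY\tilde{\ba}$ in \eqref{eqn:ho_comp_v}. I would use the Lyapunov-like function
\begin{equation*}
    V = \frac{1}{2}s^2 + \frac{1}{2\gamma}\tilde{\bv}^\T\tilde{\bv} + \frac{1}{2\gamma}\left(\hat{\bv} - \hat{\ba}\right)^\T\left(\hat{\bv} - \hat{\ba}\right),
\end{equation*}
with $\tilde{\bv} = \hat{\bv} - \ba$, exactly as in Proposition~\ref{prop:anna}. Using the error model $\dot{s} = -\eta s + \bY\tilde{\ba}$ from \eqref{eqn:s_dyn}, the first-order forms \eqref{eqn:ho_comp_v}--\eqref{eqn:ho_comp_a}, and the identity $\tilde{\bv} = \tilde{\ba} + (\hat{\bv}-\hat{\ba})$ --- which is what lets the indefinite term $s\bY\tilde{\ba}$ cancel against the contribution of $\tilde{\bv}^\T\dot{\hat{\bv}}$, just as in the first-order case --- a direct computation should give, after substituting $\sN = 1 + \mu\Vert\bY\Vert^2$ and writing $\tilde{f} = \bY\tilde{\ba}$,
\begin{equation*}
    \dot{V} = -\eta s^2 - \frac{\kappa}{\gamma}\tilde{f}^2 - 2 s\,\bY\left(\hat{\bv}-\hat{\ba}\right) - \frac{2\kappa}{\gamma}\tilde{f}\,\bY\left(\hat{\bv}-\hat{\ba}\right) - \frac{\beta}{\gamma}\Vert\hat{\bv}-\hat{\ba}\Vert^2 - \frac{\beta\mu}{\gamma}\Vert\bY\Vert^2\Vert\hat{\bv}-\hat{\ba}\Vert^2 .
\end{equation*}

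The core of the argument is to dominate the two indefinite cross terms by Young's inequality with two free parameters $\epsilon_1,\epsilon_2>0$,
\begin{align*}
    2|s|\,\Vert\bY\Vert\,\Vert\hat{\bv}-\hat{\ba}\Vert &\leq \epsilon_1 s^2 + \tfrac{1}{\epsilon_1}\Vert\bY\Vert^2\Vert\hat{\bv}-\hat{\ba}\Vert^2,\\
    \tfrac{2\kappa}{\gamma}|\tilde{f}|\,\Vert\bY\Vert\,\Vert\hat{\bv}-\hat{\ba}\Vert &\leq \tfrac{\kappa}{\gamma}\epsilon_2\,\tilde{f}^2 + \tfrac{\kappa}{\gamma\epsilon_2}\Vert\bY\Vert^2\Vert\hat{\bv}-\hat{\ba}\Vert^2,
\end{align*}
which yields $\dot{V} \leq -(\eta-\epsilon_1)s^2 - \tfrac{\kappa}{\gamma}(1-\epsilon_2)\tilde{f}^2 - \tfrac{\beta}{\gamma}\Vert\hat{\bv}-\hat{\ba}\Vert^2 - \left(\tfrac{\beta\mu}{\gamma} - \tfrac{1}{\epsilon_1} - \tfrac{\kappa}{\gamma\epsilon_2}\right)\Vert\bY\Vert^2\Vert\hat{\bv}-\hat{\ba}\Vert^2$. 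The hypothesis $\mu > \frac{\gamma}{\beta}\left(\frac{1}{\eta}+\frac{\kappa}{\gamma}\right)$ is precisely the statement that $\frac{\beta\mu}{\gamma} > \inf_{\epsilon_1<\eta,\ \epsilon_2<1}\left(\frac{1}{\epsilon_1}+\frac{\kappa}{\gamma\epsilon_2}\right) = \frac{1}{\eta}+\frac{\kappa}{\gamma}$, so one can choose $\epsilon_1\in(0,\eta)$ and $\epsilon_2\in(0,1)$ making the last coefficient nonnegative and the first three strictly positive. This gives $\dot{V} \leq -c_1 s^2 - c_2\tilde{f}^2 - \tfrac{\beta}{\gamma}\Vert\hat{\bv}-\hat{\ba}\Vert^2 \leq 0$ for constants $c_1,c_2>0$. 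I expect this $\epsilon$-bookkeeping --- verifying that the stated threshold on $\mu$ is exactly the one that closes the square --- to be the main (and essentially the only) delicate point.

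The remaining steps are routine and parallel Propositions~\ref{prop:anna} and~\ref{prop:comp_adapt}. Since $\dot{V}\leq 0$ and $V$ is radially unbounded in each of $s$, $\tilde{\bv}$, $\hat{\bv}-\hat{\ba}$, these signals are bounded, hence $\hat{\bv},\hat{\ba},\tilde{\ba}$ are bounded; boundedness of $s$ together with the stable filter $s=(\tfrac{d}{dt}+\lambda)^{n-1}\tilde{x}$ of \eqref{eqn:s=0} forces $\tilde{\bx}$, and therefore $\bx$, to be bounded, so $\bY(\bx(t),t)$ is bounded and thus $\tilde{f}=\bY\tilde{\ba}\in\LL_\infty$ and $\dot{s}=-\eta s+\tilde{f}\in\LL_\infty$. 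Integrating $\dot{V}$ over $[0,t]$ gives $s\in\LL_2$, $\tilde{f}\in\LL_2$, and $(\hat{\bv}-\hat{\ba})\in\LL_2$. Finally, $s\in\LL_2\cap\LL_\infty$ with $\dot{s}\in\LL_\infty$ yields $s\to 0$ by Barbalat's Lemma (Lemma~\ref{lem:barbalat}), and the filter \eqref{eqn:s=0} then gives $\tilde{x}\to 0$, i.e. $\bx\to\bx_d$; alternatively this last part follows directly from Lemma~\ref{lem:conv} applied to the error model \eqref{eqn:s_dyn}. The PI realization described after \eqref{eqn:ho_comp_a} reproduces \eqref{eqn:ho_comp_v} exactly, so it does not alter any of the above.
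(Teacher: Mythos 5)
Your proposal is correct and matches the paper's own argument essentially step for step: identical Lyapunov-like function, the same cancellation via $\tilde{\bv} = \tilde{\ba} + (\hat{\bv}-\hat{\ba})$ to produce $-\eta s^2 - \frac{\kappa}{\gamma}\tilde{f}^2$ plus two indefinite cross terms, the same Young/complete-the-square domination of those cross terms (the paper parametrizes with $\epsilon_i \in (0,1)$ scaling the retained damping while you subtract $\epsilon_i$ directly, which is a cosmetic reparametrization), and the same appeal to boundedness, integration of $\dot V$, and Barbalat's Lemma (or Lemma~\ref{lem:conv}). Your observation that the stated bound on $\mu$ is the open infimum of the admissible $\epsilon_1,\epsilon_2$ region is the right way to see why the threshold is exactly $\frac{\gamma}{\beta}(\frac{1}{\eta}+\frac{\kappa}{\gamma})$, and it is the same fact the paper expresses by saying ``$\epsilon_1,\epsilon_2$ are arbitrary.''
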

The proof is given in Appendix~\ref{app2:ho_comp}.

\begin{rmk}
\label{rmk:matrix}
The signal $\sN(t)$ may be chosen alternatively to be matrix-valued as $\mathbf{N} = \bI + \mu \bY(\bx(t), t)^\T\bY(\bx(t), t)$.$\qeddef$
\end{rmk}
\begin{rmk}
The $\bY(\bx, t)\tilde{\ba}$ term may be used in isolation, with proof by consideration of the Lyapunov-like function $V(\ha, \hv) = \frac{1}{2}\Vert\tilde{\bv}\Vert^2 + \frac{1}{2}\Vert\hat{\ba} - \hat{\bv}\Vert^2$.$\qeddef$
\end{rmk}
\begin{rmk}
\label{rmk:gain_mat}
A gain matrix $\bP = \bP^\T > 0$ of appropriate dimension may be placed in front of $\bY(\bx, t)^\T$ in $\dot{\hv}$. The quadratic parameter estimation error terms in the Lyapunov function should then be replaced by the weighted terms $\frac{1}{2}\tilde{\bv}^\T\bP^{-1}\tilde{\bv} + \frac{1}{2}\left(\hat{\ba} - \hat{\bv}\right)^\T\bP^{-1}\left(\hat{\ba} - \hat{\bv}\right)$, and bounds on $\mu$ will be given in terms of $\Vert\bP\Vert$.$\qeddef$
\end{rmk}

\subsection{A momentum algorithm for nonlinearly parameterized adaptive control}
\label{ssec:ho}
We now use the development in Proposition~\ref{prop:int_ho} to present a momentum algorithm applicable when the unknown parameters appear nonlinearly in the dynamics. An analogy to methods for learning generalized linear models in statistics and machine learning is explored in Appendix~\ref{app2:glms}.

We study the momentum-like variant of (\ref{eqn:tyukin_alg})
\begin{equation}
    \ddot{\hat{\ba}} + \left(\beta \sN(t) - \frac{\dot{\sN}(t)}{\sN(t)}\right)\dot{\hat{\ba}} = - \gamma \beta \sN(t)\tilde{f}(\bx, \ha, \ba, t)\balf(\bx, t),
    \label{eqn:ho_tyukin}
\end{equation}
which, as before, admits an equivalent representation in terms of two first-order systems,
\begin{align}
    \label{eqn:ho_tyukin_v}
    \dot{\hat{\bv}} &= -\gamma\tilde{f}(\bx, \ha, \ba, t)\balf(\bx, t),\\
    \label{eqn:ho_tyukin_a}
    \dot{\hat{\ba}} &= \beta \sN(t) \left(\hat{\bv} - \hat{\ba}\right).
\end{align}
(\ref{eqn:ho_tyukin}) may be implemented through (\ref{eqn:ho_tyukin_v}) \& (\ref{eqn:ho_tyukin_a}) via the PI form (\ref{eqn:tyukin_pi_1})-(\ref{eqn:tyukin_pi_4}) applied to the $\hat{\bv}$ variable. (\ref{eqn:ho_tyukin}) may be obtained via the Bregman Lagrangian (\ref{eqn:breg_adaptive_comp}) for velocity gradient laws with momentum by choosing only the integral term. It is then necessary to modify the resulting Euler-Lagrange equations by setting $f_m'$ to $\pm 1$ based on monotonicity of $\tilde{f}(\bx, \ha, \ba, t)$ as described in Section~\ref{ssec:adaptive_basic_nlin}. The following proposition summarizes the properties of (\ref{eqn:ho_tyukin_v}) and (\ref{eqn:ho_tyukin_a}).

\begin{prop}
\label{prop:ho_tyukin}
Consider the algorithm (\ref{eqn:ho_tyukin}) or its equivalent form (\ref{eqn:ho_tyukin_v}) \& (\ref{eqn:ho_tyukin_a}) under Assumption~\ref{assmp:tyukin} with $\sN(t) = 1 + \mu \Vert\balf(\bx(t), t)\Vert^2$ and $\mu > \frac{\gamma D_1}{\beta}$. Then, all trajectories $(\bx(t), \hat{\ba}(t), \hat{\bv}(t))$ remain bounded, $\tilde{f}(\bx(\cdot), \ha(\cdot), \cdot) \in \LL_2$, $\left(\hat{\ba}(\cdot) - \hat{\bv}(\cdot)\right) \in \LL_2$, $s(\bx(\cdot), \bx_d(\cdot)) \in \LL_2\cap\LL_\infty$, $s(\bx(t), \bx_d(t)) \rightarrow 0$ and $\bx(t) \rightarrow \bx_d(t)$.
\end{prop}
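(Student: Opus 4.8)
The plan is to follow the template of Proposition~\ref{prop:int_ho}, using the parameter-estimation-only Lyapunov-like function
\begin{equation*}
    V = \frac{1}{2\gamma}\tilde{\bv}^\T\tilde{\bv} + \frac{1}{2\gamma}\left(\hat{\ba} - \hat{\bv}\right)^\T\left(\hat{\ba} - \hat{\bv}\right),
\end{equation*}
with $\tilde{\bv} = \hat{\bv} - \ba$. Differentiating along (\ref{eqn:ho_tyukin_v})--(\ref{eqn:ho_tyukin_a}) and using $\tilde{\bv} = \tilde{\ba} + \left(\hat{\bv} - \hat{\ba}\right)$ gives
\begin{equation*}
    \dot{V} = -\tilde{f}\,\tilde{\ba}^\T\balf - 2\tilde{f}\left(\hat{\bv}-\hat{\ba}\right)^\T\balf - \frac{\beta\sN}{\gamma}\left\Vert\hat{\bv}-\hat{\ba}\right\Vert^2 .
\end{equation*}
Assumption~\ref{assmp:tyukin} controls the first term: since $\tilde{f}\,\tilde{\ba}^\T\balf \geq 0$ together with $|\balf^\T\tilde{\ba}| \geq \frac{1}{D_1}|\tilde{f}|$ forces $-\tilde{f}\,\tilde{\ba}^\T\balf \leq -\frac{1}{D_1}\tilde{f}^2$. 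With $\sN = 1 + \mu\left\Vert\balf\right\Vert^2$ the damping term contributes $-\frac{\beta\mu}{\gamma}\left\Vert\balf\right\Vert^2\left\Vert\hat{\bv}-\hat{\ba}\right\Vert^2$ on top of $-\frac{\beta}{\gamma}\left\Vert\hat{\bv}-\hat{\ba}\right\Vert^2$, while Cauchy--Schwarz and Young's inequality bound the cross term by $a\tilde{f}^2 + \frac{1}{a}\left\Vert\balf\right\Vert^2\left\Vert\hat{\bv}-\hat{\ba}\right\Vert^2$ for an arbitrary $a > 0$. The hypothesis $\mu > \gamma D_1/\beta$ is precisely what makes the interval $\left(\gamma/(\beta\mu),\, 1/D_1\right)$ nonempty; choosing $a$ inside it makes both $\frac{1}{D_1}-a$ and $\frac{\beta\mu}{\gamma}-\frac{1}{a}$ positive, so that
\begin{equation*}
    \dot{V} \leq -\left(\frac{1}{D_1} - a\right)\tilde{f}^2 - \frac{\beta}{\gamma}\left\Vert\hat{\bv} - \hat{\ba}\right\Vert^2 \leq 0 .
\end{equation*}

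From there the reasoning is standard and parallels Propositions~\ref{prop:anna} and~\ref{prop:int_ho}. Boundedness of $V$ gives $\tilde{\bv},\ \left(\hat{\ba} - \hat{\bv}\right) \in \LL_\infty$ over the maximal interval of existence $[0,T_x]$ of $\bx$, hence $\hat{\bv} \in \LL_\infty$ and $\hat{\ba} \in \LL_\infty$ on that interval. Integrating the bound on $\dot{V}$ yields $\tilde{f} \in \LL_2$ and $\left(\hat{\ba} - \hat{\bv}\right) \in \LL_2$ over $[0,T_x]$. Lemma~\ref{lem:conv}, applied to the error model (\ref{eqn:s_dyn}) with $\tilde{f}\in\LL_2$ as input, then propagates boundedness to $\bx$ — so $T_x = \infty$ — and simultaneously delivers $s \in \LL_2\cap\LL_\infty$, $s\rightarrow 0$, $\tilde{f}\rightarrow 0$, and $\bx\rightarrow\bx_d$; once $\bx\in\LL_\infty$ globally, all of the $\LL_2$ and $\LL_\infty$ statements extend to all $t$. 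The equivalence of (\ref{eqn:ho_tyukin}) with (\ref{eqn:ho_tyukin_v})--(\ref{eqn:ho_tyukin_a}) and its PI implementability have already been argued in the surrounding text and need no repetition.

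The main obstacle is the completion-of-the-square step: one must notice that passing from $\hat{\ba}$ to the auxiliary momentum variable $\hat{\bv}$ produces the extra, a priori uncontrolled cross term $-2\tilde{f}\left(\hat{\bv}-\hat{\ba}\right)^\T\balf$ — present precisely because $\tilde{\bv}\neq\tilde{\ba}$ — and that the state-dependent gain $\sN = 1+\mu\left\Vert\balf\right\Vert^2$ is engineered to absorb it, with the threshold on $\mu$ pinned down by balancing the Young's-inequality split against the monotonicity constant $D_1$ furnished by Assumption~\ref{assmp:tyukin}. Everything else is bookkeeping analogous to the first-order Tyukin analysis.
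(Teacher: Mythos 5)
Your proof is correct and matches the paper's argument in all essentials: the same Lyapunov-like function $V = \frac{1}{2\gamma}\Vert\tilde{\bv}\Vert^2 + \frac{1}{2\gamma}\Vert\hat{\ba}-\hat{\bv}\Vert^2$, the same use of Assumption~\ref{assmp:tyukin} to control $-\tilde{f}\,\tilde{\ba}^\T\balf$, the same absorption of the cross term via the state-dependent gain $\sN$, and the same hand-off to Lemma~\ref{lem:conv}. The only cosmetic difference is that you invoke Young's inequality with a free parameter $a \in (\gamma/(\beta\mu),\,1/D_1)$ where the paper completes the square with a parameter $\epsilon\in(0,1)$ and sets $\mu = \gamma D_1/((1-\epsilon)\beta)$; these are the same estimate in different notation.
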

The proof is given in Appendix~\ref{app2:ho_tyukin}.

\begin{rmk}
Remarks~\ref{rmk:matrix}~\&~\ref{rmk:gain_mat} also apply in this setting. By following the proof of Proposition~\ref{prop:ho_tyukin}, one may also take $\sN(t)$ to be matrix-valued as $\mathbf{N}(t) = 1 + \mu \balf(\bx(t), t)\balf(\bx(t), t)^\T$. One may also use a gain matrix $\bP = \bP^\T > 0$ of appropriate dimension.$\qeddef$
\end{rmk}

\section{The elastic modification}
\label{ssec:elastic}
As noted in Section~\ref{ssec:ho_sg}, one prominent difference between Nesterov's accelerated method and the momentum algorithms presented in this work is the point at which the gradient is evaluated. This distinction suggests an alternative interpretation of the momentum algorithms in Section~\ref{ssec:ho_sg} as adding an additional averaging step that consists of passing the standard first-order parameter estimates through a low-pass filter in \textit{series}. It is well-known that iterate averaging schemes for stochastic gradient descent -- such as Polyak-Ruppert averaging~\citep{polyak_average, ruppert_average} and Polyak-Juditsky averaging~\citep{pol-jud} -- can improve convergence rates via variance reduction. The momentum algorithms of Section~\ref{ssec:ho_sg} provide a stable way to perform iterate averaging in nonlinear adaptive control with an exponentially weighted average similar to~\citet{lorenzo_average}, and show that the averaging rate must depend on the norm of a system signal such as $\bY(\bx, t)$ for stability.

An alternative option to perform iterate averaging would be to couple the parameter estimates to the output of a filter of the parameter estimates in \textit{feedback}. Incorporating this type of feedback coupling is similar in spirit to the elastic averaging SGD algorithm of~\citet{easgd}. Here we show that adding such feedback coupling can always be done for velocity gradient algorithms and velocity gradient algorithms with momentum. We focus on local velocity gradient algorithms: extensions to integral functionals and nonlinearly parameterized dynamics are straightforward combining the proof techniques here with those of Proposition~\ref{prop:int_ho}. Similar to iterate averaging methods in optimization, this approach may improve convergence in settings with sparse, noisy observations, such as real-world applications of the prediction methods developed in Section~\ref{sec:obs_predict}.

\begin{prop}
\label{prop:sg_local_elastic}
Consider the adaptive control algorithm
\begin{align*}
    \dot{\ha} &= -\bP\nabla_{\ha}\dot{Q}(\bx, \ha, t) + k\left(\overline{\ba} - \ha\right),\\
    \dot{\overline{\ba}} &= k\left(\ha - \overline{\ba}\right).
\end{align*}
under Assumptions \ref{assmp:sg1}-\ref{assmp:sg3} for $k \geq 0$ and $\bP > 0$. Then all solutions $(\bx(t), \hat{\ba}(t), \overline{\ba}(t))$ remain bounded, $\left(\ha(\cdot) - \overline{\ba}(\cdot)\right) \in \LL_2$, and for all $\bx(0) \in \mathbb{R}^n$,
\begin{equation*}
    \lim_{t\rightarrow\infty} Q(\bx(t), t) = 0.
\end{equation*}
\end{prop}
The proof is given in Appendix~\ref{app2:prop:sg_local_elastic}. We may also state a similar result with momentum, allowing for coupling to a filtered version of both $\hv$ and $\ha$.

\begin{prop}
\label{prop:sg_local_mom_elastic}
Consider the adaptive control algorithm
\begin{align*}
    \dot{\hv} &= -\bP\nabla_{\ha}\dot{Q}(\bx, \ha, t) + k_v\left(\overline{\bv} - \hv\right),\\
    \dot{\overline{\bv}} &= k_v\left(\ha - \overline{\ba}\right),\\
    \dot{\ha} &= \beta\sN(t)\left(\hv - \ha\right) + k_a\beta\sN(t)\left(\overline{\ba} - \ha\right)\\
    \dot{\overline{\ba}} &= k_a\beta\sN(t)\left(\ha - \overline{\ba}\right)
\end{align*}
under Assumptions~\ref{assmp:sg1}-\ref{assmp:sg3}. Fix $k_v \geq 0, k_a \geq 0, \gamma > 0,$ and $\beta \geq 2 k_v$. Then all solutions $(\bx(t), \hat{\ba}(t), \overline{\ba}(t), \hv(t), \overline{\bv}(t))$ remain bounded, $\ha(\cdot) - \overline{\ba}(\cdot) \in \LL_2$, $\ha(\cdot) - \hv(\cdot) \in \LL_2$, $\hv(\cdot) - \overline{\bv}(\cdot) \in \LL_2$, and for all $\bx(0) \in \mathbb{R}^n$,
\begin{equation*}
    \lim_{t\rightarrow\infty} Q(\bx(t), t) = 0.
\end{equation*}
\end{prop}
The proof is given in Appendix~\ref{app2:prop:sg_local_mom_elastic}.

\section{Natural momentum algorithms}
\label{sec:ho_nat}
The Bregman Lagrangian allows for the introduction of non-Euclidean metrics. In Section~\ref{ssec:breg}, we took the potential function $\psi$ to be the Euclidean norm, $\psi(\bx) = \frac{1}{2}\Vert\bx\Vert^2$. We now show that taking $\psi$ to be an arbitrary strongly convex function leads to a more general class of algorithms that can be seen as the higher-order variants of those discussed in Sections~\ref{ssec:fo_rie_adapt} and~\ref{sec:nat}. With the same definitions of $\bar{\alpha}, \bar{\gamma}$, and $\bar{\beta}$ as in Section~\ref{ssec:breg}, but now taking $\psi$ to be arbitrary in $\bregd{\cdot}{\cdot}$, the Bregman Lagrangian (\ref{eqn:breg_lag_sg}) takes the form
\begin{equation}
    \LL(\ha, \dot{\ha}, t) = e^{\int_0^t\beta \sN(t')dt'}\left(\beta \sN(t) \bregd{\ha + \frac{\dot{\ha}}{\beta \sN(t)}}{\ha} - \gamma \frac{d}{dt}Q(\bx, t)\right),
    \label{eqn:breg_rie}
\end{equation}
with a similar expression for the integral variant. In \eqref{eqn:breg_rie}, $\bregd{\cdot}{\cdot}$ has its first argument evaluated at $\ha + \frac{\dot{\ha}}{\beta\sN(t)}$. This quantity is precisely $\hv$, which leads to a particularly simple form of first-order equations, as highlighted in the following propositions.

We start with the case of a local functional, which requires a modification of Assumption \ref{assmp:local_ho_cond}.

\begin{assumption}
\label{assmp:local_ho_cond_rie}
There exists a time-dependent signal $N(t)$ and non-negative scalar values $\beta \geq 0, \mu \geq 0$ such that the time-derivative of the goal functional evaluated at the true parameters, $\dot{Q}(\bx, \ba, t)$, satisfies the following inequality
\begin{equation}
    \dot{Q}(\bx, \ba, t) - \frac{\beta\mu}{\gamma}N(t)\Vert\hat{\ba} - \hat{\bv}\Vert^2 + \left(\hat{\ba} - \hat{\bv}\right)^\T\left(\bI + \left[\nabla^2\psi(\hv)\right]^{-1}\right)\nabla_{\ha}\dot{Q}(\bx, \ha, t) \leq -\rho(Q(\bx, t)).
    \label{eqn:local_ho_cond_rie}
\end{equation}
In (\ref{eqn:local_ho_cond_rie}), $\rho(\cdot)$ is positive definite, continuous in its argument, and satisfies $\rho(0) = 0$.$\qeddef$
\end{assumption}

Assumption \ref{assmp:local_ho_cond_rie} leads to the following non-Euclidean equivalent of Proposition~\ref{prop:local_ho_sg}.
\begin{prop}
\label{prop:local_ho_sg_rie}
Consider the adaptive control algorithm
\begin{equation}
    \ddot{\ha} + \left(\beta \sN(t) - \frac{\dot{\sN}(t)}{\sN(t)}\right)\dot{\ha} + \gamma\beta\sN(t)\nabla^2\psi\left(\ha + \frac{\dot{\ha}}{\beta\sN(t)}\right)^{-1}\nabla_{\ha}\dot{Q}(\bx, \ha, t) = 0
    \label{eqn:local_ho_sg_rie}
\end{equation}
or its equivalent first order form
\begin{align}
    \label{eqn:vdot_rie}
    \dot{\hv} &= -\gamma\left[\nabla^2\psi(\hv)\right]^{-1}\nabla_{\ha}\dot{Q}(\bx, \ha, t),\\
    \label{eqn:adot_rie}
    \dot{\ha} &= \beta\sN(t)\left(\hv - \ha\right),
\end{align}
and assume $Q(\bx, t)$ satisfies Assumptions \ref{assmp:sg1}, \ref{assmp:sg3}, and \ref{assmp:local_ho_cond_rie}. Then all solutions $(\bx(t), \hat{\bv}(t), \hat{\ba}(t))$ remain bounded, $\left(\hat{\ba}(\cdot) - \hat{\bv}(\cdot)\right) \in \LL_2$, and $\lim_{t\rightarrow \infty} Q(\bx(t), t) = 0$.
\end{prop}
The proof is given in Appendix~\ref{app2:prop:local_ho_sg_rie}. A particularly satisfying property of the above algorithm is that taking the $\beta\rightarrow\infty$ limit recovers the first-order non-Euclidean velocity gradient algorithm. 

For the integral variant, we require a modified version of Assumption ~\ref{assmp:int_ho_cond}.
\begin{assumption}
\label{assmp:int_ho_cond_rie}
$R(\bx, \hat{\ba}, t) \geq 0$ for all $\bx, \hat{\ba}$, and $t$, and is uniformly continuous in $t$ for bounded $\bx$ and $\ha$. $\nabla_{\ha}R(\bx, \ha, t)$ is locally bounded in $\bx$ and $\ha$ uniformly in $t$. Furthermore, there exists a time-dependent signal $N(t)$ and non-negative scalar values $\beta \geq 0, \mu \geq 0$ such that, for all $\ha$ and $\hv$ and for some constant $k > 0$,
\begin{equation*}
    R(\bx, \ba, t) - R(\bx, \hat{\ba}, t) - \frac{\beta\mu}{\gamma}N(t)\Vert\hat{\ba}-\hat{\bv}\Vert^2 + \left(\hat{\ba}-\hat{\bv}\right)^\T\left(\bI + \left[\nabla^2\psi\left(\hv\right)\right]^{-1}\right)\nabla_{\hat{\ba}}R(\bx, \hat{\ba}, t) \leq -k R(\bx, \hat{\ba}, t).\qeddef
\end{equation*}
\end{assumption}
With Assumption \ref{assmp:int_ho_cond_rie}, we can state the following proposition.
\begin{prop}
\label{prop:int_ho_rie}
Consider the algorithm
\begin{equation*}
    \ddot{\ha} + \left(\beta\sN(t) - \frac{\dot{\sN}(t)}{\sN(t)}\right)\dot{\ha} + \gamma\beta\sN(t)\left[\nabla^2\psi\left(\ha + \frac{\dot{\ha}}{\beta\sN(t)}\right)\right]^{-1}\nabla_{\ha}R(\bx, \ha, t) = 0
\end{equation*}
or its equivalent first-order form 
\begin{align*}
    \dot{\hv} &= -\gamma\left[\nabla^2\psi(\hv)\right]^{-1}\nabla_{\ha}R(\bx, \ha, t),\\
    \dot{\ha} &= \beta\sN(t)\left(\hv - \ha\right),
\end{align*}
along with Assumptions ~\ref{assmp:sg3} \& \ref{assmp:int_ho_cond_rie}. Let $T_x$ denote the maximal interval of existence of $\bx(t)$. Then, $\hat{\bv}(t)$ and $\hat{\ba}(t)$ remain bounded for $t\in[0, T_x)$ and  $\left(\hat{\ba}(\cdot) -\hat{\bv}(\cdot)\right) \in \LL_2$ over this interval. Moreover, $\int_0^{T_x} R(\bx(t'), \hat{\ba}(t'), t')dt' < \infty$. For any bounded solution $\bx(t)$, these conclusions hold for all $t$ and $\lim_{t\rightarrow \infty}R(\bx(t), \ha(t), t) = 0$.
\end{prop}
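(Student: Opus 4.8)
The plan is to reproduce the proof of Proposition~\ref{prop:int_ho} almost verbatim, replacing the Euclidean parameter-estimation-error term $\frac{1}{2\gamma}\tilde{\bv}^\T\tilde{\bv}$ by the Bregman divergence $\frac{1}{\gamma}\bregd{\ba}{\hv}$, exactly as Proposition~\ref{prop:local_ho_sg_rie} does in the local case. Concretely, I would take the Lyapunov-like function
\[
    V = \frac{1}{\gamma}\bregd{\ba}{\hv} + \frac{1}{2\gamma}\left(\hat{\ba}-\hat{\bv}\right)^\T\left(\hat{\ba}-\hat{\bv}\right).
\]

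First I would differentiate. As in the proof of Proposition~\ref{prop:implicit_reg}, $\frac{d}{dt}\bregd{\ba}{\hv} = -\left(\frac{d}{dt}\nabla\psi(\hv)\right)^\T\left(\ba - \hv\right)$; the first-order form $\dot{\hv} = -\gamma\left[\nabla^2\psi(\hv)\right]^{-1}\nabla_{\ha}R(\bx, \ha, t)$ gives $\frac{d}{dt}\nabla\psi(\hv) = \nabla^2\psi(\hv)\dot{\hv} = -\gamma\nabla_{\ha}R(\bx, \ha, t)$, so that $\frac{1}{\gamma}\frac{d}{dt}\bregd{\ba}{\hv} = -\nabla_{\ha}R(\bx, \ha, t)^\T\tilde{\bv}$. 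Differentiating the quadratic term using $\dot{\ha} - \dot{\hv} = \beta\sN(\hv - \ha) + \gamma\left[\nabla^2\psi(\hv)\right]^{-1}\nabla_{\ha}R(\bx,\ha,t)$ and decomposing $\tilde{\bv} = \tilde{\ba} - \left(\hat{\ba} - \hat{\bv}\right)$ collects, with $\sN = 1 + \mu N(t)$, to
\begin{align*}
    \dot V &= -\tilde{\ba}^\T\nabla_{\ha}R(\bx, \ha, t) + \left(\hat{\ba} - \hat{\bv}\right)^\T\left(\bI + \left[\nabla^2\psi(\hv)\right]^{-1}\right)\nabla_{\ha}R(\bx, \ha, t) \\
    &\quad - \frac{\beta}{\gamma}\Vert\hat{\ba} - \hat{\bv}\Vert^2 - \frac{\beta\mu}{\gamma}N(t)\Vert\hat{\ba} - \hat{\bv}\Vert^2.
\end{align*}

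Next I would bound $\dot V$: convexity of $R$ in its second argument (Assumption~\ref{assmp:sg3}) gives $-\tilde{\ba}^\T\nabla_{\ha}R(\bx, \ha, t) \leq R(\bx, \ba, t) - R(\bx, \ha, t)$, and Assumption~\ref{assmp:int_ho_cond_rie} then yields $\dot V \leq -kR(\bx, \ha, t) - \frac{\beta}{\gamma}\Vert\hat{\ba} - \hat{\bv}\Vert^2 \leq 0$. From here the argument is identical to the proof of Proposition~\ref{prop:int_ho}: boundedness of $V$ gives boundedness of $\hv$ and $\ha$ on $[0, T_x)$; integrating $\dot V$ gives $\left(\hat{\ba}-\hat{\bv}\right) \in \LL_2$ over this interval and $\int_0^{T_x} R(\bx(t'), \ha(t'), t')\,dt' < \infty$; for a bounded solution $\bx$ these integrals extend to $\infty$; writing $\bx(t) - \bx(s)$, $\hv(t) - \hv(s)$, and $\ha(t) - \ha(s)$ in integral form and using the local boundedness of $\nabla_{\ha}R$ (hence of $\dot{\hv}$) together with Assumption~\ref{assmp:bound} shows these signals are uniformly continuous in $t$; Barbalat's Lemma (Lemma~\ref{lem:barbalat}) finally gives $R(\bx(t), \ha(t), t) \rightarrow 0$.

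The only step requiring genuine care is the differentiation: I must track how the $\hv$-dependent inverse Hessian $\left[\nabla^2\psi(\hv)\right]^{-1}$, which replaces the constant identity of the Euclidean setting, propagates through $\dot{\ha} - \dot{\hv}$ so that the cross terms assemble precisely into the combination $\left(\bI + \left[\nabla^2\psi(\hv)\right]^{-1}\right)\nabla_{\ha}R(\bx, \ha, t)$ that Assumption~\ref{assmp:int_ho_cond_rie} is tailored to absorb. Everything else transfers mechanically from Propositions~\ref{prop:int_ho} and~\ref{prop:local_ho_sg_rie}.
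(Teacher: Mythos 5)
Your proposal is correct and takes essentially the same route as the paper: you use the same Lyapunov-like function $V = \frac{1}{\gamma}\bregd{\ba}{\hv} + \frac{1}{2\gamma}\Vert\ha - \hv\Vert^2$, carry out the same differentiation (correctly tracking how $[\nabla^2\psi(\hv)]^{-1}$ produces the combination $(\bI + [\nabla^2\psi(\hv)]^{-1})\nabla_{\ha}R$ that Assumption~\ref{assmp:int_ho_cond_rie} absorbs), apply convexity of $R$ and that assumption to get $\dot V \leq -kR - \frac{\beta}{\gamma}\Vert\ha-\hv\Vert^2$, and then invoke the remainder of Proposition~\ref{prop:int_ho}'s argument verbatim.
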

 The proof is given in Appendix~\ref{app2:prop:int_ho_rie}. Propositions~\ref{prop:local_ho_sg_rie}~\&~\ref{prop:int_ho_rie} may be applied to derive non-Euclidean variants of the non-filtered composite algorithm and the momentum algorithm for nonlinearly parameterized adaptive control presented in this work\footnote{The nonlinearly parameterized algorithm requires slight modification, accounting for monotonicity.}. As a concrete example, these general theorems immediately lead to stable adaptation laws for the $n^{\text{th}}$ order system \eqref{eqn:gen_sys} in the linearly parameterized setting given by
 \begin{equation*}
    \ddot{\hat{\ba}} + \left(\beta \sN(t) - \frac{\dot{\sN}(t)}{\sN(t)}\right)\dot{\hat{\ba}} + \gamma \beta \sN(t) \left(\nabla^2\psi\left(\hat{\ba} + \frac{\dot{\hat{\ba}}}{\beta \sN(t)}\right)\right)^{-1}\bY(\bx, t)^\T s(\bx, \bx_d(t)) = 0.
\end{equation*}
or the equivalent system of equations
\begin{align*}
    \dot{\hat{\bv}} &= -\gamma\left(\nabla^2\psi\left(\hat{\bv}\right)\right)^{-1}\bY(\bx, t)^\T s(\bx, \bx_d(t)),\\
    \dot{\hat{\ba}} &= \beta \sN(t)\left(\hat{\bv} - \hat{\ba}\right).
\end{align*}
 
\begin{rmk}
There are several alternative forms of natural adaptive algorithms with momentum that can also be proven to converge. For example, with similar techniques, it can be shown that the algorithm
\begin{align*}
     \dot{\hat{\bv}} &= -\gamma\left(\nabla^2\psi(\hat{\bv})\right)^{-1}\nabla_{\ha}\dot{Q}(\bx, \ha, t),\\
    \dot{\hat{\ba}} &= \beta \sN(t) \left(\nabla^2\psi(\ha)\right)^{-1}\left(\nabla\psi(\hv) - \nabla\psi(\ha)\right),
\end{align*}
is also globally convergent. This algorithm is equivalent to performing the Euclidean variant entirely in the mirrored domain; it can be be re-written
\begin{align*}
    \frac{d}{dt}\nabla\psi(\hv) &= -\gamma \nabla_{\ha}\dot{Q}(\bx, \ha, t),\\
    \frac{d}{dt}\nabla\psi(\ha) &= \beta \sN(t) \left(\nabla\psi(\hv) - \nabla\psi(\ha)\right),
\end{align*}
which shows that $\nabla\psi(\ha)$ obtains the same values over time as $\ha$ computed via algorithm (\ref{eqn:ho_local}). A second modification with $\dot{\ha}$ driven by $\hv$ rather than $\nabla\psi(\hv)$ is given by
\begin{align*}
    \dot{\hat{\bv}} &= -\gamma\left(\nabla^2\psi(\hat{\bv})\right)^{-1}\nabla_{\ha}\dot{Q}(\bx, \ha, t),\\
    \dot{\hat{\ba}} &= \beta \sN(t) \left(\nabla^2\psi(\hat{\ba})\right)^{-1}\left(\hv - \ha\right).\qeddef
\end{align*}
\end{rmk}
\begin{rmk}
\label{rmk:rie_PI}
Discretization of the $\dot{\ha}$ and $\dot{\hv}$ dynamics directly results in a natural gradient-like update~\citep{amari}, while discretization of the $\frac{d}{dt}\nabla\psi(\ha)$ and $\frac{d}{dt}\nabla\psi(\hv)$ dynamics leads to a mirror descent-like update~\citep{beck_teb, nem_yud}; these discrete-time algorithms have the same continuous-time limit~\citep{krich}.$\qeddef$
\end{rmk}

\begin{rmk}
The implicit regularization properties of the first-order algorithms considered in Section~\ref{sec:nat} extend to the higher-order setting captured by algorithm (\ref{eqn:local_ho_sg_rie}). The assumption that $\ha(t)\rightarrow\ha_\infty$ can be used to conclude $\dot{\ha}(t)\rightarrow 0$. As noted in Section~\ref{ssec:breg}, $\hv(t) = \ha(t) + \frac{\dot{\ha}(t)}{\beta\sN(t)}$, and this shows that $\hv(t) \rightarrow \ha_\infty$. Because $\dot{\hv}$ in (\ref{eqn:vdot_rie}) is identical to algorithm (\ref{eqn:vdot_local}), the result follows. A formal statement of this fact is provided in Appendix~\ref{app2:ho_imp_reg}$\qeddef$
\end{rmk}

\section{Simulations}
In this section, we perform several numerical experiments demonstrating the validity of our theory, and consider a number of applications of our non-Euclidean adaptive laws.
\label{sec:sim}
\subsection{Convergence and implicit regularization of a momentum algorithm for nonlinearly parameterized systems}
\label{ssec:sim_nlin}
We first empirically verify the global convergence and implicit regularization of the momentum algorithm for nonlinearly parameterized systems (\ref{eqn:ho_tyukin}). In particular, we consider a second order system 
\begin{align*}
    \dot{x}_1 &= x_2,\\
    \dot{x}_2 &= u(\bx, \ha, t) - f(\bx, \ba),
\end{align*}
with an unknown system dynamics of the form
\begin{equation}
    f(\bx, \ba) = \sigma\left(\tanh\left(\mathbf{V}\bx\right)^\T\ba\right).
    \label{eqn:nn}
\end{equation}
Equation (\ref{eqn:nn}) represents a three-layer neural network with input layer $\bx$, hidden layer weights $\mathbf{V}$, hidden layer nonlinearity $\tanh(\cdot)$, hidden layer weights $\ba$, and output nonlinearity $\sigma(x) = e^{.1x}$. The system model (\ref{eqn:nn}) can clearly be seen to satisfy Assumption~\ref{assmp:tyukin} with $\balf(\bx) = \tanh(\mathbf{V}\bx)$\footnote{While the exponential is not globally Lipschitz continuous, it is locally.}. We apply the non-Euclidean variant of algorithm~\eqref{eqn:ho_tyukin} with $\psi(\cdot) = \frac{1}{2}\norm{\cdot}_p^2$ with $p \in \{1.1, 2, 4, 6, 10\}$ where $p=1.1$ represents an approximation to the $\ell_1$ norm and $p=2$ represents the Euclidean variant. Further simulation details, including an implementation of the algorithm in PI form, are provided in Appendix~\ref{app2:sim_nlin}.

The tracking error for each choice of $\psi$ along with a baseline comparison to fixed $\ha(t) = \ha(0)$ is shown in Figure~\ref{fig:ptraj}A. Figures~\ref{fig:ptraj}B-F show trajectories for $100$ out of the $500$ parameters. The timescale on each axis is set to show the trajectories approximately until the parameters converge for the given algorithm. Each case results in remarkably different dynamics and resulting converged parameter vectors $\ha_\infty^\psi$. The tracking performance is good for each algorithm.

Further insight can be gained into the structure of the parameter vector $\ha_\infty^\psi$ found by each adaptation algorithm by consideration of the histograms (rug plots shown on $x$ axis) for $\ha$ at the end of the simulation in Figures~\ref{fig:hist}A-F. Figure~\ref{fig:hist}A shows the true parameter vector. The choice of $\psi(\cdot) = \frac{1}{2}\Vert\cdot\Vert_{1.1}^2$ in Figure~\ref{fig:hist}B leads to a sparse solution with most of the weight placed on a few parameters. This is consistent with $\ell_1$ regularized solutions found by the LASSO algorithm~\citep{LASSO}. The inset displays a closer view around zero. The choice of $\psi(\cdot) = \frac{1}{2}\Vert\cdot\Vert_2^2$ in Figure~\ref{fig:hist}C (Euclidean adaptation law) leads to a parameter vector $\ha_\infty^{\frac{1}{2}\Vert\cdot\Vert_2^2}\neq\ba$ that is roughly Gaussian distributed. This distribution highlights the implicit $\ell_2$ regularization of standard adaptation laws. The progression from $\psi(\cdot) = \frac{1}{2}\Vert\cdot\Vert_4^2$ to $\psi(\cdot) = \frac{1}{2}\Vert\cdot\Vert_{10}^2$ displays a trend towards approximate $\ell_\infty$-norm regularization: the distribution of parameters is pushed to be bimodal and peaked around $\pm 1$, and the $\ell_\infty$ norm of $\ha_\infty$ decreases as $p$ is increased.

Figure~\ref{fig:control}A shows the function approximation error $\tilde{f}^2(\bx(t), \ha(t), \ba)$ for each algorithm along with a reference value for fixed $\ha(t) = \ha(0)$. Each algorithm, as expected by our theory and seen by the low tracking error in Figure~\ref{fig:ptraj}A, pushes $\tilde{f}(\bx(t), \ha(t), \ba)^2$ to zero despite the different forms of regularization imposed on the parameter vectors. Figures~\ref{fig:control}B-F show the control input as a function of time along with the unique ``ideal'' control law $u(t) = \ddot{x}_d(t) + f(\bx_d(t), \ba)$ valid when $\bx(0) = \bx_d(0)$. All control inputs can be seen to converge to the ideal law, though the rate of convergence depends on the choice of algorithm. The control input is of reasonable magnitude throughout adaptation for each algorithm.

\begin{figure}
    \begin{tabular}{cc}
        \begin{overpic}[width=.47\textwidth]{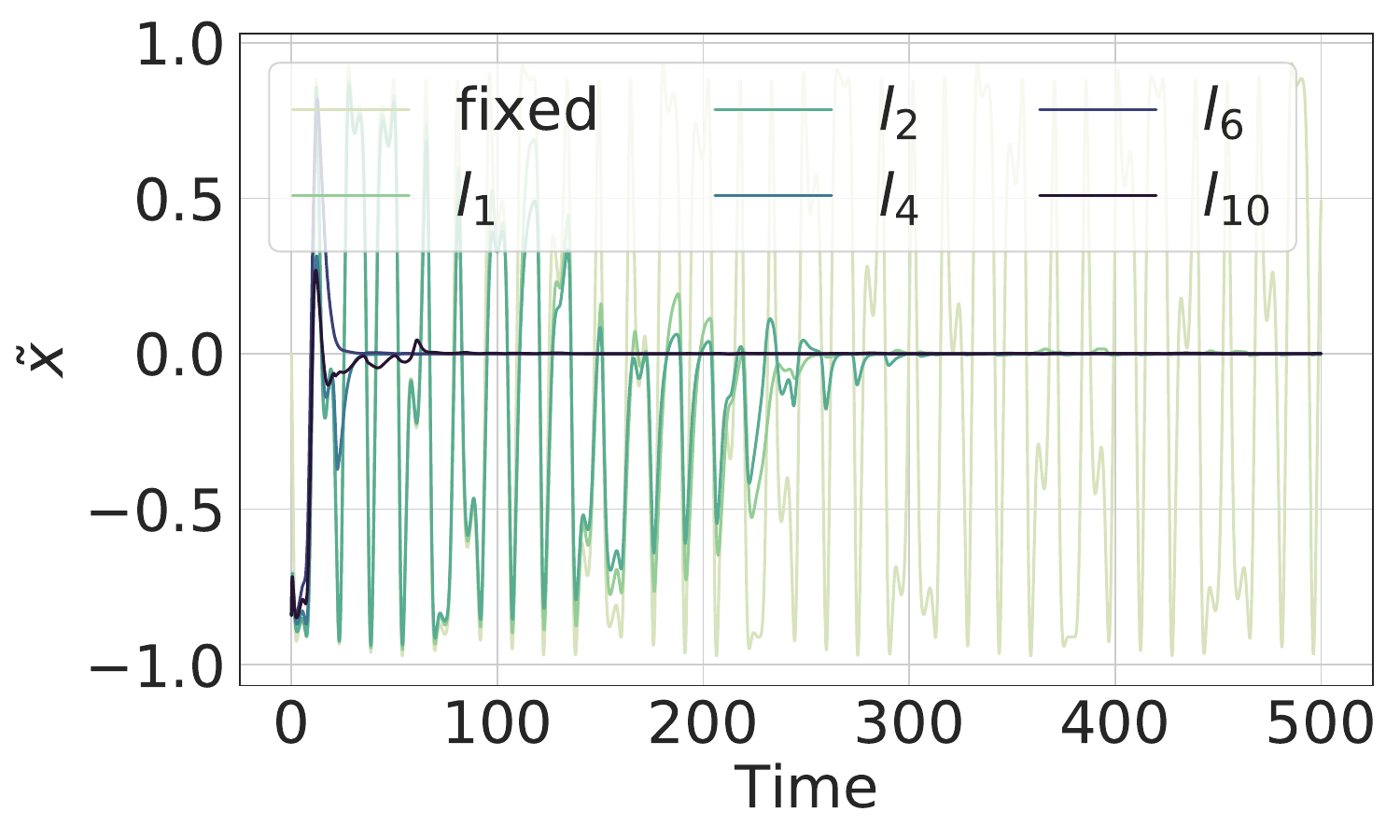}
        \put(5, 60){\textbf{A}}
        \end{overpic} &
         \begin{overpic}[width=.47\textwidth]{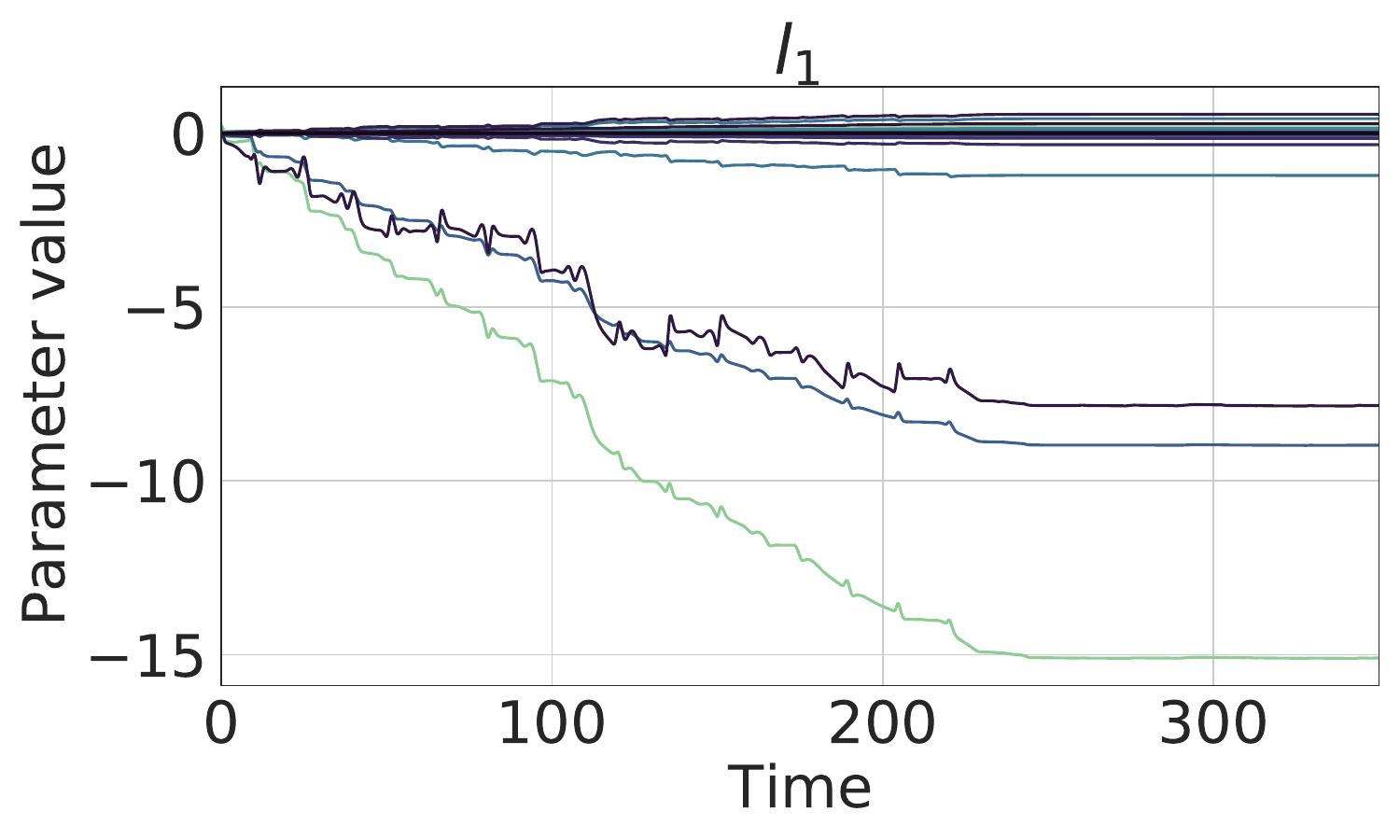}
         \put(5, 60){\textbf{B}}
         \end{overpic}
         \\
         \begin{overpic}[width=.47\textwidth]{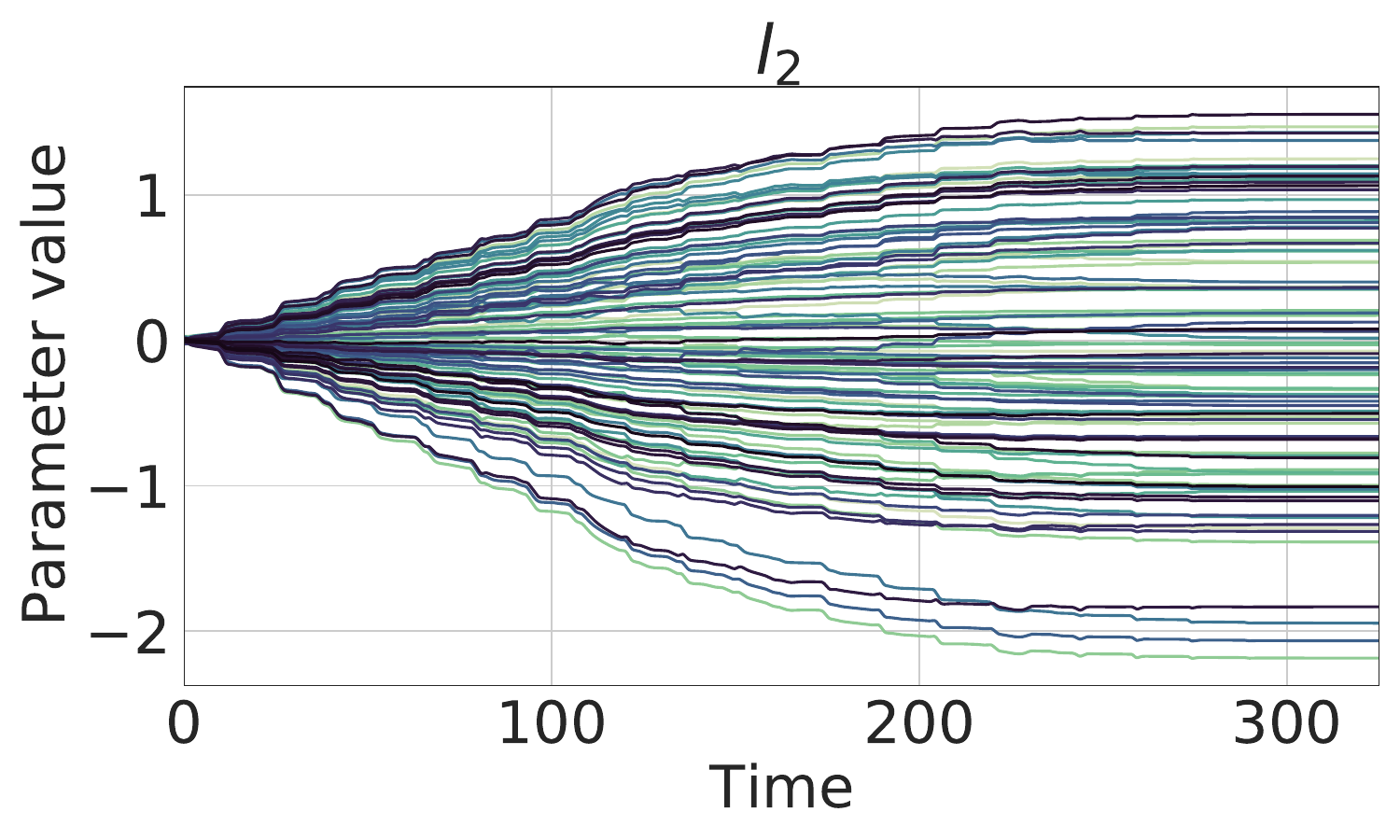} 
         \put(5, 60){\textbf{C}}
         \end{overpic}&
         \begin{overpic}[width=.47\textwidth]{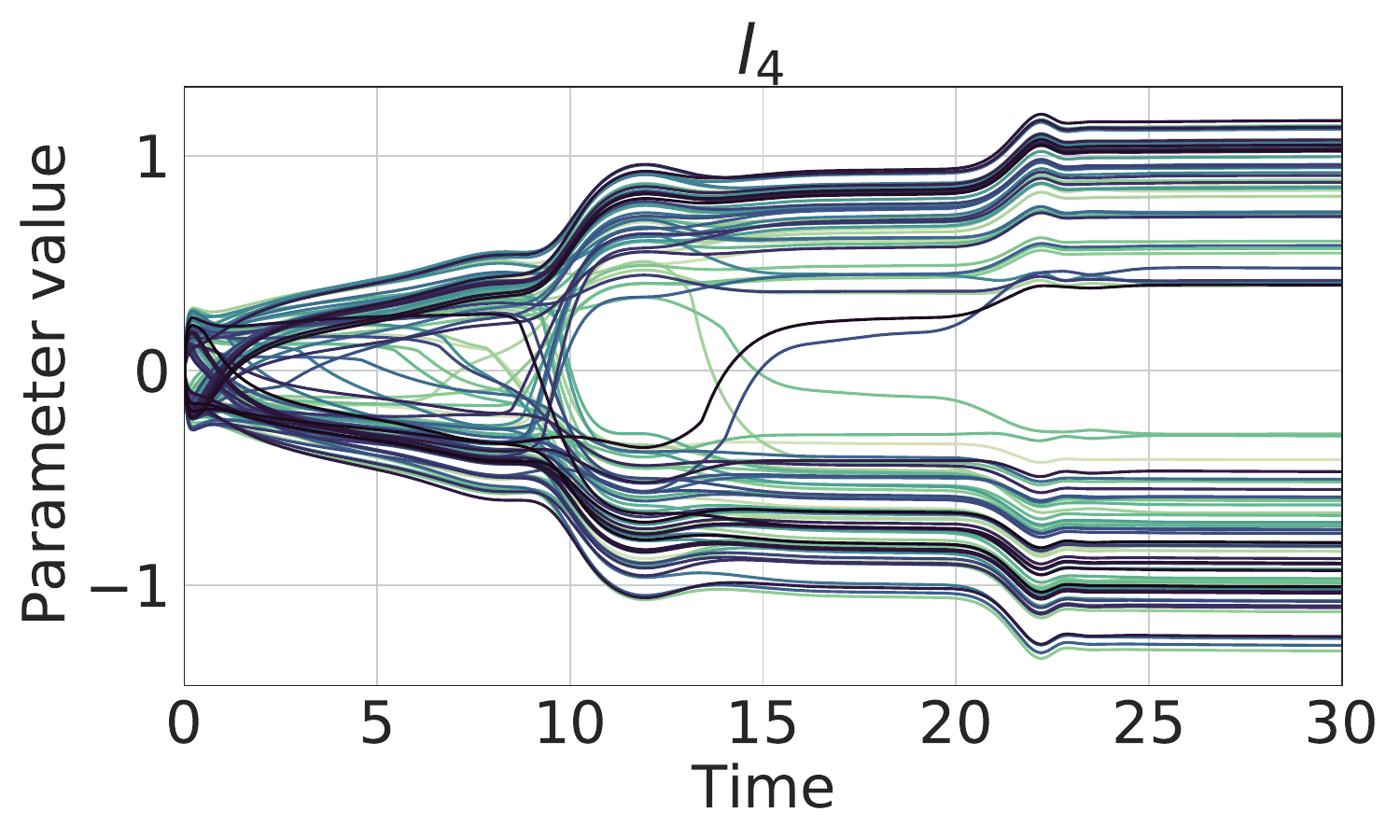} 
         \put(5, 60){\textbf{D}}
         \end{overpic}\\
         \begin{overpic}[width=.47\textwidth]{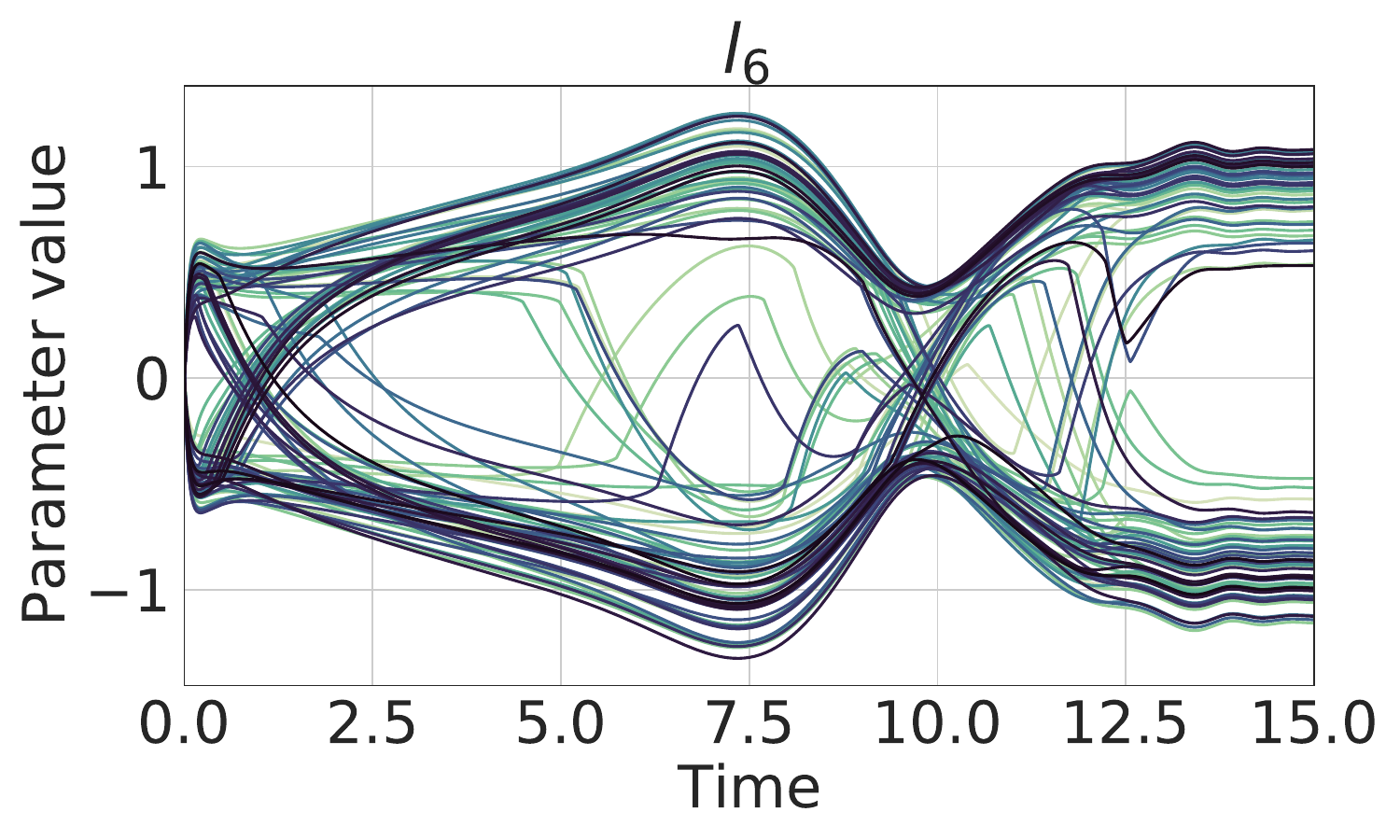} 
         \put(5, 60){\textbf{E}}
         \end{overpic}&
         \begin{overpic}[width=.47\textwidth]{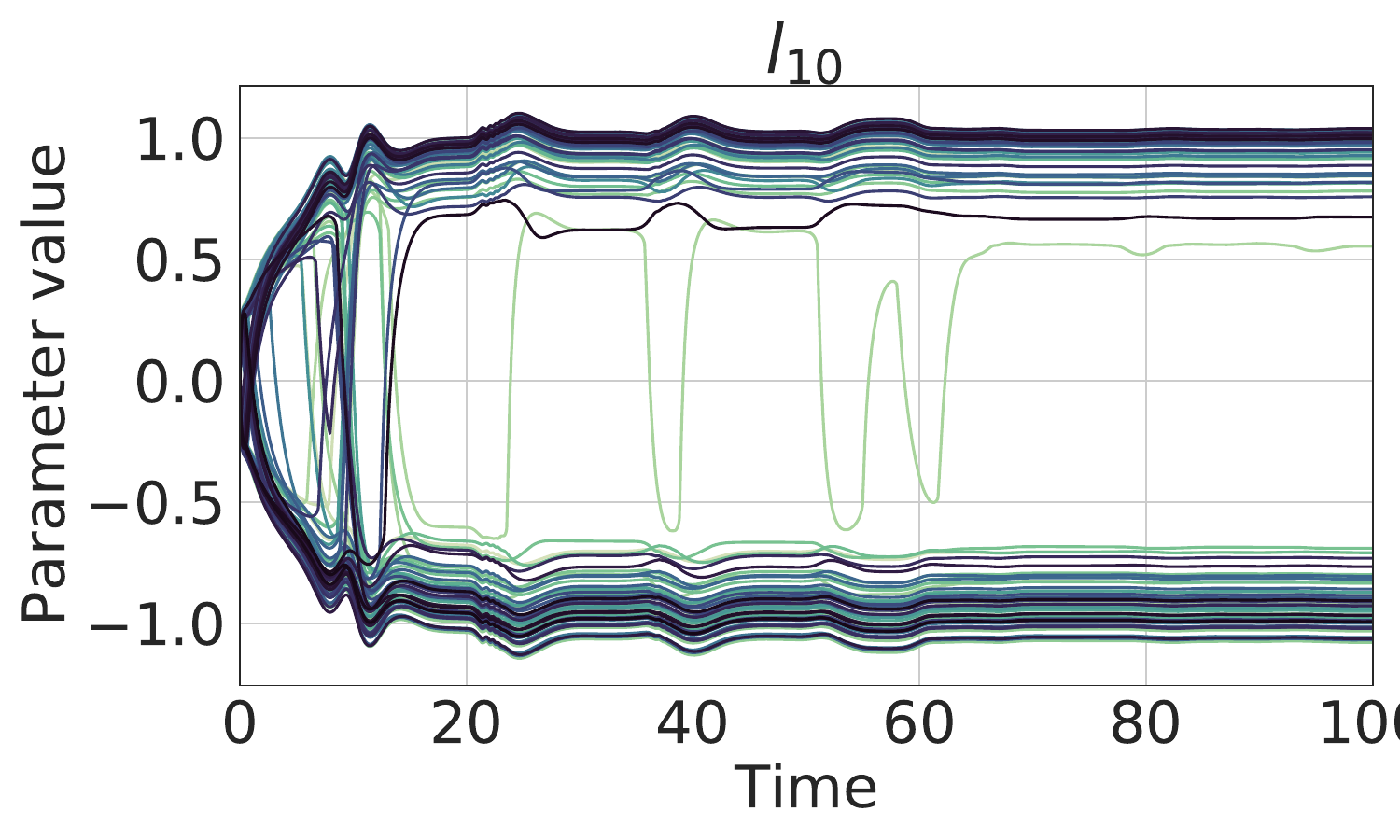}
         \put(5, 60){\textbf{F}}
         \end{overpic}
    \end{tabular}
    \caption{\textbf{Tracking error and parameter trajectories.} (A) Trajectory tracking error. All algorithms result in convergence $\bx\rightarrow\bx_d$, though transient performance differs between the algorithms. (B-F) Parameter trajectories for $100/500$ of the total parameters. Each algorithm results in remarkably different parameter trajectories and final values $\ha_\infty^\psi$.}
    \label{fig:ptraj}
\end{figure}

\begin{figure}
    \begin{tabular}{cc}
        \begin{overpic}[width=.47\textwidth]{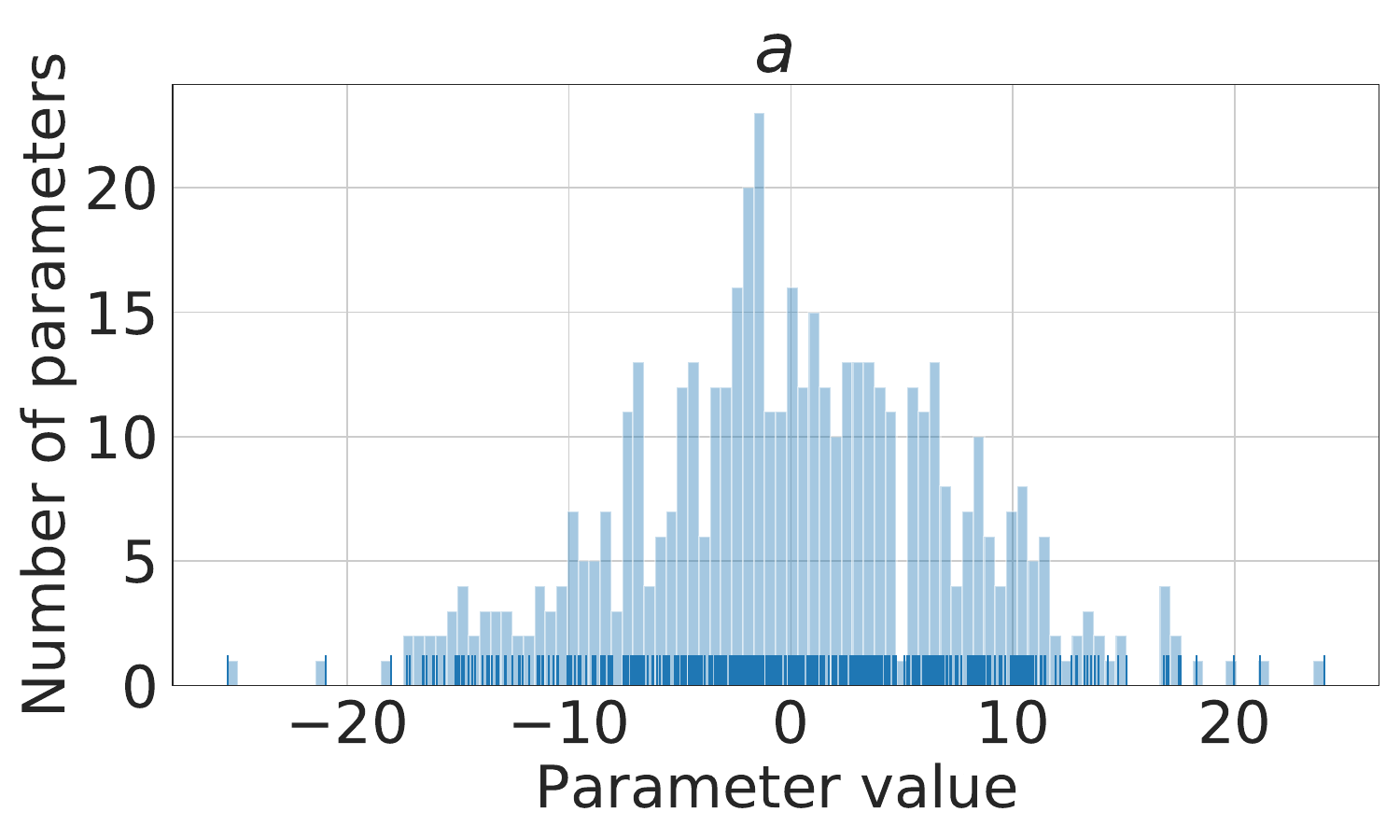}
        \put(5, 60){\textbf{A}}
        \end{overpic} &
         \begin{overpic}[width=.47\textwidth]{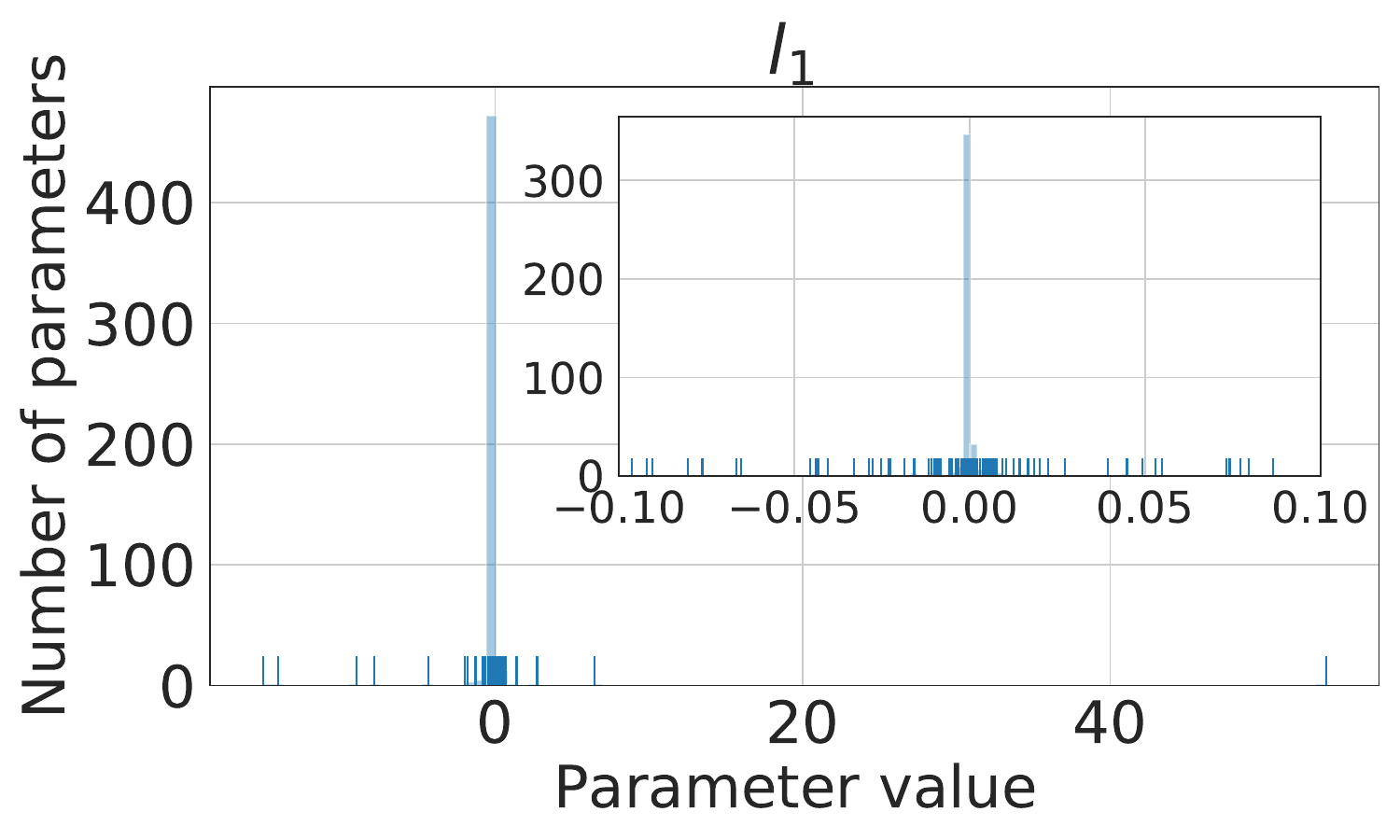}
         \put(5, 60){\textbf{B}}
         \end{overpic}
         \\
         \begin{overpic}[width=.47\textwidth]{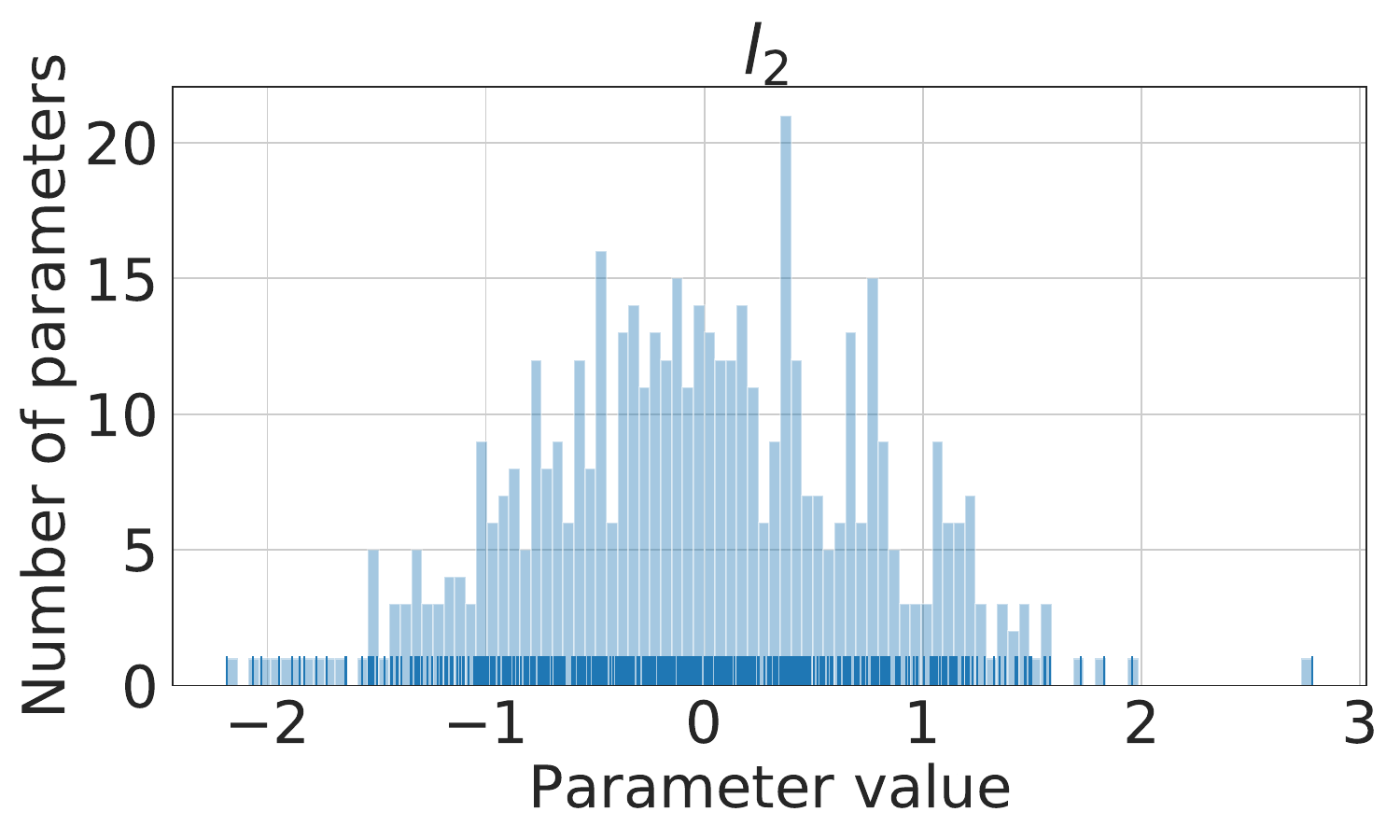} 
         \put(5, 60){\textbf{C}}
         \end{overpic}&
         \begin{overpic}[width=.47\textwidth]{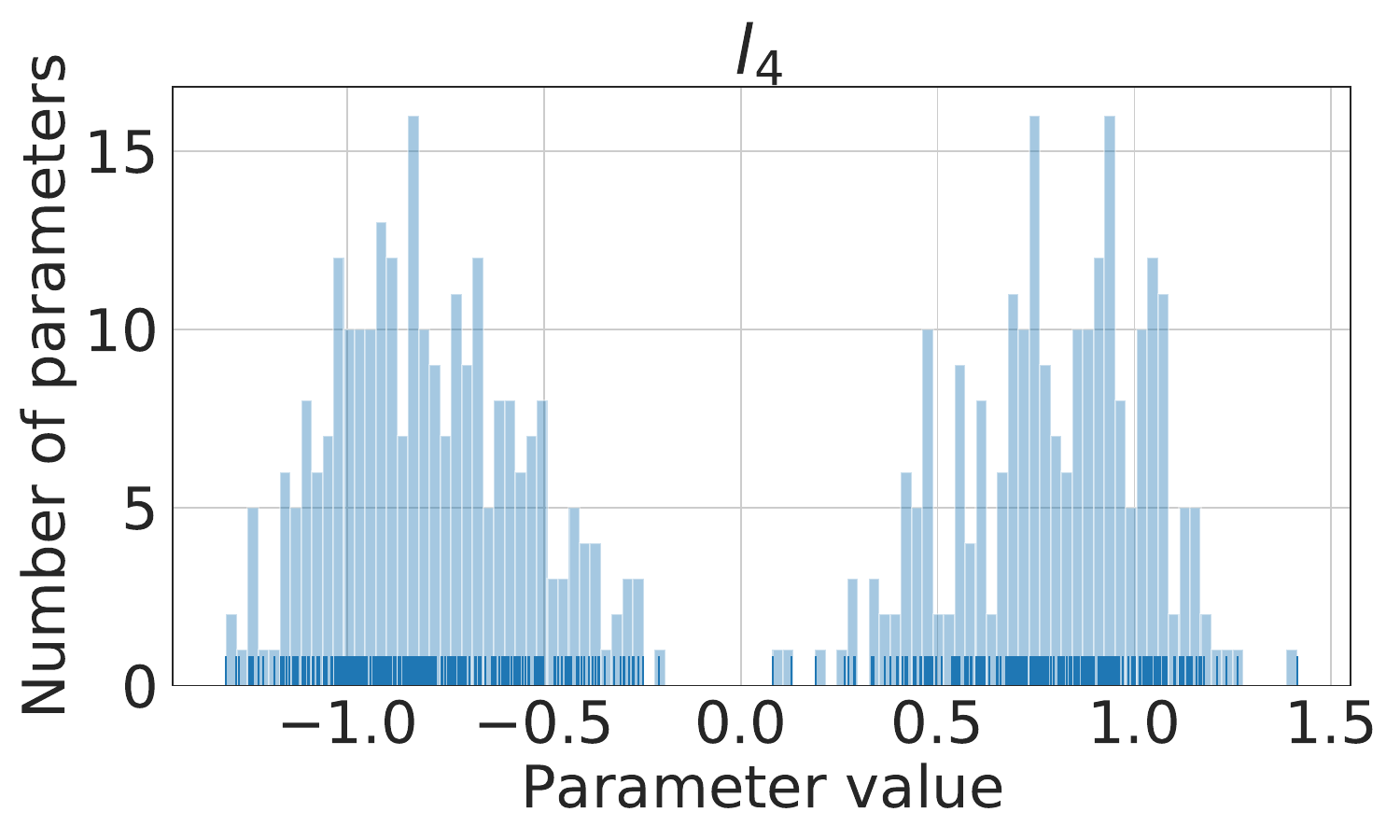} 
         \put(5, 60){\textbf{D}}
         \end{overpic}\\
         \begin{overpic}[width=.47\textwidth]{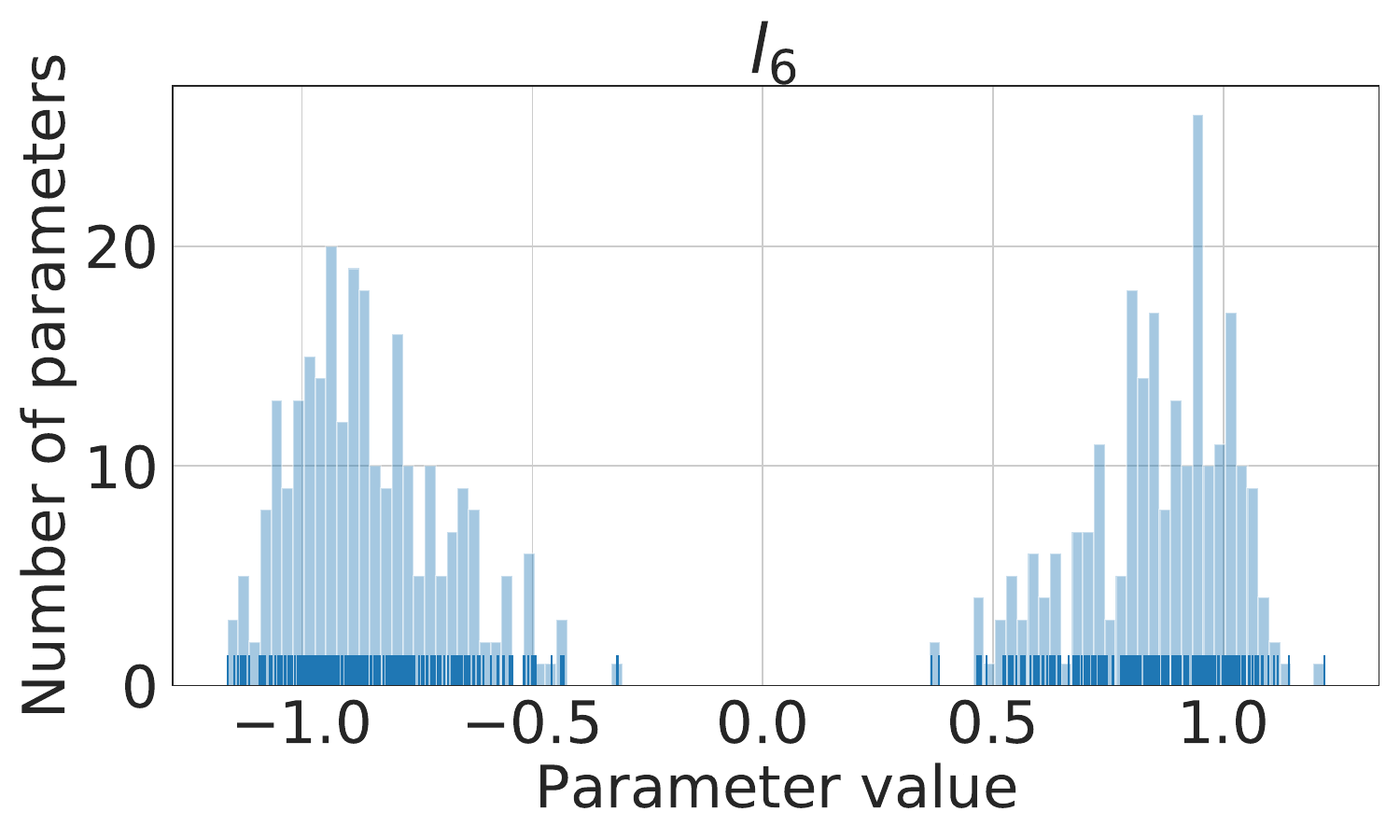} 
         \put(5, 60){\textbf{E}}
         \end{overpic}&
         \begin{overpic}[width=.47\textwidth]{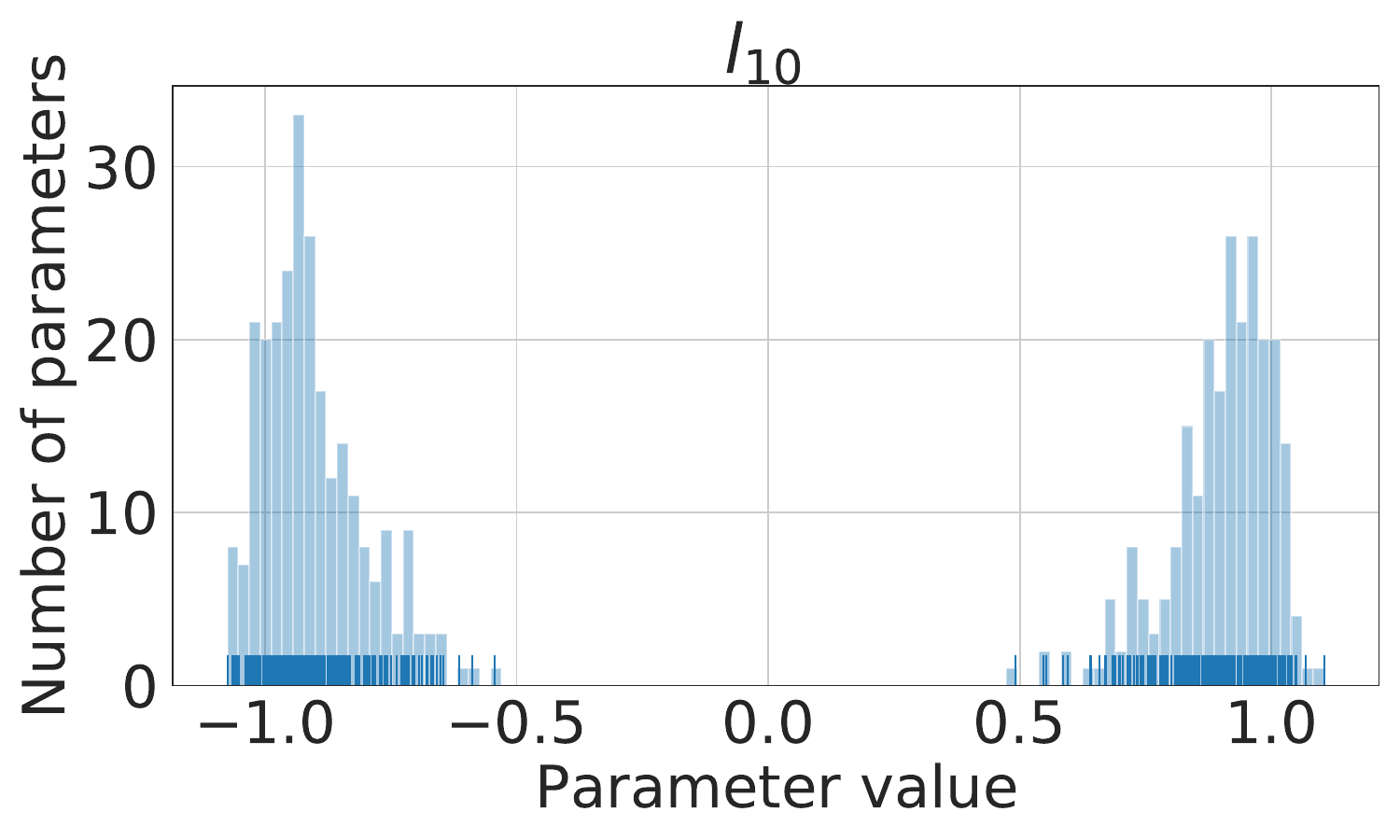}
         \put(5, 60){\textbf{F}}
         \end{overpic}
    \end{tabular}
    \caption{\textbf{Parameter histograms.} (A) True parameters $\ba$. (B) Parameter vector found by the algorithm with $\psi(\cdot) = \frac{1}{2}\Vert\cdot\Vert_{1.1}^2$. The resulting solution is extremely sparse, and has a few parameters with large magnitude, indicative of implicit $\ell_1$ regularization. (C) Parameter vector found by the standard Euclidean algorithm with $\psi(\cdot) = \frac{1}{2}\Vert\cdot\Vert_{2}^2$. The resulting parameter vector looks approximately Gaussian distributed, indicating $\ell_2$ regularization. (C)-(F) Parameter vectors found by $\psi(\cdot) = \frac{1}{2}\Vert\cdot\Vert_p^2$ with $p = 4, 6$, and $10$ respectively. The transition clearly indicates a trend towards $\ell_\infty$-norm regularization, with two bimodal peaks forming around $\pm 1$. The $\ell_\infty$ norm of the parameter vector decreases with increasing $p$.}
    \label{fig:hist}
\end{figure}

\begin{figure}
    \begin{tabular}{cc}
        \begin{overpic}[width=.47\textwidth]{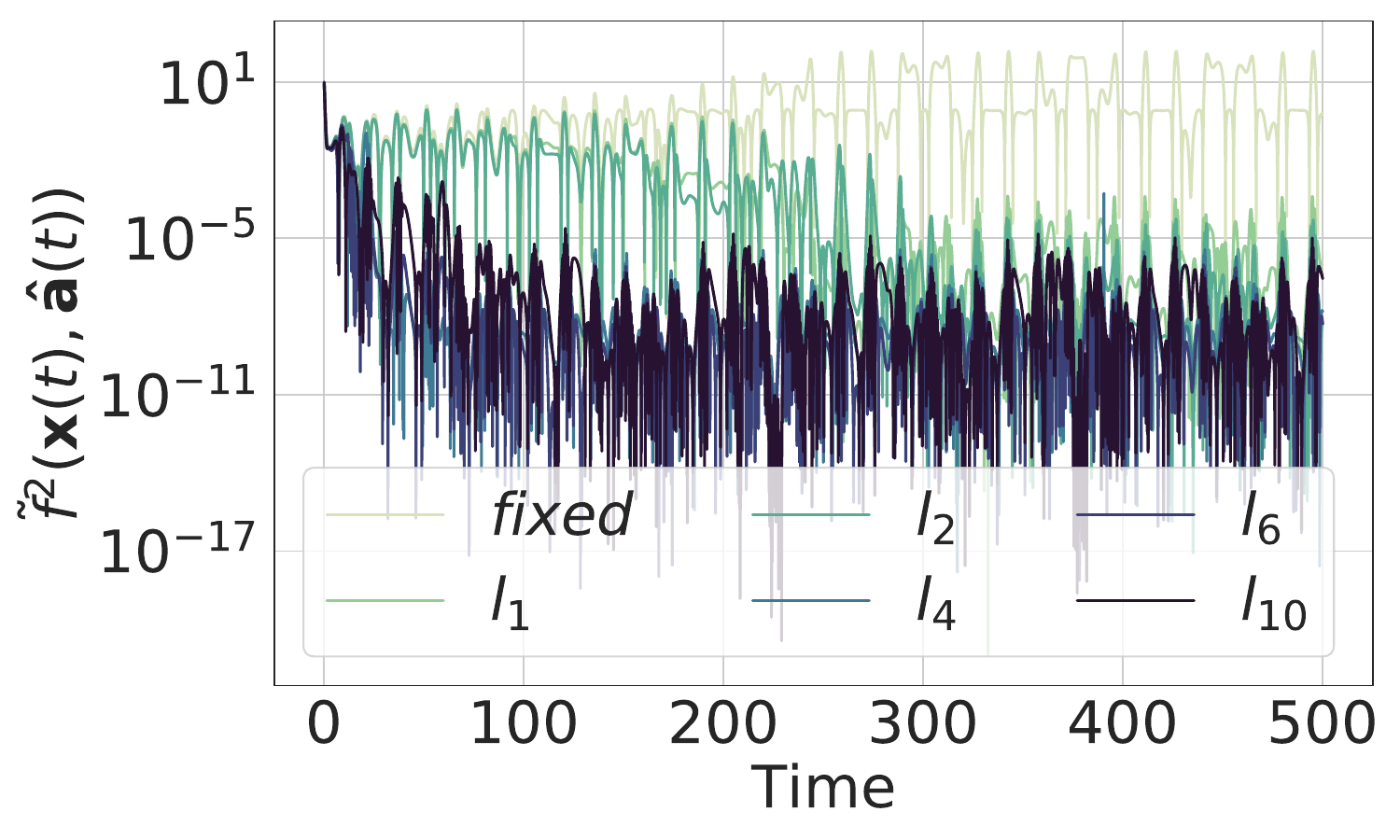}
        \put(5, 60){\textbf{A}}
        \end{overpic} &
         \begin{overpic}[width=.47\textwidth]{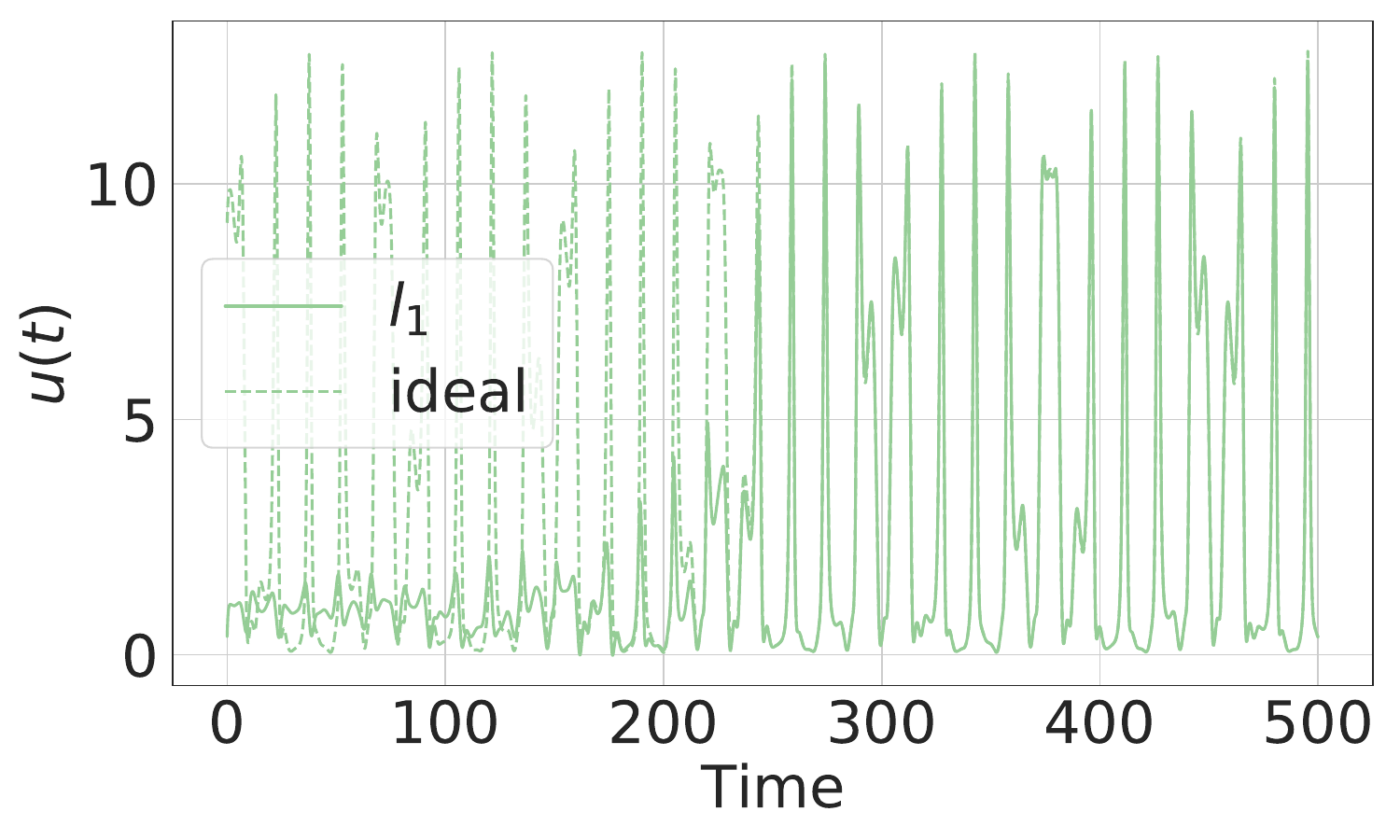}
         \put(5, 60){\textbf{B}}
         \end{overpic}
         \\
         \begin{overpic}[width=.47\textwidth]{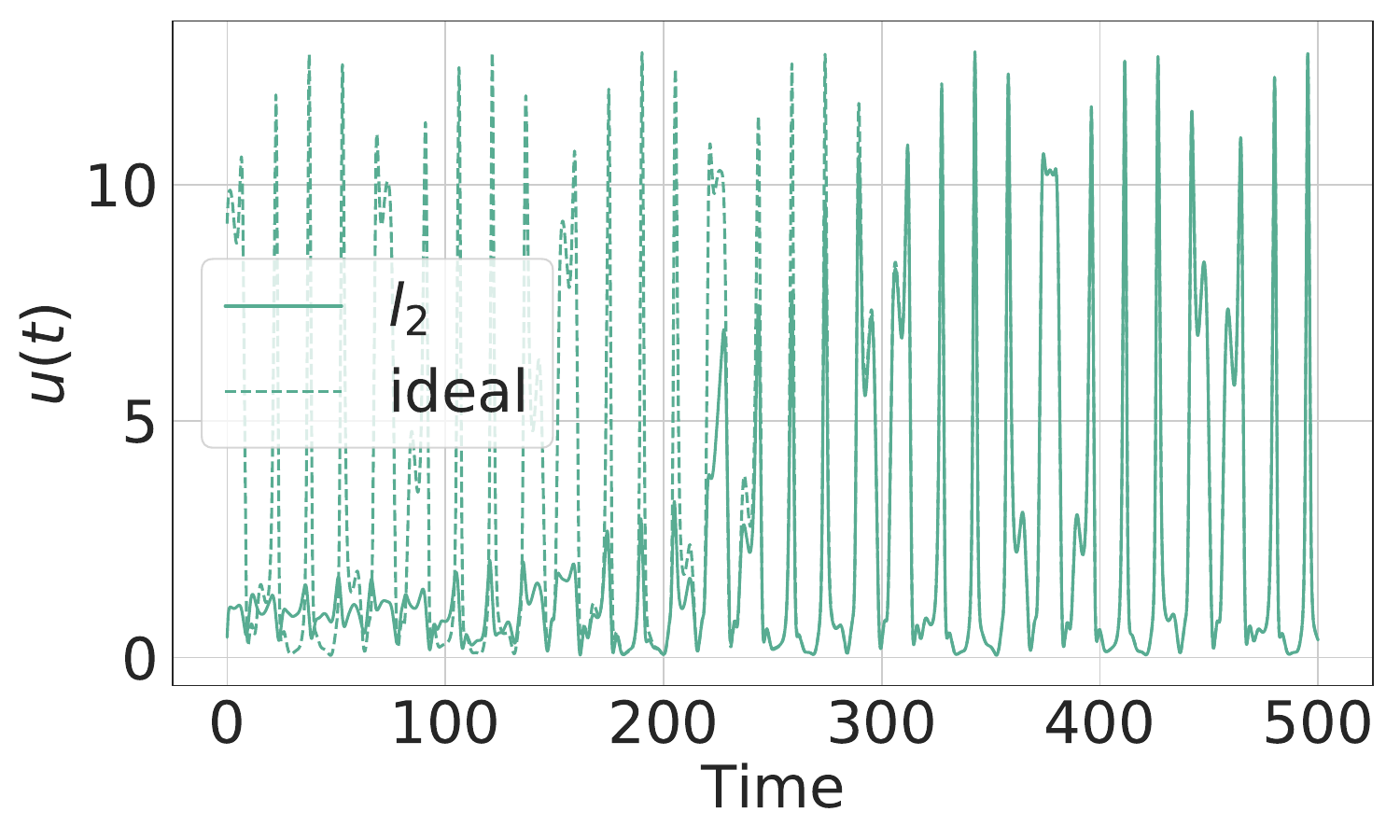} 
         \put(5, 60){\textbf{C}}
         \end{overpic}&
         \begin{overpic}[width=.47\textwidth]{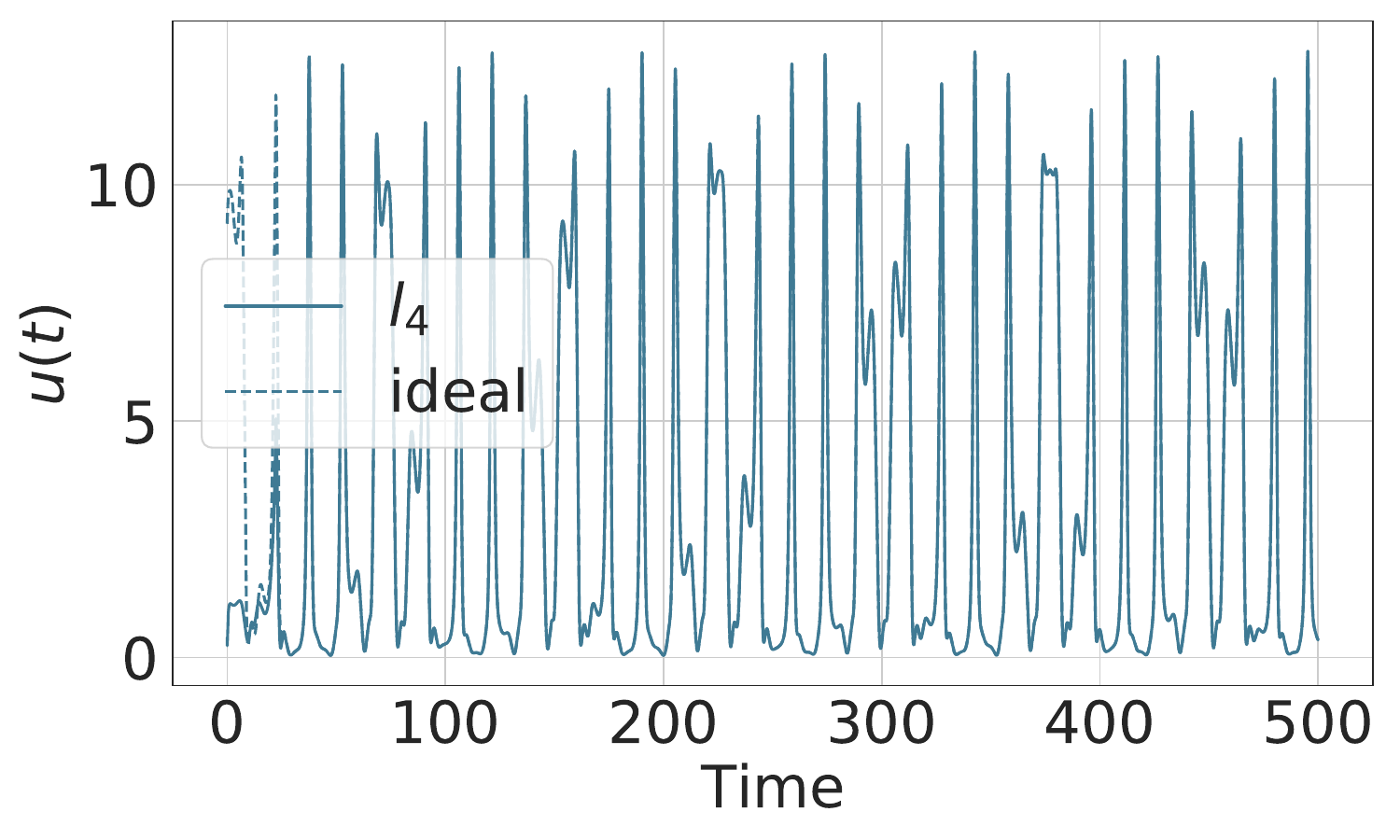} 
         \put(5, 60){\textbf{D}}
         \end{overpic}\\
         \begin{overpic}[width=.47\textwidth]{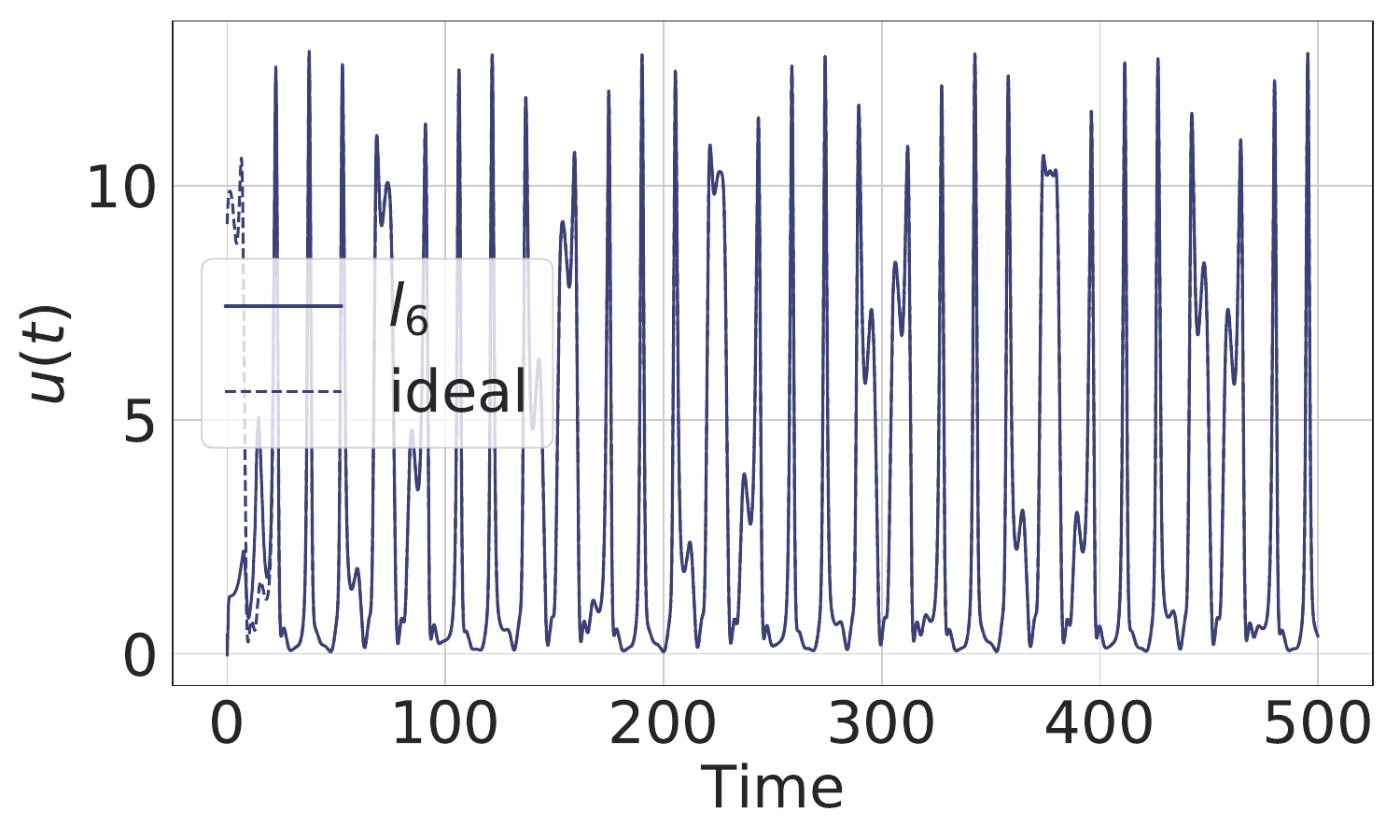} 
         \put(5, 60){\textbf{E}}
         \end{overpic}&
         \begin{overpic}[width=.47\textwidth]{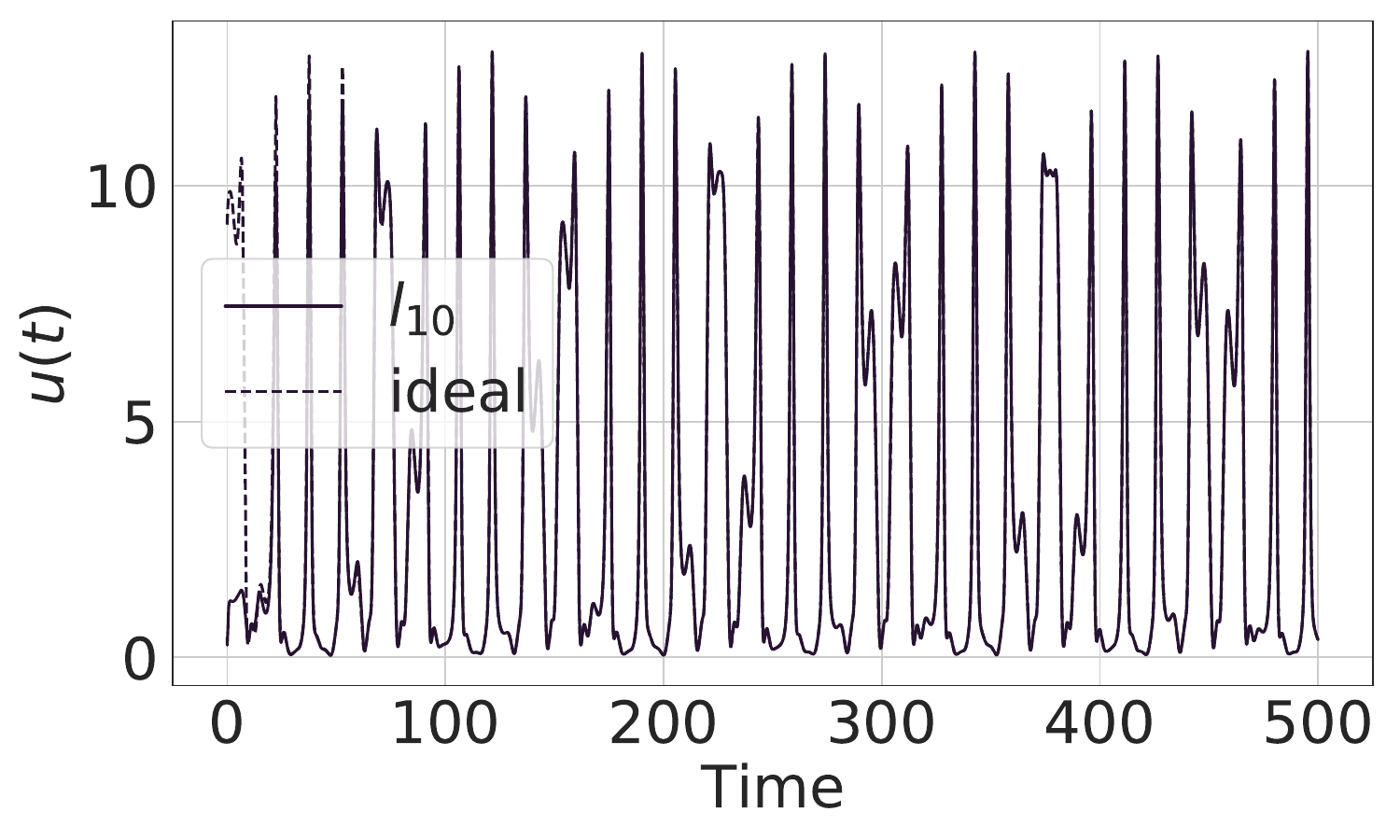}
         \put(5, 60){\textbf{F}}
         \end{overpic} 
    \end{tabular}
    \caption{\textbf{Function approximation error and control inputs.} (A) The function approximation error $\tilde{f}^2(\bx(t), \ha(t))$. All algorithms drive the error to zero. (B)-(F) Comparison of the control input $u^\psi(t)$ to the ``ideal'' control $u(t) = \ddot{x}_d(t) + f(\bx_d(t), \ba)$. All algorithms converge to the ideal control, though at a different rate. The control magnitude is kept to a reasonable level in every case.}
    \label{fig:control}
\end{figure}

\subsection{Learning to control with primitives}
\label{ssec:sim_entropy}
It is well-known that mirror descent with respect to the entropy $\psi\left(\hat{\boldsymbol\beta}\right) = \sum_i \hat{\beta}_i \log\hat{\beta}_i$ can improve the dimension-dependence of convergence rates in comparison to projected gradient descent when optimizing over the simplex $\Theta = \left\{\hat{\beta}_i : \hat{\beta}_i \geq 0, \: \: \: \sum_i\hat{\beta}_i = 1\right\}$~\citep{hazan}. Here we demonstrate that the same phenomenon appears in adaptive control.

As a model problem for this setting, we consider the second-order system 
\begin{align*}
    \dot{x}_1 &= x_2,\\
    \dot{x}_2 &= u(\bx, \ha) - \tanh\left(\bV\bx\right)^\T\ba,
\end{align*}
with $\ba \in \mathbb{R}^{p}$ a fixed vector of unknown parameters and $\bV \in \mathbb{R}^{p\times 2}$ a random matrix with $V_{ij} \sim \mathcal{N}\left(0, \frac{1}{p^2}\right)$. We assume access to control primitives $\left\{u_i(\bx, \ha^{(i)})\right\}_{i=1}^N$ capable of tracking desired trajectories $\left\{x_d^{(i)}\right\}_{i=1}^N$, and we consider tracking a desired trajectory $x_d(t)$ given piecewise by randomly drawn $x_d^{(i)}$. Details on the definition of the individual desired trajectory, the control inputs $u_i(\bx, \ha^{(i)})$, and the randomly specified desired trajectory used for the comparison can be found in Appendix~\ref{app2:sim_entropy}.

To leverage the available control primitives, we use the input
\begin{equation*}
    u\left(\bx, \hat{\boldsymbol{\beta}}, \left\{\ha^{(i)}\right\}_{i=1}^N\right)= \sum_{i=1}^N \hat{\beta}_i u_i(\bx, \ha^{(i)}) = \bu\left(\bx, \left\{\ha^{(i)}\right\}_{i=1}^N\right)\hat{\boldsymbol{\beta}}.
\end{equation*}
Above, $\bu\left(\bx, \left\{\ha^{(i)}\right\}_{i=1}^N\right)\in\mathbb{R}^{1\times N}$ is a row vector with components $u_i(\bx, \ha^{(i)})$.

We consider two adaptation laws,
\begin{align*}
    \dot{\hat{\boldsymbol\beta}} = -\gamma \bu\left(\bx, \left\{\ha^{(i)}\right\}_{i=1}^N\right)^\T s(\bx, \bx_d(t)),\\
    \frac{d}{dt}\nabla\psi\left(\hat{\boldsymbol\beta}\right) = -\gamma\bu\left(\bx, \left\{\ha^{(i)}\right\}_{i=1}^N\right)^\T s(\bx, \bx_d(t)),
\end{align*}
with projection of $\hat{\boldsymbol\beta}$ onto the simplex. In both cases, we initialize $\hat{\beta}_i = \frac{1}{N}$ and set $\gamma = .25$.

The results are shown in Figure~\ref{fig:entropy}. Figure~\ref{fig:entropy}A shows convergence of $s(\bx(t), \bx_d(t))$ for both adaptive laws. $s(\bx(t), \bx_d(t))$ jumps every $200$ units of time as the task changes discretely. While both converge, adaptation with respect to the entropy converges significantly faster and minimizes $s(\bx(t), \bx_d(t))$ to lower values. This effect is more prominently displayed in Figure~\ref{fig:entropy}B, which shows convergence of $s(\bx(t), \bx_d(t))$ on a logarithmic scale. Figures~\ref{fig:entropy}C and D show the parameter trajectories for the mirror descent and Euclidean laws, respectively. The mirror descent law displays trajectories in which fewer parameters stray from zero. Those that do drift from zero stray an order of magnitude further than the Euclidean law. The discrete changes of the desired trajectory are more visible in the parameter trajectories for the mirror descent law, indicating a faster reaction to the task.
\begin{figure}
    \begin{tabular}{cc}
        \begin{overpic}[width=.47\textwidth]{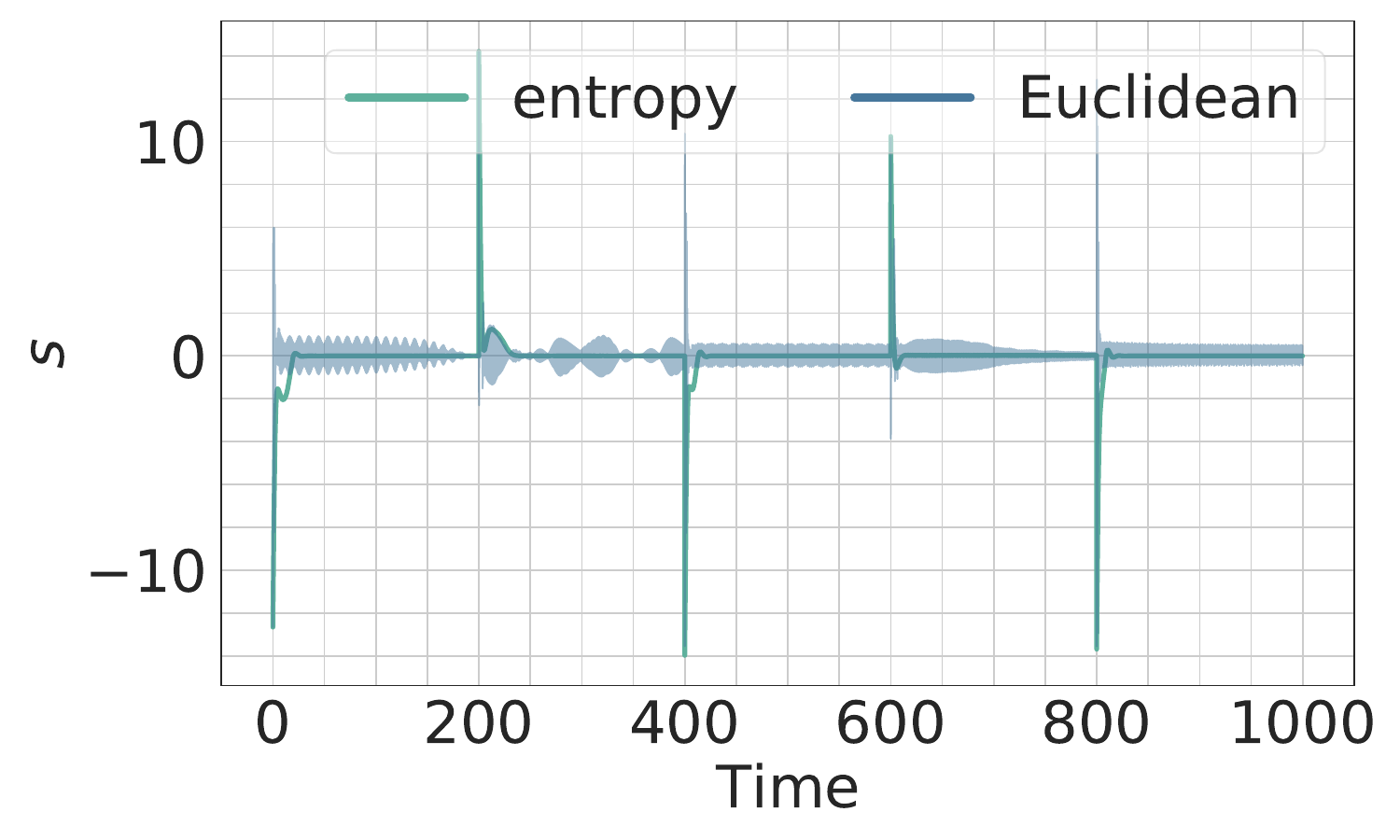}
        \put(5, 60){\textbf{A}}
        \end{overpic} &
         \begin{overpic}[width=.47\textwidth]{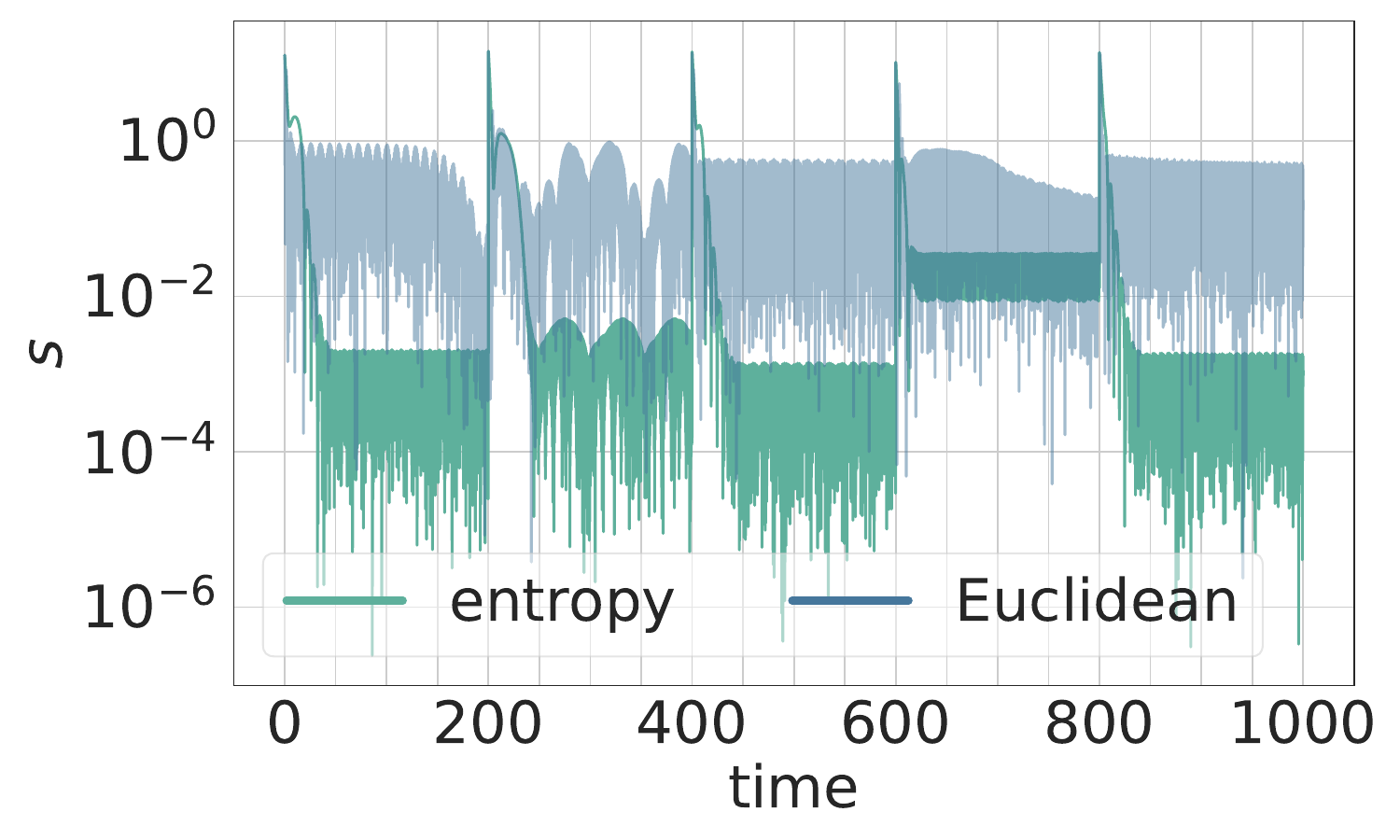}
         \put(5, 60){\textbf{B}}
         \end{overpic}
         \\
         \begin{overpic}[width=.47\textwidth]{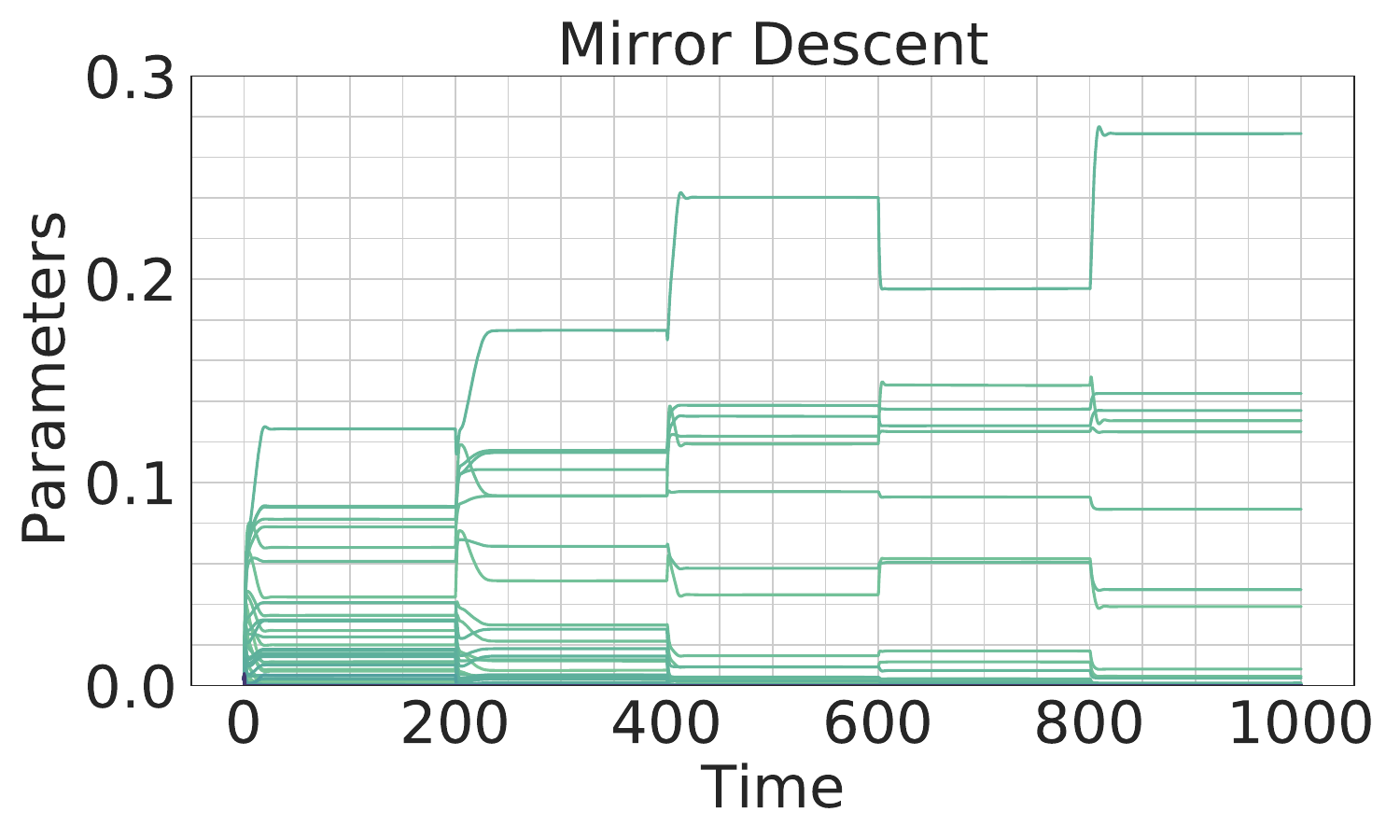}
         \put(5, 60){\textbf{C}}
         \end{overpic}&
         \begin{overpic}[width=.47\textwidth]{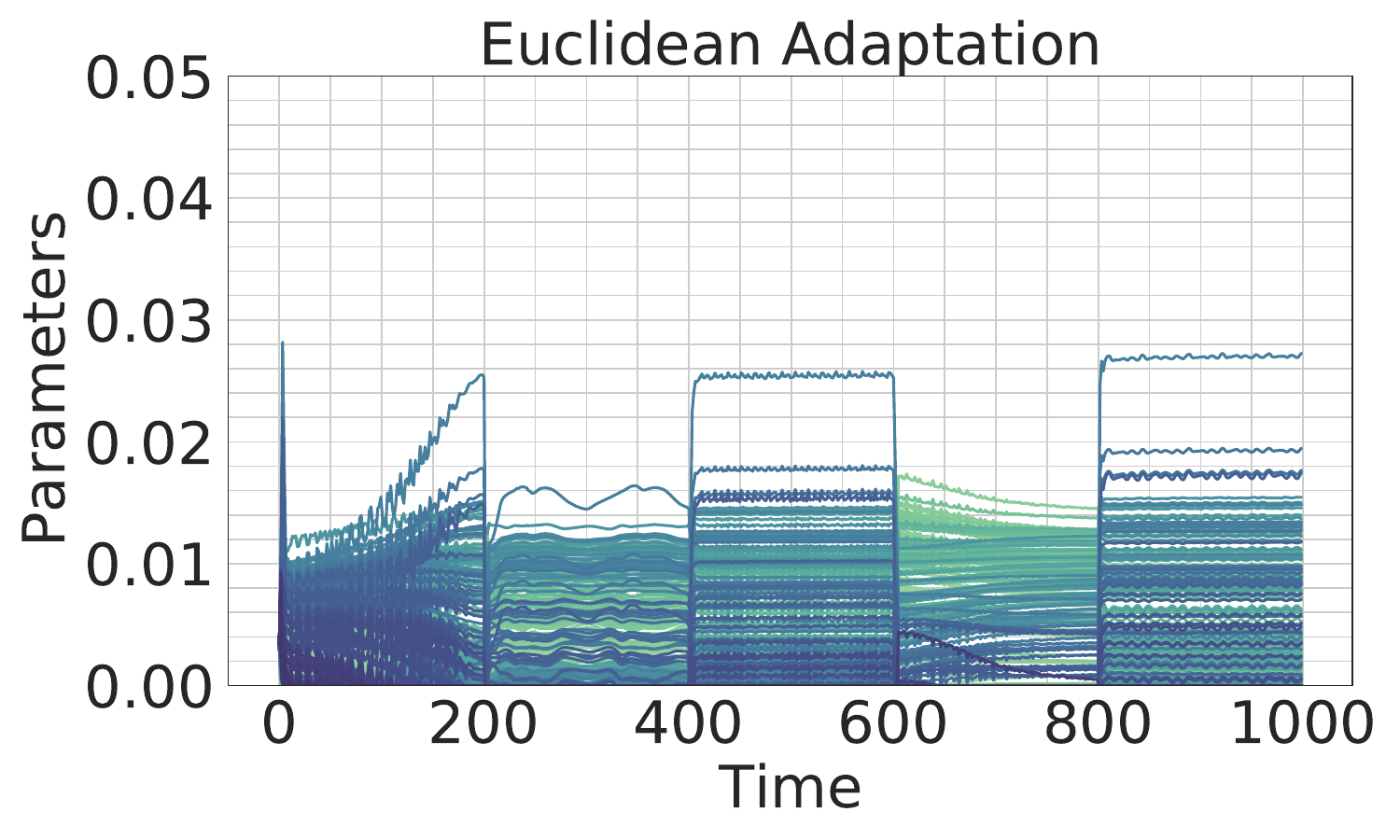}
         \put(5, 60){\textbf{D}}
         \end{overpic}
    \end{tabular}
    \caption{\textbf{Learning to control with primitives.} (A) $s$ on a linear scale for the Euclidean and mirror descent-like adaptation laws. Mirror descent converges faster for all tasks. (B) $s$ on a logarithmic scale for the Euclidean and mirror descent-like adaptation laws. Mirror descent converges faster and minimizes $s$ further for all tasks. (C/D) Parameter trajectories for the mirror descent and Euclidean adaptation laws. Mirror descent leads to smoother trajectories with fewer parameters straying from $0$.}
    \label{fig:entropy}
\end{figure}
\subsection{Dynamics prediction for Hamiltonian systems}
\label{ssec:exp_ham}
Similar to~\citet{bottou}, consider the Hamiltonian for three point masses interacting in $d=2$ dimensions via Newtonian gravitation (in units such that the gravitational constant $G =1$),
\begin{equation}
    \label{eqn:3body_ham}
    \mathcal{H} = \frac{1}{2m_1}\Vert\bp_1\Vert^2 + \frac{1}{2m_2}\Vert\bp_2\Vert^2 + \frac{1}{2m_3}\Vert\bp_3\Vert^2 - \frac{m_1 m_2}{\Vert \bq_1 - \bq_2\Vert} - \frac{m_1 m_3}{\Vert \bq_1 - \bq_3\Vert} - \frac{m_2 m_3}{\Vert \bq_2 - \bq_3\Vert},
\end{equation}
with $m_i$ the mass of body $i$, $\bp_i$ the momentum of body $i$, and $\bq_i$ the position of body $i$. Denote by $\bq$ the vector $\left(\bq_1^\T, \bq_2^\T, \bq_3^\T\right)^\T$ with similar notation for $\bp$. We consider estimating the Hamiltonian \eqref{eqn:3body_ham} directly with a physically motivated overparameterized basis
\begin{equation*}
    \hat{\mathcal{H}}(\ha) = \bY(\bq, \bp)\ha
\end{equation*}
to form the dynamics predictor \eqref{eqn:ham_pred_p}~\&~\eqref{eqn:ham_pred_q}, and choose $\psi(\cdot) = \frac{1}{2}\norm{\cdot}_{1.05}^2$ to promote sparsity. Further simulation details are given in Appendix~\ref{app2:exp_ham}

Results are shown in Figure~\ref{fig:ham}. Figure~\ref{fig:ham}A displays convergence of the predictor trajectory $\hat{\bx}(t)$ (solid) to the true trajectory $\bx(t)$ (open circles). The simulation is run open-loop and all parameters with magnitude less than $10^{-3}$ are set equal to zero past $t=10$, indicated with a vertical line. The open-loop predictor maintains the correct oscillatory behavior. Figure~\ref{fig:ham}B displays convergence of $\tilde{\bx}(t)$ to zero with adaptation (top) and demonstrates that adaptation is necessary for convergence (bottom). When switching to the open-loop predictor past $t=10$, the system without adaptation sustains large errors, while the learned predictor maintains good performance. Figure~\ref{fig:ham}C shows the qualitative structure of the predictor trajectory without learning (solid): when run closed-loop, it does not converge to the true trajectory (open circles), and when run open-loop, it spuriously converges to a fixed point. Figures~\ref{fig:ham}D and E show parameter trajectories and asymptotically converged parameters, respectively. Together, the two panes demonstrate that the implicit bias of the algorithm ensures convergence to a relatively sparse estimate of the system Hamiltonian.
\begin{figure}
    \begin{tabular}{cc}
        \begin{overpic}[width=.47\textwidth]{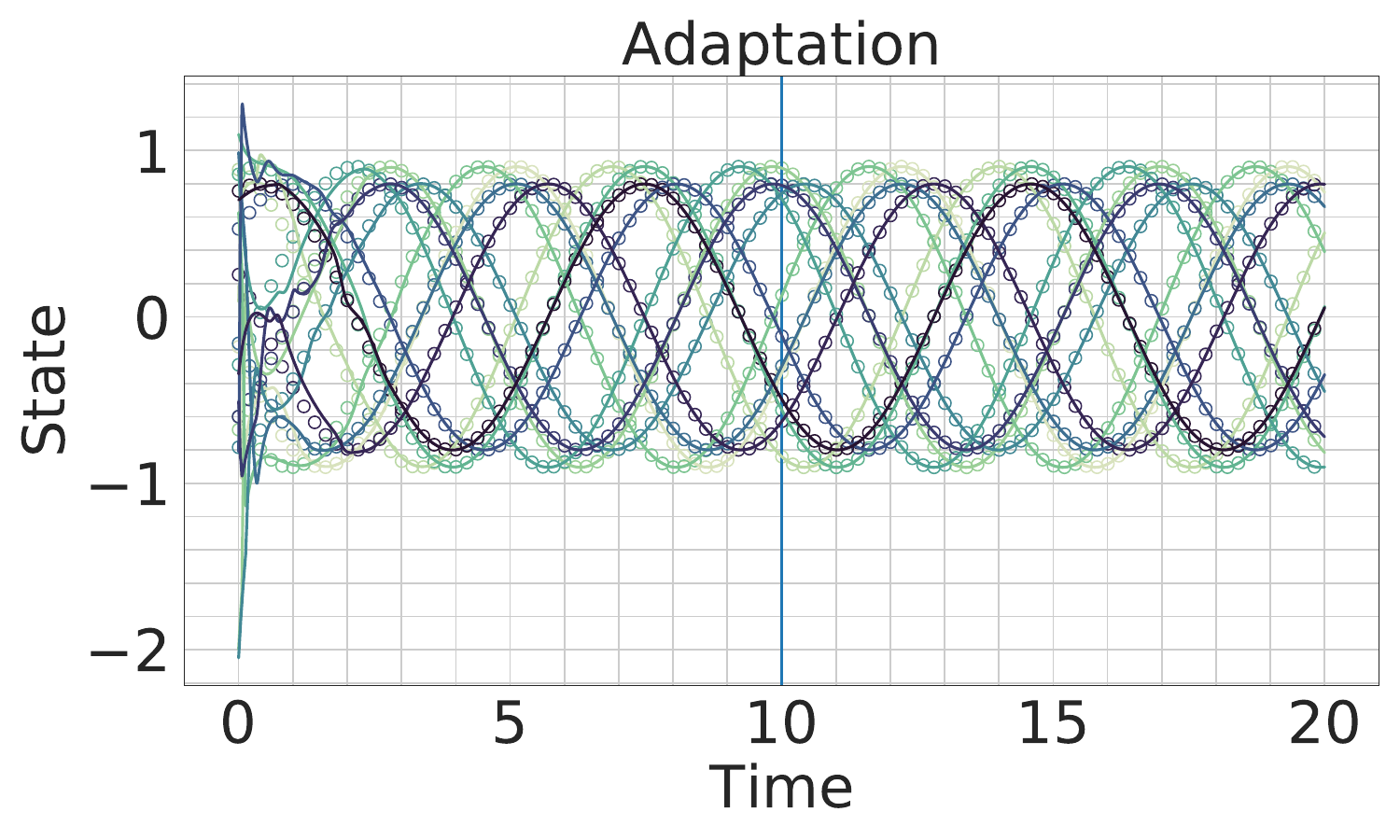}
        \put(5, 60){\textbf{A}}
        \end{overpic} &
         \begin{overpic}[width=.47\textwidth]{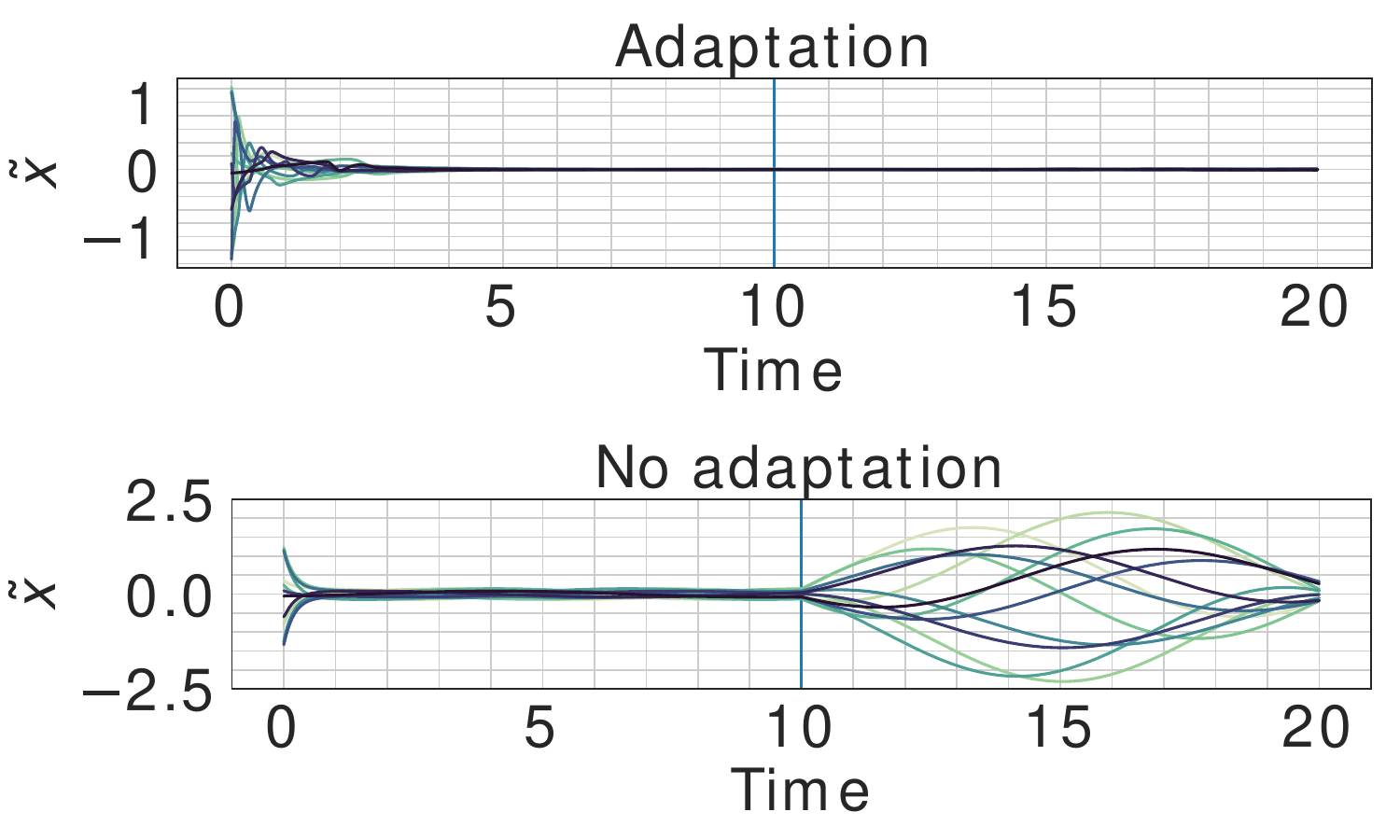}
         \put(5, 60){\textbf{B}}
         \end{overpic} \\
         \begin{overpic}[width=.47\textwidth]{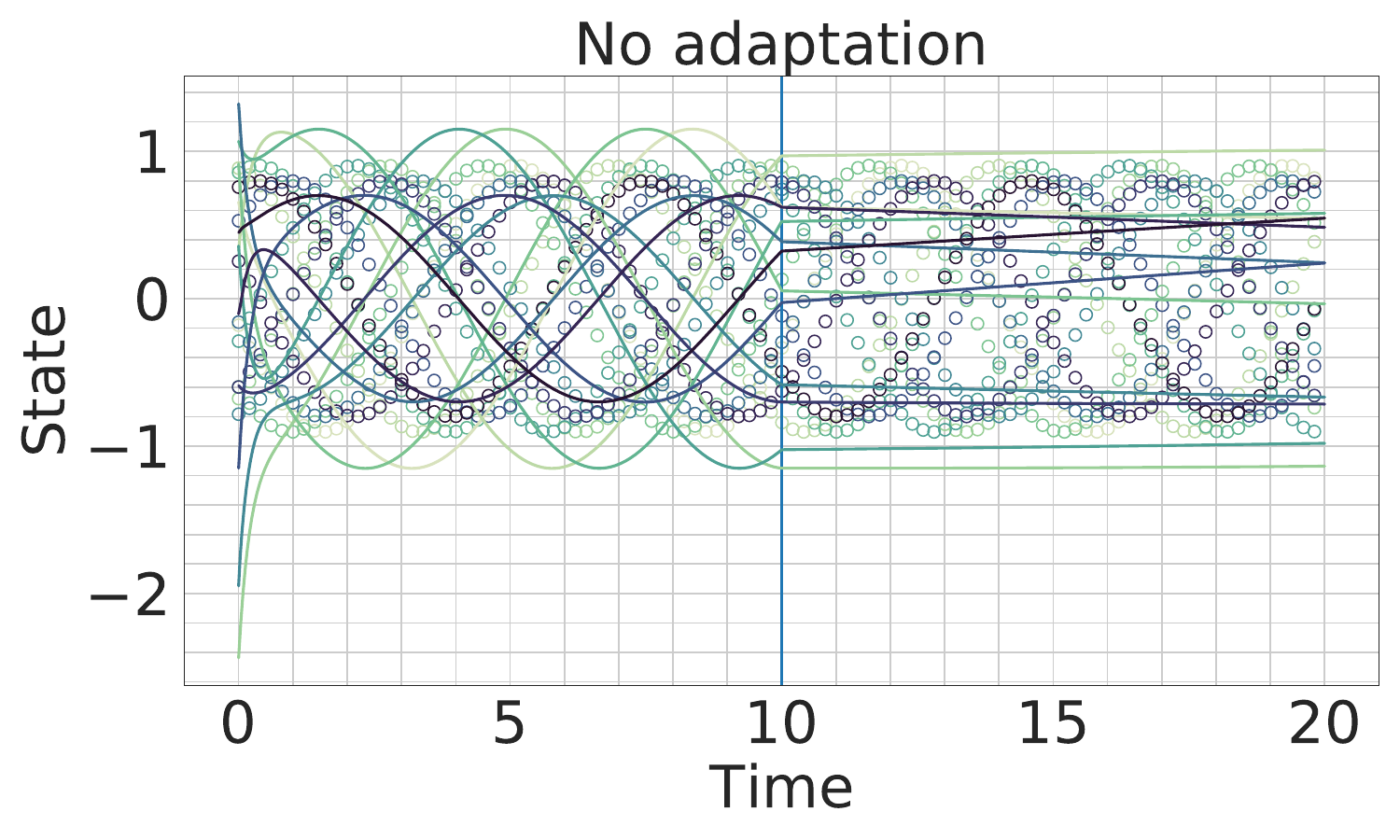}
         \put(5, 60){\textbf{C}}
         \end{overpic}&
         \begin{overpic}[width=.47\textwidth]{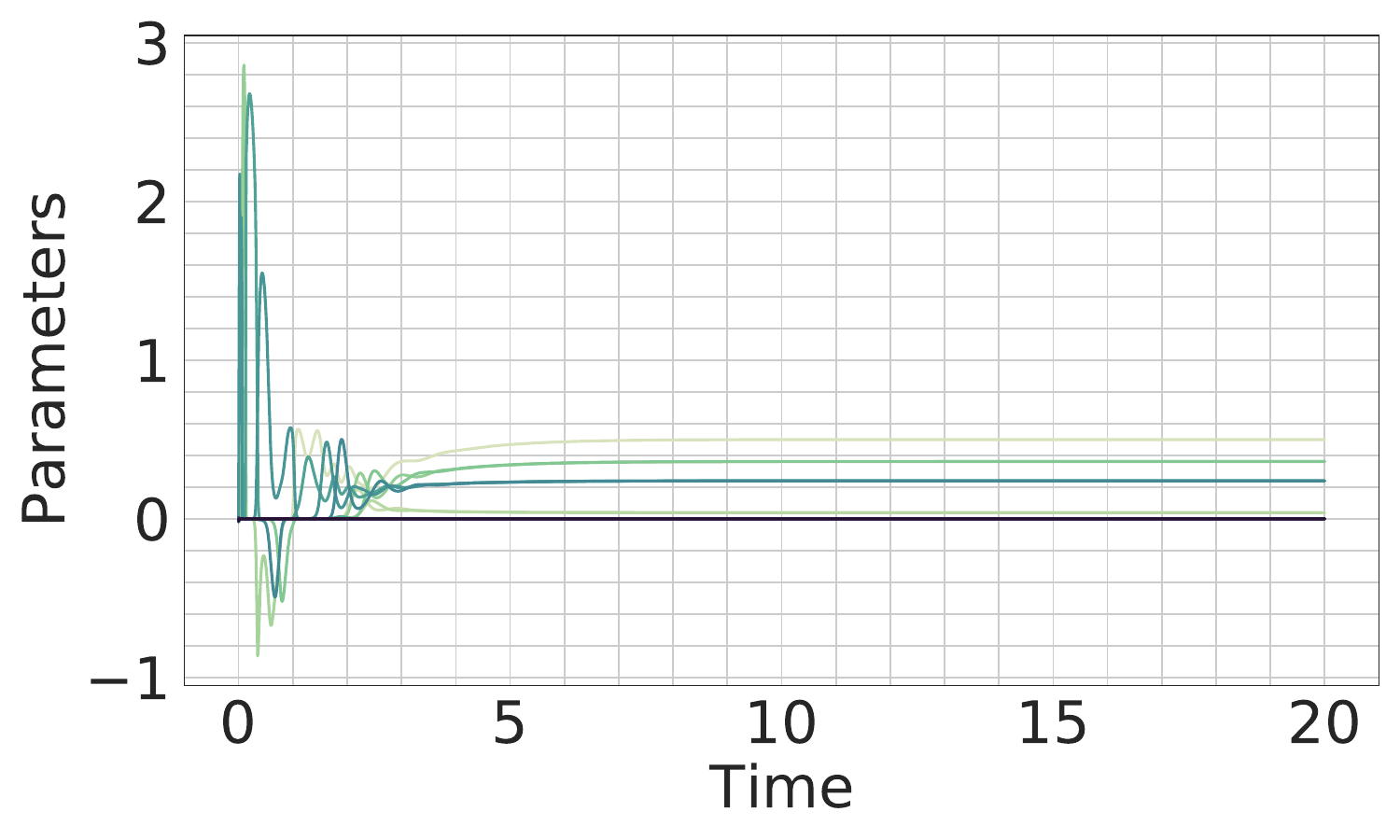}
         \put(5, 60){\textbf{D}}
         \end{overpic} \\
         \noalign{\vskip 5mm}
         \multicolumn{2}{c}{
         \begin{overpic}[width=.47\textwidth]{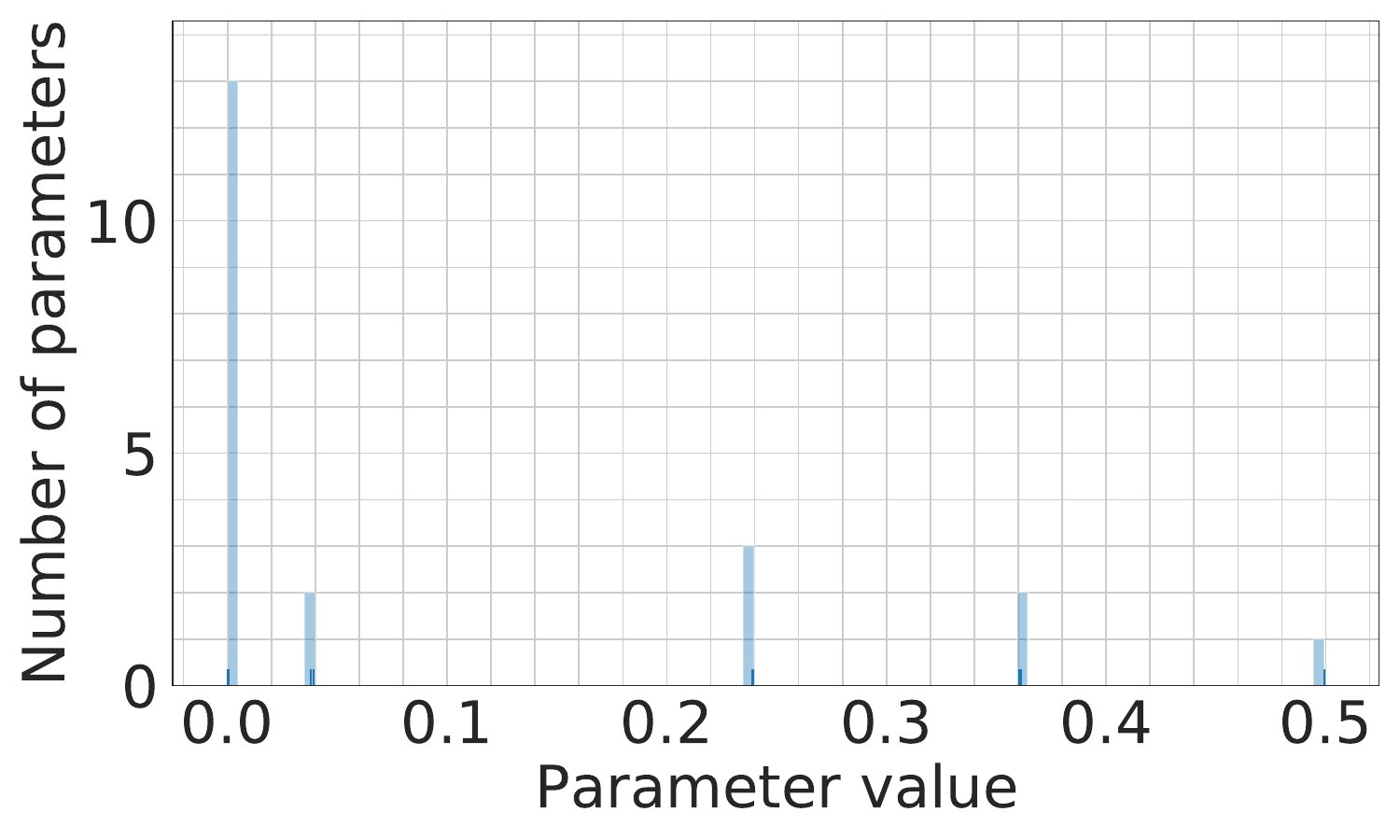}
         \put(5, 62){\textbf{E}}
         \end{overpic}
        }
    \end{tabular}
    \caption{\textbf{Three-body system.} (A) Convergence of the predictor with adaptation (solid) to the true system trajectory (open circles). Past the vertical line at $t=10$, coefficients below $10^{-3}$ are removed and the predictor is run open-loop, demonstrating that the limit cycle is correctly learned. (B) Predictor error $\tilde{\bx} = \hat{\bx} - \bx$ for the adaptive dynamics predictor \eqref{eqn:ham_pred_p}~\&~\eqref{eqn:ham_pred_q} with adaptation (top) and without adaptation (bottom). The system without adaptation is inaccurate when run open-loop, while the system with adaptation maintains accuracy. (C) Failure of the predictor trajectory without adaptation (solid) to converge to the true system trajectory (open circles). When run open-loop, the predictor without learning incorrectly tends to a fixed point. (D) Parameter trajectories for the adaptive dynamics predictor. Many parameters stay at or near zero, as predicted by Proposition~\ref{prop:implicit_reg}. (E) Histogram of final parameter values learned by the adaptive dynamics predictor.}
    \label{fig:ham}
\end{figure}
\subsection{Sparse identification of chemical reaction networks}
\label{ssec:exp_chem}
Consider a set of chemical reactions with $N$ distinct chemical species. Under the continuum hypothesis and the well-mixed assumption, mass-action kinetics dictates that the system dynamics can be described exactly in a monomial basis~\citep{network_obs},
\begin{align*}
    \nu_j(\bx) &= k_j\Pi_{i=1}^N x_i^{a_{ji}},\\
    \dot{\bx} &= \bGam \boldsymbol{\nu}(\bx),
\end{align*}
where $x_i$ is the concentration of chemical species $i$, $\bGam$ is the stoichiometric matrix, and the $a_{ji}$ are stoichiometric coefficients. Under the assumption that the full state of the network is measured, consider the adaptive dynamics predictor
\begin{align}
    \label{eqn:chem_obs_1}
    \dot{\hat{\bx}} &= \hat{\bGam}\hat{\boldsymbol{\nu}}(\hat{\bx}) + k\left(\bx - \hat{\bx}\right),\\
    \label{eqn:chem_obs_2}
    \frac{d}{dt}\nabla\psi\left(\hat{\bGam}\right) &= -\gamma (\hat{\bx} - \bx)\hat{\boldsymbol{\nu}}(\hat{\bx})^\T,
\end{align}
with $\gamma > 0$ a positive learning rate, $k > 0$ a measurement gain, $\psi$ a strongly convex function, $\hat{\bGam}$ an estimate of the stoichiometric matrix, and $\hat{\boldsymbol{\nu}}(\hat{\bx})$ a vector of monomial basis functions representing available knowledge of the system. Here we consider a four-species chemical reaction network (cf.~\citet{network_obs}, supplementary information),
\begin{align*}
    \dot{x}_1 &= -k_1 x_1 x_2,\\
    \dot{x}_2 &= -k_1 x_1 x_2 - k_2 x_2 x_3^2,\\
    \dot{x}_3 &= -k_1 x_1 x_2 - 2k_2 x_2 x_3^2,\\
    \dot{x}_4 &= k_2 x_2 x_3^2,
\end{align*}
with corresponding adaptive dynamics predictor \eqref{eqn:chem_obs_1}~\&~\eqref{eqn:chem_obs_2}. We set $\hat{\boldsymbol{\nu}}$ to be a vector of all monomials up to degree three comprising a total of $140$ candidate basis functions, and we set $\psi(\hat{\bGam}) = \frac{1}{2}\left\Vert\mathsf{vec}\left(\hat{\bGam}\right)\right\Vert_{1.01}^2$ to identify a sparse, parsimonious model consistent with the data. Searching over sparse models ensures that our learned predictor selects only a few relevant terms in the approximate system dynamics. We fix $k = 1.5$ and $\gamma = 0.25$ for $t < 10$. As in Section~\ref{ssec:exp_ham}, past $t = 10$ we set $k = \gamma = 0$ and run the predictor open-loop. We also perform shrinkage and set all coefficients with magnitude below $10^{-3}$ formally equal to zero, leaving $19$ remaining parameters.

Results are shown in Figure~\ref{fig:chemical}. In Figure~\ref{fig:chemical}A, we show convergence of the observer error to zero with adaptation (solid) and divergence away from zero without adaptation (dashed), demonstrating that adaptation is necessary for effective prediction. The inset displays a closer look at the asymptotic behavior of the open-loop dynamics predictor after shrinkage, which shows that the fixed point of the system is correctly learned. Figure~\ref{fig:chemical}B shows convergence of $\hat{\bx}(t)$ (solid) to $\bx(t)$ (dashed). Figure~\ref{fig:chemical}C displays parameter trajectories as a function of time. Many parameters stay at or near zero as predicted by Proposition~\ref{prop:implicit_reg}; use of a Euclidean law would lead to a Gaussian-like spread over parameter vectors as in Figure~\ref{fig:hist}C. The inset displays a finer-grained view around zero of the parameter trajectories. Figure~\ref{fig:chemical}D shows a histogram of the final parameter values learned by the adaptive dynamics predictor, demonstrating that only a few relevant terms are identified.

\begin{figure}
    \begin{tabular}{cc}
        \begin{overpic}[width=.47\textwidth]{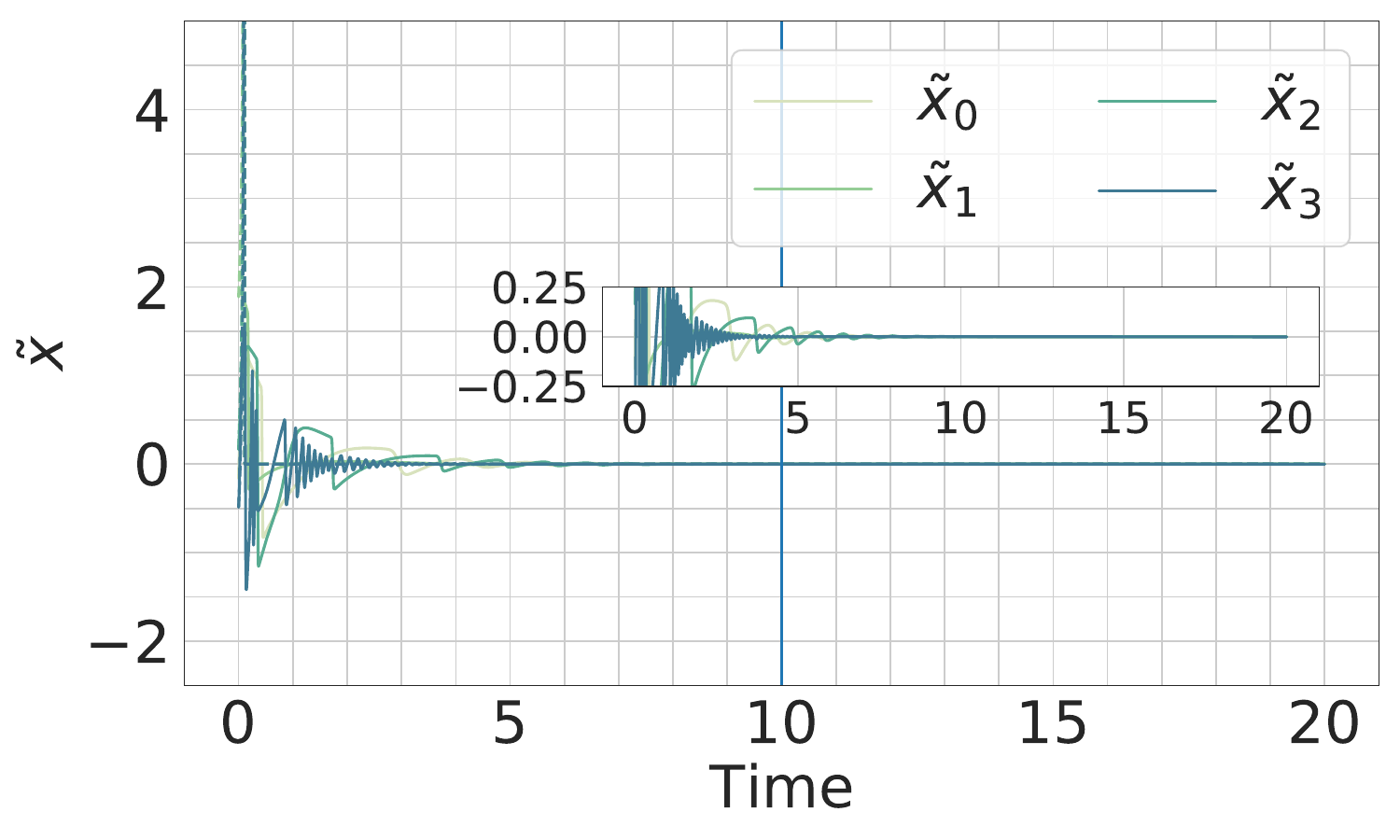}
        \put(5, 60){\textbf{A}}
        \end{overpic} &
         \begin{overpic}[width=.47\textwidth]{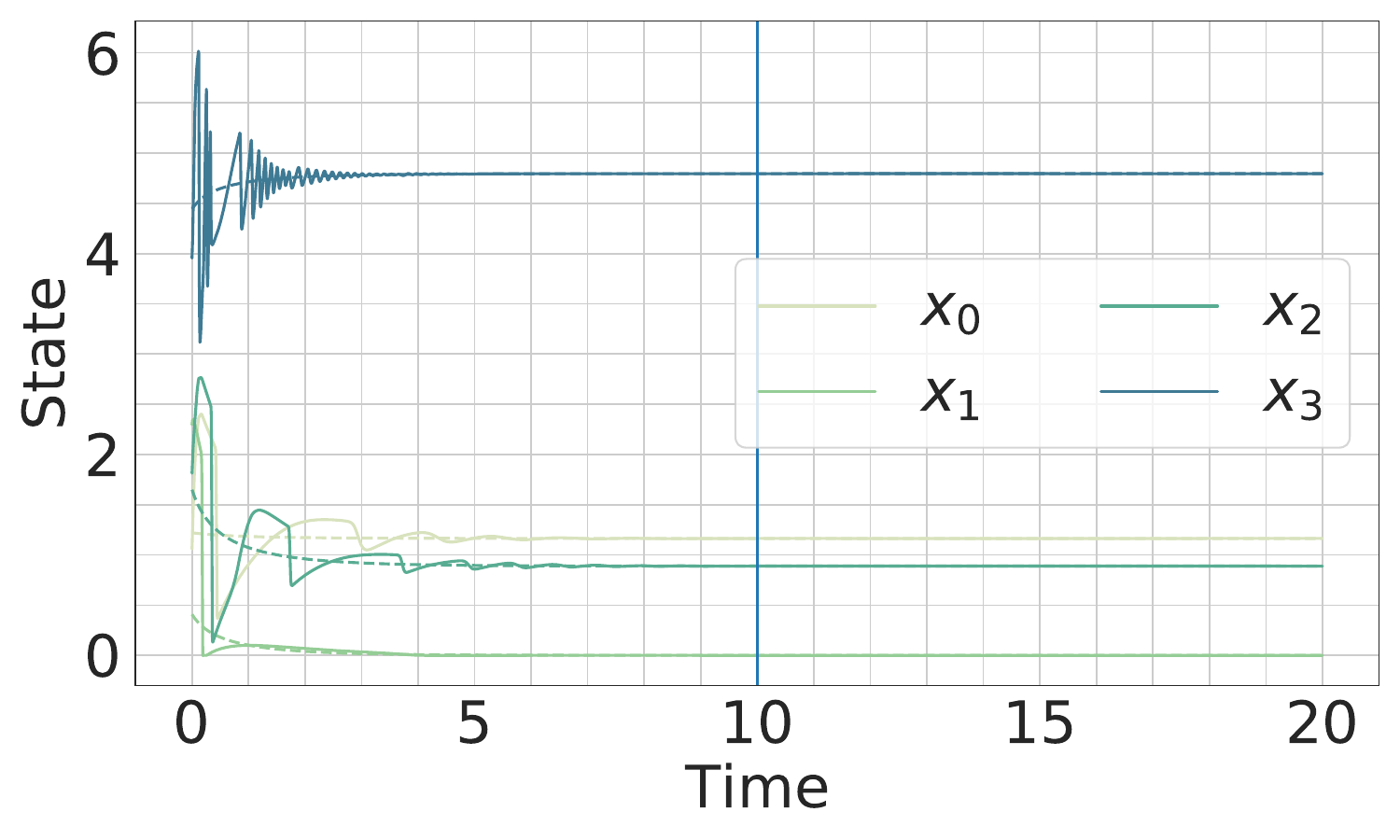}
         \put(5, 60){\textbf{B}}
         \end{overpic}
         \\
         \begin{overpic}[width=.47\textwidth]{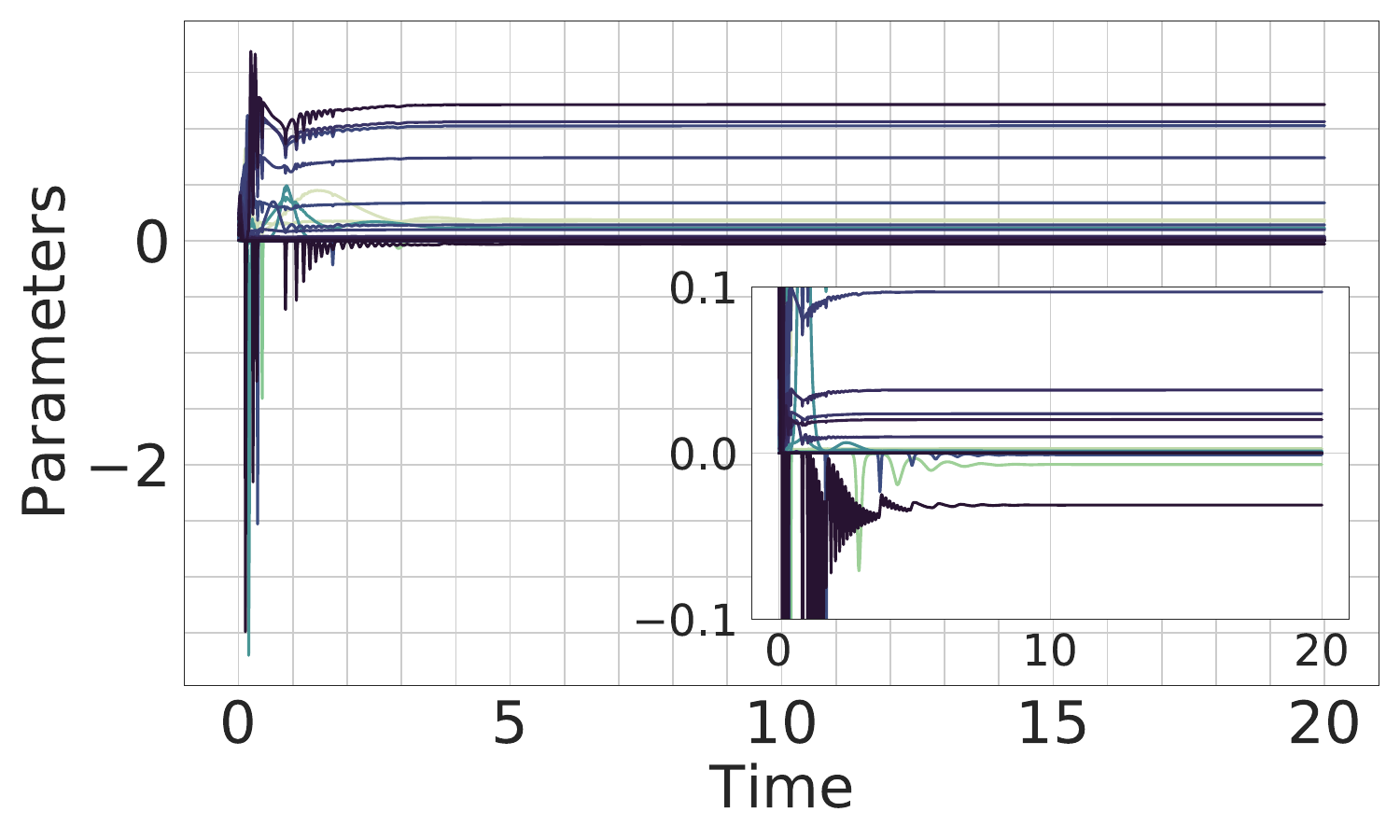}
         \put(5, 60){\textbf{C}}
         \end{overpic}&
         \begin{overpic}[width=.47\textwidth]{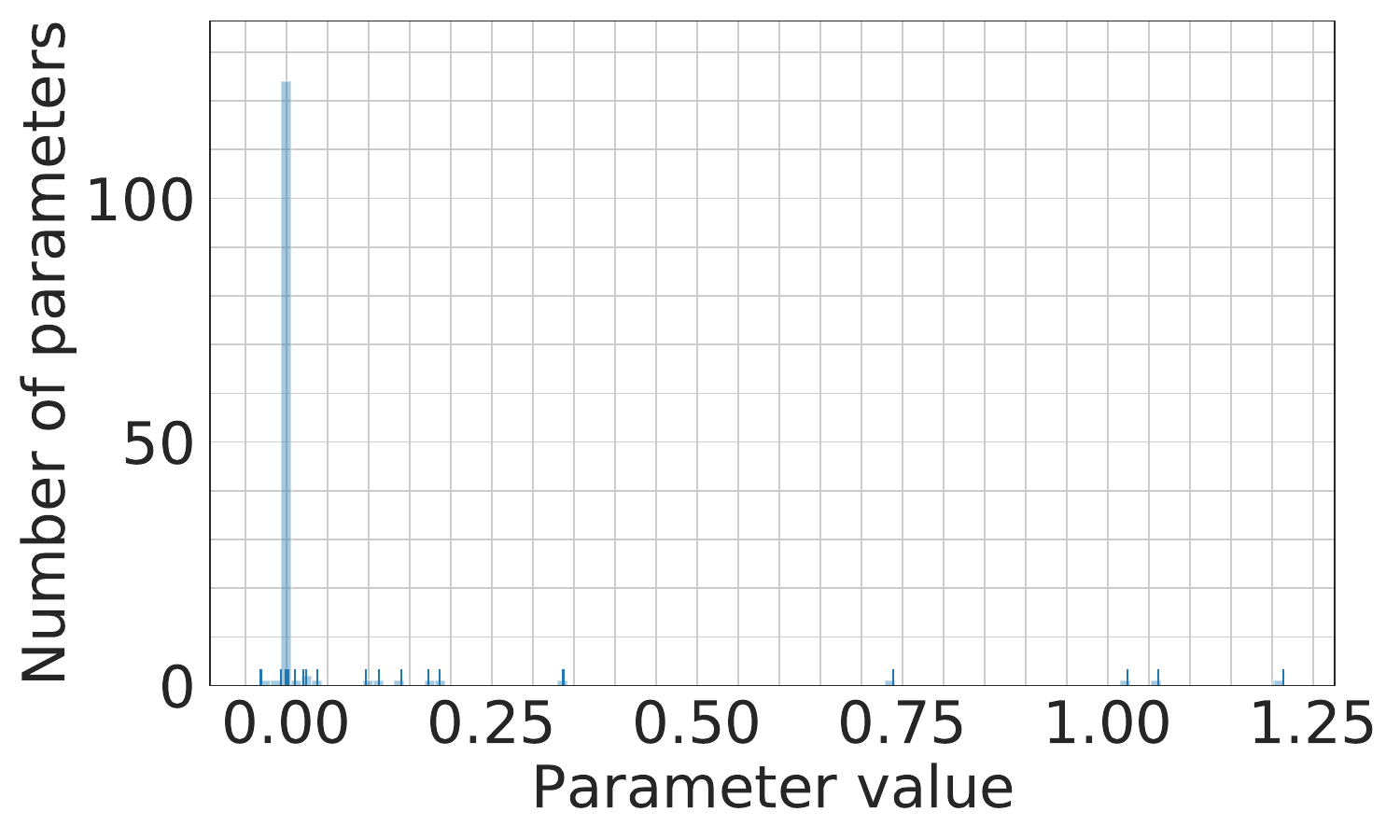}
         \put(5, 60){\textbf{D}}
         \end{overpic}
    \end{tabular}
    \caption{\textbf{Chemical reaction network.} (A) Observer error $\tilde{\bx} = \hat{\bx} - \bx$ for the adaptive dynamics predictor \eqref{eqn:chem_obs_1}~\&~\eqref{eqn:chem_obs_2} with adaptation (solid) and without adaptation (dashed). The predictor without adaptation diverges immediately. Past $t=10$, shrinkage is performed, all coefficients with magnitude below $10^{-3}$ are set to zero, and the predictor is run open-loop with $k = \gamma = 0$. (B) Convergence of $\hat{\bx}$ to $\bx$ for the adaptive dynamics predictor. The predictor accurately learns the fixed-point of the system and stays stationary when run open-loop. (C) Parameter trajectories for the adaptive dynamics predictor. Many parameters stay at or near zero, as predicted by Proposition~\ref{prop:implicit_reg}. (D) Histogram of final parameter values learned by the adaptive dynamics predictor. Only a few relevant terms are identified.}
    \label{fig:chemical}
\end{figure}
\section{Concluding remarks and future directions}
\label{sec:conc}
It is somewhat unusual in nonlinear control to have a choice between a large variety of algorithms that can all be proven to globally converge. Nevertheless, in this paper, we have presented a large set of new globally convergent adaptive control algorithms and analyzed their implicit bias properties. The general philosophy behind the development of these new algorithms is to leverage insight from recent developments in optimization and machine learning. As such, the results in this paper strengthen existing parallels between adaptive control, machine learning, and online optimization.

The new algorithms combine the velocity gradient methodology~\citep{fradkov-1979, fradkov-book} with the Bregman Lagrangian~\citep{ashia_1, ashia_2} to systematically generate velocity gradient algorithms with momentum. These algorithms can be seen as an adaptive control analogue of Nesterov's method, and admit an alternative interpretation as adding a form of Polyak-Ruppert averaging to classical first-order methods. Based on this latter analogy, using a parallel to distributed stochastic gradient descent algorithms~\citep{easgd, boffi-slot}, we developed a stable modification of our algorithms that consists of feedback coupling with an exponentially weighted average.

Throughout the paper, for simplicity of exposition, we gave concrete examples using the $n^{\text{th}}$ order system (\ref{eqn:gen_sys}). As discussed in Remark~\ref{rmk:error}, our results extend to more general systems that have an error model similar to (\ref{eqn:s_dyn}).

The most promising direction of future research is to understand when and whether or not there is an empirical advantage to using our proposed momentum algorithms. In optimization, accelerated algorithms generated by the Bregman Lagrangian provide faster convergence when properly discretized, and it is thus likely that a careful discretization is necessary to obtain optimal performance of our momentum algorithms. However, it is not possible to obtain finite-time convergence rates in adaptive control, and it would be necessary to prove such rates to understand analytically if there is an advantage. The analogy with Polyak-Ruppert averaging suggests that the momentum algorithms and the algorithms of Section~\ref{ssec:elastic} may be helpful in the presence of noise; here, the most likely angle of attack would be a stochastic discrete-time analysis.

Similar higher-order algorithms have appeared in the literature for adaptive control of linear systems of relative degree greater than one~\citep{morse}, where first-order adaptation laws provably cannot control the system. Here we have focused on feedback linearizable systems, and perhaps there exist classes of nonlinear systems that cannot be adaptively controlled with a first-order algorithm, but can with a momentum algorithm. We leave the investigation of these interesting and important questions to future work.

\bibliography{refs}
\appendix
\section{Simulation details}
\subsection{Further simulation details for Section~\ref{ssec:sim_nlin}}
\label{app2:sim_nlin}
The PI form of the non-Euclidean variant of algorithm (\ref{eqn:ho_tyukin}) is given by
\begin{align*}
    \nabla\psi(\hv) &= \overline{v} + \xi(\bx, t) + \brho(\bx, t),\\
    \xi(\bx, t) &= -\gamma s(\bx, \bx_d(t))\tanh(\mathbf{V}\bx),\\
    \rho(\bx, t) &= \gamma\left[\tanh\left(\mathbf{V}\bx\right)x_2 - \log\left(\cosh\left(\mathbf{V}\bx\right)\right)\oslash\mathbf{V}_2 + \left(\lambda\tilde{x} - x_{2, d}(t)\right)\tanh\left(\mathbf{V}\bx\right)\right],\\
    \dot{\overline{v}} &= \gamma\left(\dot{x}_{2, d}(t) - \lambda\left(x_2 - x_{2, d}(t)\right) - \eta s\right)\tanh(\mathbf{V}\bx) + \gamma x_2\tanh(\mathbf{V}\bx)\circ\mathbf{V}_1\oslash\mathbf{V}_2,\\
    \dot{\ha} &= \beta\left(1 + \mu\Vert\tanh(\mathbf{V}\bx)\Vert^2\right)\left(\hv - \ha\right),
\end{align*}
where $\circ$ and $\oslash$ denote elementwise multiplication and division respectively, where $\mathbf{V}_i$ is the $i^{\text{th}}$ column of $\mathbf{V}$, and where $\hv$ is obtained from $\nabla\psi(\hv)$ by inverting $\nabla\psi$. For the squared $p$ norm $\psi(\cdot) = \frac{1}{2}\Vert\cdot\Vert_p^2$, the inverse function can be analytically computed as
\begin{equation}
    \label{eqn:invert_sq_p}
    \left(\nabla\psi^{-1}\right)(\by) = \Vert\by\Vert_q^{2-q}|\by|^{q-1}\sign(\by)
\end{equation}
where $\frac{1}{q} + \frac{1}{p} = 1$, $|\cdot|$ denotes elementwise absolute value and $\sign(\cdot)$ denotes elementwise $\sign$~\citep{p-norm}. We consider the $\ell_1, \ell_2, \ell_4, \ell_6$, and $\ell_{10}$ norms for $\psi$. To approximate the $\ell_1$ norm, (\ref{eqn:invert_sq_p}) is used with $p = 1.1$. All other $p$ norms can be used directly. 

In all simulations we take $\lambda = .5$ in the definition of $s$ (\ref{eqn:s_dyn_def}) and $\eta = .5$ in the control input (\ref{eqn:control}). For the adaptation hyperparameters, we choose $\gamma = 1.5$ for the $\ell_2, \ell_4$, and $\ell_6$ norms. We take $\gamma = 50$ for the $\ell_1$ norm and $\gamma = .5$ for the $\ell_{10}$ norm\footnote{These values of $\gamma$ were chosen to ensure good control performance without excessively high control inputs or fast parameter adaptation. In particular, adaptation occurs very slowly with $\ell_1$ regularization, as small parameters are quickly eliminated to promote sparsity. A high adaptation gain was needed to ensure adaptation on a similar timescale to the other norms.}. In all cases, $\beta = 1$ and $\mu = \frac{3\gamma}{2\eta\beta}$. We set $\dim(\ba) = \dim(\ha) = 500$ and randomly initialize $\ha$ and $\hv$ around zero from a normal distribution with standard deviation $10^{-3}$. The true parameter vector $\ba$ is drawn from a normal distribution with mean zero and standard deviation $7.5$. The matrix $\bV$ is set to have normally distributed elements with standard deviation $\frac{1}{\sqrt{\dim{\ha}}}$. The state vector is initialized such that $\bx(0) = \bx_d(0)$. The desired trajectory is taken to be
\begin{equation*}
    x_d(t) = \sin\left(\frac{\sqrt{2}\pi}{12}t + \cos\left(\frac{\sqrt{3}\pi}{12}t\right)\right).
\end{equation*}
\subsection{Further simulation details for Section~\ref{ssec:sim_entropy}}
\label{app2:sim_entropy}
To define the control primitives, we consider a distribution over tasks specified by random desired trajectories
\begin{equation*}
    x_d^{(i)}(t) = M\sin(A_i t + B_i \cos(C_i t)) + D_i
\end{equation*}
with $D_i = 2i(-1)^i\times M$, $A_i \sim \mathsf{Unif}(0, 5\pi)$, $B_i \sim \mathsf{Unif}(0, 3)$, and $C_i \sim \mathsf{Unif}(0, 5\pi)$. The shift $D_i$ ensures that the desired trajectories occupy non-overlapping regions of state space. We then learn primitives $\left\{u_i(\bx, \ha^{(i)})\right\}_{i=1}^N$ to track $\left\{x_d^{(i)}(t)\right\}_{i=1}^N$ where each $u_i(\bx, \ha^{(i)})$ is given by \eqref{eqn:control} with parameter estimates $\ha^{(i)}$. The parameter estimates are found via the Slotine and Li adaptation law
\begin{align*}
    \dot{\ha}^{(i)} &= -\gamma \tanh(\bV\bx)^\T s(\bx, \bx_d(t)),
\end{align*}
which is allowed to run until the parameter estimates converge. We set $p = 15$, $N = 300$, $M = 0.1$, $\gamma = 5$, and $\eta = \lambda = 0.5$. Each vector of parameter estimates $\ha^{(i)}$ is initialized randomly, $\ha^{(i)}(0) \sim \mathcal{N}\left(0, \sigma^2_{\ha}\right)$ with $\sigma_{\ha} = 10^{-3}$. The state is initialized randomly for each task, $\bx_0 \sim \mathcal{N}\left(0, \sigma_\bx^2\right)$ with $\sigma_\bx = 5$. The true parameters $\ba$ are drawn randomly, $\ba \sim \mathcal{N}\left(0, \sigma_\ba^2\right)$ with $\sigma_\ba = 2$.

We then fix a time horizon $T$ and a number of tasks $k$, and set the desired trajectory in the experiment to be
\begin{equation*}
    x_d(t) = x_d^{(i_l)}(t) \text{    if    } t_{l-1} \leq t < t_l,
\end{equation*}
with $l = 1, \hdots, k$, $i_l$ drawn uniformly from $i=1, \hdots, N$, $t_0 = 0$, and $t_l = \frac{lT}{k}$. The above desired trajectory switches between desired trajectories $x_d^{(i_l)}$ that $u_{i_l}(\bx, \ha^{(i_l)})$ has learned to track. In the experiment, we fix $T = 1000$ and set $k = 5$.

To perform the experiment, we use the \texttt{lsoda} integrator in \texttt{scipy.integrate.ode}. For the Euclidean adaptation law, we zero any components if they become negative and project onto the simplex. For the non-Euclidean adaptation law, we integrate the mirror descent-like dynamics directly, and update $\nabla\psi\left(\hat{\boldsymbol\beta}\right)$ via $\texttt{lsoda}$. After each timestep, we compute $\hat{\boldsymbol\beta}$ by inverting the gradient of $\psi$, which ensures that each component $\hat{\beta}_i \geq 0$. We then project onto the simplex. In both cases, the projection is performed by dividing by the $\ell_1$ norm of the parameter vector, $\hat{\boldsymbol{\beta}} \mapsto \hat{\boldsymbol{\beta}}/\norm{\hat{\boldsymbol{\beta}}}_1$.
\section{Further simulation details for Section~\ref{ssec:exp_ham}}
\label{app2:exp_ham}
We define $\bY(\bq, \bp)\in\mathbb{R}^{1\times 21}$ to be a row vector of basis functions consisting of quadratics and quartics in $\bp_i$ and $\bq_i$, as well as $1/r_{ij}$, $1/r_{ij}^2$, and $1/r_{ij}^3$ potentials with $r_{ij} = \Vert\bq_i - \bq_j\Vert$ for $i \neq j$, comprising $21$ total basis functions. These choices represent standard expressions for kinetic energy, spring potentials, central force potentials, and higher-order terms; any basis functions can be chosen motivated by knowledge of the physical system at hand. 

We set $k = 5$, $\gamma = 3.5$, and choose $\psi(\cdot) = \frac{1}{2}\Vert\cdot\Vert_{1.05}^2$ to identify basis functions relevant to the observed trajectory. We fix $m_i = 1$ for all $i$ and initialize $\bq$ and $\bp$ to lock the system in an oscillatory mode. Past $t = 10$, we set $k = \gamma = 0$ and run the predictor open-loop, as well as perform shrinkage and set all coefficients with magnitude below $10^{-3}$ formally equal to zero, leaving $13$ remaining terms.
\section{Background results}
Barbalat's Lemma is a classical technique in adaptive control theory, which is used in conjuction with a Lyapunov-like analysis to prove convergence of a given signal.
\begin{lem}[Barbalat's Lemma,~\citep{slot_li_book}]
\label{lem:barbalat}
Assume that $\lim_{t\rightarrow \infty}\int_0^{t}|x(\tau)|d\tau < \infty$. If $x(t)$ is uniformly continuous, then $\lim_{t\rightarrow \infty}x(t) = 0$.
\end{lem}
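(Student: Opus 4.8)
The plan is a proof by contradiction resting on the Cauchy criterion for the convergence of $\int_0^t |x(\tau)|d\tau$. Suppose, for contradiction, that $x(t)$ does not converge to $0$ as $t\to\infty$. Then there exist $\epsilon > 0$ and a sequence of times $t_n \uparrow \infty$ with $|x(t_n)| \geq \epsilon$ for every $n$. By passing to a subsequence, we may additionally assume $t_{n+1} > t_n + \delta$ for the $\delta$ produced in the next step, so that the intervals $I_n := [t_n, t_n + \delta]$ are pairwise disjoint.

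Next I would invoke uniform continuity of $x$: there is a $\delta > 0$, independent of the base point, such that $|t - s| \leq \delta$ implies $|x(t) - x(s)| \leq \epsilon/2$. Hence for every $\tau \in I_n$ we get $|x(\tau)| \geq |x(t_n)| - |x(\tau) - x(t_n)| \geq \epsilon - \epsilon/2 = \epsilon/2$, and therefore $\int_{I_n}|x(\tau)|d\tau \geq \delta\epsilon/2$ for each $n$, with the $I_n$ disjoint and tending to infinity.

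Finally I would use the hypothesis that $\lim_{t\to\infty}\int_0^t |x(\tau)|d\tau$ exists and is finite. By the Cauchy criterion, for the choice $\eta = \delta\epsilon/4$ there is a $T$ such that $\int_{t_1}^{t_2}|x(\tau)|d\tau < \eta$ whenever $t_2 > t_1 > T$. Choosing $n$ large enough that $t_n > T$, we obtain $\delta\epsilon/2 \leq \int_{I_n}|x(\tau)|d\tau < \delta\epsilon/4$, a contradiction. Therefore $x(t)\to 0$, which is the claim.

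The only point requiring care — more bookkeeping than genuine difficulty — is arranging the intervals $I_n$ to be disjoint and to march off to infinity simultaneously; this is handled simply by thinning the sequence $t_n$ after $\delta$ has been fixed, so that consecutive $t_n$ are separated by more than $\delta$.
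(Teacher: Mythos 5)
Your proof is correct and is the standard argument for Barbalat's lemma: contradiction via a time $t_n\to\infty$ where $|x(t_n)|\ge\epsilon$, a uniform-continuity window of width $\delta$ on which $|x|\ge\epsilon/2$, and the Cauchy criterion for the convergent improper integral. The paper gives no proof of its own — it states the lemma with a citation to Slotine and Li, whose textbook proof is precisely this argument — so there is nothing to compare beyond noting you have reproduced it; one minor observation is that the thinning to make the $I_n$ disjoint is not actually needed for the Cauchy-criterion version of the final step (a single $I_n$ lying past $T$ already gives the contradiction), though it costs nothing.
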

A sufficient condition for uniform continuity of $x(t)$ is for $\dot{x}(t)$ to be bounded. It follows that for any signal $x(t) \in \LL_2\cap\LL_{\infty}$ with $\dot{x}(t) \in \LL_{\infty}$, we can apply Lemma~\ref{lem:barbalat} to the signal $x^2(t)$ and conclude that $x(t)\rightarrow 0$.
\begin{lem}
\label{lem:conv}
Assume that $\int_0^t\tilde{f}^2(\bx(t'), \hat{\ba}(t'), \ba, t')dt' < \infty$ where $[0, T)$ is the maximal interval of existence of $\bx(t)$. Further assume that $\hat{\ba}(t)$ is bounded over $[0, T)$, that both bounds are independent of $T$, and that $\tilde{f}(\bx, \ha, \ba, t)$ is locally bounded in $\bx$ and $\hat{\ba}$ uniformly in $t$. Then $\ha(\cdot)\in\LL_\infty$, $\tilde{f}(\bx(\cdot), \ha(\cdot), \ba, \cdot) \in \LL_2$, $s(\bx(\cdot), \bx_d(\cdot))\in\LL_2\cap\LL_\infty$, $s(\bx(t), \bx_d(t))\rightarrow 0$ and $\bx(t) \rightarrow \bx_d(t)$.
\end{lem}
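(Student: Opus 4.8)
The plan is to view the error model $\dot{s} = -\eta s + \tilde{f}$ from (\ref{eqn:s_dyn}) as a stable first-order linear filter driven by the $\LL_2$ input $\tilde{f}$, use a Lyapunov estimate to bound $s$, then propagate boundedness backwards through the stable filter $s = \left(\tfrac{d}{dt} + \lambda\right)^{n-1}\tilde{x}$ to deduce that the full state stays bounded on the maximal interval, conclude that this interval is all of $\mathbb{R}_+$, and finally apply Barbalat's Lemma.

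First I would work on the maximal interval $[0,T)$ and take $W = \tfrac{1}{2}s^2$, so that along (\ref{eqn:s_dyn}), $\dot{W} = -\eta s^2 + s\tilde{f} \leq -\tfrac{\eta}{2}s^2 + \tfrac{1}{2\eta}\tilde{f}^2$ by Young's inequality. Integrating and using the hypothesis $\int_0^t \tilde{f}^2\,dt' \le C$ with $C$ independent of $T$ gives $\tfrac{1}{2}s(t)^2 \le \tfrac{1}{2}s(0)^2 + \tfrac{C}{2\eta}$ and $\tfrac{\eta}{2}\int_0^t s^2\,dt' \le \tfrac{1}{2}s(0)^2 + \tfrac{C}{2\eta}$, so $s \in \LL_\infty[0,T)\cap \LL_2[0,T)$ with bounds independent of $T$. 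Next, writing $s = \left(\tfrac{d}{dt}+\lambda\right)^{n-1}\tilde{x}$ in state-space form yields $\dot{\bxi} = \bA\bxi + \bb\, s$, where $\bxi = (\tilde{x}, \dot{\tilde{x}}, \ldots, \tilde{x}^{(n-2)})$ and $\bA$ is the companion matrix of $(\lambda + z)^{n-1}$, which is Hurwitz since $\lambda > 0$; a bounded input $s$ therefore produces a bounded $\bxi$, and $\tilde{x}^{(n-1)} = s - \sum_{i=0}^{n-2}\binom{n-1}{i}\lambda^{n-1-i}\tilde{x}^{(i)}$ is then bounded as well, all with bounds independent of $T$. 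Since $\bx_d$ is the (bounded) known desired trajectory, $\bx = \bx_d + (\tilde{x}, \dot{\tilde{x}}, \ldots, \tilde{x}^{(n-1)})$ is bounded on $[0,T)$ uniformly in $T$.

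Now I would argue $T = \infty$. On $[0,T)$ both $\bx$ and $\hat{\ba}$ are bounded with bounds not depending on $T$ (the latter by hypothesis), so by Assumption~\ref{assmp:bound} and the explicit form of the control law (\ref{eqn:control}) the right-hand sides of the closed-loop dynamics stay bounded; hence $(\bx,\hat{\ba})$ cannot escape in finite time and extends past any finite $T$, contradicting maximality unless $T = \infty$. Consequently every bound above holds on $[0,\infty)$, giving $\ha \in \LL_\infty$, $\tilde{f} \in \LL_2$, and $s \in \LL_2 \cap \LL_\infty$. For convergence: since $\bx$ and $\hat{\ba}$ are bounded and $\tilde{f}$ is locally bounded in $(\bx,\hat{\ba})$ uniformly in $t$, $\tilde{f}$ is bounded, so $\dot{s} = -\eta s + \tilde{f} \in \LL_\infty$; combined with $s \in \LL_2 \cap \LL_\infty$, the remark following Lemma~\ref{lem:barbalat} (apply Barbalat to $s^2$) yields $s \to 0$. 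Feeding $s \to 0$ into the Hurwitz system $\dot{\bxi} = \bA\bxi + \bb\,s$ gives $\bxi \to 0$ by the converging-input/converging-state property of exponentially stable LTI systems, hence $\tilde{x}^{(i)} \to 0$ for $i = 0, \ldots, n-1$, i.e., $\bx \to \bx_d$. The main obstacle is the extension step establishing $T = \infty$: it is crucial that \emph{every} intermediate bound --- the $\LL_2$ norm of $\tilde{f}$, the $\LL_\infty$ bound on $s$ from the Lyapunov estimate, the $\LL_\infty$ bound on $\tilde{x}$ from the stable filter, and the $\LL_\infty$ bound on $\hat{\ba}$ --- be uniform in $T$, which is exactly why these uniformity hypotheses are built into the statement.
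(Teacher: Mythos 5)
Your proof is correct, and the overall strategy — bound $s$ uniformly on $[0,T)$, propagate this to a uniform bound on $\bx$, argue by contradiction that $T=\infty$, and finish with Barbalat — is the same as the paper's. The technical route differs in two places, and both are fine. First, where the paper obtains $s \in \LL_\infty$ from the explicit convolution formula $s(t) = \int_0^t e^{-\eta(t-\tau)}\tilde{f}(\tau)\,d\tau$ via Cauchy--Schwarz, and separately invokes Parseval's theorem for the low-pass filter to get $s \in \LL_2$, you instead apply a single Lyapunov/Young estimate $\dot{W} \leq -\tfrac{\eta}{2}s^2 + \tfrac{1}{2\eta}\tilde{f}^2$ to extract both conclusions at once; this is arguably tidier and avoids Parseval entirely. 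Second, the paper is terse at the step from $s \in \LL_\infty$ to $\bx \in \LL_\infty$ (``it immediately follows'') and likewise at $s\to 0 \Rightarrow \bx\to\bx_d$ (``by definition of $s$''); you supply the missing detail by writing $s = \left(\tfrac{d}{dt}+\lambda\right)^{n-1}\tilde{x}$ in companion form with a Hurwitz matrix, so that a bounded input gives a bounded state and a converging input gives a converging state. Both arguments use the standing convention that $\bx_d$ and its derivatives are bounded, which you make explicit. The emphasis you place on $T$-independence of every intermediate bound is exactly the crux the paper relies on as well.
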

\begin{proof}
By (\ref{eqn:s_dyn}), we can write explicitly
\begin{equation}
    \label{eqn:s_sol}
    s(\bx(t), \bx_d(t)) = \int_0^t e^{-\eta (t-\tau)}\tilde{f}(\bx(\tau), \hat{\ba}(\tau), \ba, \tau)d\tau.
\end{equation}
By the Cauchy-Schwarz inequality,
\begin{align*}
    s^2(\bx(T), \bx_d(T)) &\leq \left(\int_0^te^{-2\eta(T-\tau)}d\tau\right)\left(\int_0^t\tilde{f}^2(\bx(\tau), \ha(\tau), \ba, \tau)d\tau\right)\\
    &\leq \frac{1}{2\eta}\left(\int_0^t\tilde{f}^2(\bx(\tau), \ha(\tau), \ba, \tau)d\tau\right)\left(1 - e^{-2\eta T}\right)\\
    &\leq \frac{1}{2\eta}\left(\int_0^t\tilde{f}^2(\bx(\tau), \ha(\tau), \ba, \tau)d\tau\right) < \infty
\end{align*}
so that $\sup_{t\in[0, T)}|s(\bx(t), \bx_d(t))| < \infty$. Observe that this bound is independent of $T$. It immediately follows that $\sup_{t\in[0, T)}\Vert\bx(t)\Vert < \infty$, and that this bound is independent of $T$. This observation contradicts that $[0, T)$ is the maximal interval of existence of $\bx(t)$ for any $T$, and thus $\bx(t)$ must exist for all $t$. This shows that $\bx(\cdot)\in\LL_\infty$, $s(\bx(\cdot), \bx_d(\cdot))\in\LL_\infty$, and that the bounds on $\tilde{f}(\bx(t), \ha(t), \ba, t)$ and $\ha(t)$ for $t \in [0, T)$ can be extended for all $t$. From this we conclude $\tilde{f}(\bx(\cdot), \ha(\cdot), \ba, \cdot) \in \LL_2$ and $\ha\in\LL_\infty$. Similarly, Parseval's theorem applied to the low-pass filter (\ref{eqn:s_sol}) shows that $s(\bx(\cdot), \bx_d(\cdot)) \in \LL_2$. Because $\bx(\cdot) \in \LL_\infty$ and $\hat{\ba}(\cdot) \in \LL_\infty$, and because $\tilde{f}(\bx, \ha, \ba, t)$ is locally bounded in $\bx$ and $\hat{\ba}$ uniformly in $t$, $\tilde{f}(\bx(\cdot), \ha(\cdot), \ba, \cdot) \in \LL_\infty$. By (\ref{eqn:s_dyn}), $\dot{s}(\bx(\cdot), \bx_d(\cdot), \dot{\bx}(\cdot), \dot{\bx}_d(\cdot)) \in \LL_\infty$, and hence by Lemma~\ref{lem:barbalat}, $s(\bx(t), \bx_d(t)) \rightarrow 0$. By definition of $s$, we then conclude that $\bx(t) \rightarrow \bx_d(t)$.
\end{proof}
\section{Generalized linear models}
\label{app2:glms}
Generalized linear model (GLM) regression is an extension of linear regression where the data is assumed to be generated by a function of the form $f(\bx) = u(\bw^\T\bx)$ for a known ``link function'' $u$ and unknown parameters $\bw$. The first computationally and statistically efficient algorithm for this problem -- the GLM-Tron of~\citet{glmtron} -- assumes that $u$ is Lipschitz and monotonic, much like Assumption \ref{assmp:tyukin}. The GLM-Tron algorithm was recently extended to the setting of kernel methods, and was subsequently used to provably learn two hidden layer neural networks by~\citet{alphatron}; this extension is known as the Alphatron. In the kernel GLM setting handled by the Alphatron, the function to be approximated is assumed to be of the form $f(\bx) = u\left(\sum_{i=1}^m \mathcal{K}\left(\bx, \bx_i\right)w_i\right)$ where $\mathcal{K}$ is the kernel function for a Reproducing Kernel Hilbert Space (RKHS) $\mathcal{H}$.

The Alphatron initializes all weights to zero, and given a batch of labeled training data $\left(\bx_i, f(\bx_i)\right)_{i=1}^m$, updates with a learning rate $\lambda > 0$ according to the iteration
\begin{equation}
    \hat{w}_i^{t+1} = \hat{w}_i^t - \frac{\lambda}{m}\left(\hat{f}(\hat{\bw}^t, \bx_i) - f(\bx_i)\right).
    \label{eqn:alphatron_weights}
\end{equation}
We now demonstrate an equivalence between Tyukin's adaptation law (\ref{eqn:tyukin_alg}) and the Alphatron weight update (\ref{eqn:alphatron_weights}) in the following lemma. 

\begin{lem}
\label{lemma:alpha_tyuk}
The adaptation law (\ref{eqn:tyukin_alg}) is an application of the Alphatron algorithm (\ref{eqn:alphatron_weights}) to adaptive control.
\end{lem}
The proof is given in Appendix~\ref{app2:alpha_tyuk}.

Lemma \ref{lemma:alpha_tyuk} shows a convergence of techniques in nonlinearly parameterized adaptive control and provably correct learning algorithms with associated nonconvex optimization problems. It suggests that models amenable to gradient-based optimization in machine learning are good targets for nonlinear parameterizations suitable for adaptive control.
\section{Exponential forgetting least squares and bounded gain forgetting}
\label{ssec:bgf}
We now apply the exponential forgetting and bounded gain forgetting least squares techniques (cf. Chapter 9 of~\citet{slot_li_book}) to the algorithms developed in this work. These techniques are useful for estimation of time-varying parameters, as they rapidly discard previous information used for parameter estimation. Exponential forgetting least squares is described by a time-dependent learning rate matrix $\bP(t)$, which, in the linearly parameterized case $\tilde{f}(\bx, \ha, \ba, t) = \bY(\bx, t)\tilde{\ba}$ takes the form
\begin{equation}
    \label{eqn:efls}
    \dot{\bP} = \begin{cases}
    \lambda \bP - \bP \bY(\bx(t),t)^\T \bY(\bx(t), t) \bP & \text{if\ \ } \Vert\bP\Vert \leq P_0\\
    0 & \text{else}
    \end{cases}
\end{equation}
where $\lambda > 0$ is a constant forgetting factor, $P_0$ is a maximum bound on the norm, and $\Vert\bP\Vert$ is a matrix norm such as the opeator norm. Equation (\ref{eqn:efls}) implies for the inverse matrix
\begin{equation}
    \label{eqn:efls_inv}
    \frac{d}{dt}\bP^{-1} = \begin{cases}
    -\lambda \bP^{-1} + \bY(\bx(t), t)^\T\bY(\bx(t), t)& \text{if\ \ } \Vert\bP\Vert \leq P_0\\
    0 & \text{else}
    \end{cases}
\end{equation}
In the nonlinearly parameterized case described by Assumption \ref{assmp:tyukin}, we will replace $\bY(\bx(t), t)^\T$ in (\ref{eqn:efls}) \& (\ref{eqn:efls_inv}) by $\balf(\bx(t), t)$. In the bounded gain forgetting technique, $\lambda$ is a time-dependent function 
\begin{equation}
    \label{eqn:lamb_bgf}
    \lambda(t) = \lambda_0\left(1 - \frac{\Vert\bP\Vert}{P_0}\right),
\end{equation}
where $\lambda_0 > 0$ sets the forgetting factor when the norm of $\bP$ is small. It can be shown that this choice of $\lambda(t)$ ensures that $\Vert\bP\Vert \leq P_0$, and thus we may drop the case statement in (\ref{eqn:efls}) \& (\ref{eqn:efls_inv})~\citep{slot_li_book}. The choice of $\lambda(t)$ in bounded gain forgetting and the case statement used in (\ref{eqn:efls}) \& (\ref{eqn:efls_inv}) are both employed to prevent unboundedness of the learning rate matrix. 

We focus on algorithms without the elastic modification of Section~\ref{ssec:elastic}; extension to the elastic modification is simple. We also focus on the bounded gain forgetting technique: proofs for the exponential forgetting least squares technique are identical, with the addition of an appropriate case statement in the time derivative of the Lyapunov function. For simplicity, we include only the time-dependent gain $\bP(t)$ and set the scalar gains $\kappa = \gamma = 1$ where applicable.

We begin with the first-order non-filtered composite (\ref{eqn:comp_adapt}) with $\bP$ given by (\ref{eqn:efls}). In this case, the composite algorithm may be implemented via the PI form (\ref{eqn:pi_comp_1})-(\ref{eqn:pi_comp_4}) where now $\bP = \bP(t)$.
\begin{prop}
\label{prop:bgf_1}
Consider the adaptation algorithm (\ref{eqn:comp_adapt}) with $\bP(t)$ given by (\ref{eqn:efls}), $\lambda(t)$ given by (\ref{eqn:lamb_bgf}), and $\kappa = \gamma = 1$. Then all trajectories $(\bx(t), \hat{\ba}(t))$ remain bounded, $\tilde{f}(\bx(\cdot), \ha(\cdot), \cdot) \in \LL_2\cap\LL_\infty$, $s(\bx(\cdot), \bx_d(\cdot)) \in \LL_2\cap\LL_\infty$, $s(\bx(t), \bx_d(t)) \rightarrow 0$ and $\bx(t) \rightarrow \bx_d(t)$.
\end{prop}
The proof is given in Appendix~\ref{app2:bgf_1}

We can state a similar result for the higher-order non-filtered composite with time-dependent $\bP(t)$ given by (\ref{eqn:efls}),
\begin{equation}
    \ddot{\hat{\ba}} + \left(\beta \sN(t) - \frac{\dot{\sN}(t)}{\sN(t)} - \dot{\bP}(t)\bP^{-1}(t)\right)\dot{\hat{\ba}} = -\beta \sN(t) \bP(t)\bY(\bx, t)^\T\left(s(\bx, \bx_d(t)) + \tilde{f}(\bx, \ha, \ba, t)\right),
    \label{eqn:bgf_ho_comp}
\end{equation}
which admits a representation as two first-order equations,
\begin{align}
    \label{eqn:bgf_ho_comp_v}
    \dot{\hat{\bv}} &= -\bP(t)\bY(\bx, t)^\T\left(s(\bx, \bx_d(t)) + \tilde{f}(\bx, \ha, \ba, t)\right),\\
    \label{eqn:bgf_ho_comp_a}
    \dot{\hat{\ba}} &= \beta \sN(t) \bP(t)\left(\hat{\bv} - \hat{\ba}\right).
\end{align}
Equation (\ref{eqn:bgf_ho_comp_v}) can be implemented via the PI form $\hat{\bv} = \overline{\bv} + \bxi(\bx, t) + \brho(\bx, t)$ where $\bxi, \brho$, and $\dot{\overline{\bv}}$ are given by (\ref{eqn:pi_comp_1}), (\ref{eqn:pi_comp_2}), and (\ref{eqn:pi_comp_4}) respectively with $\gamma = \kappa = 1$.
\begin{prop}
\label{prop:bgf_2}
Consider the adaptation algorithm (\ref{eqn:bgf_ho_comp}) with $\bP(t)$ given by (\ref{eqn:efls}), $\lambda(t)$ given by (\ref{eqn:lamb_bgf}), $\sN(t) = 1 + \mu\Vert\bY(\bx(t), t)\Vert^2$ and $\mu > \frac{3\eta + 2}{2\eta\beta}$. Then all trajectories $(\bx(\cdot), \hat{\bv}(\cdot), \hat{\ba}(\cdot))$ remain bounded, $\tilde{f}(\bx(\cdot), \ha(\cdot), \cdot) \in \LL_2\cap\LL_\infty$, $s(\bx(\cdot), \bx_d(\cdot)) \in \LL_2\cap\LL_\infty$, $s(\bx(t), \bx_d(t)) \rightarrow 0$ and $\bx(t) \rightarrow \bx_d(t)$.
\end{prop}
The proof is given in Appendix~\ref{app2:bgf_2}. 

\begin{rmk}
\label{rmk:P_bound}
Because $\bP(t)$ is uniformly bounded in $t$, it is not necessary to include $\bP(t)$ in (\ref{eqn:bgf_ho_comp_a}); by a slight modification of the proof, it is easy to show that the modified higher-order law
\begin{equation*}
    \ddot{\hat{\ba}} + \left(\beta \sN(t) - \frac{\dot{\sN}(t)}{\sN(t)}\right)\dot{\hat{\ba}} = -\beta \sN(t) \bP(t)\bY(\bx(t), t)^\T\left(s(\bx, \bx_d(t)) + \tilde{f}(\bx, \ha, \ba, t)\right),
\end{equation*}
is also a stable adaptive law with a suitable choice of gains.$\qeddef$
\end{rmk}

We now consider Tyukin's first-order algorithm for nonlinearly parameterized systems (\ref{eqn:tyukin_alg}) with $\bP = \bP(t)$ given by (\ref{eqn:efls}). To do so, we require an additional assumption
\begin{assumption}
\label{assmp:lower_bound}
For the same function $\balf(\bx, t)$ as in Assumption \ref{assmp:tyukin}, there exists a constant $D_2$ such that
\begin{equation}
    \label{eqn:lower_bound}
    |\tilde{f}(\bx, \hat{\ba}, \ba, t)| \geq D_2|\balf(\bx, t)^\T\tilde{\ba}|.
\end{equation}
\end{assumption}

Together with Assumption \ref{assmp:tyukin}, Assumption~\ref{assmp:lower_bound} states that $\tilde{f}(\bx, \ha, \ba, t)$ lies between two linear functions. Given that the update (\ref{eqn:efls}) is derived based on recursive \textit{linear} least squares considerations, it is unsurprising that such an assumption is required in the nonlinearly parameterized setting. We are now in a position to state the following proposition.
\begin{prop}
\label{prop:bgf_3}
Consider the adaptation algorithm (\ref{eqn:tyukin_alg}) with $\bP(t)$ given by (\ref{eqn:efls}) and $\lambda(t)$ given by (\ref{eqn:lamb_bgf}) for $\tilde{f}(\bx, \ha, \ba, t)$ satisfying Assumptions \ref{assmp:tyukin} and \ref{assmp:lower_bound}. Further assume that $D_1 < 2D_2^2$ or that $D_2 > \frac{1}{2}$. Then, all trajectories $(\bx(t), \hat{\ba}(t))$ remain bounded, $\tilde{f}(\bx(\cdot), \ha(\cdot), \cdot) \in \LL_2$, $s(\bx(\cdot), \bx_d(\cdot)) \in \LL_2\cap\LL_\infty$, $s(\bx(t), \bx_d(t)) \rightarrow 0$ and $\bx(t) \rightarrow \bx_d(t)$.
\end{prop}
The proof is given in Appendix~\ref{app2:bgf_3}.

Last, we consider the momentum algorithm for nonlinearly parameterized systems
\begin{equation}
    \ddot{\hat{\ba}} + \left(\beta \sN(t) - \frac{\dot{\sN}(t)}{\sN(t)}- \dot{\bP}(t)\bP(t)^{-1}\right)\dot{\hat{\ba}} = -\beta \sN(t) \tilde{f}(\bx, \ha, \ba, t)\bP(t)\balf(\bx, t),
    \label{eqn:bgf_ho_nlin}
\end{equation}
which admits a representation as two first-order equations,
\begin{align}
    \label{eqn:bgf_ho_nlin_v}
    \dot{\hat{\bv}} &= -\tilde{f}(\bx, \ha, \ba, t)\bP(t)\balf(\bx, t),\\
    \label{eqn:bgf_ho_nlin_a}
    \dot{\hat{\ba}} &= \beta \sN(t) \bP(t)\left(\hat{\bv} - \hat{\ba}\right).
\end{align}
\begin{prop}
\label{prop:bgf_4}
Consider the adaptation algorithm (\ref{eqn:bgf_ho_nlin}) with $\bP(t)$ given by (\ref{eqn:efls}), $\lambda(t)$ given by (\ref{eqn:lamb_bgf}), $\sN(t) = 1 + \mu\Vert\balf(\bx(t), t)\Vert^2$, and $\mu > \frac{4D_2 - 2 + \left(2D_1 + 1\right)^2}{\beta\left(4D_2 - 1\right)}$. Suppose $\tilde{f}(\bx, \ha, \ba, t)$ satisfies Assumptions \ref{assmp:tyukin} and \ref{assmp:lower_bound}. Further assume that $D_2 > \frac{1}{2}$. Then, all trajectories $(\bx(t), \hat{\bv}(t), \hat{\ba}(t))$ remain bounded, $\tilde{f}(\bx(\cdot), \ha(\cdot), \cdot) \in \LL_2$, $s(\bx(\cdot), \bx_d(\cdot)) \in \LL_2\cap\LL_\infty$, $s(\bx(t), \bx_d(t))\rightarrow 0$ and $\bx(t) \rightarrow \bx_d(t)$.
\end{prop}
The proof is given in Appendix~\ref{app2:bgf_4}. 

\begin{rmk}
As in Remark~\ref{rmk:P_bound}, because $\bP(t)$ is uniformly bounded in $t$, it is not necessary to include $\bP(t)$ in (\ref{eqn:bgf_ho_nlin_a}). It is simple to show by modification of the proof that
\begin{equation*}
    \ddot{\hat{\ba}} + \left(\beta \sN(t) - \frac{\dot{\sN}(t)}{\sN(t)}\right)\dot{\hat{\ba}} = - \beta \sN(t) \tilde{f}(\bx, \ha, \ba, t)\bP(t)\balf(\bx, t)
\end{equation*}
is also a stable adaptive law with a suitable choice of gains.
\end{rmk}
\section{Proofs}
\subsection{Proof of Proposition~\ref{prop:implicit_reg}}
\label{app2:prop:implicit_reg}
\begin{proof}
Let $\bthet$ be any constant vector of parameters. The Bregman divergence $\bregd{\bthet}{\ha} = \psi(\bthet) - \psi(\ha) - \nabla\psi(\ha)^\T\left(\bthet - \ha\right)$ has time derivative
\begin{equation*}
    \frac{d}{dt}\bregd{\bthet}{\ha} = -\left(\frac{d}{dt}\nabla\psi(\ha)\right)^\T\left(\bthet - \ha\right).
\end{equation*}
From (\ref{eqn:natural_sl}), $\frac{d}{dt}\nabla\psi(\ha) = -\nabla_{\ha}\dot{Q}(\bx, \ha, t)$, so that
\begin{equation*}
    \frac{d}{dt}\bregd{\bthet}{\ha} = \nabla_{\ha}\dot{Q}(\bx, \ha, t)^\T\left(\bthet - \ha\right).
\end{equation*}
Integrating both sides of the above shows that
\begin{equation*}
    \bregd{\bthet}{\ha(0)} = \bregd{\bthet}{\ha(t)} + \int_0^t \nabla_{\ha}\dot{Q}(\bx(\tau), \ha(\tau), \tau)^\T\left(\ha(\tau) - \bthet\right)d\tau.
\end{equation*}
By the assumption of a linear parameterization, $\nabla_{\ha}\dot{Q}(\bx, \ha, t) = \bY(\bx, t)^\T\frac{\partial Q}{\partial \bx}(\bx, t)$. If we now take $\bthet \in \mathcal{A}$, $\bY(\bx(\tau), \tau)\bthet = f(\bx(\tau), \ba, \tau)$ and the integral term becomes independent of $\bthet$. Assuming that $\ha(t)\rightarrow\ha_\infty \in \mathcal{A}$, we can take the limit as $t\rightarrow \infty$ and say that for any $\bthet \in \mathcal{A}, \ha_\infty \in \mathcal{A}$,
\begin{equation*}
    \bregd{\bthet}{\ha(0)} = \bregd{\bthet}{\ha_\infty} + \int_0^\infty \frac{\partial Q}{\partial \bx}(\bx(\tau), \tau)^\T\left(\bY(\bx(\tau), \tau)\ha(\tau) - f(\bx(\tau), \ba, \tau)\right)d\tau.
\end{equation*}
Because the only dependence of the right-hand side on $\bthet$ is in the first term, and because this relation holds for any $\bthet$, the $\arg\min$ of the two Bregman divergences must be identical. The minimum of the right-hand side over $\bthet$ is clearly obtained at $\ha_\infty$, while the minimum of the left-hand side is by definition obtained at $\arg\min_{\bthet\in\mathcal{A}}d_\psi(\bthet, \ha(0))$. From this, we conclude that
\begin{equation*}
    \ha_\infty = \arg\min_{\bthet\in\mathcal{A}}\bregd{\bthet}{\ha(0)},
\end{equation*}
which completes the proof.
\end{proof}
\subsection{Proof of Proposition~\ref{prop:tyukin_reg}}
\label{app2:tyukin_reg}
\begin{proof}
The proof is much the same as Proposition~\ref{prop:implicit_reg}. The Bregman divergence $\bregd{\bthet}{\ha}$ for any fixed vector of parameters $\bthet$ verifies
\begin{equation*}
    \frac{d}{dt}\bregd{\bthet}{\ha} = \tilde{f}(\bx(t), \ha(t), \ba, t)\balf(\bx(t), t)^\T\left(\bthet - \ha\right),
\end{equation*}
so that, integrating both sides,
\begin{equation*}
    \bregd{\bthet}{\ha(0)} = \bregd{\bthet}{\ha(t)} - \int_0^t\tilde{f}(\bx(\tau), \ha(t), \ba, \tau)\balf(\bx(\tau), \tau)^\T\left(\bthet - \ha(\tau)\right)d\tau.
\end{equation*}
Now take $\bthet \in \mathcal{A}$. By the assumptions of the proposition, $\balf(\bx(\tau), \tau)^\T\bthet = \balf(\bx(\tau), \tau)^\T\ba$ is independent of $\bthet$. Hence, using that $\ha(t)\rightarrow\ha_\infty \in \mathcal{A}$, we can write
\begin{equation*}
    \bregd{\bthet}{\ha(0)} = \bregd{\bthet}{\ha_\infty} - \int_0^\infty \tilde{f}(\bx(\tau), \ha(\tau), \ba, \tau)\balf(\bx(\tau), \tau)^\T\left(\ba - \ha(\tau)\right)d\tau.
\end{equation*}
Optimizing both sides over $\bthet \in \mathcal{A}$ as in Proposition~\ref{prop:implicit_reg} yields the result.
\end{proof}
\subsection{Proof of Proposition~\ref{prop:dyn_predict_gen}}
\label{app2:dyn_predict_gen}
\begin{proof}
By adding and subtracting $\bff(\hat{\bx}) = \bY(\hat{\bx})\ba$, the error $\mathbf{e} = \hat{\bx} - \bx$ has dynamics
\begin{equation*}
    \dot{\mathbf{e}} = -k\mathbf{e} + \bY(\hat{\bx})\tilde{\ba} + \bff(\hat{\bx}) - \bff(\bx).
\end{equation*}
Consider the parameter estimator
\begin{equation*}
    \dot{\ha} = -\gamma\left[\nabla^2\psi(\ha)\right]^{-1}\bY(\hat{\bx})^\T\bGam\mathbf{e},
\end{equation*}
where $\gamma > 0$ is a constant learning rate, $\psi:\mathbb{R}^p\rightarrow\mathbb{R}$ is a strongly convex potential function, and $\bGam\in\mathbb{R}^{n\times n} > 0$ is a constant symmetric positive definite matrix. Now consider the Lyapunov-like function
\begin{equation*}
    V(\be, \ha) = \frac{1}{2}\be^\T\bGam\be + \frac{1}{\gamma}\bregd{\ba}{\ha}.
\end{equation*}
Let $\frac{\partial g}{\partial \bx}(\bx_1, \bx_2)$ denote the derivative of $\bg$ with respect to its first argument evaluated at $(\bx_1, \bx_2)$. $V(\be, \ha)$ then has time derivative
\begin{align}
    \dot{V}(\be, \ha, t) &= \be^\T\bGam\left(\bg\left(\be + \bx(t), \bx(t)\right) + \bY\left(\be + \bx(t)\right)\tilde{\ba} + \bff\left(\be + \bx(t)\right) - \bff\left(\bx(t)\right)\right)\nonumber\\
    &\phantom{=} - \tilde{\ba}^\T\bY^\T\left(\be + \bx(t)\right)\bGam\be,\nonumber\\
    &= \be^\T\bGam\left(\bg\left(\be + \bx(t), \bx(t)\right) - \bg\left(\bx(t), \bx(t)\right) + \bff\left(\be + \bx(t)\right) - \bff\left(\bx(t)\right)\right),\nonumber\\
    &= \be^\T\int_0^1\bGam\left[\frac{\p \bff}{\p \bx}\left(\bx(t) + s\be\right) + \frac{\partial \bg}{\p \bx}\left(\bx(t) + s\be, \bx(t)\right)\right]ds\be,
    \label{eqn:dyn_pred_lyap_dot}
\end{align}
where the second equality follows by application of $\bg(\bx, \bx) = \mathbf{0}$ for all $\bx$ and the last equality follows by Hadamard's Lemma. (\ref{eqn:dyn_pred_lyap_dot}) shows that $\be(t)\rightarrow 0$ as long as $\bff(\bx) + \bg(\bx, \bu(t))$ is contracting in the metric $\bGam$ for any external input $\bu(t)$~\citep{contraction, modular}, i.e., if
\begin{equation*}
    \left(\frac{\p \bff(\bx)}{\p \bx} + \frac{\p \bg}{\p \bx}\left(\bx, \bu(t)\right)\right)^\T\bGam + \bGam\left(\frac{\p \bff(\bx)}{\p \bx} + \frac{\p \bg}{\p \bx}\left(\bx, \bu(t)\right)\right) \leq -2\lambda\bGam
\end{equation*}
uniformly in $\bx$ for some contraction rate $\lambda > 0$. To see this, assume that $f(\bx) + \bg(\bx, \bu(t))$ is contracting for any external input $\bu(t)$. Then
\begin{equation*}
    \dot{V}(\be, \ha, t) \leq -2\lambda \be^\T \bGam \be \leq 0.
\end{equation*}
Because $V(\be, \ha) \geq 0$ for all $\be$ and $\ha$, and because $\dot{V}(\be, \ha, t) \leq 0$ for all $\be, \ha$, and $t$, this shows that $\be(\cdot) \in \mathcal{L}_\infty$, $\ha(\cdot) \in \mathcal{L}_\infty$ and that $\lim_{t\rightarrow\infty}V(\be(t), \ha(t)) < \infty$ exists, is finite, and is positive. Integrating both sides of \eqref{eqn:dyn_pred_lyap_dot} shows that
\begin{equation*}
    \int_0^\infty \be^\T \bGam \be \leq \frac{V(\be(0), \ha(0))}{2\lambda}
\end{equation*}
so that $\be(\cdot) \in \mathcal{L}_2$. It is straightforward to show that $\dot{\be}(\cdot) \in \mathcal{L}_\infty$, so that an application of Lemma~\ref{lem:barbalat} leads to the conclusion that $\be(t) \rightarrow \mathbf{0}$.
\end{proof}
\subsection{Proof of Proposition~\ref{prop:ham_estimate}}
\label{app2:ham_estimate}
After subtracting the true dynamics $\dot{\bp}$ and $\dot{\bq}$ from the dynamics predictor, consider the decomposition of the error dynamics
\begin{align*}
    \dot{\tilde{\bp}} &= -\left(\nabla_{\hq}\bY(\hp, \hq)\right)\tilde{\ba} - k_p \tilde{\bp} - \left(\nabla_{\hq}\HH(\hat{\bp}, \hat{\bq}) - \nabla_{\hq}\HH(\bp, \hat{\bq})\right) - \left(\nabla_{\hq}\HH(\bp, \hat{\bq}) - \nabla_{\bq}\HH(\bp, \bq)\right),\\
    \dot{\tilde{\bq}} &= \left(\nabla_{\hp}\bY(\hp, \hq)\right)\tilde{\ba} - k_q\tilde{\bq} + \left(\nabla_{\hp}\HH(\hp, \hq) - \nabla_{\hp}\HH(\hp, \bq)\right) + \left(\nabla_{\hp}\HH(\hp, \bq) - \nabla_{\bp}\HH(\bp, \bq)\right),
\end{align*}
The Lyapunov-like function
\begin{equation*}
    V(\tilde{\bp}, \tilde{\bq}, \ha) = \frac{1}{2}\tilde{\bp}^\T\tilde{\bp} + \frac{1}{2}\tilde{\bq}^\T\tilde{\bq} + \bregd{\ba}{\ha}
\end{equation*}
then has time derivative
\begin{align*}
    \dot{V}(\tilde{\bp}, \tilde{\bq}, \ha) &= \tilde{\bp}^\T\left[-\left(\nabla_{\hq}\bY(\hp, \hq)\right)\tilde{\ba} - k_p \tilde{\bp} - \left(\nabla_{\hq}\HH(\hat{\bp}, \hat{\bq}) - \nabla_{\hq}\HH(\bp, \hat{\bq})\right) - \left(\nabla_{\hq}\HH(\bp, \hat{\bq}) - \nabla_{\bq}\HH(\bp, \bq)\right)\right]\\
    &\phantom{=} + \tilde{\bq}^\T\left[\left(\nabla_{\hp}\bY(\hp, \hq)\right)\tilde{\ba} - k_q\tilde{\bq} + \left(\nabla_{\hp}\HH(\hp, \hq) - \nabla_{\hp}\HH(\hp, \bq)\right) + \left(\nabla_{\hp}\HH(\hp, \bq) - \nabla_{\bp}\HH(\bp, \bq)\right)\right]\\
    &\phantom{=} + \tilde{\ba}^\T\left(\left(\nabla_{\hq}\bY(\hp, \hq)\right)^\T\tilde{\bp} - \left(\nabla_{\hp}\bY(\hp, \hq)\right)^\T\tilde{\bq}\right)\\
    &= \tilde{\bp}^\T\left[- k_p \tilde{\bp} - \left(\nabla_{\hq}\HH(\hat{\bp}, \hat{\bq}) - \nabla_{\hq}\HH(\bp, \hat{\bq})\right) - \left(\nabla_{\hq}\HH(\bp, \hat{\bq}) - \nabla_{\bq}\HH(\bp, \bq)\right)\right]\\
    &\phantom{=} + \tilde{\bq}^\T\left[- k_q\tilde{\bq} + \left(\nabla_{\hp}\HH(\hp, \hq) - \nabla_{\hp}\HH(\hp, \bq)\right) + \left(\nabla_{\hp}\HH(\hp, \bq) - \nabla_{\bp}\HH(\bp, \bq)\right)\right]\\
    &= \begin{pmatrix} \tilde{\bp}^\T & \tilde{\bq}^\T\end{pmatrix}\bM(\tilde{\bp}, \tilde{\bq}) \begin{pmatrix} \tilde{\bp} \\ \tilde{\bq} \end{pmatrix},
\end{align*}
where the matrix $\bM(\tilde{\bp}, \tilde{\bq})$ is given by, after an application of Hadamard's Lemma,
\begin{equation*}
    \begin{pmatrix} -k_p\bI - \int_0^1\nabla_{\bp + s\tilde{\bp}}\nabla_{\hq}\HH(\bp + s\tilde{\bp}, \hq)ds & -\int_0^1\nabla_{\bq + s\tilde{\bq}}^2\HH(\bp, \bq + s\tilde{\bq})ds \\ \int_0^1\nabla_{\bp + s\tilde{\bp}}^2\HH(\bp + s\tilde{\bp}, \bq)ds & -k_q \bI + \int_0^1\nabla_{\bq + s\tilde{\bq}}\nabla_{\hp}\HH(\hp, \bq + s\tilde{\bq})ds \end{pmatrix}.
\end{equation*}
As in the proof of Proposition~\ref{prop:dyn_predict_gen}, a sufficient condition for convergence of $\tilde{\bp}(t)\rightarrow \mathbf{0}$ and $\tilde{\bq}(t) \rightarrow \mathbf{0}$ is uniform negative definiteness of the Jacobian matrix
\begin{equation*}
    \bJ = \begin{pmatrix} -k_p\bI - \nabla_{\bp}\nabla_{\bq}\HH(\bp, \bq) & -\nabla_{\bq}^2\HH(\bq, \bp) \\ \nabla_{\bp}^2\HH(\bp, \bq) & -k_q \bI + \nabla_{\bq}\nabla_{\bp}\HH(\bp, \bq) \end{pmatrix},
\end{equation*}
in $\bp$ and $\bq$, i.e., contraction of the nominal $\dot{\hp}$ and $\dot{\hq}$ system in the Euclidean metric. Sufficient conditions for uniform negative definiteness are then given by (see, for example, Example~3.8 in~\citet{modular})
\begin{align*}
    k_p &> -\frac{1}{2}\lambda_{\min}\left(\nabla_{\bp}\nabla_{\bq}\HH(\bp, \bq) + \nabla_{\bq}\nabla_{\bp}\HH(\bp, \bq)\right),\\
    k_q &> \frac{1}{2}\lambda_{\max}\left(\nabla_{\bp}\nabla_{\bq}\HH(\bp, \bq) + \nabla_{\bq}\nabla_{\bp}\HH(\bp, \bq)\right),\\
    \lambda_p\lambda_q &> \frac{1}{4}\lambda_{\max}^2\left[\nabla^2_{\bp}\HH(\bp, \bq) - \nabla^2_{\bq}\HH(\bp, \bq)\right],
\end{align*}
where $\lambda_p$ and $\lambda_q$ are the contraction rates of the $\hat{\bp}$ and $\hat{\bq}$ systems respectively, given by the difference of the left- and right-hand sides of the first two inequalities above.
\subsection{Proof of Proposition~\ref{prop:lagr_est}}
\label{app2:lagr_est}
The Euler-Lagrange equations of motion $\frac{d}{dt}\frac{\p \LL}{\p\dot{\bq}} - \frac{\p\LL}{\p\bq} = \bu$ with $\bu$ a control input give the dynamics
\begin{equation*}
    \sum_{lj}a_l^{(K)}M_{ij}^l(\bq)\ddot{q}_j + \sum_{lkj}a_l^{(K)}\dot{q}_k\dot{q}_j\left[\frac{\p M_{ij}^l(\bq)}{\p q_k} - \frac{1}{2}\frac{\p M_{kj}^l(\bq)}{\p q_i}\right] + \sum_l a_l^{(P)}\frac{\p \phi^l(\bq)}{\p q_i} = u_i.
\end{equation*}
Above, the second term
\begin{equation*}
    \sum_{lkj}a_l^{(K)}\dot{q}_k\dot{q}_j\left[\frac{\p M_{ij}^l(\bq)}{\p q_k} - \frac{1}{2}\frac{\p M_{kj}^l(\bq)}{\p q_i}\right]
\end{equation*}
uniquely defines the centripetal and Coriolis forces (traditionally written as $\bC(\bq, \dot{\bq})\dot{\bq}$ with $\bC(\bq, \dot{\bq})$ the Coriolis matrix), but does not uniquely define the Coriolis matrix (see, e.g., Section 9.1 of~\citet{slot_li_book}). Choosing
\begin{equation*}
    C_{ij}(\bq, \dot{\bq}) = \sum_{kl} a_l^{(K)}\frac{1}{2}\left[\frac{\p M^l_{ij}(\bq)}{\p q_k} - \left(\frac{\p M^l_{kj}(\bq)}{\p q_i} - \frac{\p M^l_{ki}(\bq)}{\p q_j}\right)\right]\dot{q}_k
\end{equation*}
preserves the Coriolis force $\bC(\bq, \dot{\bq})\dot{\bq}$ and ensures that $\dot{\bH}(\bq) - 2\bC(\bq, \dot{\bq})$ is a skew-symmetric matrix. In matrix notation, the dynamics are then given by
\begin{equation*}
    \bH(\bq)\ddot{\bq} + \bC(\bq, \dot{\bq})\dot{\bq} + \bg(\bq) = \bu
\end{equation*}
with the potential force $\bg(\bq) = \sum_l a^{(P)}_l\nabla_{\bq}\phi^l(\bq)$. Defining $\bs$ and $\dot{\bq}_r$ as $\bs(\bq, \dot{\bq}, \bq_d(t), \dot{\bq}_d(t)) = \left(\frac{d}{dt} + \lambda\right)\left(\bq - \bq_d(t)\right) = \dot{\bq} - \dot{\bq}_r$ with $\dot{\bq}_r = \dot{\bq}_d(t) - \lambda \left(\bq - \bq_d(t)\right)$, these dynamics can be equivalently rewritten, dropping the arguments of $\bs$ for visual clarity,
\begin{equation}
    \label{eqn:lag_dyn_s}
    \bH(\bq)\dot{\bs} + \bC(\bq, \dot{\bq})\bs = \bu - \left(\bH(\bq)\ddot{\bq}_r + \bC(\bq, \dot{\bq})\dot{\bq}_r + \bg(\bq)\right).
\end{equation}
Observe that because the Lagrangian was linearly parameterized, the resulting dynamics inherent a linear parameterization. Defining the \textit{known} basis functions
\begin{align*}
    Y^{(P)}_{il}(\bq) &= \frac{\p\phi^l(\bq)}{\p q_i},\\
    Y^{(K)}_{il}(\bq, \dot{\bq}, \bq_d(t), \dot{\bq}_d(t)) &= \sum_{kj} \frac{1}{2}\left[\frac{\p M^l_{ij}(\bq)}{\p q_k} - \left(\frac{\p M^l_{kj}(\bq)}{\p q_i} - \frac{\p M^l_{ki}(\bq)}{\p q_j}\right)\right]\dot{q}_k\dot{q}_{r, j} + \sum_{j}M_{ij}^l(\bq)\ddot{q}_{r, j},
\end{align*}
we can write (\ref{eqn:lag_dyn_s}) as
\begin{equation*}
    \bH(\bq)\dot{\bs} + \bC(\bq, \dot{\bq})\bs = \bu - \bY^{(P)}(\bq)\ba^{(P)} - \bY^{(K)}(\bq, \dot{\bq}, \bq_d(t), \dot{\bq}_d(t))\ba^{(K)}.
\end{equation*}
For $\bK > 0$ a constant positive definite matrix and for parameter estimates $\hat{\ba}^{(P)}$ and $\hat{\ba}^{(K)}$, taking $\bu = -\bK\bs + \bY^{(p)}(\bq)\ha^{(P)} + \bY^{(K)}(\bq, \dot{\bq}, \bq_d(t), \dot{\bq}_d(t))\ha^{(K)}$ leads to
\begin{equation*}
    \bH(\bq)\dot{\bs} + \bC(\bq, \dot{\bq})\bs = -\bK\bs + \bY^{(P)}(\bq)\tilde{\ba}^{(P)} + \bY^{(K)}(\bq, \dot{\bq}, \bq_d(t), \dot{\bq}_d(t))\tilde{\ba}^{(K)}.
\end{equation*}
The proof in~\citet{slot_li_robot} can now be directly extended. For $\psi^{(K)}$, $\psi^{(P)}$ strongly convex functions and $\gamma_K > 0$, $\gamma_P > 0$ positive gains, the Lyapunov-like function
\begin{equation*}
    V(\bs, \bq, \ha^{(K)}, \ha^{(P)}) = \frac{1}{2}\bs^\T\bH(\bq)\bs + \frac{1}{\gamma_K}\bregdg{{\psi^{(K)}}}{\ba^{(K)}}{\ha^{(K)}} +  \frac{1}{\gamma_P}\bregdg{{\psi^{(P)}}}{\ba^{(P)}}{\ha^{(P)}},
\end{equation*}
shows stability of the adaptation laws
\begin{align*}
    \dot{\ha}^{(K)} &= -\gamma_K\nabla^2\psi^{(K)}\left(\ha^{(K)}\right)^{-1}\bY^{(K)}(\bq, \dot{\bq}, \bq_d(t), \dot{\bq}_d(t))^\T\bs,\\
    \dot{\ha}^{(P)} &= -\gamma_P\nabla^2\psi^{(P)}\left(\ha^{(P)}\right)^{-1}\bY^{(P)}(\bq)^\T\bs,
\end{align*}
after an application of Barbalat's Lemma (Lemma~\ref{lem:barbalat}) and using skew-symmetry of $\dot{\bH} - 2\bC$ to eliminate $\frac{1}{2}\bs^\T\dot{\bH}\bs$.
\subsection{Proof of Proposition~\ref{prop:adaptive_observer}}
\label{app2:adaptive_observer}
Consider the Lyapunov-like function
\begin{equation*}
    V(\tilde{\by}, \ha) = \frac{1}{2}\tilde{\by}^\T\bGam\tilde{\by} + \frac{1}{\gamma}\bregd{\ba}{\ha}.
\end{equation*}
Let $\frac{\partial \bY\ba}{\partial \by}(\by)$ denote the derivative of $\bY(\by)\ba$ with respect to its argument evaluated at $\by$, and $\frac{\partial \bg}{\partial \by}(\by_1, \by_2)$ denote the derivative of $\bg$ with respect to its first argument evaluated at $(\by_1, \by_2)$. Using that $\bg(\by, \by) = \mathbf{0}$ for all $\by \in \mathbb{R}^m$, this Lyapunov-like function has time derivative
\begin{align*}
    \dot{V}(\tilde{\by}, \ha, t) &= \tilde{\by}^\T\bGam\bC\left(\bY(\tilde{\by} + \by(t))\tilde{\ba} + \left[\bY(\tilde{\by} + \by(t)) - \bY(\by(t))\right]\ba + \bg(\tilde{\by} + \by(t), \by(t)) - \bg(\by(t), \by(t))\right)\\
    &\phantom{=} - \tilde{\by}^\T\bGam\bC\bY(\tilde{\by} + \by(t))\tilde{\ba},\\
    &= \tilde{\by}^\T\bGam\bC\left(\left[\bY(\tilde{\by} + \by(t)) - \bY(\by(t))\right]\ba + \bg(\tilde{\by} + \by(t), \by(t)) - \bg(\by(t), \by(t))\right),\\
    &= \tilde{\by}^\T\left(\bGam\bC\int_0^1\left(\frac{\p\bY\ba}{\p \by}(\by(t) + s\tilde{\by}) + \frac{\p \bg}{\p \by}(\by(t) + s\tilde{\by}, \by(t))\right)ds\right)\tilde{\by},
\end{align*}
where the last equality follows by Hadamard's Lemma. As in the proof of Proposition~\ref{prop:dyn_predict_gen}, this shows that a sufficient condition for convergence of $\tilde{\by}(t)\rightarrow 0$ is
\begin{equation*}
    \bGam\bC\left(\frac{\p \bY\ba}{\p \by}(\by) + \frac{\p \bg}{\p \by}(\by, \bu(t)\right) + \left(\frac{\p \bY\ba}{\p \by}(\by) + \frac{\p \bg}{\p \by}(\by, \bu(t))\right)^\T\bC^\T\bGam \leq -\lambda\bGam
\end{equation*}
uniformly in $\by$ for some contraction rate $\lambda > 0$ and any external input $\bu(t)$. Under suitable observability assumptions on the system, convergence of $\hat{\by}(t)$ to $\by(t)$ ensures that $\hat{\bx}(t)$ converges to $\bx(t)$~\citep{luenberger}. 
\subsection{Proof of Proposition~\ref{prop:rec_net}}
\label{app2:rec_net}
The Lyapunov-like function
\begin{equation*}
    V(\widehat{\bThet}) = \frac{1}{\gamma}\bregd{\bThet}{\widehat{\bThet}},
\end{equation*}
has time derivative 
\begin{align*}
    \dot{V}(\widehat{\bThet}, t) &= -\sum_{ij}\widetilde{\Theta}_{ij}\left(\bsig\left(\widehat{\bThet}\bx(t)\right) - \bsig\left(\bThet\bx(t)\right)\right)_ix_j(t),\\
    &= -\sum_{ij}\widetilde{\Theta}_{ij}\left(\sigma_i\left(\sum_k\widehat{\Theta}_{ik}x_k(t)\right) - \sigma_i\left(\sum_k \Theta_{ik}x_k(t)\right)\right)x_j(t),\\
    &= -\sum_i \left(\sum_k \widetilde{\Theta}_{ik}x_k(t)\right)\left(\sigma_i\left(\sum_k\widehat{\Theta}_{ik}x_k(t)\right) - \sigma_i\left(\sum_k\Theta_{ik}x_k(t)\right)\right),\\
    &\leq -\sum_i \frac{1}{L_i}\left(\sigma_i\left(\sum_k\widehat{\Theta}_{ik}x_k(t)\right) - \sigma_i\left(\sum_k\Theta_{ik}x_k(t)\right)\right)^2,\\
    &\leq -\frac{1}{\max_k L_k}\left\Vert\bsig\left(\widehat{\bThet}\bx(t)\right) - \bsig\left(\bThet\bx(t)\right)\right\Vert_2^2 \leq 0.
\end{align*}
Integrating the above inequality shows that $\left[\bsig\left(\widehat{\bThet}\bx(t)\right) - \bsig\left(\bThet\bx(t)\right)\right]$ is an $\LL_2$ signal and hence that each component $\left[\sigma_i\left(\widehat{\bThet}\bx(t)\right) - \sigma_i\left(\bThet\bx(t)\right)\right]$ is also an $\LL_2$ signal. The error dynamics
\begin{equation*}
    \dot{\be} = -(k+1)\be + \bsig\left(\widehat{\bThet}\bx(t)\right) - \bsig\left(\bThet\bx(t)\right),
\end{equation*}
shows that each component $e_i$ is a low-pass filter of each component of the function approximation error $\left[\sigma_i\left(\widehat{\bThet}\bx(t)\right) - \sigma_i\left(\bThet\bx(t)\right)\right]$. Applying Lemma~\ref{lem:conv} shows that $\be(t)\rightarrow \mathbf{0}$.
\subsection{Proof of Proposition~\ref{prop:local_ho_sg}}
\label{app2:prop:local_ho_sg}
\begin{proof}
Consider the Lyapunov-like function
\begin{equation}
    V(\bx, \hv, \ha, t) = Q(\bx, t)  + \frac{1}{2\gamma}\tilde{\bv}^\T\tilde{\bv} + \frac{1}{2\gamma}\left(\hat{\ba} - \hat{\bv}\right)^\T\left(\hat{\ba} - \hat{\bv}\right).
    \label{eqn:ho_local_proof_v}
\end{equation}
Equation (\ref{eqn:ho_local_proof_v}) implies that, with $\mathcal{N}(t) = 1 + \mu N(t)$,
\begin{align}
    \dot{V}(\bx, \hv, \ha, t) &= \dot{Q}(\bx, \hat{\ba}, t) - \tilde{\ba}^\T\nabla_{\ha}\dot{Q}(\bx, \hat{\ba}, t) - \frac{\beta}{\gamma} \Vert\hat{\ba} - \hat{\bv}\Vert^2 - \frac{\beta\mu}{\gamma} N(t) \Vert\hat{\ba} - \hat{\bv}\Vert^2\nonumber\\
    &\phantom{=} + 2\left(\hat{\ba} - \hat{\bv}\right)^\T\nabla_{\ha}\dot{Q}(\bx, \hat{\ba}, t), \nonumber\\
    &\leq \dot{Q}(\bx, \ba, t) - \frac{\beta}{\gamma}\Vert\hat{\ba} - \hat{\bv}\Vert^2 - \frac{\beta\mu}{\gamma} N(t) \Vert\hat{\ba} - \hat{\bv}\Vert^2 + 2 \left(\hat{\ba} - \hat{\bv}\right)^\T\nabla_{\ha}\dot{Q}(\bx, \hat{\ba}, t),\nonumber\\
    &\leq -\rho(Q(\bx, t)) - \frac{\beta}{\gamma} \Vert\hat{\ba} - \hat{\bv}\Vert^2 \leq 0.
    \label{eqn:ho_local_proof_vdot}
\end{align}
By radial unboundedness of $Q(\bx, t)$ in $\bx$, (\ref{eqn:ho_local_proof_v}) \& (\ref{eqn:ho_local_proof_vdot}) show that $\bx$ remains bounded. Similarly, radial unboundedness of $V(\bx, \hv, \ha, t)$ in $\tilde{\bv}$ and $\hat{\ba} - \hat{\bv}$ show that $\hat{\bv}(t)$ and $\hat{\ba}(t)$ remain bounded. Integrating (\ref{eqn:ho_local_proof_vdot}) shows that $\frac{\beta}{\gamma}\int_0^\infty\Vert\hat{\ba}(t)-\hat{\bv}(t)\Vert^2dt \leq V(\bx(0), \bv(0), \ha(0), 0) < \infty$, so that $\left(\hat{\ba}(\cdot) - \hat{\bv}(\cdot)\right) \in \LL_2$. An identical argument shows that $\int_0^\infty \rho\left(Q(\bx(t), t)\right)dt < \infty$. Now, because $\bx$ and $\hat{\ba}$ are bounded, and because $\tilde{f}(\bx, \hat{\ba}, \ba, t)$ is locally bounded in $\bx$ and $\hat{\ba}$ uniformly in $t$ by assumption, writing $\bx(t) - \bx(s) = \int_s^t\left(\bff(\bx(t'), \ba, t') + \bu(\ha(t'), t')\right)dt'$ shows that $\bx(t)$ is uniformly continuous in $t$. Because $Q(\bx, t)$ is uniformly continuous in $t$ when $\bx$ is bounded, because $Q(\bx(t), t)$ is bounded for all $t$, and because $\rho$ is continuous in its argument, we conclude $\rho$ is uniformly continuous in $t$ and $\lim_{t\rightarrow \infty} \rho(t) = \lim_{t\rightarrow\infty} \rho(Q(\bx(t), t)) = 0$ by Lemma~\ref{lem:barbalat}. This shows that $\lim_{t\rightarrow \infty} Q(\bx(t), t) = 0$.
\end{proof}
\subsection{Proof of Proposition~\ref{prop:int_ho}}
\label{app2:prop:int_ho}
\begin{proof}
Consider the Lyapunov-like function
\begin{equation}
    V(\hv, \ha) = \frac{1}{2\gamma}\tilde{\bv}^\T\tilde{\bv} + \frac{1}{2\gamma}\left(\hat{\ba} - \hat{\bv}\right)^\T\left(\hat{\ba} - \hat{\bv}\right).
    \label{eqn:v_ho_int}
\end{equation}
Equation (\ref{eqn:v_ho_int}) implies that, with $\mathcal{N}(t) = 1 + \mu N(t)$,
\begin{align}
    \dot{V}(\bx, \hv, \ha, t) &= -\tilde{\ba}^\T\nabla_{\ha}R(\bx, \hat{\ba}, t) - \frac{\beta}{\gamma}\Vert\hat{\ba} - \hat{\bv}\Vert^2 - \frac{\beta\mu}{\gamma}N(t)\Vert\hat{\ba} - \hat{\bv}\Vert^2\nonumber\\
    &\phantom{=} + 2 \left(\hat{\ba} - \hat{\bv}\right)^\T\nabla_{\ha}R(\bx, \hat{\ba}, t),\nonumber\\
    &\leq R(\bx, \ba, t) - R(\bx, \hat{\ba}, t) - \frac{\beta}{\gamma}\Vert\hat{\ba} - \hat{\bv}\Vert^2 - \frac{\beta\mu}{\gamma}N(t)\Vert\hat{\ba} - \hat{\bv}\Vert^2\nonumber\\
    &\phantom{=} + 2 \left(\hat{\ba} - \hat{\bv}\right)^\T\nabla_{\ha}R(\bx, \hat{\ba}, t),\nonumber\\
    &\leq -kR(\bx, \hat{\ba}, t) - \frac{\beta}{\gamma}\Vert\hat{\ba} - \hat{\bv}\Vert^2 \leq 0.
    \label{eqn:vdot_ho_int}
\end{align}
(\ref{eqn:v_ho_int}) \& (\ref{eqn:vdot_ho_int}) show boundedness of $\hv$ and $\ha$ over $[0, T_x]$. Furthermore, integrating (\ref{eqn:vdot_ho_int}) shows that $\int_0^{T_x}\Vert\ha(t')-\hv(t')\Vert^2dt' < \infty$ and $\int_0^{T_x}R(\bx(t'), \ha(t'), t')dt' < \infty$. For any bounded solution $\bx(t)$, these integrals may be extended to infinity, and we conclude that $\left(\ha-\hv\right)\in\mathcal{L}_2$, $\ha\in\LL_{\infty}$, and $\hv\in\LL_{\infty}$. Writing $\bx(t) - \bx(s)$ in integral form as in the proof of Proposition~\ref{prop:local_ho_sg} shows that $\bx(t)$ is uniformly continuous in $t$, and in light of the local boundedness assumption on $\nabla_{\ha}R$, the same procedure can be applied to $\hv$ and $\ha$. Because $R(\bx(t), \ha(t), t)$ is uniformly continuous in $t$ for bounded $\bx$ and $\ha$, and because $\bx(t)$ and $\ha(t)$ are both uniformly continuous in $t$, we conclude that $R(\bx(t), \ha(t), t)$ is uniformly continuous in $t$ and $R(\bx(t), \ha(t), t)\rightarrow 0$ by Barbalat's Lemma (Lemma~\ref{lem:barbalat}).
\end{proof}
\subsection{Proof of Proposition~\ref{prop:comp_adapt}}
\label{app2:comp_adapt}
\begin{proof}
Consider the Lyapunov-like function
\begin{equation*}
    V(\bx, \ha, t) = \frac{1}{2}s(\bx, \bx_d(t))^2 + \frac{1}{2\gamma}\tilde{\ba}^\T\bP^{-1}\tilde{\ba}.
\end{equation*}
$V(\bx, \ha, t)$ has time derivative
\begin{equation*}
    \dot{V}(\bx, \ha, t) = -\eta s(\bx, \bx_d(t))^2 - \frac{\kappa}{\gamma}\tilde{f}(\bx, \ha, \ba, t)^2 \leq 0.
\end{equation*}
This immediately shows $s(\bx(\cdot), \bx_d(\cdot)) \in \LL_\infty$ and that $\hat{\ba}(\cdot) \in \LL_{\infty}$. Because $s(\bx(\cdot), \bx_d(\cdot)) \in \LL_\infty$, $\bx(\cdot) \in \LL_\infty$ by definition of the sliding variable. Integrating $\dot{V}(\bx, \ha, t)$ with respect to time shows that $s(\bx(\cdot), \bx_d(\cdot)) \in \LL_2$ and $\tilde{f}(\bx(\cdot), \ha(\cdot), \ba, \cdot) \in \LL_2$. The result then follows by application of Lemma~\ref{lem:conv} or directly by Barbalat's Lemma (Lemma~\ref{lem:barbalat}).
\end{proof}
\subsection{Proof of Proposition~\ref{prop:ho_comp}}
\label{app2:ho_comp}
\begin{proof}
Consider the Lyapunov function
\begin{equation*}
    V(\bx, \ha, \hv, t) = \frac{1}{2}s(\bx, \bx_d(t))^2 + \frac{1}{2\gamma}\left(\Vert\tilde{\bv}\Vert^2 + \Vert \hat{\ba} - \hat{\bv}\Vert^2\right).
\end{equation*}
$V(\bx, \ha, \hv, t)$ has time derivative
\begin{align}
    \dot{V}(\bx, \ha, \hv, t) &= - \eta s(\bx, \bx_d(t))^2 + s(\bx, \bx_d(t)) \tilde{f}(\bx, \ha, \ba, t) \nonumber\\
    &\phantom{=} + \frac{1}{\gamma}\tilde{\bv}^\T\left(-\kappa \tilde{f}(\bx, \ha, \ba, t) - \gamma s(\bx, \bx_d(t))\right)\bY(\bx, t)^\T \nonumber\\
    &\phantom{=} + \frac{1}{\gamma}\left(\hat{\ba} - \hat{\bv}\right)^\T\left(\beta \sN(t) \left(\hat{\bv} - \hat{\ba}\right) + \gamma s(\bx, \bx_d(t)) \bY(\bx, t)^\T + \kappa \tilde{f}(\bx, \ha, \ba, t)\bY(\bx, t)^\T\right)\nonumber\\
    &= -\eta s(\bx, \bx_d(t))^2 - \frac{\kappa}{\gamma}\tilde{f}(\bx, \ha, \ba, t)^2 - \frac{\beta}{\gamma}\Vert\hat{\ba}-\hat{\bv}\Vert^2 - \frac{\beta\mu}{\gamma}\Vert\hat{\ba} - \hat{\bv}\Vert\Vert\bY(\bx, t)\Vert^2 \nonumber\\
    &\phantom{=} + 2s(\bx, \bx_d(t))\left(\hat{\ba} - \hat{\bv}\right)^\T\bY(\bx, t)^\T + 2\frac{\kappa}{\gamma}\tilde{f}(\bx, \ha, \ba, t)\left(\hat{\ba} - \hat{\bv}\right)^\T\bY(\bx, t)^\T\nonumber\\
    &\leq -\eta s(\bx, \bx_d(t))^2 - \frac{\kappa}{\gamma}\tilde{f}(\bx, \ha, \ba, t)^2 - \frac{\beta}{\gamma}\Vert\hat{\ba}-\hat{\bv}\Vert^2 - \frac{\beta\mu}{\gamma}\Vert\hat{\ba} - \hat{\bv}\Vert\Vert\bY(\bx, t)\Vert^2 \nonumber\\
    &\phantom{=} + 2|s(\bx, \bx_d(t))|\Vert\hat{\ba} - \hat{\bv}\Vert\Vert\bY(\bx, t)\Vert + 2\frac{\kappa}{\gamma}|\tilde{f}(\bx, \ha, \ba, t)|\Vert\hat{\ba} - \hat{\bv}\Vert\Vert\bY(\bx, t)\Vert\nonumber\\
    &\leq -\epsilon_1\eta s(\bx, \bx_d(t))^2 - \epsilon_2\frac{\kappa}{\gamma}\tilde{f}(\bx, \ha, \ba, t)^2 - \frac{\beta}{\gamma}\Vert\hat{\ba}-\hat{\bv}\Vert^2 \nonumber\\
    &\phantom{=} - \left(\sqrt{(1-\epsilon_1)\eta}|s(\bx, \bx_d(t))| - \frac{1}{\sqrt{(1-\epsilon_1)\eta}}\Vert\hat{\ba} - \hat{\bv}\Vert\Vert\bY(\bx, t)\Vert\right)^2 \nonumber\\
    &\phantom{=} - \left(\sqrt{\frac{(1-\epsilon_2)\kappa}{\gamma}}|\tilde{f}(\bx, \ha, \ba, t)| - \frac{\kappa}{\gamma}\sqrt{\frac{\gamma}{(1-\epsilon_2)\kappa}}\Vert\hat{\ba} - \hat{\bv}\Vert\Vert\bY(\bx, t)\Vert\right)^2\nonumber\\
    &\leq 0.\nonumber
\end{align}
Above, $0 < \epsilon_1 < 1$ and  $0 < \epsilon_2 < 1$ are arbitrary and we have taken $\mu = \frac{\gamma}{\beta}\left(\frac{1}{(1-\epsilon_1)\eta} + \frac{\kappa}{(1-\epsilon_2)\gamma}\right)$. Because $\epsilon_1$ and $\epsilon_2$ are arbitrary, $\dot{V}(\bx, \ha, \hv, t)$ is negative semi-definite for $\mu > \frac{\gamma}{\beta}\left(\frac{1}{\eta} + \frac{\kappa}{\gamma}\right)$. Hence $\hat{\bv}(\cdot) \in \LL_\infty$, $\hat{\ba}(\cdot) \in \LL_\infty$, and $s(\bx(\cdot), \bx_d(\cdot)) \in \LL_\infty$. Because $s(\bx(\cdot), \bx_d(\cdot)) \in \LL_\infty$, we automatically have $\bx(\cdot) \in \LL_\infty$, which shows that $\dot{s}(\bx(\cdot), \bx_d(\cdot)) \in \LL_\infty$ by local boundedness of $\tilde{f}(\bx, \ha, \ba, t)$ in $\bx$ and $\hat{\ba}$ uniformly in $t$. Integrating $\dot{V}(\bx, \ha, \hv, t)$ with respect to time shows that $s(\bx(\cdot), \bx_d(\cdot), \dot{\bx}(\cdot), \dot{\bx}_d(\cdot)) \in \LL_2$ and hence by Lemma~\ref{lem:barbalat}, $s(\bx(t), \bx_d(t)) \rightarrow 0$ and $\bx(t) \rightarrow \bx_d(t)$.
\end{proof}

\subsection{Proof of Proposition~\ref{prop:ho_tyukin}}
\label{app2:ho_tyukin}
\begin{proof}
Consider the Lyapunov-like function
\begin{equation*}
V(\ha, \hv) = \frac{1}{2\gamma}\Vert\tilde{\bv}\Vert^2 + \frac{1}{2\gamma}\Vert\hat{\ba} - \hat{\bv}\Vert^2.
\end{equation*}
$V(\ha, \hv)$ has time derivative
\begin{align*}
    \dot{V}(\bx, \ha, \hv, t) &= \frac{1}{\gamma}\tilde{\bv}^\T\left(-\gamma\tilde{f}(\bx, \ha, \ba, t)\balf(\bx, t)\right) + \frac{1}{\gamma}\left(\hat{\ba} -  \hat{\bv}\right)^\T\left(\beta\sN(t)\left(\hat{\bv} - \hat{\ba}\right) + \gamma\tilde{f}(\bx, \ha, \ba, t)\balf(\bx, t)\right)\\
    &= -\left(\tilde{\ba}^\T\balf(\bx, t)\right)\tilde{f}(\bx, \ha, \ba, t) - \frac{\beta}{\gamma} \sN(t) \Vert\hat{\ba} - \hat{\bv}\Vert^2 + 2\left(\hat{\ba} - \hat{\bv}\right)^\T\balf(\bx, t)\tilde{f}(\bx, \ha, \ba, t)\\
    &\leq -\frac{\tilde{f}(\bx, \ha, \ba, t)^2}{D_1} - \frac{\beta}{\gamma}\Vert\hat{\ba} - \hat{\bv}\Vert^2 - \frac{\beta\mu}{\gamma}\Vert\hat{\ba} - \hat{\bv}\Vert^2\Vert\balf(\bx, t)\Vert^2\\
    &\phantom{=} + 2\Vert\balf(\bx, t)\Vert\Vert\hat{\ba} - \hat{\bv}\Vert|\tilde{f}(\bx, \ha, \ba, t)|\\
    &\leq -\frac{\epsilon}{D_1}\tilde{f}(\bx, \ha, \ba, t)^2 - \beta\Vert\hat{\ba} - \hat{\bv}\Vert^2 \\
    &\phantom{=} - \left(\sqrt{\frac{1-\epsilon}{D_1}}|\tilde{f}(\bx, \ha, \ba, t)| - \sqrt{\frac{D_1}{1-\epsilon}}\Vert\balf(\bx, t)\Vert\Vert\hat{\ba} - \hat{\bv}\Vert\right)^2 \leq 0.
\end{align*}
Above, $0 < \epsilon < 1$ is arbitrary and we have chosen $\mu = \frac{\gamma D_1}{(1-\epsilon)\beta}$. Because $\epsilon$ is arbitrary, this shows that $\hat{\bv}(t)$ and $\hat{\ba}(t)$ remain bounded for $\mu > \frac{\gamma D_1}{\beta}$ over the maximal interval of existence of $\bx(t)$. By integrating $\dot{V}(\bx, \ha, \hv, t)$ with respect to time shows that $\tilde{f}(\bx(\cdot), \ha(\cdot), \ba, \cdot) \in \LL_2$ over this same interval. Observe now that these bounds are independent of the length of the interval, as they depend only on the initial value of $V(\ha, \hv)$. Application of Lemma~\ref{lem:conv} completes the proof.
\end{proof}

\subsection{Proof of Proposition~\ref{prop:sg_local_elastic}}
\label{app2:prop:sg_local_elastic}
\begin{proof}
Consider the Lyapunov-like function
\begin{equation*}
    V(\bx, \ha, \overline{\ba}, t) = Q(\bx, t) + \frac{1}{2}\left(\tilde{\ba}^\T\bP^{-1}\tilde{\ba} + \tilde{\overline{\ba}}^\T\bP^{-1}\tilde{\overline{\ba}}\right).
\end{equation*}
$V(\bx, \ha, \overline{\ba}, t)$ has time derivative
\begin{align*}
    \dot{V}(\bx, \ha, \overline{\ba}, t) &= \dot{Q}(\bx, \ha, t) - \tilde{\ba}^\T\nabla_{\ha}\dot{Q}(\bx, \ha, t) - k\left(\overline{\ba} - \hat{\ba}\right)^\T\bP^{-1}\left(\overline{\ba} - \hat{\ba}\right)\\
    &\leq \dot{Q}(\bx, \ba, t) - k\left(\overline{\ba} - \hat{\ba}\right)^\T\bP^{-1}\left(\overline{\ba} - \hat{\ba}\right)\\
    &\leq -\rho\left(\bQ(\bx, t)\right) - k\left(\overline{\ba} - \hat{\ba}\right)^\T\bP^{-1}\left(\overline{\ba} - \hat{\ba}\right)\\
    &\leq 0.
\end{align*}
Because $\dot{V}(\bx, \ha, \overline{\ba}, t) \leq 0$ and $V(\bx, \ha, \overline{\ba}, t) \geq 0$, $\ha(t)$ and $\overline{\ba}(t)$ remain bounded. Radial unboundedness of $Q(\bx, t)$ in $\bx$ ensures that $\bx(t)$ remains bounded. Integrating $\dot{V}(\bx, \ha, \overline{\ba}, t)$ with respect to time shows that $\left(\ha(\cdot) - \overline{\ba}(\cdot)\right) \in \LL_2$. The remainder of the argument follows the proof of Proposition~\ref{prop:local_ho_sg}.
\end{proof}

\subsection{Proof of Proposition~\ref{prop:sg_local_mom_elastic}}
\label{app2:prop:sg_local_mom_elastic}
\begin{proof}
Consider the Lyapunov-like function
\begin{equation*}
    V(\bx, \ha, \hv, \overline{\ba}, \overline{\bv}, t) = Q(\bx, t) + \frac{1}{2\gamma}\tilde{\bv}^\T\tilde{\bv} + \frac{1}{2\gamma}\Vert\hat{\ba} - \hat{\bv}\Vert^2 + \frac{1}{2}\Vert\hat{\ba} - \overline{\ba}\Vert^2 + \frac{1}{2\gamma}\tilde{\overline{\bv}}^\T\tilde{\overline{\bv}}.
\end{equation*}
Observe that we can write:
\begin{align*}
    \frac{d}{dt}\left(Q(\bx, t) + \frac{1}{2\gamma}\tilde{\bv}^\T\tilde{\bv}\right) &= \dot{Q}(\bx, \ha, t) - \tilde{\bv}^\T\nabla_{\ha}\dot{Q}(\bx, \ha, t) + \frac{k_v}{\gamma}\tilde{\bv}^\T\left(\overline{\bv} - \hv\right)\\
    &= \dot{Q}(\bx, \ha, t) - \tilde{\ba}^\T\nabla_{\ha}\dot{Q}(\bx, \ha, t) - \left(\hv - \ha\right)^\T\nabla_{\ha}\dot{Q}(\bx, \ha, t)+ \frac{k_v}{\gamma}\tilde{\bv}^\T\left(\overline{\bv} - \hv\right)\\
    &\leq \dot{Q}(\bx, \ba, t) - \left(\hv - \ha\right)^\T\nabla_{\ha}\dot{Q}(\bx, \ha, t) + \frac{k_v}{\gamma}\tilde{\bv}^\T\left(\overline{\bv} - \hv\right)
\end{align*}
Where the inequality follows by convexity of $\dot{Q}(\bx, \ha, t)$ in its second argument. Now,
\begin{align*}
    \frac{d}{dt} \frac{1}{2\gamma}\norm{\ha-\hv}^2 &= \frac{1}{\gamma}\left(\ha - \hv\right)^\T\left(\beta\sN(t)\left[\hv - \ha\right] + k_a\beta\sN(t)\left[\overline{\ba} - \ha\right] + \gamma\nabla_{\ha}\dot{Q}(\bx, \ha, t) - k_v\left[\overline{\bv}-\hv\right]\right)\\
    &= -\frac{\beta\sN(t)}{\gamma}\norm{\ha-\hv}^2 + \frac{k_a\beta\sN(t)}{\gamma}\left(\ha-\hv\right)^\T\left(\overline{\ba}-\ha\right) + \left(\ha-\hv\right)^\T\nabla_{\ha}\dot{Q}(\bx, \ha, t)\\
    &\phantom{=} - \frac{k_v}{\gamma}\left(\ha-\hv\right)^\T\left(\overline{\bv}-\hv\right)
\end{align*}
Summing the above, we find that
\begin{align*}
    \frac{d}{dt}\left(Q(\bx, t) + \frac{1}{2\gamma}\tilde{\bv}^\T\tilde{\bv} + \frac{1}{2\gamma}\norm{\ha-\hv}^2\right) &\leq \dot{Q}(\bx, \ba, t) + 2\left(\ha - \hv\right)^\T\nabla_{\ha}\dot{Q}(\bx, \ha, t) + \frac{k_v}{\gamma}\tilde{\bv}^\T\left(\overline{\bv} - \hv\right)\\
    &\phantom{=} - \frac{\beta\sN(t)}{\gamma}\norm{\ha-\hv}^2 + \frac{k_a\beta\sN(t)}{\gamma}\left(\ha-\hv\right)^\T\left(\overline{\ba}-\ha\right)\\
    &\phantom{=} - \frac{k_v}{\gamma}\left(\ha-\hv\right)^\T\left(\overline{\bv}-\hv\right)
\end{align*}
Now, take $\sN(t) = 1 + \mu N(t)$ and write that
\begin{align*}
    \dot{Q}(\bx, \ba, t) + 2\left(\ha - \hv\right)^\T\nabla_{\ha}\dot{Q}(\bx, \ha, t) - \frac{\beta\sN(t)}{\gamma}\norm{\ha-\hv}^2 \leq -\rho(Q(\bx, t)) - \frac{\beta}{\gamma}\norm{\ha-\hv}^2
\end{align*}
by Assumption~\ref{assmp:local_ho_cond}. Hence,
\begin{align*}
    \frac{d}{dt}\left(Q(\bx, t) + \frac{1}{2\gamma}\tilde{\bv}^\T\tilde{\bv} + \frac{1}{2\gamma}\norm{\ha-\hv}^2\right) &\leq -\rho(Q(\bx, t)) - \frac{\beta}{\gamma}\norm{\ha-\hv}^2 + \frac{k_v}{\gamma}\tilde{\bv}^\T\left(\overline{\bv} - \hv\right)\\
    &\phantom{=} - \frac{\beta\sN(t)}{\gamma}\norm{\ha-\hv}^2 + \frac{k_a\beta\sN(t)}{\gamma}\left(\ha-\hv\right)^\T\left(\overline{\ba}-\ha\right)\\
    &\phantom{=} - \frac{k_v}{\gamma}\left(\ha-\hv\right)^\T\left(\overline{\bv}-\hv\right)
\end{align*}
Consider
\begin{equation*}
    \frac{d}{dt}\frac{1}{2\gamma}\tilde{\overline{\bv}}^\T\tilde{\overline{\bv}} = \frac{k_v}{\gamma}\tilde{\overline{\bv}}^\T\left(\hv - \overline{\bv}\right)
\end{equation*}
and then note that
\begin{equation*}
    \frac{k_v}{\gamma}\tilde{\overline{\bv}}^\T\left(\hv - \overline{\bv}\right) + \frac{k_v}{\gamma}\tilde{\bv}^\T\left(\overline{\bv} - \hv\right) = -\frac{k_v}{\gamma}\norm{\overline{\bv}-\hv}^2.
\end{equation*}
The final term in the Lyapunov function satisfies
\begin{align*}
    \frac{d}{dt} \frac{1}{2\gamma}\norm{\ha-\overline{\ba}}^2 = \frac{\beta \sN(t)}{\gamma}\left(\ha - \overline{\ba}\right)^\T\left(\hv - \ha\right) - \frac{2k_a\beta\sN(t)}{\gamma}\norm{\overline{\ba} - \ha}^2
\end{align*}
Putting all of these together, $V(\bx, \ha, \hv, \overline{\ba}, \overline{\bv}, t)$ has a time derivative which satisfies
\begin{align*}
    \dot{V}(\bx, \ha, \hv, \overline{\ba}, \overline{\bv}, t)  &\leq -\rho(Q(\bx, t)) - \frac{\beta}{\gamma}\norm{\ha-\hv}^2 - \frac{k_v}{\gamma}\norm{\bar{\bv}-\hv}^2 - \frac{\beta\sN(t)}{\gamma}\norm{\ha-\hv}^2 \\
    &\phantom{=} + \frac{\beta\sN(t)}{\gamma}\left(k_a + 1\right)\left(\ha-\hv\right)^\T\left(\overline{\ba}-\ha\right) - \frac{k_v}{\gamma}\left(\ha-\hv\right)^\T\left(\overline{\bv}-\hv\right) - \frac{2k_a\beta\sN(t)}{\gamma}\norm{\overline{\ba} - \ha}^2\\
    &\leq -\rho(Q(\bx, t)) - \left(\frac{\beta}{\gamma} - \frac{k_v}{2\gamma}\right)\norm{\ha-\hv}^2 - \frac{k_v}{2\gamma}\norm{\bar{\bv}-\hv}^2\\
    &\phantom{=} - \frac{\beta\sN(t)}{\gamma}\left(1 - \frac{\epsilon(k_a+1)}{2}\right)\norm{\ha-\hv}^2 \\
    &\phantom{=} - \frac{\beta\sN(t)}{\gamma}\left(2k_a - \frac{k_a+1}{2\epsilon}\right)\norm{\overline{\ba} - \ha}^2
\end{align*}
Above, $\epsilon > 0$ is a small constant and the inequality follows by two applications of Young's inequality. Clearly, $V(\bx, \ha, \hv, \overline{\ba}, \overline{\bv}, t) \leq 0$ if $\beta - \frac{k_v}{2} \geq 0$, $1 - \frac{\epsilon(k_a+1)}{2} \geq 0$, and $2k_a - \frac{k_a+1}{2\epsilon} \geq 0$. The latter two inequalities lead to the requirement that $\frac{1}{4\epsilon - 1} \leq k_a \leq \frac{2}{\epsilon} - 1$. Taking the limit as $\epsilon \rightarrow 0$ shows that $k_a > 0$ can be arbitrary. The proof from here is identical to that of Proposition~\ref{prop:local_ho_sg}.
\end{proof}
\subsection{Proof of Proposition~\ref{prop:local_ho_sg_rie}}
\label{app2:prop:local_ho_sg_rie}
\begin{proof}
Consider the Lyapunov-like function
\begin{equation}
    V(\bx, \ha, \hv, t) = Q(\bx, t)  + \frac{1}{\gamma}\bregd{\ba}{\hv} + \frac{1}{2\gamma}\left(\hat{\ba} - \hat{\bv}\right)^\T\left(\hat{\ba} - \hat{\bv}\right).
    \label{eqn:ho_local_proof_v_rie}
\end{equation}
Equation (\ref{eqn:ho_local_proof_v_rie}) implies that, with $\mathcal{N}(t) = 1 + \mu N(t)$,
\begin{align}
    \dot{V}(\bx, \ha, \hv, t) &= \dot{Q}(\bx, \hat{\ba}, t) - \tilde{\ba}^\T\nabla_{\ha}\dot{Q}(\bx, \ha, t) - \frac{\beta}{\gamma} \Vert\hat{\ba} - \hat{\bv}\Vert^2 - \frac{\beta\mu}{\gamma} N(t) \Vert\hat{\ba} - \hat{\bv}\Vert^2 \nonumber\\
    &\phantom{=} + \left(\hat{\ba} - \hat{\bv}\right)^\T\left(\bI + \left[\nabla^2\psi(\hv)\right]^{-1}\right)\nabla_{\ha}\dot{Q}(\bx, \ha, t), \nonumber\\
    &\leq \dot{Q}(\bx, \ba, t) - \frac{\beta}{\gamma}\Vert\hat{\ba} - \hat{\bv}\Vert^2 - \frac{\beta\mu}{\gamma} N(t) \Vert\hat{\ba} - \hat{\bv}\Vert^2 \nonumber\\
    &\phantom{=} + \left(\hat{\ba} - \hat{\bv}\right)^\T\left(\bI + \left[\nabla^2\psi(\hv)\right]^{-1}\right)\nabla_{\ha}\dot{Q}(\bx, \ha, t),\nonumber\\
    &\leq -\rho(Q(\bx, t)) - \frac{\beta}{\gamma} \Vert\hat{\ba} - \hat{\bv}\Vert^2.
    \label{eqn:ho_local_proof_vdot_rie}
\end{align}
The first line to the second follows by convexity of $\dot{Q}(\bx, \ha, t)$ in its second argument, while the second line to the third follows by Assumption \ref{assmp:local_ho_cond_rie}. The remainder of the proof is identical to Proposition~\ref{prop:local_ho_sg}.
\end{proof}
\subsection{Proof of Proposition~\ref{prop:int_ho_rie}}
\label{app2:prop:int_ho_rie}
\begin{proof}
Consider the Lyapunov-like function
\begin{equation}
    V(\ha, \hv) = \frac{1}{\gamma}\bregd{\ba}{\hv} + \frac{1}{2\gamma}\left(\hat{\ba} - \hat{\bv}\right)^\T\left(\hat{\ba} - \hat{\bv}\right).
    \label{eqn:v_ho_int_rie}
\end{equation}
Equation (\ref{eqn:v_ho_int_rie}) implies that, with $\mathcal{N}(t) = 1 + \mu N(t)$,
\begin{align}
    \dot{V}(\bx, \ha, \hv, t) &= -\tilde{\ba}^\T\nabla_{\ha}R(\bx, \ha, t) - \frac{\beta}{\gamma}\Vert\hat{\ba} - \hat{\bv}\Vert^2 - \frac{\beta\mu}{\gamma}N(t)\Vert\hat{\ba} - \hat{\bv}\Vert^2 \nonumber\\
    &\phantom{=} + \left(\hat{\ba} - \hat{\bv}\right)^\T\left(\bI + \left[\nabla^2\psi(\hv)\right]^{-1}\right)\nabla_{\ha}R(\bx, \ha, t),\nonumber\\
    &\leq R(\bx, \ba, t) - R(\bx, \hat{\ba}, t) - \frac{\beta}{\gamma}\Vert\hat{\ba} - \hat{\bv}\Vert^2 - \frac{\beta\mu}{\gamma}N(t)\Vert\hat{\ba} - \hat{\bv}\Vert^2 \nonumber\\
    &\phantom{=} +  \left(\hat{\ba} - \hat{\bv}\right)^\T\left(\bI + \left[\nabla^2\psi(\hv)\right]^{-1}\right)\nabla_{\ha}R(\bx, \ha, t),\nonumber\\
    &\leq -kR(\bx, \hat{\ba}, t) - \frac{\beta}{\gamma}\Vert\hat{\ba} - \hat{\bv}\Vert^2.
    \label{eqn:vdot_ho_int_rie}
\end{align}
The first line to the second follows by convexity of $R(\bx, \ha, t)$ in its second argument, while the second to the third follows by Assumption \ref{assmp:int_ho_cond_rie}. The remainder of the proof is identical to Proposition~\ref{prop:int_ho}.
\end{proof}

\section{Further results on dynamics prediction for Hamiltonian systems}
\label{app2:ham}
We now provide some extensions to the results in Section~\ref{ssec:ham} by exploiting the structure of separable Hamiltonians. With a separable Hamiltonian, it is natural to estimate the kinetic and potential energies separately,
\begin{align*}
    T(\hp) &= \bY_p(\hp)\ha_p,\\
    U(\hq) &= \bY_q(\hq)\ha_q,
\end{align*}
where $\bY_p$ and $\bY_q$ are row vectors of basis functions for the kinetic and potential energies respectively. In this case, following the same derivation as in Section~\ref{ssec:dyn_predict}, the error dynamics become
\begin{align*}
    \dot{\tilde{\bp}} &= -\nabla_{\hq}\bY_q(\hq)\tilde{\ba}_{q} - k_p \tilde{\bp} - \left(\nabla_{\hq}U(\hq) - \nabla_{\bq}U(\bq)\right),\\
    \dot{\tilde{\bq}} &= \nabla_{\hp}\bY_p(\hp)\tilde{\ba}_{p} - k_q\tilde{\bq} + \left(\nabla_{\hp}T(\hp) - \nabla_{\bp}T(\bp)\right).
\end{align*}
Consider the adaptation laws
\begin{align*}
    \dot{\ha}_p &= -\gamma_p\left[\nabla^2\psi_{p}(\ha_p)\right]^{-1}\left(\nabla_{\hp}\bY_p(\hp)\right)^\T\tilde{\bq},\\
    \dot{\ha}_q &= \gamma_q\left[\nabla^2\psi_{q}(\ha_q)\right]^{-1}\left(\nabla_{\hq}\bY_q(\hq)\right)^\T\tilde{\bp},
\end{align*}
where $\psi_p(\cdot)$ and $\psi_q(\cdot)$ are strictly convex functions, and where $\gamma_p > 0$ and $\gamma_q > 0$ are positive learning rates. The Lyapunov-like function 
\begin{equation}
    \label{eqn:app_lyap_sep}
    V = \frac{1}{2}\tilde{\bp}^\T\tilde{\bp} + \frac{1}{2}\tilde{\bq}^\T\tilde{\bq} + \frac{1}{\gamma_p}\bregdg{{\psi_p}}{\ba_p}{\ha_p} + \frac{1}{\gamma_q}\bregdg{{\psi_q}}{\ba_q}{\ha_q}
\end{equation} 
shows that a sufficient condition for convergence $\tilde{\bp} \rightarrow 0$ and $\tilde{\bq}\rightarrow 0$ is for the Jacobian
\begin{equation*}
    \bJ = \begin{pmatrix} -k_p\bI & -\nabla_{\bq}^2U(\bq) \\ \nabla_{\bp}^2T(\bp) & -k_q \bI \end{pmatrix}
\end{equation*}
to be uniformly negative definite. A sufficient condition for uniform negative definiteness is given by (\ref{eqn:contr_sep}). 

While separable Hamiltonians encompass many physical systems, some, such as robotic systems, do not have this structure. A more general form encompassing robotic systems is
\begin{equation*}
    \HH(\bp, \bq) = T(\bp, \bq) + U(\bq).
\end{equation*}
Parameterizing these terms independently,
\begin{align*}
    T(\hp, \hq) &= \bY_p(\hp, \hq)\ha_p,\\
    U(\hq) &= \bY_q(\hq)\ha_q,
\end{align*}
the error dynamics becomes
\begin{align*}
    \dot{\tilde{\bp}} &= -\left(\nabla_{\hq}\bY_p(\hp, \hq)\right)\tilde{\ba}_{p} -\left(\nabla_{\hq}\bY_q(\hq)\right)\tilde{\ba}_{q} - k_p \tilde{\bp} - \left(\nabla_{\hq}U(\hq) - \nabla_{\bq}U(\bq)\right)- \left(\nabla_{\hq}T(\hp, \hq) - \nabla_{\bq}T(\bp, \bq)\right),\\
    \dot{\tilde{\bq}} &= \left(\nabla_{\hp}\bY_p(\hp, \hq)\right)\tilde{\ba}_{p} - k_q\tilde{\bq} + \left(\nabla_{\hp}T(\hp, \hq) - \nabla_{\bp}T(\bp, \bq)\right).
\end{align*}
Now consider the adaptation laws
\begin{align*}
    \dot{\ha}_p &= \gamma_p\left[\nabla^2\psi_{p}(\ha_p)\right]^{-1}\left(\left(\nabla_{\hq}\bY_p(\hp, \hq)\right)^\T\tilde{\bp} - \left(\nabla_{\hp}\bY_p(\hp, \hq)\right)^\T\tilde{\bq}\right),\\
    \dot{\ha}_q &= \gamma_q\left[\nabla^2\psi_{q}(\ha_q)\right]^{-1}\left(\nabla_{\hq}\bY_q(\hq)\right)^\T\tilde{\bp},
\end{align*}
again where $\psi_p(\cdot)$ and $\psi_q(\cdot)$ are strictly convex functions, and where $\gamma_p > 0$ and $\gamma_q > 0$ are positive learning rates. The Lyapunov-like function (\ref{eqn:app_lyap_sep}) shows that a sufficient condition for convergence is for the Jacobian matrix
\begin{equation*}
    \bJ = \begin{pmatrix} -k_p\bI - \nabla_{\bp}\nabla_{\bq}T(\bp, \bq) & -\nabla_{\bq}^2U(\bq) - \nabla_{\bq}^2 T(\bp, \bq)\\ \nabla_{\bp}^2T(\bp, \bq) & -k_q \bI + \nabla_{\bq}\nabla_{\bp}T(\bp, \bq) \end{pmatrix}
\end{equation*}
to be uniformly negative definite. Sufficient conditions for this are now given by
\begin{align*}
    k_p &> -\frac{1}{2}\lambda_{\min}\left(\nabla_{\hp}\nabla_{\hq}T(\hp, \hq) + \nabla_{\hq}\nabla_{\hp}T(\hp, \hq)\right),\\
    k_q &> \frac{1}{2}\lambda_{\max}\left(\nabla_{\hp}\nabla_{\hq}T(\hp, \hq) + \nabla_{\hq}\nabla_{\hp}T(\hp, \hq)\right),\\
    \lambda_p\lambda_q &> \frac{1}{4}\lambda_{\max}^2\left[\nabla^2_{\hp}T(\hp, \hq) - \nabla^2_{\hq}T(\hp, \hq) - \nabla^2_{\hq}U(\hq)\right],
\end{align*}
similar to the fully general case handled in Sec.~\ref{ssec:ham}. More general results can be obtained by using a non-Euclidean metric as a replacement for the momentum and position estimation error terms in (\ref{eqn:app_lyap_sep}).
\subsection{Implicit regularization for higher-order laws}
\label{app2:ho_imp_reg}
For simplicity, we only consider the linearly parameterized setting. The nonlinearly parameterized setting can be handled immediately.
\begin{prop}
\label{prop:implicit_reg_app}
Consider the velocity gradient algorithm with momentum
\begin{align*}
    \dot{\hv} &= -\left[\nabla^2\psi(\hv)\right]^{-1}\nabla_{\ha}\dot{Q}(\bx, \ha, t),\\
    \dot{\ha} &= \beta\sN(t)(\hv - \ha).
\end{align*}
Let $\psi:\mathbb{R}^p\rightarrow\mathbb{R}$ be a strongly convex function. Suppose that the unknown dynamics is linearly parameterized. Assume that $\ha(t) \rightarrow \ha_\infty \in \mathcal{A}$ where $\mathcal{A}$ is defined in \eqref{set:A}. Then $\ha_\infty = \arg\min_{\bthet \in \mathcal{A}}\bregd{\bthet}{\hv(0)}$.
In particular, if $\hv(0) = \arg\min_{\bthet\in\mathbb{R}^p}\psi(\bthet)$, then 
\begin{equation*}
    \ha_\infty = \arg\min_{\bthet \in \mathcal{A}} \psi(\bthet).
\end{equation*}
\end{prop}
\begin{proof}
First observe that if $\ha(t)\rightarrow\ha_{\infty}$, then $\hv(t) \rightarrow \ha_{\infty}$. To prove this fact, we will show that $\dot{\ha}(t) \rightarrow 0$. consider
\begin{equation*}
    \Vert\dot{\ha}\Vert_{\LL_2}^2 = \int_0^\infty \dot{\ha}(t)^\T \dot{\ha}(t) dt = \int_0^\infty \beta \sN(t)\Vert\ha(t) - \hv(t)\Vert^2dt < \beta \sup_{t}\sN(t) \Vert\ha - \hv\Vert_{\LL_2}^2.
\end{equation*}
Note that $\sup_t \sN(t) < \infty$ because $\bx(t)$ remains bounded and because $\bY(\bx, t)$ is locally bounded in $\bx$ uniformly in $t$. Further note that $\Vert\ha - \hv\Vert_{\LL_2} < \infty$ by Proposition~\ref{prop:local_ho_sg_rie}. Hence we conclude that $\Vert \dot{\ha}\Vert_{\LL_2} < \infty$. Moreover, $\Vert \dot{\ha} \Vert_{\LL_\infty} < \infty$ and $\Vert \ddot{\ha} \Vert_{\LL_\infty} < \infty$ by boundedness of $\hv$ and boundedness of $\bY(\bx(t), t)$. By Lemma~\ref{lem:barbalat}, then $\dot{\ha}(t) \rightarrow 0$. Because $\hv(t) = \ha(t) + \frac{\dot{\ha}(t)}{\beta \sN(t)}$, this implies that $\hv(t) \rightarrow \ha_{\infty}$.

Now let $\bthet$ be any constant vector of parameters. The Bregman divergence has time derivative
\begin{equation*}
    \frac{d}{dt}\bregd{\bthet}{\hv} = -\left(\frac{d}{dt}\nabla\psi(\hv)\right)^\T\left(\bthet - \hv\right).
\end{equation*}
Using that $\frac{d}{dt}\nabla\psi(\hv) = -\nabla_{\ha}\dot{Q}(\bx, \ha, t) = -\bY(\bx, t)^\T\frac{\partial Q}{\partial \bx}(\bx, t)$ and integrating both sides of the above shows
\begin{equation*}
    \bregd{\bthet}{\hv(0)} = \bregd{\bthet}{\ha_{\infty}} + \int_0^\infty \frac{\partial Q}{\partial \bx}(\bx(\tau), \tau)^\T\bY(\bx(\tau), \tau)\left(\hv(\tau) - \bthet\right)d\tau.
\end{equation*}
Taking $\bthet \in \mathcal{A}$, the integral becomes independent of $\bthet$. The proof from here is identical to the first-order case in Appendix~\ref{app2:prop:implicit_reg}.
\end{proof}

\subsection{Proof of Lemma~\ref{lemma:alpha_tyuk}}
\label{app2:alpha_tyuk}
\begin{proof}
Defining the vector $\hat{\bv}^t = \sum_{i=1}^m\hat{w}_i^t\bphi(\bx_i)$, (\ref{eqn:alphatron_weights}) implies the iteration on $\hat{\bv}$,
\begin{equation}
    \hat{\bv}^{t+1} = \hat{\bv}^t - \frac{\lambda}{m}\sum_{i=1}^m\left(\hat{f}(\hat{\bw}^t, \bx_i) - f(\bx_i)\right)\bphi(\bx_i).
    \label{eqn:alpha_v_iter}
\end{equation}
(\ref{eqn:alpha_v_iter}) shows that at time $t$,
\begin{equation}
    \hat{\bv}^t = -\frac{\lambda}{m}\sum_{i=1}^m\left(\sum_{j=1}^{t-1}\tilde{f}_i^j(\hat{\bw}^t, \bx_i)\right)\bphi(\bx_i),
    \label{eqn:alpha_v}
\end{equation}
where $\tilde{f}_i^j(\hat{\bw}^t, \bx_i)$ in (\ref{eqn:alpha_v}) is the function approximation error on the $i^\text{th}$ input example at iteration $j$, $\tilde{f}_i^j(\hat{\bw}^t, \bx_i) = \hat{f}(\hat{\bw}^j, \bx_i) - f(\bx_i)$.

Now, assuming that for the adaptive control problem $f(\bx, \ba, t) = u(\balf^\T(\bx, t)\ba)$, setting $\bP = \lambda\bI$, $\hat{\ba}(0) = \mathbf{0}$, and integrating both sides of (\ref{eqn:tyukin_alg}), we see that at time $t$,
\begin{equation}
    \hat{\ba}(t) = -\lambda\int_0^t\tilde{f}(\bx(t'), \ha(t'), \ba, t')\balf(\bx(t'), t')dt'.
    \label{eqn:tyukin_params}
\end{equation}
The current function approximation $\hat{f}(t)$ at time $t$ for the parameters in (\ref{eqn:tyukin_params}) can then be written
\begin{align}
    \hat{f}(t) &= u\left(\balf^\T(\bx, t)\hat{\ba}(t)\right) = u\left(\int_0^t-\lambda\tilde{f}(\bx(t'), \hat{\ba}(t'), \ba, t')\balf^\T(\bx(t), t)\balf(\bx(t'), t')dt'\right)\nonumber\\
    &= u\left(\int_0^t c(t')\mathcal{K}(t, t')dt'\right)
    \label{eqn:tyukin_func}
\end{align}
where we have defined $c(t') = -\lambda\tilde{f}(\bx(t'), \hat{\ba}(t'), \ba, t')$ and $\mathcal{K}(t, t') = \balf^\T(\bx(t), t)\balf(\bx(t'), t')$. Similarly, in the case of the Alphatron, the current approximation at iteration $t$ is given by
\begin{align}
    \hat{f}(\hat{\bw}^t, \bx) &= u\left(\left\langle \hat{\bv}^t, \bphi(\bx)\right\rangle_{\mathcal{H}}\right) = u\left(\sum_{i=1}^m\left(\sum_{j=1}^{t-1}-\frac{\lambda}{m}\tilde{f}_i^j(\hat{\bw}^t, \bx_i)\right)\left\langle\bphi(\bx), \bphi(\bx_i)\right\rangle\right)\nonumber\\
    &= u\left(\sum_{i=1}^m\hat{w}_i^t\mathcal{K}(\bx, \bx_i)\right),
    \label{eqn:alphatron_func}
\end{align}
where we have noted that with $\hat{w}^0_i = 0$ for all $i$, $\hat{w}^t_i = \sum_{j=1}^{t-1}-\frac{\lambda}{m}\tilde{f}_i^j(\hat{\bw}^t, \bx_i)$.
\end{proof}

\subsection{Proof of Proposition~\ref{prop:bgf_1}}
\label{app2:bgf_1}
\begin{proof}
Consider the Lyapunov-like function
\begin{equation*}
    V(\bx, \ha, t) = \frac{1}{2}s(\bx, \bx_d(t))^2 + \frac{1}{2}\tilde{\ba}^\T\bP^{-1}\tilde{\ba}.
\end{equation*}
$V(\bx, \ha, t)$ has time derivative
\begin{equation*}
    \dot{V}(\bx, \ha, t) = -\eta s(\bx, \bx_d(t))^2 - \frac{1}{2}\tilde{f}(\bx, \ha, \ba, t)^2 - \frac{\lambda}{2}\tilde{\ba}^\T\bP^{-1}\tilde{\ba} \leq 0.
\end{equation*}
Negative semi-definiteness of $\dot{V}(\bx, \ha, t)$ shows that $s(\bx(t), \bx_d(t))$ and $\hat{\ba}(t)$ remain bounded. Because $s(\bx(t), \bx_d(t))$ remains bounded, $\bx(t)$ remains bounded. Integrating $\dot{V}(\bx, \ha, t)$ with respect to time shows that $s(\bx(\cdot), \bx_d(\cdot)) \in \LL_2$ and $\tilde{f}(\bx(\cdot), \ha(\cdot), \ba, \cdot) \in \LL_2$. The proof is completed by application of Lemma~\ref{lem:conv} or Lemma~\ref{lem:barbalat}.
\end{proof}

\subsection{Proof of Proposition~\ref{prop:bgf_2}}
\label{app2:bgf_2}
\begin{proof}
Consider the Lyapunov-like function
\begin{equation*}
    V(\bx, \ha, \hv, t) = \frac{1}{2}s(\bx, \bx_d(t))^2 + \frac{1}{2}\tilde{\bv}^\T\bP(t)^{-1}\tilde{\bv} + \frac{1}{2}\left(\hat{\bv} - \hat{\ba}\right)^\T\bP(t)^{-1}\left(\hat{\bv} - \hat{\ba}\right).
\end{equation*}
$V(\bx, \ha, \hv, t)$ has time derivative
\begin{align*}
    \dot{V}(\bx, \ha, \hv, t) &= -\eta s(\bx, \bx_d(t))^2 + s(\bx, \bx_d(t)) \tilde{f}(\bx, \ha, \ba, t)\\
    &\phantom{=} - \left(\hat{\bv} - \hat{\ba} + \tilde{\ba}\right)^\T\left(s(\bx, \bx_d(t)) + \tilde{f}(\bx, \ha, \ba, t)\right)\bY(\bx, t)^\T + \frac{1}{2}\left(\tilde{\bv}^\T\bY(\bx, t)^\T\right)^2\\
    &\phantom{=} - \frac{\lambda(t)}{2}\tilde{\bv}^\T\bP(t)^{-1}\tilde{\bv} + \left(\hat{\bv} - \hat{\ba}\right)^\T\left(-\beta \sN(t) \left(\hat{\bv} - \hat{\ba}\right) - \left(s(\bx, \bx_d(t)) +\tilde{f}(\bx, \ha, \ba, t)\right)\bY(\bx, t)^\T\right)\\
    &\phantom{=} + \frac{1}{2}\left[\left(\hat{\bv} - \hat{\ba}\right)^\T\bY(\bx, t)^\T\right]^2 - \frac{\lambda(t)}{2}\left(\hat{\bv} - \hat{\ba}\right)^\T\bP(t)^{-1}\left(\hat{\bv} - \hat{\ba}\right)\\\
    &= -\eta s(\bx, \bx_d(t))^2 - \tilde{f}(\bx, \ha, \ba, t)^2 - 2\left(\hat{\bv} - \hat{\ba}\right)^\T\left(s(\bx, \bx_d(t)) + \tilde{f}(\bx, \ha, \ba, t)\right)\bY(\bx, t)^\T\\
    &\phantom{=} - \beta \sN(t) \Vert\hat{\bv} - \hat{\ba}\Vert^2 + \frac{1}{2}\left(\tilde{\bv}^\T\bY(\bx, t)^\T\right)^2 + \frac{1}{2}\left[\left(\hat{\bv} - \hat{\ba}\right)^\T\bY(\bx, t)^\T\right]^2\\
    &\phantom{=} -\frac{\lambda(t)}{2}\left(\tilde{\bv}^\T\bP(t)^{-1}\tilde{\bv} + \left(\hat{\bv} - \hat{\ba}\right)^\T\bP(t)^{-1}\left(\hat{\bv} - \hat{\ba}\right)\right).
\end{align*}
Now we use that $$\tilde{\bv}^\T\bY(\bx, t)^\T = \left(\hat{\bv} - \hat{\ba}\right)^\T\bY(\bx, t)^\T + \tilde{f}(\bx, \ha, \ba, t)$$ to say that $$\frac{1}{2}\left(\tilde{\bv}^\T\bY(\bx, t)^\T\right)^2 = \frac{1}{2}\left[\left(\hat{\bv} - \hat{\ba}\right)^\T\bY(\bx, t)^\T\right]^2 + \left(\hat{\bv} - \hat{\ba}\right)^\T\bY(\bx, t)^\T\tilde{f}(\bx, \ha, \ba, t) + \frac{1}{2}\tilde{f}(\bx, \ha, \ba, t)^2.$$ Hence,
\begin{align*}
    \dot{V}(\bx, \ha, \hv, t) &= -\eta s(\bx, \bx_d(t))^2 - \frac{1}{2}\tilde{f}(\bx, \ha, \ba, t)^2 - 2s(\bx, \bx_d(t))\left(\hat{\bv} - \hat{\ba}\right)^\T\bY(\bx, t)^\T\\
    &\phantom{=} - \tilde{f}(\bx, \ha, \ba, t)\left(\hat{\bv} - \hat{\ba}\right)^\T\bY(\bx, t)^\T - \beta \sN(t) \Vert\hat{\bv} - \hat{\ba}\Vert^2 + \left[\left(\hat{\bv} - \hat{\ba}\right)^\T\bY(\bx, t)^\T\right]^2\\
     &\phantom{=} -\frac{\lambda(t)}{2}\left(\tilde{\bv}^\T\bP(t)^{-1}\tilde{\bv} + \left(\hat{\bv} - \hat{\ba}\right)^\T\bP(t)^{-1}\left(\hat{\bv} - \hat{\ba}\right)\right)\\
    &= -\eta s(\bx, \bx_d(t))^2 - \frac{1}{2}\tilde{f}(\bx, \ha, \ba, t)^2 - 2s(\bx, \bx_d(t))\left(\hat{\bv} - \hat{\ba}\right)^\T\bY(\bx, t)^\T \\
    &\phantom{=} - \tilde{f}(\bx, \ha, \ba, t)\left(\hat{\bv} - \hat{\ba}\right)^\T\bY(\bx, t)^\T - \beta \Vert\hat{\bv} - \hat{\ba}\Vert^2 - \beta \mu\Vert\bY(\bx, t)\Vert^2 \Vert\hat{\bv} - \hat{\ba}\Vert^2 \\
    &\phantom{=} + \left[\left(\hat{\bv} - \hat{\ba}\right)^\T\bY(\bx, t)^\T\right]^2 - \frac{\lambda(t)}{2}\left(\tilde{\bv}^\T\bP(t)^{-1}\tilde{\bv} + \left(\hat{\bv} - \hat{\ba}\right)^\T\bP(t)^{-1}\left(\hat{\bv} - \hat{\ba}\right)\right)\\
    &\leq -\eta s(\bx, \bx_d(t))^2 - \frac{1}{2}\tilde{f}(\bx, \ha, \ba, t)^2 + 2|s(\bx, \bx_d(t))|\Vert\left(\hat{\bv} - \hat{\ba}\right)\Vert\Vert\bY(\bx, t)^\T\Vert \\
    &\phantom{=} + |\tilde{f}(\bx, \ha, \ba, t)|\Vert\left(\hat{\bv} - \hat{\ba}\right)\Vert\Vert\bY(\bx, t)^\T\Vert - \beta \Vert\hat{\bv} - \hat{\ba}\Vert^2 - \left(\beta \mu - 1\right)\Vert\bY(\bx, t)\Vert^2 \Vert\hat{\bv} - \hat{\ba}\Vert^2\\
    &\phantom{=} -\frac{\lambda(t)}{2}\left(\tilde{\bv}^\T\bP(t)^{-1}\tilde{\bv} + \left(\hat{\bv} - \hat{\ba}\right)^\T\bP(t)^{-1}\left(\hat{\bv} - \hat{\ba}\right)\right)\\
    &\leq -\eta \epsilon_1 s(\bx, \bx_d(t))^2 - \frac{\epsilon_2}{2}\tilde{f}(\bx, \ha, \ba, t)^2 - \beta\Vert\hat{\bv} - \hat{\ba}\Vert^2 \\
    &\phantom{=} - \left(\sqrt{(1-\epsilon_1)\eta}|s(\bx, \bx_d(t))| - \frac{1}{\sqrt{(1-\epsilon_1)\eta}}\Vert\hat{\bv}-\hat{\ba}\Vert\Vert\bY(\bx, t)\Vert\right)^2\\
    &\phantom{=} - \left(\sqrt{\frac{1-\epsilon_2}{2}}|\tilde{f}(\bx, \ha, \ba, t)| - \frac{1}{2}\sqrt{\frac{2}{1-\epsilon_2}}\Vert\hat{\bv} - \hat{\ba}\Vert\Vert\bY(\bx, t)^\T\Vert\right)^2\\
    &\phantom{=} - \frac{\lambda(t)}{2}\left(\tilde{\bv}^\T\bP(t)^{-1}\tilde{\bv} + \left(\hat{\bv} - \hat{\ba}\right)^\T\bP(t)^{-1}\left(\hat{\bv} - \hat{\ba}\right)\right)\\
    &\leq 0
\end{align*}
Above, $0 < \epsilon_1 < 1$ and $0 < \epsilon_2 < 1$ are both arbitrary and we have chosen $\mu = \frac{1}{\beta}\left(1 + \frac{1}{\eta(1-\epsilon_1)} + \frac{1}{2(1-\epsilon_2)}\right)$. Negative semi-definiteness of $\dot{V}(\bx, \ha, \hv, t)$ shows that $s(\bx(t), \bx_d(t))$, $\hat{\bv}(t)$, and $\hat{\ba}(t)$ remain bounded. Because $s(\bx(t), \bx_d(t))$ remains bounded, $\bx(t)$ remains bounded. Integrating $\dot{V}(\bx, \ha, \hv, t)$ with respect to time shows that $s(\bx(\cdot), \bx_d(\cdot)) \in \LL_2$ and $\tilde{f}(\bx(\cdot), \ha(\cdot), \ba, \cdot) \in \LL_2$. By local boundedness of $\tilde{f}(\bx(\cdot), \ha(\cdot), \ba, \cdot)$ in $\bx$ and $\hat{\ba}$ uniformly in $t$, $\tilde{f}(\bx(t), \ha(t), \ba, t)$ remains bounded and hence $\dot{s}(\bx(t), \bx_d(t))$ remains bounded. By Lemma~\ref{lem:barbalat}, $s(\bx(t), \bx_d(t)) \rightarrow 0$ and $\bx(t) \rightarrow \bx_d(t)$.
\end{proof}

\subsection{Proof of Proposition~\ref{prop:bgf_3}}
\label{app2:bgf_3}
\begin{proof}
Consider the Lyapunov-like function
\begin{equation}
    V(\ha, t) = \frac{1}{2}\tilde{\ba}^\T\bP(t)^{-1}\tilde{\ba}.
\end{equation}
$V(\ha, t)$ has time derivative
\begin{align*}
\dot{V}(\bx, \ha, t) &= -\tilde{f}(\bx, \ha, \ba, t)\balf(\bx, t)^\T\tilde{\ba} + \frac{1}{2}\left(\tilde{\ba}^\T\balf(\bx, t)\right)^2 - \frac{\lambda}{2}\tilde{\ba}^\T\bP(t)^{-1}\tilde{\ba},\\
& \leq -\frac{1}{D_1}\tilde{f}(\bx, \ha, \ba, t)^2 + \frac{1}{2D_2^2}\tilde{f}(\bx, \ha, \ba, t)^2 = -\left(\frac{1}{D_1} - \frac{1}{2D_2^2}\right)\tilde{f}(\bx, \ha, \ba, t)^2.
\end{align*}
For $D_1 < 2D_2^2$, $\dot{V}(\bx, \ha, t) \leq 0$. Integrating $\dot{V}(\bx, \ha, t)$ with respect to time then shows that $\tilde{f}(\bx(\cdot), \ha(\cdot), \ba(\cdot), \cdot) \in \LL_2$ over the maximal interval of existence of $\bx(t)$. Alternatively, using the same Lyapunov function,
\begin{align*}
    \dot{V}(\bx, \ha, t) &\leq -\left(\balf(\bx, t)^\T\tilde{\ba}\right)^2\left(D_2 - \frac{1}{2}\right).
\end{align*}
For $D_2 > \frac{1}{2}$, $\dot{V}(\bx, \ha, t) \leq 0$ and $\balf(\bx(\cdot), \cdot)^\T\tilde{\ba} \in \LL_2$ over the maximal interval of existence of $\bx(t)$. By Assumption \ref{assmp:tyukin}, this implies that $\tilde{f}(\bx(\cdot), \ha(\cdot), \ba, \cdot) \in \LL_2$ over this same interval. Hence, both approaches demonstrate that $\hat{\ba}$ remains bounded over the maximal interval of existence of $\bx(t)$, and that $\tilde{f}(\bx(\cdot), \ha(\cdot), \ba, \cdot) \in \LL_2$ over the same interval. Furthermore, these bounds are independent of the length of the interval, as they only depend on $V(\ha(0), 0)$. By Lemma \ref{lem:conv}, the proposition is proved.
\end{proof}

\subsection{Proof of Proposition~\ref{prop:bgf_4}}
\label{app2:bgf_4}
\begin{proof}
Consider the Lyapunov-like function
\begin{equation*}
    V(\ha, \hv, t) = \frac{1}{2}\left(\tilde{\bv}^\T\bP(t)^{-1}\tilde{\bv} + \left(\hat{\ba} - \hat{\bv}\right)^\T\bP(t)^{-1}\left(\hat{\ba} - \hat{\bv}\right)\right).
\end{equation*}
$V(\ha, \hv, t)$ has time derivative
\begin{align*}
    \dot{V}(\bx, \ha, \hv, t) &= -\tilde{\bv}^\T\balf(\bx, t)\tilde{f}(\bx, \ha, \ba, t) + \frac{1}{2}\tilde{\bv}^\T\left(-\lambda\bP(t)^{-1} + \balf(\bx, t)\balf(\bx, t)^\T\right)\tilde{\bv} \nonumber\\
    &\phantom{=} + \left(\hat{\ba} - \hat{\bv}\right)^\T\left(\beta \sN(t) \left(\hat{\bv} - \hat{\ba}\right) + \balf(\bx, t)\tilde{f}(\bx, \ha, \ba, t)\right) \\
    &\phantom{=} + \frac{1}{2}\left(\hat{\ba} - \hat{\bv}\right)^\T\left(-\lambda\bP(t)^{-1} + \balf(\bx, t)\balf(\bx, t)^\T\right)\left(\hat{\ba} - \hat{\bv}\right)\\
    &\leq -\left(\tilde{\ba}^\T\balf(\bx, t)\right)\tilde{f}(\bx, \ha, \ba, t) + \frac{1}{2}\left(\tilde{\bv}^\T\balf(\bx, t)\right)^2 + \frac{1}{2}\left(\left(\hat{\ba} - \hat{\bv}\right)^\T\balf(\bx, t)\right)^2 \\
    &\phantom{=} - \beta \sN(t) \Vert\hat{\ba} - \hat{\bv}\Vert^2 + 2 \tilde{f}(\bx, \ha, \ba, t)\left(\hat{\ba} - \hat{\bv}\right)^\T\balf(\bx, t)\\
    &\leq -D_2\left(\balf(\bx, t)^\T\tilde{\ba}\right)^2 + \frac{1}{2}\left(\tilde{\bv}^\T\balf(\bx, t)\right)^2 + \frac{1}{2}\left(\left(\hat{\ba} - \hat{\bv}\right)^\T\balf(\bx, t)\right)^2 \nonumber\\
    &\phantom{=} - \beta\sN(t) \Vert\hat{\ba} - \hat{\bv}\Vert^2 + 2|\tilde{f}(\bx, \ha, \ba, t)|\Vert\hat{\ba} - \hat{\bv}\Vert\Vert\balf(\bx, t)\Vert
\end{align*}
Observe that $$\frac{1}{2}\left(\tilde{\bv}^\T\balf(\bx, t)\right)^2 = \frac{1}{2}\left[\left(\hat{\bv} - \hat{\ba}\right)^\T\balf(\bx, t)\right]^2 + \left(\hat{\bv} - \hat{\ba}\right)^\T\balf(\bx, t)\left(\balf(\bx, t)^\T\tilde{\ba}\right) + \frac{1}{2}\left(\tilde{\ba}^\T\balf(\bx, t)\right)^2.$$ Then, rewrite
\begin{align*}
\dot{V}(\bx, \ha, \hv, t) &\leq -\left(D_2 - \frac{1}{2}\right)\left(\balf(\bx, t)^\T\tilde{\ba}\right)^2 + \left[\left(\hat{\ba} - \hat{\bv}\right)^\T\balf(\bx, t)\right]^2 \\
&\phantom{=} - \beta\sN(t)\Vert\hat{\ba} - \hat{\bv}\Vert^2 + \Vert\hat{\bv} - \hat{\ba}\Vert\Vert\balf(\bx, t)\Vert|\balf(\bx, t)^\T\tilde{\ba}|\left(2D_1 + 1\right)\\
&\leq -\left(D_2 - \frac{1}{2}\right)\left(\balf(\bx, t)^\T\tilde{\ba}\right)^2 - \beta\Vert\hat{\ba} - \hat{\bv}\Vert^2 - \left(\beta\mu-1\right)\Vert\hat{\ba} - \hat{\bv}\Vert^2\Vert\balf(\bx, t)\Vert^2 \\
&\phantom{=} + \Vert\hat{\bv} - \hat{\ba}\Vert\Vert\balf(\bx, t)\Vert|\balf(\bx, t)^\T\tilde{\ba}|\left(2D_1 + 1\right)\\
&\leq -\epsilon\left(D_2 - \frac{1}{2}\right)\left(\balf(\bx, t)^\T\tilde{\ba}\right)^2 - \beta\Vert\hat{\ba} - \hat{\bv}\Vert^2 \\
&\phantom{=} - \left(\sqrt{(1-\epsilon)\left(D_2 - \frac{1}{2}\right)}|\balf(\bx, t)^\T\tilde{\ba}| - \frac{2D_1+1}{2\sqrt{(1-\epsilon)\left(D_2 - \frac{1}{2}\right)}}\Vert\hat{\bv} - \hat{\ba}\Vert\Vert\balf(\bx, t)\Vert\right)^2.
\end{align*}
Above, $0 < \epsilon < 1$ is arbitrary and we have chosen $\mu = \frac{1}{\beta}\left(1 + \frac{\left(2D_1 + 1\right)^2}{(1-\epsilon)(4D_2 - 2)}\right)$. $\dot{V}(\bx, \ha, \hv, t)$ is clearly negative semi-definite for $D_2 < \frac{1}{2}$, which shows that $\hat{\bv}(t)$ and $\hat{\ba}(t)$ remain bounded over the maximal interval of existence of $\bx(t)$. Integrating $\dot{V}(\bx, \ha, \hv, t)$ with respect to time shows that $\balf(\bx(\cdot), \cdot)^\T\tilde{\ba}(\cdot) \in \LL_2$ over this interval, which implies that $\tilde{f}(\bx(\cdot), \ha(\cdot), \ba, \cdot) \in \LL_2$ over the same interval by Assumption \ref{assmp:tyukin}. Observe that the bounds are independent of the length of the interval, as they only depend on $V(\ha(0), \hv(0), 0)$. By Lemma \ref{lem:conv}, the proposition is proven.
\end{proof}

\end{document}